\newlist{steps}{enumerate}{1}
\setlist[steps, 1]{label = Step \arabic*:}
\newtheorem{theorem}{\bf Theorem}[section]
\newtheorem{lemma}[theorem]{\bf Lemma}
\newtheorem{definition}[theorem]{\bf Definition}
\newtheorem{corollary}[theorem]{\bf Corollary}
\newtheorem{proposition}[theorem]{\bf Proposition}
\newtheorem{remark}[theorem]{\bf Remark}
\newtheorem{obs}[theorem]{\bf Observation}
\newtheorem{example}[theorem]{\bf Example}
\newcommand{\rme}{\mathrm{e}}
\newcommand{\rmi}{\mathrm{i}}
\newcommand{\C}{\mathbb{C}}
\begin{document}

%%%% Article title to be placed here
\title{Links of inner non-degenerate mixed functions, Part II}

\author{%%%% Author details
Benjamin Bode}
\date{}

\address{Instituto de Ciencias Matemáticas (ICMAT), Consejo Superior de Investigaciones Científicas (CSIC), C/ Nicolás Cabrera, 13-15, Campus Cantoblanco, UAM, 28049 Madrid, Spain}
\email{benjamin.bode@icmat.es}

%\author{Benjamin Bode}
%\newcommand{\Addresses}{{% additional braces for segregating \footnotesize
%  \bigskip
%  \footnotesize

%B.~Bode, \textsc{Instituto de Ciencias Matemáticas (ICMAT), Consejo Superior de Investigaciones Científicas (CSIC), Campus Cantoblanco UAM,\\
%C/ Nicolás Cabrera, 13-15, 28049 Madrid, Spain}\par\nopagebreak
%  \textit{E-mail address}: \texttt{benjamin.bode@icmat.es}

%}}
%%%%%%%%% Insert author address here
%\address{Osaka
%ben.bode.2013@my.bristol.ac.uk
%}

%%%% Subject entries to be placed here %%%%
%\subject{Mathematical Physics, Topology}

%%%% Keyword entries to be placed here %%%%
%\keywords{braid group action, }
%%% good choices?

%%%% Insert corresponding author and its email address}
%\corres{Benjamin Bode, Mark Dennis\\
%\email{benjamin.bode@bristol.ac.uk}, \email{mark.dennis@bristol.ac.uk}}
%BB: wrote Benjamin instead of Ben
%%% mrd: final final version, from now on all edits require comment

%%%% Abstract text to be placed here %%%%%%%%%%%%

%%%%%%%%%%%%%%%%%%%%%%%%%%%
%
%%%%%%%%%% Insert the texts which can accomdate on firstpage in the tag "fmtext" %%%%%

\maketitle
\begin{abstract}
Let $f:\mathbb{C}^2\to\mathbb{C}$ be an inner non-degenerate mixed polynomial with a nice Newton boundary with $N$ compact 1-faces. In the first part of this series of papers we showed that $f$ has a weakly isolated singularity and that its link can be constructed from a sequence of links $L_1, L_2,\ldots,L_N$, each of which is associated with a compact 1-face of the Newton boundary of $f$. In this paper, we offer a complete description of the links of singularities of inner non-degenerate mixed polynomials with nice Newton boundary. We show that a link arises as the link of such a singularity if and only if it is the result of the procedure from Part 1 for a sequence of links $L_1,L_2,\ldots,L_N$ that all satisfy certain symmetry conditions. We prove the same result for convenient, Newton non-degenerate mixed polynomials with nice Newton boundary. We also introduce the notion of P-fibered braids with $O$-multiplicities and coefficients, which allows us to describe the links of isolated singularities of inner non-degenerate semiholomorphic polynomials (as opposed to weakly isolated singularities).
\end{abstract}

 \vspace{0.2cm} 

   \noindent \textit{Keywords:} semiholomorphic polynomial; weakly isolated singularity; real algebraic link; P-fibered braid; inner Newton non-degenerate; mixed function; radially weighted homogeneous
   
     \vspace{0.2cm} 
     
      \noindent \textit{Mathematics Subject Classification 2020:} 
    57K10, 14P25, 14B05, 32S55.

\section{Introduction}
This article is a continuation of the work in \cite{AraujoBodeSanchez}. We will review some concepts and results in the introduction and at appropriate places in the later sections in order to keep this paper as self-contained as possible. However, for a complete account of the definitions and details on constructions of links of singularities we advise the reader to consult \cite{AraujoBodeSanchez}.

\subsection{Background}
There are several important differences between the study of singularities of complex plane curves and real polynomial mappings $f:\mathbb{R}^4\to\mathbb{R}^2$. The first of these differences pertains to the notion of an isolated singularity.  Let $\Sigma_f$ denote the set of critical points of $f$, that is, the set of points where the Jacobian matrix of $f$ does not have full rank. Let $V_f$ denote the variety of a polynomial map defined by the equation $f=0$ and suppose that the origin $O\in\mathbb{R}^4$ is a singularity of $f$. Then $O$ is a \textit{weakly isolated singularity} if there is a neighbourhood $U$ of $O$ such that $V_f\cap \Sigma_f\cap U=\{O\}$. It is called an \textit{isolated singularity} if there is a neighbourhood $U$ with $\Sigma_f\cap U=\{O\}$. It follows from the Łojasiewicz inequality \cite{loja} that the two notions are equivalent for complex plane curves, while in the real setting they are not \cite{milnor,ak}.

We are interested in topological properties of the variety near a (weakly) isolated singularity. Intersecting $V_f$ with a sufficiently small 3-sphere centered at the singularity results in a closed 1-dimensional submanifold, the link of the singularity. The so-called algebraic links, the isotopy classes of links that one obtains in this way from singularities of complex plane curves, are completely classified as certain iterated cables of torus links \cite{brauner, kahler, burau, burau2}. It is known that every link in $S^3$ is the link of a weakly isolated singularity of a real polynomial map \cite{ak}. However, it is not known which links are \textit{real algebraic}, that is, links of isolated singularities of real polynomial maps. Benedetti and Shiota conjecture that a link is real algebraic if and only if it is fibered \cite{benedetti}. One difficulty of this conjecture is that isolated singularities are exceedingly rare for real polynomial maps of these dimensions, making explicit constructions very challenging.

Because of this it is useful to borrow some techniques from the complex setting that allow us to deduce that a given polynomial has an isolated singularity. This is where we encounter another important difference between the real setting and the complex setting. Writing $f$ as a mixed function, i.e., a complex-valued function in complex variables $u$ and $v$ and their complex conjugates $\bar{u}$ and $\bar{v}$, allows the generalisation of many concepts from the complex setting. This leads for example to the definition of the Newton polygon of a mixed function and different notions of non-degeneracy that imply the existence of (weakly) isolated singularities. However, while some of these notions of non-degeneracy, such as inner non-degeneracy (see Definition~\ref{Newtoncond}) and partial non-degeneracy, are known to be equivalent to each other in the complex setting (in these dimensions), they are known to be different from each other for mixed functions \cite{eder2}. Furthermore, each of these types of non-degeneracies of mixed functions comes in two flavours, a weak one that implies a weakly isolated singularity and a strong one that implies an isolated singularity.

We showed in \cite{AraujoBodeSanchez} that inner non-degeneracy of a mixed function $f$ implies that $f$ has a weakly isolated singularity, while strong inner non-degeneracy implies an isolated singularity. In order to describe the link of the singularity we introduced another condition on the Newton boundary, which we call ``being nice''. We showed how the link of a singularity of an inner non-degenerate mixed function $f:\mathbb{C}^2\to\mathbb{C}$ with a nice Newton boundary with $N$ compact 1-faces can be constructed from a sequence of links $L_1,L_2,\ldots,L_N$, each of which is associated with a compact 1-face of the mixed Newton boundary of $f$. However, this is not a complete description of the links of such singularities. 

As in the complex setting there is an order on the set of compact 1-faces of the Newton boundary and we may associate to the $i$th 1-face a positive weight vector $P_i$ and a polynomial map $f_{P_i}$. This polynomial admits a certain radial symmetry (it is \textit{radially weighted homogeneous}). Together with the inner non-degeneracy and niceness condition this implies that its zeros are the cone over a link, which is the link $L_i$ associated to that compact 1-face. While \cite{AraujoBodeSanchez} explains how these links have to be combined to obtain the link of the singularity, it does not specify which link types can arise in this way as an $L_i$.

In this paper we study radially weighted homogeneous mixed functions and show that the links of singularities of such functions are exactly those that satisfy a certain even or odd symmetry. This allows us to completely characterise the links of weakly isolated singularities of inner non-degenerate mixed functions with nice Newton boundary. By generalising the concept of a P-fibered braid, which was introduced in \cite{bode:sat} to construct radially weighted homogeneous polynomials with isolated singularities, we can also describe those links that lead to isolated singularities. This addresses (but does not solve) the conjecture by Benedetti and Shiota about which links are real algebraic by providing new families of real algebraic links.

\subsection{Main results}
As in the first part of this series of papers we use $(u,v)$ as complex coordinates in $\mathbb{C}^2$ and on $S^3$, the unit 3-sphere in $\mathbb{C}^2$. If $v$ is non-zero, we may write $v=r\rme^{\rmi t}$ with $r>0$, $t\in[0,2\pi]$, and similarly $u=R\rme^{\rmi\varphi}$ for non-zero $u$.
\begin{definition}
Let $\tau_u:\mathbb{C}^2\to\mathbb{C}^2$ be the map $\tau_u(u,v)=(u,-v)$, $\tau_v:\mathbb{C}^2\to\mathbb{C}$, $\tau_v(u,v)=(-u,v)$, and $\tau_1:\mathbb{C}^2\to\mathbb{C}$, $\tau_1(u,v)=(-u,-v)$.

We say that a subset $L$ of $\mathbb{R}^4\cong\mathbb{C}^2$ is $u$-even if it is invariant (as a set) under $\tau_u$, i.e., $\tau_u(L)=L$. It is $v$-even if it is invariant under $\tau_v$. We say $L$ is even if it is $u$-even or $v$-even. We say that an even link $L$ is fixed-point-free if it does not intersect the set of fixed points of the corresponding transformation $\tau_u$ or $\tau_v$.\\
We say that $L$ is odd if $\tau_1(L)=L$.\\
%\item We say a link in $S^3$ is $k$-symmetric if it is invariant (as a set) under the transformation $(u,v)\mapsto(u\rme^{\rmi 2\pi/k},-v)$.
%We use the same terminology for links in $\mathbb{C}\times S^1$, $\mathbb{C}^*\times S^1$, $S^1\times\mathbb{C}$ and $S^1\times \mathbb{C}^*$. In particular, a closed braid in $\mathbb{C}\times S^1$ is called even (or odd) if it is invariant under $(u,\rme^{\rmi t})\mapsto (u,\rme^{\rmi (t+\pi)})$ (or invariant under $(u,\rme^{\rmi t})\mapsto(-u,\rme^{\rmi(t+\pi)})$).
\end{definition}
In this work, $L$ will usually be a link. In this case, the definition refers to the actual geometric curve that forms a link, not its isotopy class, which is its link type. If $M$ is a 3-dimensional submanifold of $\mathbb{R}^4\cong\mathbb{C}^2$ that is invariant under the symmetries above, we may say that a link type $L$ in $M$ has one of the symmetries above if there is a representative in its isotopy class in $M$ that has that symmetry. %Similarly, we say that a closed braid $B$ in $\mathbb{C}\times S^1$ represents a braid isotopy class with a given symmetry if it is isotopic as a closed braid in $\mathbb{C}\times S^1$ to a closed braid with that symmetry.

Note that being even and fixed-point-free in $M=S^3$ is equivalent to being 2-periodic with the set $(u,0)$ (or $(0,v)$) as the set of fixed points of the involution \cite{murasugi}. The even symmetry has been used in the construction of isolated singularities of radially weighted homogeneous polynomials \cite{bode:ralg}, while the odd symmetry has been used in \cite{looijenga}. We prove that for inner non-degenerate mixed polynomials with exactly one compact 1-face in their Newton boundary, including all radially weighted homogeneous polynomials, these two symmetries characterise all links of weakly isolated singularities. 

\begin{theorem}\label{thm:main1}
A link type $L\subset S^3$ arises as the link of a weakly isolated singularity of some inner non-degenerate mixed function $f$ with exactly one compact 1-face in its Newton boundary if and only if it is fixed-point-free even, or the union of a fixed-point-free even link and the corresponding set of fixed points, or an odd link type.
\end{theorem}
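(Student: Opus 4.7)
The plan is to handle necessity and sufficiency separately. For necessity, suppose $f$ is inner non-degenerate with a weakly isolated singularity and a single compact 1-face $\Delta$ with primitive positive inner normal $P = (p_1, p_2)$, so $\gcd(p_1, p_2) = 1$. By Part~1 of this series, the link of the singularity of $f$ coincides with that of its face polynomial $f_P$, which is radially weighted homogeneous of weights $(p_1, p_2)$ and some weighted degree $q$. The identity $f_P(t^{p_1} u, t^{p_2} v) = t^q f_P(u, v)$ is a polynomial identity in $t$ and therefore holds for all real $t$; specialising to $t = -1$ shows that $L = V_{f_P} \cap S^3$ is invariant under the involution $(u, v) \mapsto ((-1)^{p_1} u, (-1)^{p_2} v)$. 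Coprimality forces the parity of $(p_1, p_2)$ to be $(\text{even}, \text{odd})$, $(\text{odd}, \text{even})$, or $(\text{odd}, \text{odd})$, producing invariance of $L$ under $\tau_u$, $\tau_v$, or $\tau_1$ respectively.

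For the two even cases it remains to show that $L$ either avoids or fully contains the corresponding fixed circle. Consider the $(\text{odd}, \text{even})$ case, for which the fixed circle is $C = \{(0, v) \in S^3\}$, and set $g(v) := f_P(0, v)$. Then $g$ is radially weighted homogeneous of weight $p_2$ and degree $q$ in $v$, so its zero set in $\mathbb{C}$ is either all of $\mathbb{C}$ or a union of closed rays from the origin. If $g \equiv 0$, then $\{u = 0\} \subset V_{f_P}$, hence $C \subset L$, and $L \setminus C$ is a $\tau_v$-invariant link disjoint from $C$: this is case~(b). Otherwise the zero set of $g$ is nonempty and 1-dimensional, and since $g$ vanishes identically along each such ray, its differential has rank at most $1$ at every point of the zero set. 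Thus every nonzero zero of $g$ is a critical point of $g$. But $g$ is exactly the face polynomial of the 0-face of $\Delta$ on the $v$-axis, and inner non-degeneracy at that 0-face forbids critical zeros. So $g$ has no nonzero zeros, $L \cap C = \emptyset$, and $L$ lies in case~(a). The odd case needs no supplementary analysis because $\tau_1$ acts freely on $S^3$.

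For sufficiency I would start with a link $L$ of one of the three permitted types and construct a realising polynomial. The first step is to represent $L$ as the closure of a braid whose braiding respects the symmetry: a $\tau_1$-equivariant braid for odd $L$, and for the two fixed-point-free even types the lift of an ordinary braid from the solid torus quotient of $S^3$ by $\tau_u$ or $\tau_v$. The second step feeds this braid into the generalised P-fibered braid construction of the present paper: the output is a radially weighted homogeneous mixed polynomial $f_P$ of weights $P$ in the required parity class whose $S^3$-zero set is exactly $L$, whose Newton boundary consists of a single compact 1-face by construction, and which is inner non-degenerate on that face. For case~(b) one additionally arranges that every monomial of $f_P$ contains $u$ or $\bar u$, forcing $f_P|_{u=0} \equiv 0$ and hence $C \subset V_{f_P}$; this is the role of the $O$-multiplicities highlighted in the abstract.

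The main obstacle is the sufficiency direction, and in particular case~(b): one must simultaneously arrange that $f_P$ vanishes identically on $\{u = 0\}$ while the 0-face polynomial on the opposite axis retains enough terms to stay inner non-degenerate. Because eliminating every $u$-free monomial thins the Newton boundary from that side, the construction with $O$-multiplicities has to be calibrated carefully to reconcile the two demands. By contrast, the necessity direction reduces, once the Euler-type symmetry of $V_{f_P}$ is identified, to the fairly direct 0-face analysis summarised above.
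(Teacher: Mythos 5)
Your necessity argument is correct in outline and actually slicker than the paper's at one point: deducing invariance of $V_{f_P}$ under $(u,v)\mapsto((-1)^{p_1}u,(-1)^{p_2}v)$ by specialising the real scaling identity to $t=-1$ is a clean substitute for the coefficient-by-coefficient parity computation in Lemma~\ref{lem1mix}, and the coprimality trichotomy is exactly what drives both arguments. However, the fixed-point analysis has a gap: you show that every nonzero zero of $g(v)=f_P(0,v)$ is a critical point \emph{of $g$}, and then assert that inner non-degeneracy ``at that 0-face'' rules this out. But condition~(ii) of Definition~\ref{Newtoncond} applies only to non-extreme vertices, so it says nothing about the extreme vertex on the $v$-axis; and criticality of $g=f_P|_{u=0}$ does not by itself make $(0,v_0)$ a critical point of the full map $f_P:\C^2\to\C$, since the $u$- and $\bar u$-derivatives of $f_P$ could be nonzero there. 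What is actually needed, and what Lemma~\ref{lem:convcases} supplies, is the observation that $p_2$ even forces every monomial to have $\mu\geq 2$ (once there is a $\mu=0$ term), so $\partial_u f_P$ and $\partial_{\bar u}f_P$ vanish identically on $\{u=0\}$; combined with the vanishing of the radial $v$-derivative along the zero ray this gives rank $\leq 1$ for the genuine Jacobian of $f_P$, which then contradicts condition~(i) for the 1-face on $\C^2\setminus\{v=0\}$.

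The sufficiency direction is where the proposal breaks. You propose to ``represent $L$ as the closure of a braid whose braiding respects the symmetry'' and feed that braid into a P-fibered construction. Two problems. First, it is not known (and the paper explicitly cautions, right after Theorem~\ref{thm:main1b}, that one should \emph{not} expect) that every fixed-point-free even or odd link type admits a braided representative realising the symmetry. If that were available, it would essentially collapse the gap between Theorem~\ref{thm:main1} and the strictly more constrained braid-theoretic Theorem~\ref{thm:main1b}. Second, the P-fibered braid machinery (and the $O$-multiplicities you invoke) is the paper's tool for \emph{isolated} singularities of \emph{semiholomorphic} polynomials (Section~\ref{sec6}); it produces braids, not arbitrary links in $\C\times S^1$, and it is not what Theorem~\ref{thm:main1} uses. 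The paper's actual route is: isotope the symmetric link off the core circle (Lemmas~\ref{lem:oddHopf}, \ref{lem:nonzerosym}, \ref{lem:nonzeroodd}), then pass to the quotient by the symmetry and invoke the Calcut--King approximation theorem (Lemma~\ref{lem:approx}, refined in Lemma~\ref{lem:nondeg}) to realise the quotient link as the transverse zero set of a polynomial $g$ in $u,\bar u,\rme^{\rmi t},\rme^{-\rmi t}$; pulling back along the covering and rescaling radially as in Eq.~\eqref{eq:gscale} then produces the required radially weighted homogeneous mixed polynomial (Lemma~\ref{lem:symmix}). This never needs $L$ to be braided. Case~(b) is then not the bottleneck you identify: the paper simply multiplies by $u$ and cites Lemma~\ref{lem:uconv} to preserve inner non-degeneracy; the genuinely hard part of sufficiency is the approximation step, which your proposal does not engage with.
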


Among the family of mixed polynomials there is a family that can be seen as an interesting intermediate between the real (or mixed) setting and the complex case. We say that a mixed polynomial is \textit{semiholomorphic} if it is holomorphic with respect to one of the two complex variables. By convention we usually assume that a semiholomorphic polynomial is holomorphic with respect to the variable $u$. Links of singularities of semiholomorphic polynomials are naturally braided. Again, we find that certain symmetries characterise the braids that are obtained from polynomials with exactly one compact 1-face in its Newton boundary.

\begin{definition}
Consider the transformation $\tau_k:\mathbb{C}^2\to\mathbb{C}^2, \tau_k(u,v)\mapsto(u\rme^{\rmi \pi/k},-v)$, $k>0$. We say that a subset of $\mathbb{R}^4\cong\mathbb{C}^2$ is $k$-symmetric with $k\in \mathbb{N}$ if it is invariant (as a set) under $\tau_k$.

Let $B$ be closed a braid in $\mathbb{C}\times S^1$. We denote by $\tilde{s}$ the number of strands of $B$ that are not $\{0\}\times S^1$. We say that $B$ is divisor-symmetric if it is $2^K$-symmetric for some $K\in\mathbb{N}_0$ such that $2^K$ divides $\tilde{s}$.
%A subset of $\mathbb{C}\times S^1$ is even if it is invariant under the transformation $\tau_u:\mathbb{C}\times S^1\to\mathbb{C}\times S^1, \tau_0(u,\rme^{\rmi t})=(u,\rme^{\rmi (t+\pi)})$.
\end{definition}

%\begin{definition}
%Let $B$ be a closed braid in $\mathbb{C}\times S^1$. Let $\tilde{s}$ denote the number of strands of $B$ that are not $0\times S^1$. Then we say that $B$ is fully symmetric if it is $2^K$-symmetric, where $2^K$ is the largest power of 2 that divides $\tilde{s}$. A closed braid $B$ in $\mathbb{C}\times S^1$ is called an even braid if it is invariant under $\tau_u:\mathbb{C}\times S^1\to\mathbb{C}\times S^1$, $\tau_u(u,\rme^{\rmi t})=(u,\rme^{\rmi (t+\pi)})$.
%\end{definition}

\begin{theorem}\label{thm:main1b}
A link type $L\subset S^3$ arises as the link of a weakly isolated singularity of some inner non-degenerate mixed polynomial $f$ with semiholomorphic principal part with exactly one compact 1-face in its Newton boundary if and only if it can be realised as the closure of a braid $B$ in $\mathbb{C}\times S^1$ that is $u$-even or divisor-symmetric or as the union of the closure of such a braid and its braid axis.
%If $L$ is the closure of a $k$-symmetric braid for some $k$, then $L$ is the link of a singularity of a radially weighted homogeneous semiholomorphic polynomial $f$ with the weight vector $P=(p_1,p_2)$.
\end{theorem}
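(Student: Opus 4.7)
The proof adapts Theorem \ref{thm:main1} to the semiholomorphic setting by translating the even/odd symmetries of the link into symmetries of the associated braid. Write $f_P$ for the principal part of $f$; by the one-compact-face hypothesis it is semiholomorphic and radially weighted homogeneous with a coprime weight vector $(p_1,p_2)$. Since $f_P$ is holomorphic in $u$, the intersection $V(f_P)\cap(\mathbb{C}\times S^1)$ is a braid $B$, whose closure (together with the braid axis precisely when $u\mid f_P$) is the link $L$ of the singularity.

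\emph{Necessity.} Theorem \ref{thm:main1} gives three possible symmetries of $L$, which correspond to the three parity patterns of the coprime pair $(p_1,p_2)$: $u$-even when $p_1$ is even and $p_2$ odd, odd when both are odd, and $v$-even when $p_1$ is odd and $p_2$ even. In the $u$-even case, the $\tau_u$-invariance of $V(f_P)$ descends directly to $\tau_u$-invariance of $B$. In the odd case, $\tau_1$-invariance gives $\tau_1=\tau_{2^0}$-invariance of $B$, which makes $B$ divisor-symmetric with $K=0$ since $1\mid\tilde{s}$ is automatic. The $v$-even case is the delicate one: the parity constraint on the support of $f_P$ forces a factorization $f_P=u^\varepsilon h(u^2,v,\bar v)$ with $\varepsilon\in\{0,1\}$, where $h$ is semiholomorphic and radially weighted homogeneous with reduced coprime weight vector $(p_1,p_2/2)$. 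Iterating this reduction exactly $K$ times, where $K$ is the $2$-adic valuation of $p_2$, brings us to a base case whose variety is odd-symmetric; a pull-back computation along $u=\pm\sqrt{w}$ shows that each step doubles the strand count and upgrades a $\tau_{2^j}$-symmetry of the intermediate braid to a $\tau_{2^{j+1}}$-symmetry of the next level. Therefore $B$ is $\tau_{2^K}$-symmetric with $2^K\mid\tilde{s}$, i.e., divisor-symmetric. The axis-union option in the statement accounts for the case $u\mid f_P$.

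\emph{Sufficiency.} Given a braid $B$ with the prescribed symmetry, we produce a semiholomorphic radially weighted homogeneous polynomial $f_P$ realizing $B$. For a $u$-even $B$ with strands $u_j(v)$, the $\tau_u$-invariance makes the elementary symmetric polynomials in the $u_j$ invariant under $v\mapsto -v$, so $\prod_j(u-u_j(v))$ can be placed in radially weighted homogeneous semiholomorphic form with $p_2$ odd. For a divisor-symmetric $B$, we invert the necessity reduction: the strands split into $\tilde{s}/2^K$ orbits of size $2^K$ under $\tau_{2^K}$, and the P-fibered braid construction of \cite{bode:sat} applied orbit-by-orbit yields a polynomial $h$ with the correct reduced symmetry, whose iterated pull-back under $w=u^2$ produces $f_P$. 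Inner non-degeneracy and uniqueness of the compact 1-face follow from a generic choice of coefficients; the axis-union case is handled by multiplying $f_P$ by $u$.

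The main obstacle is the $v$-even case of the necessity direction, where one must simultaneously control the iterated factorization of $f_P$ as $u^{\varepsilon_0}(u^2)^{\varepsilon_1}\cdots(u^{2^{K-1}})^{\varepsilon_{K-1}}h_K(u^{2^K},v,\bar v)$, the bootstrap of the symmetry index from $j$ to $j+1$ at each step, and the doubling of the strand count, so that the divisibility $2^K\mid\tilde{s}$ holds at the end of the induction. Once this bookkeeping is in place, both directions reduce to the P-fibered braid techniques already developed in \cite{bode:sat}.
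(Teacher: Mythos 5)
Your overall strategy is sound and closely parallels the paper's: pass to the principal part, use the $g$-polynomial to identify the link with a closed braid, and translate the symmetry of $V_{f_P}$ into a symmetry of $B$. Your treatment of the $v$-even (i.e.\ $p_1$ odd, $p_2$ even) case, however, takes a genuinely different route. The paper (Lemma~\ref{lem1}) proves the $2^K$-symmetry of $V_f$ by a direct computation with lattice points on the Newton boundary (after multiplying by a power of $u$ so that $b$ is divisible by $2^{K+1}$), and establishes $2^K\mid\tilde s$ in a separate short lemma by observing that $p_2$ divides $\deg_u f$ (or $\deg_u f - 1$). For sufficiency it factors $g$ as $\prod_j(u^{2^K}-a_j(t))$ in one step via Lemma~\ref{lem:circles} and controls the frequency parities of the $c_j$. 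You instead iterate a single $u\mapsto u^2$ reduction $K$ times, halving the $p_2$-entry of the weight vector at each step, upgrading $\tau_{2^j}$-symmetry to $\tau_{2^{j+1}}$-symmetry and doubling the strand count, arriving at a 1-symmetric base case. Both work; your version makes the divisibility $2^K\mid\tilde s$ manifest through the strand-count doubling, while the paper's one-shot factorization avoids having to track the factorization $f_P=u^{\varepsilon_0}(u^2)^{\varepsilon_1}\cdots h_K(u^{2^K},v,\bar v)$ across $K$ stages.

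There is one genuine error, though: you have the roles of $u$ and $v$ reversed for the braid-axis case. You assert that the link contains the braid axis precisely when $u\mid f_P$ and that the axis-union case is handled by multiplying $f_P$ by $u$. In fact, the braid axis is the unknot $\{v=0\}\cap S^3$, which appears in the link precisely when $f_P$ is not $u$-convenient, i.e.\ when $v\mid f_P$; correspondingly, one adjoins the axis by multiplying by $v$ (using the analogue of Lemma~\ref{lem:uconv}). Multiplying by $u$ instead inserts the strand $\{0\}\times S^1$ \emph{into} the braid $B$ (the core of the braid torus), which is a different modification entirely and is what one does when $B$ is required to contain the zero strand. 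You should also replace the reference to the ``P-fibered braid construction of \cite{bode:sat}'' by the basic braid-to-polynomial construction from \cite{bode:ralg,bode:algo}: P-fiberedness is relevant only to isolated singularities (Theorem~\ref{thm:main3}), and insisting on it here would be both unnecessary and unjustified, since not every braid with the required symmetry is P-fibered.
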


The symmetries in Theorem~\ref{thm:main1b} are special cases of those in Theorem~\ref{thm:main1}, since closures of $u$-even braids are $u$-even links in $S^3$, closures of 1-symmetric braids are odd links in $S^3$ and closures of $2^K$-symmetric braids with $K>0$ are $v$-even links. However, we should not expect that every $v$-even link is the closure of a divisor-symmetric braid or that every even or odd link type realises its symmetry via a braided representative.

In \cite{AraujoBodeSanchez} we showed that the link $L$ of a singularity of an inner non-degenerate mixed function $f$ with a so-called ``nice'' Newton boundary (cf. Definition~\ref{def:blabla}) can be obtained from links $L_i$, $i=1,2,\ldots,N$, which are associated with the compact 1-faces of the Newton boundary of $f$ if $N>1$. The result of this construction is denoted by $L([L_1,L_2,\ldots,L_{N-1}],L_N)$ and depends only on the isotopy class of $L_1$ in $\mathbb{C}\times S^1$, the isotopy class of $L_i$, $i=2,3,\ldots,N-1$, in $(\mathbb{C}\backslash\{0\})\times S^1$ and the isotopy class of $L_N$ in $S^1\times(\mathbb{C}\backslash\{0\})$. If the function $f$ has a semiholomorphic principal part, i.e., its face functions do not involve the variable $\overline{u}$ and are thus holomorphic in $u$, then there is a similar construction that yields the link of the singularity as the closure of a braid $B(B_1,B_2,\ldots,B_N)$ (or $B^\circ(B_1,B_2,\ldots,B_N)$ if $f$ is not $u$-convenient) obtained from braids $B_i$ that are again associated with the compact 1-faces of the Newton boundary. Again, the construction only depends on the braid isotopy class of each $B_i$, $i=2,3,\ldots,N$, in $(\mathbb{C}\backslash\{0\})\times S^1$ and $B_1$ in $\mathbb{C}\times S^1$. Details of the construction can be found in \cite{AraujoBodeSanchez}.
Combining this with Theorem~\ref{thm:main1} we obtain a complete characterisation of links of inner non-degenerate mixed functions with nice Newton boundary.

\begin{theorem}\label{thm:main2}
A link type $L$ arises as the link of a weakly isolated singularity of an inner non-degenerate mixed function with nice Newton boundary if and only if $L$ is as in Theorem~\ref{thm:main1} or it has a representative $L([L_1,L_2,\ldots,L_{N-1}],L_N)$ with $N>1$, where each of $L_1\subset\mathbb{C}\times S^1$, $L_i\subset(\mathbb{C}\backslash\{0\})\times S^1$, $i=2,3,\ldots,N-1$, $L_N\subset S^1\times\mathbb{C}$ is fixed-point-free even or odd.
%\begin{itemize}
%\item $L_1\subset\mathbb{C}\times S^1$ is one of the following:
%\begin{itemize}
%\item fixed-point-free even,
%\item the union of a fixed-point-free $v$-even link and the set of fixed points $\{(0,v):|v|^2=1\}$,
%\item odd.
%\end{itemize}
%\item for all $i\in\{2,3,\ldots,N-1\}$ the link $L_i\subset(\mathbb{C}\backslash\{0\})\times S^1$ is one of the following:
%\begin{itemize}
%\item fixed-point-free even,
%\item odd.
%\end{itemize}
%\item $L_N\subset S^1\times\mathbb{C}$ is one of the following:
%\begin{itemize}
%\item fixed-point-free even,
%\item the union of a fixed-point-free $u$-even link and the set of fixed points $\{(u,0):|u|^2=1\}$,
%\item odd.
%\end{itemize}
%\end{itemize}
\end{theorem}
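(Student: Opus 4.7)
The plan is to treat the $N=1$ case as Theorem~\ref{thm:main1} directly and reduce the $N>1$ case to Theorem~\ref{thm:main1} applied face-by-face, using the constructions of \cite{AraujoBodeSanchez} in both directions.

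For the \emph{necessity} direction, assume $f$ is inner non-degenerate with a nice Newton boundary having $N>1$ compact 1-faces. The construction of \cite{AraujoBodeSanchez} presents the link of $f$ as $L([L_1,L_2,\ldots,L_{N-1}],L_N)$, where $L_i$ is the link associated to the face function $f_{P_i}$ viewed in the appropriate ambient space. Each $f_{P_i}$ is radially weighted homogeneous, inner non-degenerate, and has exactly one compact 1-face, so Theorem~\ref{thm:main1} describes its link. For the middle faces $i=2,\ldots,N-1$ the 1-face does not touch a coordinate axis of the Newton diagram, which forces $L_i\subset(\mathbb{C}\setminus\{0\})\times S^1$; hence the ``even link unioned with the axis'' option of Theorem~\ref{thm:main1} cannot occur and $L_i$ is fixed-point-free even or odd. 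For $L_1$ and $L_N$, the same conclusion follows from the niceness condition together with the presence of at least one further compact 1-face: a union of an even link with an entire coordinate axis would demand that axis to be a full component of the vanishing locus for every adjacent face function, which is incompatible with having $N>1$ compact 1-faces arranged according to Definition~\ref{def:blabla}.

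For the \emph{sufficiency} direction, suppose $L_1,\ldots,L_N$ are given with the prescribed symmetries in their respective ambient spaces. By Theorem~\ref{thm:main1} each $L_i$ is the link of the singularity of some radially weighted homogeneous inner non-degenerate mixed polynomial $g_i$ with a single compact 1-face associated to a weight vector $P_i$. By rescaling, substituting powers of the variables, and multiplying by suitable monomials $u^{a_i}v^{b_i}$, we may arrange that the weight vectors $P_1,\ldots,P_N$ are mutually compatible and produce, in the plane of exponents, a convex polygonal arc whose $N$ edges are exactly these 1-faces in the correct cyclic order. The sum $f=\sum_{i=1}^N g_i$ (possibly with additional interior vertex monomials to secure the niceness condition) then has Newton boundary consisting precisely of these $N$ compact 1-faces, its face function on the $i$th face equals $g_i$, and inner non-degeneracy holds face-by-face by that of each $g_i$. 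Applying the construction of \cite{AraujoBodeSanchez} to $f$ recovers $L([L_1,\ldots,L_{N-1}],L_N)$ as the link.

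The main obstacle is the combination step in the sufficiency direction: arranging compatible weight vectors and monomial multipliers so that the Newton diagram of $f$ has exactly the prescribed $N$ compact 1-faces in the correct order, is nice in the sense of Definition~\ref{def:blabla}, and carries no spurious faces that would either destroy niceness or alter the face functions. Tracking how the symmetries of the individual $L_i$ survive the passage to the assembled polynomial and its rescaled coordinates is the delicate combinatorial bookkeeping at the heart of the argument; the necessity side, by contrast, is essentially an unpacking of the ambient-space restrictions already built into the construction of \cite{AraujoBodeSanchez}.
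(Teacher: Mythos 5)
There is a genuine gap in your necessity argument for $L_1$ and $L_N$. You claim that the ``union of an even link with the coordinate axis'' case of Theorem~\ref{thm:main1} ``cannot occur'' for $N>1$ because it would force the axis to lie in the vanishing locus of every adjacent face function, which you assert is incompatible with Definition~\ref{def:blabla}. Both halves of this are off. If $f$ is not $v$-convenient then every lattice point of $supp(f)$ has first coordinate $\geq 1$, so indeed $\{u=0\}\subset V_{f_P}$ for every weight vector $P$ --- but this is perfectly compatible with niceness and with inner non-degeneracy, since the conditions of Definitions~\ref{Newtoncond} and \ref{def:blabla} at interior vertices concern only $(\mathbb{C}^*)^2$, which is disjoint from the axis, and the conditions at the extreme faces concern only $\mathbb{C}^2\setminus\{v=0\}$ resp.\ $\mathbb{C}^2\setminus\{u=0\}$. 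So polynomials with $N>1$ nice compact 1-faces and the axis in the link do exist, and for them $\widetilde{L_1}$ really is an even link together with the fixed-point set of $\tau_v$. The paper (Lemma~\ref{lem:miximplies}) handles precisely this by deforming the axis inside a tubular neighbourhood into a nearby odd knot $L_1$ disjoint from the even part $L_2$, rewriting $\widetilde{L_1}=[L_1,L_2]$, and re-indexing so that the final $N$ may be $N'$, $N'+1$ or $N'+2$. That splitting step is the real content of the necessity direction and your proposal skips over it by declaring the problematic case impossible.

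Your sufficiency argument is also underdetermined at exactly the step you yourself flag as the heart of the matter. The naive sum $f=\sum_i g_i$ will generically not have the $g_i$ as its face functions: the vertex shared by two consecutive 1-faces receives terms from both summands, and a monomial-by-monomial correction gives no mechanism to control the face functions of the interior vertices or to propagate non-degeneracy. The paper instead glues inductively via
\begin{equation*}
f=\tilde{f}_{\Delta}\,\hat{f}+\hat{f}_{\Delta'}\,\tilde{f}-\tilde{f}_{\Delta}\,\hat{f}_{\Delta'},
\end{equation*}
so that each face function of $f$ factors as (a face function of $\tilde{f}$ or $\hat{f}$) times (a vertex monomial that does not vanish on $(\mathbb{C}^*)^2$); Lemma~\ref{lem:verttimes} then yields Newton non-degeneracy, hence inner non-degeneracy, and the face functions are exactly those prescribed. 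Some multiplicative device of this kind is needed; ``summing with monomial corrections'' as stated is not a proof.
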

Note that the different $L_i$, $i\in\{1,2,\ldots,N\}$, can satisfy different symmetry requirements. $L_1$ could be odd, for example, while $L_2$ is fixed-point-free even.

\begin{theorem}\label{thm:main2b}
A link type $L$ arises as the link of a weakly isolated singularity of an inner non-degenerate mixed function with semiholomorphic principal part if and only if it has a representative that is the closure of $B(B_1,B_2,\ldots,B_N)$ or $B^\circ(B_1,B_2,\ldots,B_N)$, where each braid $B_i$ is $u$-even or divisor-symmetric with $B_1\subset\mathbb{C}\times S^1$ and $B_i\subset(\mathbb{C}\backslash\{0\})\times S^1$ for all $i=2,3,\ldots,N$.
\end{theorem}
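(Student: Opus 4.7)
The plan is to derive Theorem~\ref{thm:main2b} by combining Theorem~\ref{thm:main1b} (the single-face characterisation in the semiholomorphic setting) with the construction of the link of an inner non-degenerate mixed function as the closure of $B(B_1,\ldots,B_N)$ or $B^\circ(B_1,\ldots,B_N)$ from \cite{AraujoBodeSanchez}. The structure mirrors exactly how Theorem~\ref{thm:main2} is deduced from Theorem~\ref{thm:main1}: one proves necessity face-by-face, and sufficiency by gluing together single-face models.

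For the necessity direction, I would start from an inner non-degenerate mixed polynomial $f$ with semiholomorphic principal part and nice Newton boundary with compact 1-faces ordered $F_1,\ldots,F_N$. By the main result of \cite{AraujoBodeSanchez} recalled above, the link of the weakly isolated singularity of $f$ at $O$ is the closure of $B(B_1,\ldots,B_N)$ (or $B^\circ(B_1,\ldots,B_N)$ if $f$ fails to be $u$-convenient), where each $B_i$ is the braid associated with $F_i$. Now the face polynomial $f_{P_i}$ is itself a semiholomorphic, inner non-degenerate, radially weighted homogeneous polynomial whose Newton boundary has exactly one compact 1-face, namely $F_i$, and its associated braid in the sense of \cite{AraujoBodeSanchez} is precisely $B_i$. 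Applying Theorem~\ref{thm:main1b} to $f_{P_i}$ therefore forces $B_i$ to be $u$-even or divisor-symmetric (the braid-axis alternative in Theorem~\ref{thm:main1b} corresponds exactly to the $B^\circ$ variant at the outer face).

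For the sufficiency direction, suppose we are given braids $B_1\subset \mathbb{C}\times S^1$ and $B_i\subset(\mathbb{C}\setminus\{0\})\times S^1$ for $i=2,\ldots,N$, each $u$-even or divisor-symmetric. The constructive half of Theorem~\ref{thm:main1b} yields, for each $i$, a semiholomorphic inner non-degenerate radially weighted homogeneous polynomial $g_i$ with some weight vector $P_i'$ whose braid is $B_i$. The task is then to rescale the $P_i'$ to a sequence of weight vectors $P_1,\ldots,P_N$ that form the consecutive outward-normal directions of a single convex Newton polygon, and to assemble $f = \sum_{i=1}^N g_i$ (plus possibly monomials on the axes ensuring $u$- or $v$-convenience and a nice Newton boundary) so that its $i$th face polynomial $f_{P_i}$ agrees, up to a monomial factor that does not change the braid, with $g_i$. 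Because each $g_i$ is semiholomorphic, the assembled $f$ automatically has semiholomorphic principal part, and by construction $f$ is inner non-degenerate; then \cite{AraujoBodeSanchez} guarantees that its link is the closure of $B(B_1,\ldots,B_N)$ or $B^\circ(B_1,\ldots,B_N)$ as desired.

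The main obstacle is the gluing step in the sufficiency direction, and in particular checking three things simultaneously: (i) that the rescaled $g_i$ genuinely sit on consecutive compact 1-faces of a common Newton boundary, which is a matter of choosing the $P_i$ with increasing slopes and adjusting the exponents of $g_i$ by multiplication with appropriate monomial factors in $u$ and $v$; (ii) that these monomial shifts do not destroy inner non-degeneracy or the niceness of the resulting Newton boundary; and (iii) that the inherited face braids are exactly the $B_i$ up to the ambient isotopies preserved by the construction $B(\cdot)$. Steps (i)--(iii) are essentially the same combinatorial manipulations used in the proof of Theorem~\ref{thm:main2}, specialised to the semiholomorphic case where the absence of $\overline{u}$ in the face polynomials makes the arithmetic of the Newton polygon cleaner rather than harder; no new phenomenon appears beyond those already handled in \cite{AraujoBodeSanchez}.
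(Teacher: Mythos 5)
Your high-level strategy matches the paper's: necessity is proved face-by-face by reading off that each face function $f_{P_i}$ is radially weighted homogeneous and semiholomorphic and then invoking the single-face symmetry result, while sufficiency is proved by building a polynomial with the right face functions from single-face models and invoking the description of the link from \cite{AraujoBodeSanchez}. The necessity direction as you sketch it is essentially the paper's argument (though the paper leans on Lemma~\ref{lem23} directly rather than Theorem~\ref{thm:main1b} as a black box, since $f_{P_i}$ need not be inner non-degenerate as a standalone polynomial; what matters is that $f_{P_i}$ is radially weighted homogeneous, semiholomorphic, and that condition (ii) of inner non-degeneracy of the ambient $f$ gives the braid structure). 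Also note the logical order is reversed from your framing: the paper proves Theorem~\ref{thm:main2b} first and then proves Theorem~\ref{thm:main2} by reusing the same inductive machinery, not the other way around.

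The substantive gap is in the gluing step. Writing $f=\sum_{i=1}^N g_i$ with the $g_i$ monomially shifted is not what happens, and as stated it would not work: when two single-face models share a vertex of the Newton boundary, their contributions to that vertex are added, and nothing in a bare sum guarantees the resulting coefficient is nonzero or that the vertex face function is non-degenerate. The paper's Lemma~\ref{lem:semiglue} instead builds $f$ by a two-step induction with the explicit formula
\[
f \;=\; \tilde{f}_{\Delta}\,\hat{f} \;+\; \hat{f}_{\Delta'}\,\tilde{f} \;-\; \tilde{f}_{\Delta}\,\hat{f}_{\Delta'},
\]
where $\hat{f}$ is the radially weighted homogeneous model for $B_1$, $\tilde{f}$ is the inductively constructed polynomial for $B_2,\ldots,B_N$, $\Delta$ is $\tilde{f}$'s vertex on the vertical axis, and $\Delta'$ is $\hat{f}$'s vertex on the horizontal axis. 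The subtraction removes the double-counted shared-vertex monomial, and, crucially, this arrangement forces every face function of $f$ to be a product of a nowhere-vanishing vertex monomial with one of the single-face polynomials; Lemma~\ref{lem:verttimes} is then what guarantees Newton non-degeneracy of such products and hence inner non-degeneracy of $f$. You would need both the subtraction trick (or an equivalent way to control the shared-vertex coefficient) and an argument in the spirit of Lemma~\ref{lem:verttimes} to turn your sketch into a proof; as written, steps (i)--(iii) assert these points rather than establish them.
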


Since the construction of $L([L_1,L_2,\ldots,L_{N-1}],L_N)$ in Theorem~\ref{thm:main2} only depends on the isotopy classes of $L_1$ in $\mathbb{C}\times S^1$, the isotopy class of $L_i$, $i=1,2,\ldots,N-1$, in $(\mathbb{C}\backslash\{0\})\times S^1$ and the isotopy class of $L_N$ in $S^1\times(\mathbb{C}\backslash\{0\})$, the symmetry constraints on the $L_i$ can be understood as ``the isotopy class of $L_i$ in $(\mathbb{C}\backslash\{0\})\times S^1$ (or $\mathbb{C}\times S^1$ or $S^1\times(\mathbb{C}\backslash\{0\})$ in the case of $L_1$ and $L_N$, respectively) has an even or odd representative''. The corresponding statement for the braids $B_i$ in Theorem~\ref{thm:main2b} is analogous.

Although Oka's notion of Newton non-degeneracy in \cite{oka} is not equivalent to inner non-degeneracy (see Example~3.4 in \cite{AraujoBodeSanchez}), the two definitions are closely related. In particular, we find that the set of links of singularities of convenient, Newton non-degenerate mixed functions with nice Newton boundary is identical to the set of links associated with inner non-degenerate mixed functions with nice Newton boundary.
%The only difference is that the set of fixed points of $\tau_u$ or $\tau_v$ can no longer appear as a component of $L_1$ or $L_N$ if the other components of $L_1$ or $L_N$ are $v$-even or $u$-even, respectively.

% and under certain additional assumptions (being convenient and extremely nice, to be defined later) they become equivalent. We obtain the following corollary.

\begin{theorem}\label{cor:Oka}
A link type $L$ arises as the link of a weakly isolated singularity of a convenient, Newton non-degenerate mixed function with nice Newton boundary if and only if it is the link of a weakly isolated singularity of an inner non-degenerate mixed function with nice Newton boundary.
%fixed-point-free even or odd or it has a representative $L([L_1,L_2,\ldots,L_{N-1}],L_N)$ with $N>1$ for a sequence of links $L_1\subset\mathbb{C}\times S^1$, $L_i\subset(\mathbb{C}\backslash\{0\})\times S^1$ for all $i\in\{2,3,\ldots,N-1\}$, $L_N\subset S^1\times\mathbb{C}$, all of which are fixed-point-free even or odd.
\end{theorem}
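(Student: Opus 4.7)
The plan is to prove the equality of the two sets of links by establishing both containments separately, with most of the work going into producing convenient Newton non-degenerate realisations.

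First I would show the easier direction: every convenient, Newton non-degenerate mixed function $f$ with nice Newton boundary is already inner non-degenerate. This is essentially a comparison of the two definitions. Oka's condition requires that the face function $f_\Delta$ has no critical points on $(\mathbb{C}^*)^2$ for \emph{every} face $\Delta$ of the Newton boundary (including all compact $1$-faces $P_i$), whereas the inner condition imposes the analogous requirement only on the inner boundary together with a mild condition coming from the coordinate subspaces. Under convenience, the Newton polygon reaches both axes, so the vertex face functions on the axes force the face functions $f_{P_i}$ to restrict compatibly to the coordinate subspaces; this is exactly what is needed to upgrade Newton non-degeneracy on $P_i$ to inner non-degeneracy. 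The niceness of the Newton boundary is a purely combinatorial/geometric condition on the $P_i$ and transfers verbatim, so $f$ lies in the inner non-degenerate class and its link is in the target set.

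For the harder direction I would start from an inner non-degenerate $f$ with nice Newton boundary realising a link $L$ and construct a convenient, Newton non-degenerate $g$ with the same link. If $f$ is not already convenient, add monomials $\alpha u^M + \beta v^M$ for sufficiently large $M$ with generic nonzero $\alpha,\beta\in\mathbb{C}$. For $M$ large enough, these new vertices lie far beyond the existing Newton polygon, so the existing compact $1$-faces $P_1,\ldots,P_N$ and their face functions are untouched; the Newton boundary of $g$ is that of $f$ with one or two additional compact $1$-faces joining the old boundary to the new axis vertices. These new face functions are binomials in a single variable (of the form $c u^{a} v^{b}+\alpha u^M$ or similar), for which Newton non-degeneracy is automatic for generic $\alpha,\beta$. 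Finally, a generic perturbation of the coefficients of $f$ in each compact $1$-face support (keeping the Newton polytope fixed) puts the remaining face functions in the Newton non-degenerate locus, which is Zariski open by a standard transversality argument. Niceness of the resulting Newton boundary is preserved by the explicit combinatorial form of the new faces.

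It remains to check that these modifications do not change the link. Here I would invoke the description from Part I and Theorem~\ref{thm:main2}: the link of the weakly isolated singularity is determined by the data $L([L_1,\ldots,L_{N-1}],L_N)$ attached to the compact $1$-faces, where each $L_i$ depends only on the face function $f_{P_i}$. Since the added high-order monomials lie strictly above the existing compact $1$-faces, all $f_{P_i}$ are preserved, so all $L_i$ are preserved. A radial rescaling argument (using the weighted homogeneity of the face functions and the largeness of $M$) shows that on a sufficiently small $3$-sphere, the added terms are dominated by the face functions, so the zero set of $g$ is isotopic to that of $f$. Combining with the analogous argument for the coefficient perturbation concludes that the link of $g$ is $L$.

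The main obstacle is the simultaneous enforcement of all three conditions on $g$ (convenience, full Newton non-degeneracy, preserved niceness) while controlling the link. The delicate point is to quantify how large $M$ must be: $M$ must be large enough that the new face structure does not interact with the original face functions, that the radial rescaling argument preserves the link on small spheres, and that the new face functions fit into the nice Newton boundary combinatorics. A careful choice of $M$ together with a transversality statement on the coefficient space of face functions should resolve these constraints in one step.
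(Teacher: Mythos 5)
The paper does not prove the theorem by perturbing the given polynomial. Instead it uses the complete characterisation of the links: Lemma~\ref{lem:miximplies} shows that any link of an inner non-degenerate function with nice Newton boundary has the form $L([L_1,\ldots,L_{N-1}],L_N)$ with each $L_i$ fixed-point-free even or odd, and Lemma~\ref{lem:Newtonconstr} builds, from scratch, a convenient Newton non-degenerate polynomial with nice Newton boundary realising that same $L([L_1,\ldots,L_{N-1}],L_N)$. The other containment follows from \cite{AraujoBodeSanchez} since convenient Newton non-degenerate implies inner non-degenerate. Your approach, starting from the given $f$ and deforming it, is therefore genuinely different from the paper's, and it has a real gap in the hard direction.

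The core difficulty is that your coefficient perturbation is asked to do two incompatible things. You correctly observe that the link $L([L_1,\ldots,L_{N-1}],L_N)$ is determined by the face functions $f_{P_i}$ via the links $L_i$, and you use this to argue that adding the high-degree monomials $\alpha u^M+\beta v^M$ is harmless. But to achieve Newton non-degeneracy you then want to perturb the coefficients \emph{on} the compact $1$-faces, which changes exactly those $f_{P_i}$ and hence the $L_i$. You assert this is controlled by ``the analogous argument'', but there is no such argument given, and it is not automatic: the isotopy type of the $L_i$ is only stable under perturbations small in a sense that has to be reconciled with the requirement that the perturbed coefficients reach the Newton non-degenerate locus. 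More seriously, the appeal to ``Newton non-degeneracy is Zariski open'' transfers the complex-variety intuition to a real semialgebraic setting where it is not justified; the whole point of \cite{eder2} and of Example~3.4 in \cite{AraujoBodeSanchez} is that the different non-degeneracy notions cut out genuinely different (real) loci, and there is no generic-position argument that makes them coincide.

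Two further local problems. First, the new face functions created by adding $\alpha u^M$ are not binomials: the new $1$-face joining the old extreme vertex $\Delta$ to the new vertex $(M,0)$ carries the face function $f_\Delta + \alpha u^M$, and $f_\Delta$ is in general a \emph{sum} of mixed monomials $c_{\mu_1,\mu_2,\nu_1,\nu_2}u^{\mu_1}\bar u^{\mu_2}v^{\nu_1}\bar v^{\nu_2}$ with $\mu_1+\mu_2$ and $\nu_1+\nu_2$ fixed, so Newton non-degeneracy of this new face is not ``automatic for generic $\alpha$''. Second, and this you do not mention at all: after adding $\alpha u^M$, the previously extreme vertex $\Delta$ becomes a \emph{non-extreme} vertex, so the niceness condition $V_{f_\Delta}\cap(\C^*)^2=\emptyset$ now applies to it, and nothing in the hypotheses guarantees it (niceness of the original $f$ constrains only the originally non-extreme vertices). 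So even convenience can destroy niceness in your construction. The paper avoids all of this by never perturbing the given $f$: it reads off the symmetry data from $f$ and then builds a new polynomial whose vertex face functions are, by construction, nowhere zero on $(\C^*)^2$.

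Your easier direction (convenient Newton non-degenerate $\Rightarrow$ inner non-degenerate) is essentially the one in the paper, though the precise comparison of the two definitions should be stated as: inner non-degeneracy is the conjunction of Newton non-degeneracy of all compact $1$-faces and non-extreme vertices together with a \emph{stronger} condition on $f_{P_1}$ and $f_{P_N}$ (no critical points off the respective coordinate axis, not merely off both axes), and convenience is exactly what makes the extreme vertex conditions of Oka's definition supply that stronger condition.
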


%Thus the only difference between the family of links in Theorem~\ref{thm:main2} and Corollary~\ref{cor:Oka} is that in the corollary the links associated with the extreme 1-faces, $L_1$ and $L_N$ (or $B_1$ in the semiholomorphic case), that is to say, the symmetric representatives of the corresponding isotopy classes, are not allowed to intersect $\{0\}\times S^1$ and $S^1\times\{0\}$, respectively. In the case of even symmetries, this is no additional restriction. If a link type has an even representative, then it also has an even representative that does not intersect these 0-lines. However, in the case of the odd symmetry it is not clear if this additional condition changes the set of link types that admit the desired symmetries.

%We also prove that Theorem~\ref{thm:main1} and Theorem~\ref{thm:main2} are true if \textit{inner non-degenerate} is replaced by \textit{non-degenerate (in Oka's sense) and convenient}. Therefore, although the two types of non-degeneracy are not equivalent (see Example~3.4 in \cite{AraujoBodeSanchez}), they give rise to the same set of links if a new nice Newton boundary or semiholomorphicity is assumed.

The results above pertain to weakly isolated singularities. Let now $f$ be a $u$-convenient, radially weighted homogeneous, semiholomorphic polynomial with weakly isolated singularity and link $L$. Then we know that the singularity is an isolated singularity if and only if $L$ is the closure of a so-called P-fibered braid \cite{bode:ralg, bode:sat}, which is equivalent to $f$ being strongly inner non-degenerate, see Proposition~\ref{prop:radPfib}. (Note that we do not claim that every closure of a P-fibered braid is the link of an isolated singularity. Being the link of a weakly isolated singularity of a radially weighted homogeneous polynomial comes with certain symmetry restrictions as seen in Theorem~\ref{thm:main1}. So the assumption that $L$ is the link of the singularity already puts restrictions on it.) We generalise this notion and define \textit{P-fibered braids with $O$-multiplicities and coefficients}, which allow us to describe the links corresponding to isolated singularities. This involves the definition of a \textit{compatible sequence} of P-fibered geometric braids with $O$-multiplicities and coefficients, which we give in Definition~\ref{def:compseq}.

\begin{theorem}\label{thm:main3}
A link type $L$ arises as the link of an isolated singularity of a $u$-convenient, strongly inner non-degenerate mixed polynomial $f:\C^2\to\C$ with exactly $N$ compact 1-faces and semiholomorphic principal part if and only if it has a representative that is the closure of $B(B_1,B_2,\ldots,B_N)$, where $B_1,B_2,\ldots,B_N$ forms a compatible sequence of P-fibered geometric braids with $O$-multiplicities and coefficients and every braid $B_i$ is $u$-even or divisor-symmetric.
\end{theorem}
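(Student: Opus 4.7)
The plan is to leverage Theorem~\ref{thm:main2b} for the weakly isolated case and then refine it using Proposition~\ref{prop:radPfib} to capture the isolated (not merely weakly isolated) condition. For the forward direction, suppose $f$ is $u$-convenient, strongly inner non-degenerate, semiholomorphic in principal part, with exactly $N$ compact 1-faces. In particular $f$ is inner non-degenerate, so Theorem~\ref{thm:main2b} provides a representative of the link as the closure of $B(B_1,B_2,\ldots,B_N)$, with each $B_i$ $u$-even or divisor-symmetric (the $B^\circ$ form is ruled out by $u$-convenience). What remains is to show that the sequence $(B_1,\ldots,B_N)$ is a compatible sequence of P-fibered geometric braids with $O$-multiplicities and coefficients.

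To see this, I would look face-by-face. For each compact 1-face with weight vector $P_i$, the face polynomial $f_{P_i}$ is radially weighted homogeneous and semiholomorphic, and its zero locus (away from the $u$- and $v$-axes) carries the braid $B_i$, with possible additional strands along $\{0\}\times S^1$ accounted for by the $O$-multiplicities. Strong inner non-degeneracy of $f$ implies strong inner non-degeneracy of each $f_{P_i}$ on the interior of its face, which by Proposition~\ref{prop:radPfib} (extended to the setting with $O$-multiplicities) is equivalent to $B_i$ being a P-fibered geometric braid with the corresponding $O$-multiplicities. Moreover, at the shared vertices of consecutive 1-faces the face polynomials $f_{P_i}$ and $f_{P_{i+1}}$ must agree as a monomial, forcing a matching of the corresponding coefficients and of the P-fibering data at the endpoints; this is precisely the content of Definition~\ref{def:compseq}.

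The converse direction reverses this: starting from a compatible sequence $(B_1,\ldots,B_N)$ with the stated symmetries, I would realize each $B_i$ together with its $O$-multiplicities as the zero set of a radially weighted homogeneous semiholomorphic polynomial $f_{P_i}$ with isolated singularity, using the construction behind Proposition~\ref{prop:radPfib} and the braid-to-polynomial machinery of \cite{bode:sat}. Compatibility of coefficients guarantees that these face polynomials arise from a single semiholomorphic $f$ whose Newton boundary consists of exactly the prescribed $N$ compact 1-faces, with $f_{P_i}$ as face polynomial at each $P_i$. The symmetry conditions on the $B_i$ ensure via Theorem~\ref{thm:main2b} that $f$ is weakly isolated and that its link is the closure of $B(B_1,\ldots,B_N)$; $u$-convenience follows from working with $B$ rather than $B^\circ$; and strong inner non-degeneracy of $f$ follows because each $f_{P_i}$ is strongly inner non-degenerate on its face.

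The main obstacle I expect is the interface between local and global strong non-degeneracy. Strong inner non-degeneracy is a condition one face at a time, and the crux is to show that face-wise strong non-degeneracy together with the coefficient/P-fibering matching at shared vertices is genuinely equivalent to strong inner non-degeneracy of the full polynomial $f$, so that no further compatibility is hidden in the gluing. A related subtlety is establishing the $O$-multiplicity generalization of Proposition~\ref{prop:radPfib} in enough generality to accommodate strands along $\{0\}\times S^1$ at the endpoints of each face, and then verifying that Definition~\ref{def:compseq} is sharp enough to capture exactly the obstructions arising from these axis strands at the transitions between consecutive faces.
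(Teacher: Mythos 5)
Your plan follows essentially the same route as the paper's proof: reduce to the weakly isolated characterisation of Theorem~\ref{thm:main2b}, work face-by-face and apply the argument of Proposition~\ref{prop:radPfib} (suitably extended to accommodate the factor $u^{m_i}$) to translate strong inner non-degeneracy into P-fiberedness with $O$-multiplicities, and deduce compatibility from the agreement of face functions at shared vertices. However, the obstacle you flag as the ``main'' one---the passage from face-wise strong non-degeneracy to strong inner non-degeneracy of the full polynomial---is not actually an obstacle: Definition~\ref{def:stronginner} already \emph{defines} strong inner non-degeneracy as a face-by-face condition (one condition per $1$-face and per non-extreme vertex, plus the two boundary conditions for $f_{P_1}$ and $f_{P_N}$), so there is no hidden global gluing condition to worry about. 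What does require care and what your sketch elides are several smaller points the paper handles explicitly: in the forward direction, that $f_{P_i}$ for $i<N$ is not $u$-convenient, so the argument of Proposition~\ref{prop:radPfib} must be restricted to $(\mathbb{C}^*)^2$; in the converse direction, that the trigonometric-polynomial $C^1$-approximation of each $\widetilde{g_i}$ preserves P-fiberedness with $O$-multiplicities and coefficients (Lemma~\ref{lem:approxcomp}), that the nowhere-vanishing of the coefficients $a_i(t)$ guarantees the face functions at vertices have no critical points in $(\mathbb{C}^*)^2$, and that $f_{P_N}$ has no critical points on $\mathbb{C}^*\times\{0\}$ (a direct computation using $u$-convenience). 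With those details supplied, your outline coincides with the paper's argument.
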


We want to emphasize that even though this provides a description of the family of links of singularities, it does not really solve the question of which links arise in this way because it is not known which links are closures of P-fibered braids (with or without $O$-multiplicities and coefficients) and it is very difficult to check if a given sequence of braids is compatible.

However, Theorem~\ref{thm:main3} allows us to construct isolated singularities. This results in new families of links of isolated singularities, the yet unclassified real algebraic links. A special case of these families is the following result.
\begin{corollary}\label{cor:sub}
For any link $L$ there is a positive integer $M$ such that for all integers $m>M$ there is a satellite link $L'$ of the $(2,4m)$-torus link that is a real algebraic link and that contains $L$ as a sublink, i.e., a subset of the components of $L'$ is ambient isotopic to $L$.
\end{corollary}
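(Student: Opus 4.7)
The plan is to invoke Theorem~\ref{thm:main3} with $N=2$, choosing $B_1$ so that its closure is the $(2,4m)$-torus link and $B_2$ so that its closure contains $L$ as a sublink and lies in a tubular neighbourhood of one of the components of the closure of $B_1$. For $B_1$ take $\sigma_1^{4m}$ on two strands; this is the braid of the semiholomorphic polynomial $u^2-v^{4m}$, so it is P-fibered by Proposition~\ref{prop:radPfib}. It is also divisor-symmetric with $K=1$, since the two strands $u=\pm v^{2m}$ are swapped by $\tau_1(u,v)=(-u,-v)$.

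Represent $L$ as the closure of a geometric braid $\beta$ on $s$ strands supported in a small disc $D\subset\mathbb{C}\setminus\{0\}$. I would build $B_2$ on $2s$ strands by placing a copy of $\beta$ in $D$ together with its image under the involution $z\mapsto -z$, and then composing with enough full twists about the origin to make $B_2$ a P-fibered braid with $O$-multiplicities and coefficients in the sense of the paper; analogous twisting arguments appear in \cite{bode:sat}. By construction $B_2$ is $\tau_1$-invariant, hence divisor-symmetric with $K=1$, and either of the two embedded copies of $\beta$ closes up to a sublink ambient isotopic to $L$.

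The main technical step, and the expected obstacle, is to verify that the pair $(B_1,B_2)$ is a compatible sequence in the sense of Definition~\ref{def:compseq}. Compatibility typically amounts to a finite list of inequalities comparing the winding rates of the two braids about the axis; the winding rate of $B_1=\sigma_1^{4m}$ grows linearly in $m$, whereas the geometry of $B_2$ is fixed once $L$ is fixed, so there is a threshold $M$ depending only on $L$ such that these inequalities hold whenever $m>M$. Theorem~\ref{thm:main3} then produces a $u$-convenient, strongly inner non-degenerate mixed polynomial with semiholomorphic principal part whose link of isolated singularity is the closure of $B(B_1,B_2)$; by the geometry of the construction $B(\cdot,\cdot)$ this closure is obtained by inserting the pattern encoded by $B_2$ into a tubular neighbourhood of the closure of $B_1$, hence is a real algebraic satellite of the $(2,4m)$-torus link containing $L$ as a sublink, as required.
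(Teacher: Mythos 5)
Your overall plan---build a compatible two-term sequence, one term carrying a $(2,4m)$-torus braid and the other carrying $L$, then apply Theorem~\ref{thm:main3}---is the right idea, and it is essentially what the paper does. But the roles of the two braids in the construction $B(B_1,B_2)$ are the opposite of what you describe, and this reversal breaks the satellite claim. In $B(B_1,B_2)$ the braid $B_1$ is inserted into a small tube around the $0$-axis inside the cylinder that carries $B_2$; that is, $B_1$ ends up \emph{inside} and $B_2$ ends up \emph{outside}. With your assignment ($B_1=\sigma_1^{4m}$ inside, $B_2\supset L$ outside) the closure consists of a $(2,4m)$-torus link sitting in a thin solid torus $N$ around the core unknot, with the closure of $B_2$ wrapping around $N$ from outside. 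The closure of $B_2$ is then not contained in a tubular neighbourhood of any component of that torus link, so the result is not a satellite of the $(2,4m)$-torus link; your final sentence, ``inserting the pattern encoded by $B_2$ into a tubular neighbourhood of the closure of $B_1$,'' describes a construction that $B(\cdot,\cdot)$ does not perform.

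The paper resolves this by taking $B_1=B^2$ (whose closure contains $L$) and $B_2$ a \emph{single} strand parametrised by $(\rme^{\rmi m t},t)$; then $B(B_1^2,B_2^2)=T^{2m}\imath_s(B^2)$. Here the core circle of the inner tube together with the closed outer strand form the $(2,4m)$-torus link, and the closure of $B^2$ replaces the core---a genuine satellite. This also makes the compatibility check concrete rather than heuristic: $B_2^2$ is the single root of $g_2(u,\rme^{\rmi t})=u-\rme^{2\rmi m t}$, whose nonzero argument-critical value has strictly monotone argument, giving P-fiberedness with $O$-multiplicity $s$ and coefficient $1$; and $B^2$ is P-fibered with $O$-multiplicity $0$ and coefficient $\rme^{2\rmi m t}$ for $m$ large by (the proof of) Lemma~\ref{lem:a}, which matches the lowest-order coefficient of $g_2$ exactly as Definition~\ref{def:compseq} requires. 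Your version, by contrast, leaves the compatibility conditions as unverified ``inequalities on winding rates,'' whereas the actual conditions are equalities on coefficients plus P-fiberedness statements. Finally, a smaller error: $\sigma_1^{4m}$, with strands $u=\pm\rme^{\rmi 2mt}$, is $\tau_1$-invariant (odd, i.e.\ $2^0$-symmetric) and also $u$-even, but it is not $2$-symmetric; saying it is ``divisor-symmetric with $K=1$'' on the strength of $\tau_1$-invariance conflates $\tau_1$ with $\tau_2$. None of these issues is fatal to the strategy, but as written the argument does not produce a satellite of the $(2,4m)$-torus link, and the compatibility step is not carried out.
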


%The results above show that not every link arises as the link of a (weakly) isolated singularity of an inner non-degenerate mixed function. We thus turn our attention to deformations of inner degenerate mixed functions, in particular semiholomorphic polynomials.

%We obtain the following results, which mirrors that in \cite{AraujoBodeEder} for inner non-degenerate mixed functions and generalises the corresponding result for holomorphic functions.
%\begin{theorem}\label{thm:main4}
%Let $f:\C^2\to\C$ be inner degenerate semiholomorphic polynomial with a weakly isolated singularity and corresponding link $L$. Then $f+g$ also has a weakly isolated singularity and corresponding link $L$ if $g:\C^2\to\C$ is semiholomorphic and all of its terms satisfy...
%\end{theorem}

%Combining this with the construction in \cite{BodeAK} we obtain the following.
%\begin{corollary}\label{cor:family}
%Let $L_1$ and $L_2$ be any pair of links in $S^3$. Then there is a family of semiholomorphic polynomials $f_t:\C^2\to\C$, $t\in\mathbb{R}^+$ such that $f_t$ has a weakly isolated singularity for all $t$ and such that $L_1$ is the link of the singularity of $f_0$ and $L_2$ is the link of the singularity of $f_t$ for all $t>0$.
%\end{corollary}
%

\subsection{Outline of the paper}
The remainder of this paper is structured as follows. In Section~\ref{sec2} we study the symmetries of links of singularities of radially weighted homogeneous semiholomorphic polynomials, resulting in a proof of Theorem~\ref{thm:main1b}. Section~\ref{sec3} generalises these arguments to radially weighted homogeneous mixed functions, which proves Theorem~\ref{thm:main1}. A proof of Theorem~\ref{thm:main2} and Theorem~\ref{thm:main2b}, which generalise the results to general inner non-degenerate mixed polynomials and inner non-degenerate mixed polynomials with semiholomorphic principal part, respectively, can be found in Sections~\ref{sec4} and \ref{sec5}. In Section~\ref{sec5} we also prove Theorem~\ref{cor:Oka}. In Section~\ref{sec6} we introduce the notion of a P-fibered braid with $O$-multiplicities and coefficients. This leads to a proof of Theorem~\ref{thm:main3} and Corollary~\ref{cor:sub}. %Section~\ref{sec7} investigates how deformations affect the topology of inner degenerate semiholomorphic polynomial, with Theorem~\ref{thm:main4} and Corollary~\ref{cor:family} as the main results.

\section{Links of radially weighted homogeneous semiholomorphic polynomials}\label{sec2}

In this section we study the symmetries of the link types that can be associated with radially weighted homogeneous, semiholomorphic polynomials. For background on mixed functions, specifically on the Newton boundary of a mixed polynomial, weight vectors, semiholomorphic polynomials and radially weighted homogeneous polynomials, we point the reader to \cite{oka, AraujoBodeSanchez}. By \cite{AraujoBodeSanchez} the link type of a singularity of an inner non-degenerate function $f$ only depends on its Newton principal part $f_{\Gamma}$, the terms on the Newton boundary $\Gamma_f$. It follows that the link types that one obtains for radially weighted homogeneous, inner non-degenerate polynomials are the same as the link types coming from inner non-degenerate polynomials with exactly one compact 1-face in its Newton boundary.

We may write any mixed polynomial $f:\mathbb{C}^2\to\mathbb{C}$ as 
\begin{equation}
f(u,v)=\sum_{\mu_1,\mu_2,\nu_1,\nu_2\in\mathbb{N}_0}c_{\mu_1,\mu_2,\nu_1,\nu_2}u^{\mu_1}\bar{u}^{\mu_2}v^{\nu_1}\bar{v}^{\nu_2}.
\end{equation} 
The support of $f$ is denoted by $supp(f)$ and is defined as the set of integer lattice points $(\mu,\nu)\in\mathbb{N}_0^2$ such that there exist $\mu_1,\mu_2,\nu_1,\nu_2\in\mathbb{N}_0$ with $\mu=\mu_1+\mu_2$, $\nu=\nu_1+\nu_2$ and $c_{\mu_1,\mu_2,\nu_1,\nu_2}\neq 0$.

The Newton polygon of $f$ is then the convex hull of $supp(f)+(\mathbb{R}^+)^2$. The Newton boundary $\Gamma_f$ of $f$ is the union of the compact faces (0- or 1-dimensional) of the Newton polygon. The sum of the terms whose corresponding integer lattice points lie on $\Gamma_f$ is called the principal part of $f$ and denoted by $f_{\Gamma}$.

For every positive weight vector $P=(p_1,p_2)\in\mathbb{N}^2$ we obtain a linear map $\alpha_P$ from $supp(f)$ to $\mathbb{N}_0$ given by $\alpha_P(\mu,\nu)=p_1\mu+p_2\nu$. The sum of terms of $f$ on which $\alpha_P$ is minimal is called the face function $f_P$ associated with $P$. A mixed polynomial is called radially weighted homogeneous if there is a positive weight vector $P$ such that $f=f_{\Gamma}=f_P$, i.e., the support of $f$ is the Newton boundary, which consists of (at most) one compact 1-face. We denote the (minimal) value that $\alpha_P$ takes on $\Delta_P:=supp(f_P)$ by $d(P;f)$.

Weight vectors have a natural ordering. Writing $P=(p_{1},p_{2})$, $Q=(q_1,q_2)$, we define $P\succ Q$ if and only if $\tfrac{p_{1}}{p_{2}}>\tfrac{q_1}{q_2}$. The set of weight vectors $P$ for which $\Delta_{P}$ is a 1-face in the Newton boundary is denoted by $P_1,P_2,\ldots,P_N$, where $P_i\succ P_j$ if and only if $i<j$.

In the following we assume that a radially weighted homogeneous polynomial has exactly one compact 1-face in its Newton boundary. We will see later (Lemma~\ref{lem:radvertex}) that we do not miss any links by ignoring the polynomials whose Newton boundary is a single vertex.

Let $f:\mathbb{C}^2\to\mathbb{C}$ be a radially weighted homogeneous mixed function with weight vector $P=(p_1,p_2), \gcd(p_1,p_2)=1$ and exactly one compact 1-face in its Newton boundary. Let $(b,n)$ and $(s,a)$ with $a<n$ be the coordinates of the vertices $\Delta_1$ and $\Delta_2$ at the endpoints of this 1-face. We write $V_f$ for the variety of $f$, i.e., the set of points $(u,v)$ in $\mathbb{C}^2$ with $f(u,v)=0$.

%%%%%%%

\begin{lemma}\label{lem1}
Let $f:\mathbb{C}^2\to\mathbb{C}$ be a radially weighted homogeneous semiholomophic polynomial with weight vector $P=(p_1,p_2)$, $\gcd(p_1,p_2)=1$. Then $V_f$ is $u$-even if $p_1$ is even and $2^K$-symmetric if $p_1$ is odd, where $2^K$ is the largest power of 2 that divides $p_2$.
\end{lemma}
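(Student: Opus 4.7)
The plan is to verify both symmetries by evaluating $f$ directly on the relevant monomials. Since $f$ is semiholomorphic and radially weighted homogeneous of degree $d$ for the weight $P=(p_1,p_2)$, I can write
\[
f(u,v)=\sum_{(\mu_1,\nu_1,\nu_2)\in A} c_{\mu_1,\nu_1,\nu_2}\,u^{\mu_1}v^{\nu_1}\bar v^{\nu_2},
\]
where every triple in the support $A$ satisfies $p_1\mu_1+p_2(\nu_1+\nu_2)=d$. Because $\gcd(p_1,p_2)=1$, any two triples in $A$ differ in their $(\mu_1,\nu_1+\nu_2)$ components by an integer multiple of $(p_2,-p_1)$. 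Applying any of the symmetries from the lemma multiplies a monomial only by a unimodular scalar that depends on $(\mu_1,\nu_1,\nu_2)$, so it suffices to show that this scalar is the same for every triple in $A$; then $f\circ\tau=Cf$ with $C\in S^1\setminus\{0\}$, which forces $V_f$ to be invariant.

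For the first case ($p_1$ even, hence $p_2$ odd), applying $\tau_u(u,v)=(u,-v)$ multiplies each monomial by $(-1)^{\nu_1+\nu_2}$. The relation $p_1\mu_1+p_2(\nu_1+\nu_2)=d$ reduces mod $2$ to $\nu_1+\nu_2\equiv d\pmod{2}$, since $p_1\mu_1$ is even and $p_2$ is odd. Consequently $(-1)^{\nu_1+\nu_2}=(-1)^d$ is the same constant for every monomial, giving $f\circ\tau_u=(-1)^d f$ and $u$-evenness of $V_f$.

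For the second case ($p_1$ odd), write $p_2=2^Kq$ with $q$ odd. The transformation $\tau_{2^K}$ multiplies a monomial by $e^{\rmi\pi\mu_1/2^K}(-1)^{\nu_1+\nu_2}$. The condition $p_1\mu_1\equiv d\pmod{2^K}$, together with the invertibility of $p_1$ mod $2^K$, shows that the class of $\mu_1$ mod $2^K$ is fixed, so the differences $\mu_1-\mu_1^*$ along the lattice of solutions are integer multiples of $2^K$; hence the expression $e^{\rmi\pi\mu_1/2^K}(-1)^{\nu_1+\nu_2}$ is well-behaved to compare between monomials. Passing from the triple $(\mu_1^*,\nu^*_1,\nu^*_2)$ to $(\mu_1^*+tp_2,\nu_1^*,\nu_2^*-tp_1)$ (or any valid redistribution of $-tp_1$ between $\nu_1$ and $\nu_2$) changes the exponent of $e^{\rmi\pi}$ by $tq-tp_1=t(q-p_1)$, and the crux is that $q-p_1$ is even because both $q$ and $p_1$ are odd. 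Hence the phase is invariant and $f\circ\tau_{2^K}$ equals a unimodular constant times $f$, so $V_f$ is $2^K$-symmetric.

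The only real obstacle is the parity bookkeeping in the second case: one has to track simultaneously the action on the $u$-variable (a fractional rotation by $1/2^K$) and on the $v$-variable (a reflection), and check that the resulting phase on each monomial is independent of which allowed triple in $A$ one picked. Once the cancellation $q-p_1\equiv 0\pmod 2$ is isolated, the remainder of the argument is formal.
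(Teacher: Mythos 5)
Your proof is correct and follows the same basic strategy as the paper's: compute the unimodular scalar by which $\tau_u$ or $\tau_{2^K}$ multiplies each monomial on the collinear support of $f$ and show it is constant across that support, from which $f\circ\tau=Cf$ and the invariance of $V_f$ follow. Your handling of the $p_1$-odd case is marginally cleaner than the paper's: you compare the phase factors of two arbitrary monomials directly and reduce to the parity of $t(q\pm p_1)$ (automatically even since $q$ and $p_1$ are both odd), rather than first normalising $b$ to be divisible by $2^{K+1}$ by premultiplying $f$ by a power of $u$, a step that alters $V_f$ by the set $\{u=0\}$ and thus needs a small supplementary argument.
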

\begin{proof}
Since $f$ is radially weighted homogeneous, its support is contained in the set of intersection points between the integer lattice and the line through $(b,n)$ and $(s,a)$, which has slope $\tfrac{-p_1}{p_2}$. These intersection points are given by $(b,n)+j(p_2,-p_1)$, where $j=0,1,2,\ldots,\tfrac{s-b}{p_2}$.

Thus if $p_1$ is even, then the second coordinate entry $n-jp_1$, $j=0,1,2,\ldots,\tfrac{s-b}{p_2}$, has the same parity for all intersection points . Writing
\begin{equation}
f(u,v)=\sum_{j=0}^{(s-b)/p_2}\sum_{\nu_1+\nu_2=n-jp_1}c_{b+jp_1,0,\nu_1,\nu_2}u^{b+jp_1}v^{\nu_1}\bar{v}^{\nu_2}
\end{equation}
we obtain
\begin{equation}\label{eq:tauu}
f(\tau_u(u,v))=\sum_{j=0}^{(s-b)/p_1}\sum_{\nu_1+\nu_2=n-jp_1}(-1)^{n-jp_1}c_{b+jp_1,0,\nu_1,\nu_2}u^{b+jp_1}v^{\nu_1}\bar{v}^{\nu_2}.
\end{equation}
Since the parity of $n-jp_1$ is independent of $j$, we have that $f(\tau_u(u,v))$ is $(-1)^nf(u,v)$ and so $V_f$ is $u$-even.

If $p_1$ is odd, then the parity of $n-jp_1$ alternates with $j$. By definition we may write $p_2=2^Km$ for some odd natural number $m$. We may multiply $f$ by any power of $u$ without changing $p_1$, $p_2$, $n$, $a$ or any of the symmetries of $V_f$. Multiplying $f$ by $u^q$, $q\in\mathbb{N}$, however changes both $b$ and $s$ by adding $q$. We may thus choose $q$ such that $b$ is divisible by $2^{K+1}$. Then $b+jp_2$ is divisible by $2^K$ for all integers $j$, and the parity of $(b+jp_2)/2^K$ is the parity of $jm$, or simply that of $j$, since $m$ is odd. Thus

\begin{align}
f(\tau_{2^K}(u,v))&=\sum_{j=0}^{(s-b)/p_1}\sum_{\nu_1+\nu_2=n-jp_1}c_{b+jp_1,0,\nu_1,\nu_2}(u\rme^{2\rmi\pi/(2^K)})^{b+jp_1}(-v)^{\nu_1}(-\bar{v})^{\nu_2}\nonumber\\
&=\sum_{j=0}^{(s-b)/p_1}\sum_{\nu_1+\nu_2=n-jp_1}c_{b+jp_1,0,\nu_1,\nu_2}(-1)^{(b+jp_1)/2^K}u^{b+jp_1}(-1)^{n-jp_1}v^{\nu_1}\bar{v}^{\nu_2}\nonumber\\
&=\sum_{j=0}^{(s-b)/p_1}\sum_{\nu_1+\nu_2=n-jp_1}(-1)^{2j+n}c_{b+jp_1,0,\nu_1,\nu_2}u^{b+jp_1}v^{\nu_1}\bar{v}^{\nu_2}=(-1)^nf(u,v).
\end{align}
Therefore, $V_f$ is invariant under $\tau_{2^K}$.
\end{proof}

%%%%%

Note that this implies that if $p_1$ and $p_2$ are both odd, then the zeros of $g$ are 1-symmetric, that is, they are odd. If $p_1$ is odd and $p_2$ is even, then the zeros are invariant under $\tau_{2^K}$ and thus also invariant under $\tau_{2^K}^{2^K}$, which is the composition of $2^K$ copies of $\tau_{2^K}$. But $\tau_{2^K}^{2^K}$ sends $(u,\rme^{\rmi t})$ to $(u\rme^{\rmi \pi},\rme^{\rmi (t+2^{K}\pi)})=(-u,\rme^{\rmi t})$. It follows that the image of the zeros in $S^3$ is even in this case, too.

The focus of this paper lies on mixed polynomials that are inner non-degenerate.
\begin{definition}[{\cite[Definition 3.1]{AraujoBodeSanchez}}]\label{Newtoncond}
We say that $f$ is {\bf inner Newton non-degenerate} if both of the following conditions hold:
\begin{itemize}
    \item[(i)] the face functions $f_{P_1}$ and $f_{P_N}$ have no critical points in $V_{f_{P_1}} \cap (\C^2\setminus \{v=0\})$ and $V_{f_{P_N}}\cap (\C^2\setminus \{u=0\})$, respectively.
\item[(ii)] for each 1-face and non-extreme vertex $\Delta$, the face function $f_\Delta$ has no critical points in $V_{f_\Delta} \cap (\C^*)^{2}$.
\end{itemize}
\end{definition}

Usually we drop ``Newton'' from the term and simply say that $f$ is \textit{inner non-degenerate}.

This definition may be extended to mixed polynomials $f$ without any compact 1-faces in their Newton boundary. Then the Newton boundary of $f$ consists of a single vertex $\Delta$. In order for such a $f$ to be inner Newton non-degenerate, we require that $f_{\Delta}$ has no critical points in $\mathbb{C}^2\backslash\{(0,0)\}$ that are also zeros of $f_{\Delta}$. Note however that in this case the origin in $\mathbb{R}^4$ is not necessarily a singular point of $f$ as in the example of $f(u,v)=u$. If it is a singular point, it is a weakly isolated singularity by the same arguments as in \cite{AraujoBodeSanchez}.

The definition of the property of being radially weighted homogeneous includes the possibility of such mixed polynomials whose support is a single lattice point. As the following lemma shows, the links of these polynomials can also be obtained from polynomials with one compact 1-face in their Newton boundary.

\begin{lemma}\label{lem:radvertex}
Let $f:\mathbb{C}^2\to\mathbb{C}$ be a mixed polynomial that is inner non-degenerate and whose Newton boundary consists of a single lattice point. If $f$ has a weakly isolated singularity, then the link of the singularity of $f$ is a Hopf link.
\end{lemma}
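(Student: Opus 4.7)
The plan is to show that the hypotheses force $f_{\Gamma}=f_{\Delta}$ to be bihomogeneous of bidegree $(1,1)$ in the real pairs $(u,\bar u)$ and $(v,\bar v)$, and then that $V_{f_\Delta}\cap S^3$ is exactly the Hopf link; since by \cite{AraujoBodeSanchez} the link of the singularity of an inner non-degenerate $f$ depends only on $f_{\Gamma}$, this gives the claim.

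First I would restrict the single vertex $(\mu,\nu)$ of the Newton boundary. If $\mu\ge 2$, then every monomial of $f_{\Delta}$ contains a factor $u^{\mu_{1}}\bar u^{\mu_{2}}$ with $\mu_{1}+\mu_{2}\ge 2$; hence $f_{\Delta}$ and all four of its Wirtinger partials vanish identically on $\{u=0\}$, so each $(0,v_{0})$ with $v_{0}\ne 0$ is a critical zero of $f_{\Delta}$ outside $O$, contradicting the single-vertex inner non-degeneracy condition stated just before the lemma. A symmetric argument rules out $\nu\ge 2$. The remaining options with $\mu+\nu\le 1$ either put $O\notin V_{f}$ (non-zero constant principal part), make $O$ non-singular (linear principal part with $|a|\ne|b|$), or violate inner non-degeneracy (linear principal part with $|a|=|b|$, or $f_{\Delta}\equiv 0$). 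So the only possibility is $(\mu,\nu)=(1,1)$, and
\[
f_{\Delta}(u,v)=auv+bu\bar v+c\bar uv+d\bar u\bar v
\]
for some $a,b,c,d\in\mathbb{C}$.

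Next I would pass to polar coordinates $u=Re^{\rmi\varphi}$, $v=re^{\rmi t}$ and obtain $f_{\Delta}=Rr\,h(\varphi,t)$ with
\[
h(\varphi,t)=e^{\rmi\varphi}(ae^{\rmi t}+be^{-\rmi t})+e^{-\rmi\varphi}(ce^{\rmi t}+de^{-\rmi t}).
\]
Thus $V_{f_{\Delta}}\cap S^{3}$ always contains the two coordinate circles $\{u=0\}$ and $\{v=0\}$, forming a Hopf link, plus one arc joining them for every zero of $h$ on the torus. The main step is to rule out such extra arcs by inner non-degeneracy. Assume $h(\varphi_{0},t_{0})=0$ and set $\alpha=ae^{\rmi t_{0}}+be^{-\rmi t_{0}}$, $\beta=ce^{\rmi t_{0}}+de^{-\rmi t_{0}}$; the zero relation $e^{2\rmi\varphi_{0}}\alpha+\beta=0$ gives $|\alpha|=|\beta|$. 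For any $r_{0}>0$, the point $p=(0,r_{0}e^{\rmi t_{0}})$ lies in $V_{f_{\Delta}}\setminus\{O\}$, and a direct computation yields $f_{u}(p)=r_{0}\alpha$, $f_{\bar u}(p)=r_{0}\beta$, $f_{v}(p)=f_{\bar v}(p)=0$. Since $|f_{u}(p)|=|f_{\bar u}(p)|$ and $f_{v}(p)=f_{\bar v}(p)=0$, there exists $\lambda\in S^{1}$ (take $\lambda=\alpha/\bar\beta$, or any element of $S^{1}$ when $\alpha=\beta=0$) with $f_{u}(p)=\lambda\overline{f_{\bar u}(p)}$ and $f_{v}(p)=\lambda\overline{f_{\bar v}(p)}$, which is exactly the criterion for $p$ to be a critical point of the mixed function $f_{\Delta}$. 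This contradicts inner non-degeneracy, so $h$ has no zeros on the torus and $V_{f_{\Delta}}\cap S^{3}$ is precisely the Hopf link.

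The main obstacle I anticipate is the vertex classification in the first step, where each low-degree sub-case must be checked individually to see that it either makes $O$ non-singular or violates the single-vertex inner non-degeneracy condition. A secondary subtlety is the degenerate sub-case $\alpha=\beta=0$ in the criticality argument, where the explicit formula $\lambda=\alpha/\bar\beta$ is not defined; there, however, all four Wirtinger partials of $f_{\Delta}$ vanish at $p$, so the criticality condition holds trivially with any $\lambda\in S^{1}$.
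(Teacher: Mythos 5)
Your proof is correct and follows the paper's skeleton — classify the single vertex $\Delta$, reduce to $\Delta=(1,1)$, write $f_\Delta=Rr\,h(\varphi,t)$, and show $h$ never vanishes — but you flesh out the two places where the paper simply gestures. For the vertex classification the paper cites \cite{AraujoBodeSanchez}, whereas you give a self-contained argument: when $\mu\ge 2$ every monomial of $f_\Delta$ carries a factor $u^{\mu_1}\bar u^{\mu_2}$ with $\mu_1+\mu_2\ge 2$, so all four Wirtinger partials vanish along $\{u=0\}$ and one finds critical zeros away from the origin; symmetrically for $\nu\ge 2$; and the $\mu+\nu\le 1$ cases are disposed of individually. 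More notably, where the paper dismisses the non-vanishing of $h$ as a ``somewhat lengthy but direct calculation'', you produce a short one: you do not verify the mixed-singularity criterion at a putative zero of $h$ on the torus (which would indeed be lengthy because all four Wirtinger partials are nonzero there), but instead extract the consequence $|\alpha|=|\beta|$ and transfer it to the simpler point $p=(0,r_0\rme^{\rmi t_0})$, where $f_v(p)=f_{\bar v}(p)=0$ and the criterion $f_u(p)=\lambda\,\overline{f_{\bar u}(p)}$ with $\lambda=\alpha/\bar\beta\in S^1$ (or any $\lambda\in S^1$ when $\alpha=\beta=0$) is immediate. This is a genuine simplification of the step the paper leaves to the reader, and all the subcases, including the degenerate one, are handled correctly.
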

\begin{proof}
By \cite{AraujoBodeSanchez} we may assume that the support of $f$ is a single lattice point that is $(0,1)$, $(1,0)$ or $(1,1)$. If it is $(0,1)$ or $(1,0)$ a direct calculation shows that the origin is a regular point of $V_f$.

Thus $supp(f)=\{(1,1)\}$ and $f(u,v)=auv+bu\bar{v}+c\bar{u}v+d\bar{u}\bar{v}$ for some $a,b,c,d\in\mathbb{C}$. We have the obvious solutions of $u=0$ and $v=0$, which form a Hopf link. Writing $u=R\rme^{\rmi \varphi}$ and $v=r\rme^{\rmi t}$, we can interpret $f$ as the product of $Rr$ and a polynomial in $\rme^{\rmi\varphi}$, $\rme^{-\rmi\varphi}$, $\rme^{\rmi t}$ and $\rme^{-\rmi t}$. It is a somewhat lengthy, but direct calculation to show that the second factor has no zeros if $f$ is inner non-degenerate.
\end{proof}

%The second factor can be thought of as a map whose domain is the torus. For every fixed value of $t\in[0,2\pi]$ the $u$-coordinates of its roots correspond to the roots of a complex polynomial in one complex variable on the unit circle. To be precise, if $(u_*,v_*)=(R_*\rme^{\rmi \varphi_*},r_*\rme^{\rmi t_*})\in(\mathbb{C}^*)^2$ is a zero of $f$, then $z=\rme^{\rmi\varphi_*}$ is a root of $az^2\rme^{\rmi t_*}+bz^2\rme^{-\rmi t_*}+c\rme^{\rmi t_*}+d\rme^{-\rmi t_*}$. Since the degree of the polynomial is 2, there are thus at most 2 solutions for every value of $t=t_*$. In other words, the components of the link of the singularity that are not the Hopf link lie on a torus and intersect each meridian $t=constant$ at most twice. It is thus either the empty set or a $(2,m)$-torus link with $m\in\mathbb{Z}$.

%Reversing the role of $u$ and $v$, we find that the link on the torus also intersects each longitude $\varphi=constant$ at most twice. It is therefore either the unknot or a Hopf link.

Note that the Hopf link satisfies the symmetry conditions from Theorem~\ref{thm:main1} and is therefore automatically included in the list of links in Theorem~\ref{thm:main2}. We may thus ignore the polynomials whose Newton boundary is a single vertex. From now on the term ``radially weighted homogeneous polynomial'' will be reserved for polynomials with exactly one compact 1-face.

In \cite{bode:ralg} we used certain functions $g:\mathbb{C}\times S^1\to\mathbb{C}$ that are polynomials in a complex variable $u$ as well as in $\rme^{\rmi t}$ and $\rme^{-\rmi t}$ to construct radially weighted homogeneous semiholomorphic polynomials. In \cite{AraujoBodeSanchez} we reversed this construction. We may associate to any radially weighted homogeneous mixed polynomial $f$ a polynomial $g:\mathbb{C}\times S^1\to\mathbb{C}$ in variables $u$, $\bar{u}$, $\rme^{\rmi t}$ and $\rme^{-\rmi t}$ by setting
\begin{equation}\label{eq:g}
g(u,\rme^{\rmi t})=r^{-d(P;f)/p_2}f(r^ku,r\rme^{\rmi t}),
\end{equation}
where $P=(p_1,p_2)$ is the weight vector corresponding to the 1-face of the Newton boundary of $f$ and $k=\tfrac{p_1}{p_2}$. Note that $g$ is constructed in such a way that it does not depend on $r$ anymore. Since $f$ is radially weighted homogeneous, the factors of $r$ on the right hand side cancel each other. Using the notation from Lemma~\ref{lem1}, the exponent $-d(P;f)/p_2$ may also be written as $ks+a$.

If $f$ is semiholomorphic and inner non-degenerate, the corresponding $g$-polynomial is holomorphic in $u$ and may be interpreted as a loop in the space of complex polynomials in one variable with degree $\deg_ug=\deg_uf$.

\begin{lemma}\label{lem:symfg}
Let $f:\mathbb{C}^2\to\mathbb{C}$ be a radially weighted homogeneous semiholomorphic polynomial and $g$ defined via Eq.~\eqref{eq:g}. Then the zeros of $f$ in $S^3$ are (fixed-point-free) $u$-even/(fixed-point-free) $v$-even/odd/$k$-symmetric if and only if the zeros of $g$ are.
\end{lemma}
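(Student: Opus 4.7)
The plan is to exploit the radial weighted homogeneity of $f$ to construct a canonical bijection $\Phi$ between $V_f\cap(S^3\setminus\{v=0\})$ and $V_g\subset\C\times S^1$, and then to observe that each of the four symmetries commutes with $\Phi$. Once this is in place, the iff of the lemma reduces to bookkeeping on the axis circle $\{(u,0):|u|=1\}$, which is absent from $\C\times S^1$.

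Concretely, I would write $\kappa=p_1/p_2$ and set $\Phi(u,v)=(|v|^{-\kappa}u,\,v/|v|)$. Since the right-hand side of \eqref{eq:g} is independent of $r$, taking $r=1$ gives $g(u,\rme^{\rmi t})=f(u,\rme^{\rmi t})$, so $V_g=V_f\cap(\C\times S^1)$. Radial weighted homogeneity implies that $V_f\setminus\{0\}$ is a union of orbits of the $\mathbb{R}_+$-action $\lambda\cdot(u,v)=(\lambda^{p_1}u,\lambda^{p_2}v)$. For $v\neq 0$, each orbit meets $S^3$ in exactly one point (by monotonicity of $\lambda\mapsto\lambda^{2p_1}|u|^2+\lambda^{2p_2}|v|^2$) and meets $\C\times S^1$ in exactly one point, from which $\Phi$ is a bijection with the inverse realised by the unique rescaling onto $S^3$.

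Next I would verify $\Phi\circ\tau=\tau\circ\Phi$ for each $\tau\in\{\tau_u,\tau_v,\tau_1,\tau_k\}$. Each of these multiplies the two coordinates by unimodular constants, whereas $\Phi$ rescales only by $|v|$, so the commutativity is immediate in every case. This already yields the iff for $V_f\cap(S^3\setminus\{v=0\})$ and $V_g$. To promote it to the full zero set $V_f\cap S^3$, I would invoke the semiholomorphic hypothesis: $f|_{v=0}$ is either $cu^s$ with $c\neq 0$ (when $a=0$) or identically zero (when $a>0$), so $V_f\cap\{(u,0):|u|=1\}$ is either empty or the entire circle $C_0$, and both options are invariant under every $\tau$ in the list.

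The main subtlety, and the step I would handle most carefully, is the fixed-point-free qualifier. For $\tau_v$ the fixed-point sets in $S^3$ and in $\C\times S^1$ both coincide with $\{(0,v):|v|=1\}$, and $\Phi$ restricts to the identity on this set, so the fixed-point-free $v$-even case translates cleanly. For $\tau_u$ the fixed-point set in $S^3$ is the circle $C_0$, while the fixed-point set of $\tau_u$ on $\C\times S^1$ is empty; so the fixed-point-free condition in $S^3$ is the additional requirement $V_f\cap C_0=\emptyset$, which by the dichotomy above is exactly the case $a=0$, and this has to be tracked in parallel with the symmetry equivalence rather than being a consequence of it.
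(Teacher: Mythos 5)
Your proposal is correct and is essentially the intended reading of the paper's one-line proof, which simply asserts that the lemma is ``immediate from Eq.~\eqref{eq:g}.'' You have supplied the reasoning that makes it immediate: the map $\Phi(u,v)=(|v|^{-\kappa}u,\,v/|v|)$ is exactly the per-orbit identification coming from radial weighted homogeneity, and each of $\tau_u,\tau_v,\tau_1,\tau_k$ visibly commutes with $\Phi$ because all of them rescale coordinates by unimodular constants while $\Phi$ only rescales by $|v|$. Your treatment of the circle $C_0=\{(u,0):|u|=1\}$, using the semiholomorphic dichotomy $f|_{v=0}\in\{cu^s,0\}$ according as $a=0$ or $a>0$, is the right way to promote the equivalence from $S^3\setminus\{v=0\}$ to all of $S^3$. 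The one genuine subtlety you flag is worth emphasizing: since $\tau_u$ has no fixed points in $\mathbb{C}\times S^1$, ``fixed-point-free $u$-even'' is vacuous on the $V_g$ side, so that half of the parenthetical really reduces to the $u$-convenience condition $a=0$ rather than to a property of $V_g$; the lemma is used downstream in exactly that way (the $u$-convenient case in Lemma~\ref{lem23}), so nothing breaks, but a fully literal reading of the biconditional for that one variant needs the qualification you give.
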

\begin{proof}
This is immediate from Eq.~\eqref{eq:g}. 
\end{proof}

\begin{definition}[Oka \cite{oka}]
A face function $f_{\Delta}$ of a mixed polynomial $f$ for some compact face $\Delta$ (0- or 1-dimensional) of the Newton boundary $\Gamma_f$ is called {\bf Newton non-degenerate} if $V_{f_{\Delta}}\cap\Sigma_{f_{\Delta}}\cap(\mathbb{C}^*)^2=\emptyset$. It is called {\bf strongly Newton non-degenerate} if $\Sigma_{f_{\Delta}}\cap(\mathbb{C}^*)^2=\emptyset$. We say that $f$ is (strongly) Newton non-degenerate if $f_{\Delta}$ is (strongly) Newton non-degenerate for all compact faces $\Delta$ of $\Gamma_f$.
\end{definition}

Recall that a mixed polynomial is called $u$-convenient if its Newton boundary has a non-empty intersection with the horizontal axis in the integer lattice. It is called $v$-convenient if its Newton boundary has a non-empty intersection with the vertical axis. If it is both $u$-convenient and $v$-convenient, we call $f$ convenient.

\begin{lemma}\label{lem23}
Let $f:\mathbb{C}^2\to\mathbb{C}$ be a radially weighted homogeneous, inner non-degenerate, semiholomorphic polynomial  with weight vector $P=(p_1,p_2)$. If $f$ is $u$-convenient, then its link is the closure of a braid $B$ that is $u$-even if $p_1$ is even and that is $2^K$-symmetric if $p_1$ is odd, where $2^K$ is the largest power of 2 that divides $p_2$. If $f$ is not $u$-convenient, its link is the union of the closure of a braid $B$ as above and its braid axis.
\end{lemma}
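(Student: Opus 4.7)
\emph{Proof plan.} The plan is to combine the $g$-polynomial machinery from Eq.~\eqref{eq:g} with Lemma~\ref{lem1} and Lemma~\ref{lem:symfg}. Since $f$ is semiholomorphic and inner non-degenerate, the excerpt notes that $g$ is a loop in the space of complex polynomials in $u$ of constant degree $s=\deg_u f$, so its zero set defines a geometric braid $B\subset\mathbb{C}\times S^1$. My strategy has two halves: first, identify the link of $f$ in $S^3$ with the closure $\hat{B}$ (possibly together with the braid axis); second, transfer the symmetries established for $V_f$ in Lemma~\ref{lem1} over to $B$ via Lemma~\ref{lem:symfg}.

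For the first half, I exploit the scaling $(u,v)\mapsto(r^k u,re^{\rmi t})$ implicit in Eq.~\eqref{eq:g}: for $v\neq 0$, writing $v=re^{\rmi t}$ with $r>0$ and $u=r^k u'$, one has $(u,v)\in V_f$ if and only if $(u',e^{\rmi t})\in V_g$. Intersecting with $S^3$ selects, for each $(u',e^{\rmi t})\in V_g$, a unique positive radius $r$ determined by $r^{2k}|u'|^2+r^2=1$. Sweeping $t$ around $S^1$ shows that $V_f\cap S^3\cap\{v\neq 0\}$ is the closure of $B$ around the circle $\{(u,0):|u|=1\}$. For $v=0$: if $f$ is $u$-convenient then the right Newton vertex is $(s,0)$ and $f(u,0)=c_{s,0,0,0}u^s$ has no zero on $|u|=1$, so the link equals $\hat{B}$; if $f$ is not $u$-convenient then $a>0$, every monomial of $f$ carries a positive power of $v$ or $\bar v$, hence $f(u,0)\equiv 0$, and the braid axis itself is an extra component of the link.

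For the second half, Lemma~\ref{lem1} provides that $V_f$ is $u$-even when $p_1$ is even and $2^K$-symmetric when $p_1$ is odd. Lemma~\ref{lem:symfg} transports the same symmetry to $V_g\subset\mathbb{C}\times S^1$. Unpacking: $u$-evenness of $V_g$ says $V_g$ is invariant under $(u',e^{\rmi t})\mapsto(u',e^{\rmi (t+\pi)})$, while $2^K$-symmetry says $V_g$ is invariant under $(u',e^{\rmi t})\mapsto(u'e^{\rmi\pi/2^K},e^{\rmi (t+\pi)})$. Since the strands of $B$ are by definition the zero set $V_g$, these are exactly the braid-theoretic $u$-even and $2^K$-symmetric conditions from the preceding definitions, completing the proof.

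The main potential pitfall is merely bookkeeping---verifying that the involution inherited on $V_g\subset\mathbb{C}\times S^1$ from $\tau_u$ or $\tau_{2^K}$ on $V_f$ really does agree verbatim with the braid definition, rather than differing from it by some twist or conjugation in the $t$-variable. Beyond this, no step presents a substantive obstacle; both the geometric identification of the link as a braid closure and the symmetry transfer reduce immediately to the cited lemmas.
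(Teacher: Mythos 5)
Your proposal is correct and takes essentially the same approach as the paper: the paper's proof likewise cites Lemma~\ref{lem1} and Lemma~\ref{lem:symfg} for the symmetry transfer and invokes the identification of $V_g$ with a closed braid whose closure is the link (Lemma~4.4 of \cite{AraujoBodeSanchez}). You merely spell out the scaling argument behind that cited identification rather than referencing it, and your handling of the $u$-convenient versus non-$u$-convenient dichotomy via $f(u,0)$ matches the paper.
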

\begin{proof}
%Since $f$ is semiholomorphic and inner non-degenerate, the condition from Lemma~\ref{lem1} that $f_{\Delta_2}$ should have no zeros in $(\mathbb{C}^*)^2$ is automatically satisfied.
%Suppose that $f$ is $u$-convenient and write $\Delta_2=(s,0)$ for the intersection point of the Newton boundary and the vertical axis.
%Either $f$ is $u$-convenient, in which case $a=0$ (with $a$ as in Lemma~\ref{lem1}) and $\Phi(\rme^{\rmi t})$ is constant or $f$ is not $u$-convenient, in which case $f_{\Delta_2}$ is Newton non-degenerate by \cite[Lemma 5.1]{eder2}. However, any zero of $f_{\Delta_2}$ in $(\mathbb{C}^*)^2$ is also a critical point of $f_{\Delta_2}$ in $(\mathbb{C}^*)^2$, contradicting the Newton non-degeneracy of $f_{\Delta_2}$.
 
The lemma is an immediate consequence of Lemma~\ref{lem1}, Lemma~\ref{lem:symfg} and the fact that the zeros of a $g$-polynomial that corresponds to a $u$-convenient, radially weighted homogeneous semiholomorphic polynomial with a weakly isolated singularity form a closed braid $B$ in $\mathbb{C}\times S^1$ (cf. Lemma~4.4 in \cite{AraujoBodeSanchez}) with the closure of $B$ equal to the link of the singularity if $f$ is $u$-convenient. If $f$ is not $u$-convenient, its link is the union of the braid axis and the closed braid.
\end{proof}

Since the case of $p_1$ being odd leads to the two possible cases of the zeros being $v$-even (if $p_2$ is even) or being odd (if $p_2$ is odd), this implies the following.

\begin{lemma}
Let $f:\mathbb{C}^2\to\mathbb{C}$ be a radially weighted homogeneous, inner non-degenerate, semiholomorphic polynomial with weight vector $P=(p_1,p_2)$. By \cite{AraujoBodeSanchez} it has a weakly isolated singularity. The link type of its singularity is even if $p_1$ is even or $p_2$ is even. Otherwise it is odd.
\end{lemma}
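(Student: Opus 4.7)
The plan is to read this off from Lemma~\ref{lem23} by a case analysis on the parities of $p_1$ and $p_2$, together with the observation already recorded in the paragraph following Lemma~\ref{lem1} that a $2^K$-symmetric set in $S^3$ is also invariant under $\tau_{2^K}^{2^K}$, which acts as $\tau_v$.

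First, I would dispose of the case $p_1$ even. Lemma~\ref{lem23} says the link of the singularity is the closure of a $u$-even braid $B$ in $\mathbb{C}\times S^1$, or the union of such a closure with its braid axis $\{u=0\}\cap S^3$ when $f$ fails to be $u$-convenient. The closure in $S^3$ of a $u$-even braid is a $u$-even link, and the braid axis $\{u=0\}\cap S^3$ is visibly preserved (in fact pointwise fixed) by $\tau_u(u,v)=(u,-v)$. Hence the link is $u$-even, i.e.\ even in the sense of the definition.

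Next, suppose $p_1$ is odd and $p_2$ is even. Write $p_2=2^K m$ with $K\geq 1$ and $m$ odd, so Lemma~\ref{lem23} provides a $2^K$-symmetric braid $B$ whose closure is the link (again, possibly together with the braid axis). As remarked after the proof of Lemma~\ref{lem1}, $\tau_{2^K}^{2^K}$ sends $(u,\rme^{\rmi t})$ to $(u\rme^{\rmi\pi},\rme^{\rmi(t+2^K\pi)})=(-u,\rme^{\rmi t})$, which is exactly the restriction of $\tau_v$ to $S^3$. Iterating the $2^K$-symmetry therefore forces $v$-evenness of the closure, and the braid axis $\{u=0\}\cap S^3$ is pointwise fixed by $\tau_v$, so the union is still $v$-even. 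Thus the link is even.

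Finally, if $p_1$ and $p_2$ are both odd, then $K=0$, so $\tau_{2^K}=\tau_{2^0}=\tau_1$. Lemma~\ref{lem23} then says the link (or its union with the braid axis, which is also preserved by $\tau_1$) is $\tau_1$-invariant, and this is exactly the definition of odd. No real obstacle is anticipated: the whole argument is a bookkeeping exercise on top of Lemma~\ref{lem23} and the elementary identity $\tau_{2^K}^{2^K}|_{S^3}=\tau_v|_{S^3}$, with the only minor point to verify being that in the non-$u$-convenient case the braid axis inherits each of the three required symmetries, which it does trivially since $\{u=0\}$ is invariant under $\tau_u$, $\tau_v$ and $\tau_1$.
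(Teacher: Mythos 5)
Your proof is correct and reconstructs exactly the one-line argument that the paper intends (the paper proves the lemma only by pointing to Lemma~\ref{lem23} and the remark after Lemma~\ref{lem1} about iterating $\tau_{2^K}$). You correctly read the three parity cases off the $u$-even / $2^K$-symmetric dichotomy, use the identity $\tau_{2^K}^{2^K}=\tau_v$ (for $K\geq 1$) and the observation $\tau_{2^0}=\tau_1$, and note that the embedding of $\mathbb{C}\times S^1$ into $S^3$ transports these symmetries.

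One slip worth flagging: the braid axis that gets added in the non-$u$-convenient case is the unknot $\{v=0\}\cap S^3$, not $\{u=0\}\cap S^3$. Non-$u$-convenience forces $f(u,0)\equiv 0$, so the extra component of $V_f\cap S^3$ is $\{(u,0):|u|=1\}$; the set $\{u=0\}\cap S^3$ is instead the core of the solid torus containing the embedded closed braid, and is generically \emph{not} in the link. This also affects your parenthetical ``pointwise fixed'' claims: $\{u=0\}\cap S^3$ is pointwise fixed by $\tau_v$ but only setwise preserved by $\tau_u$, while the actual braid axis $\{v=0\}\cap S^3$ is pointwise fixed by $\tau_u$ but only setwise preserved by $\tau_v$ and $\tau_1$. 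Fortunately, the lemma asserts only even/odd symmetry, not fixed-point-free-ness, so setwise invariance — which both unknots enjoy under all of $\tau_u$, $\tau_v$, $\tau_1$ — is all that is needed, and your conclusion goes through unchanged once the name of the extra component is corrected.
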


\begin{lemma}\label{lem:circles}
Let $B$ be a $2^K$-symmetric braid in $\mathbb{C}\times S^1$, with $K\geq 1$. Then $B$ is invariant (as a set) under the transformation $(u,\rme^{\rmi t})\mapsto(u\rme^{\rmi \pi/2^{K-1}},\rme^{\rmi t})$.  
\end{lemma}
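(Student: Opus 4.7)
The plan is to recognize the transformation in the lemma as the square of $\tau_{2^K}$, and then invoke the hypothesized invariance of $B$ under $\tau_{2^K}$. First I would compute directly: applying $\tau_{2^K}$ twice gives
\begin{equation*}
\tau_{2^K}^2(u,v) = \tau_{2^K}\bigl(u\rme^{\rmi\pi/2^K},\,-v\bigr) = \bigl(u\rme^{\rmi\pi/2^K}\cdot\rme^{\rmi\pi/2^K},\,-(-v)\bigr) = \bigl(u\rme^{2\rmi\pi/2^K},\,v\bigr) = \bigl(u\rme^{\rmi\pi/2^{K-1}},\,v\bigr).
\end{equation*}
Restricting to $\mathbb{C}\times S^1$ (which $\tau_{2^K}$ preserves, since $|{-v}| = |v|$), this is exactly the map $(u,\rme^{\rmi t})\mapsto(u\rme^{\rmi\pi/2^{K-1}},\rme^{\rmi t})$ appearing in the statement.

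Next I would conclude using that $\tau_{2^K}(B)=B$ by the $2^K$-symmetry hypothesis: then $\tau_{2^K}^2(B)=\tau_{2^K}(\tau_{2^K}(B))=\tau_{2^K}(B)=B$. More generally, invariance under a set map is preserved by composition, so $B$ is invariant under every iterate of $\tau_{2^K}$, and in particular under $\tau_{2^K}^2$, which is what we wanted. The assumption $K\geq 1$ is only used to ensure that $2^{K-1}$ is a well-defined positive integer, so the angular rotation $\rme^{\rmi\pi/2^{K-1}}$ in the conclusion makes sense; I do not foresee any obstacle here, as the argument is a direct unwinding of the definition of $\tau_{2^K}$.
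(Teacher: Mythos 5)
Your proof is correct and follows exactly the same route as the paper's: the paper's one-line proof says precisely to apply $\tau_{2^K}$ twice, and you carry out that computation and record the obvious fact that invariance is preserved under composition. Nothing to add.
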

\begin{proof}
This lemma follows directly from the definition of $k$-symmetric braids by applying the transformation $\tau_{2^K}$ twice.
\end{proof}

Consider now the $g$-polynomial associated with a semiholomorphic polynomial $f$ as in Lemma~\ref{lem23} such that the roots of $g$ in $\mathbb{C}\times S^1$ form a $2^K$-symmetric braid with $K\geq 1$ and $\tilde{s}$ strands that are not $0\times S^1$. Then Lemma~\ref{lem:circles} implies that for each value of $t\in[0,2\pi]$ the non-zero roots of $g(\cdot,\rme^{\rmi t})$ lie on concentric circles centered at $0\in\mathbb{C}$ and are equally distributed among each circle. In other words, $g(u,\rme^{\rmi t})$ is given by the product of some power of $u$ (to be precise, $u^0=1$ or $u$) and several factors of the form $u^{\tilde{s}/2^K}-a_j(t)$, where $j=1,2,\ldots,\tilde{s}/2^K$, is indexing the different circles, and $a_{j}(t):[0,2\pi]\to\mathbb{C}$ determines the radius of the circle and the phase shift relative to the $2^K$th roots of unity.

Note that for some values of $t$ the different circles can coincide. However, the roots on the circles have to remain distinct. The positions of roots and circles for different values of $t$ are illustrated in Figure~\ref{fig:circles}. Furthermore, the functions $a_j(t)$ are not necessarily $2\pi$-periodic, since $t$ going from 0 to $2\pi$ could induce a non-trivial permutation of the circles, so that $a_j(2\pi)=a_{j'}(0)$ for some $j'$, but not necessarily $j'=j$.

%Note that the number of distinct circles can change with $t$ as the radii vary with $t$. The number $M$ should thus be understood as the maximal number of circles among all values of $t\in[0,2\pi]$. Furthermore, the functions $a_j(t)$ are not necessarily $2\pi$-periodic, since $t$ going from 0 to $2\pi$ could induce a non-trivial permutation of the circles, so that $a_j(2\pi)=a_{j'}(0)$ for some $j'$ with $s_j=s_{j'}$, but not necessarily $j'=j$.

Also note that the symmetry from Lemma~\ref{lem:circles} implies that a strand of a $2^K$-symmetric braid with $K\geq 1$ intersects $0\times S^1\subset\mathbb{C}\times S^1$ if and only if it is equal to $0\times S^1$ and thus forms its own connected component.

\begin{figure}[H]
\centering
\hspace{-0.5cm}
\includegraphics[height=4cm]{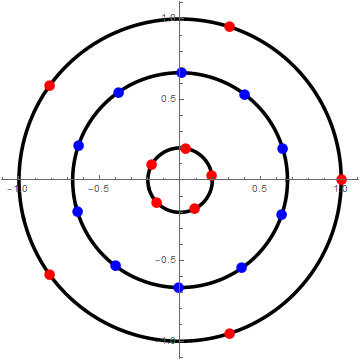}\qquad
\includegraphics[height=4cm]{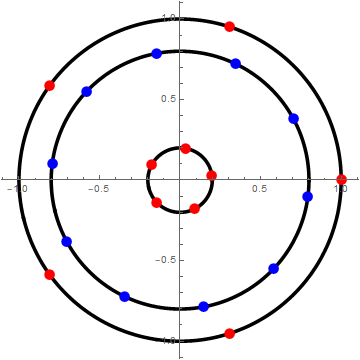}\\
\vspace{0.5cm}
\includegraphics[height=5cm]{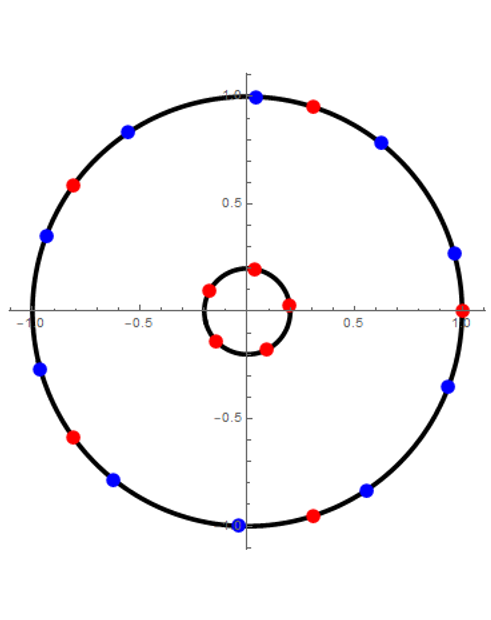}\quad
\includegraphics[height=5cm]{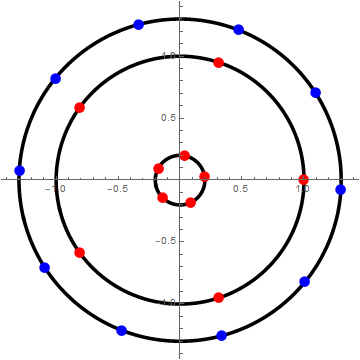}
\caption{Positions of roots of $g(\cdot,\rme^{\rmi t})$ for different values of $t\in[0,2\pi]$ if the resulting braid is $k$-symmetric. In this example $k=5$ and $\tilde{s}=20$. Roots are colored to distinguish different circles. \label{fig:circles}}
\end{figure}

\begin{lemma}
Let $f:\mathbb{C}^2\to\mathbb{C}$ be a radially weighted homogeneous, inner non-degenerate, semiholomorphic polynomial  with weight vector $P=(p_1,p_2)$. If $f$ is $u$-convenient, then its link is the closure of a braid $B$ that is $u$-even if $p_1$ is even and that is divisor-symmetric if $p_1$ is odd. If $f$ is not $u$-convenient, its link is the union of the closure of a braid $B$ as above and its braid axis.
\end{lemma}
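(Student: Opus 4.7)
The plan is to deduce this refinement of Lemma~\ref{lem23} by combining it with the orbit-counting consequence of Lemma~\ref{lem:circles}. The $u$-even conclusion (when $p_1$ is even) and the treatment of the braid axis when $f$ is not $u$-convenient are already contained in Lemma~\ref{lem23}, so the only new content is to upgrade ``$2^K$-symmetric'' to ``divisor-symmetric'' in the $p_1$ odd case. In other words, writing $2^K$ for the largest power of $2$ dividing $p_2$, I need to show that $2^K$ divides $\tilde{s}$.

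First I would dispose of the trivial case $K=0$, where $2^K=1$ divides every integer, so the claim is automatic. For $K\geq 1$, I would invoke Lemma~\ref{lem:circles}: the braid $B$ produced by Lemma~\ref{lem23} is invariant under the transformation $\sigma\colon (u,\rme^{\rmi t})\mapsto (u\rme^{\rmi \pi/2^{K-1}},\rme^{\rmi t})$. The reason for working with this transformation rather than $\tau_{2^K}$ itself is that $\sigma$ fixes the $S^1$-coordinate, so its restriction to each slice $\mathbb{C}\times\{\rme^{\rmi t}\}$ is the rotation of $\mathbb{C}$ by $2\pi/2^K$ about the origin. This rotation generates a cyclic group of order $2^K$ whose only fixed point in $\mathbb{C}$ is $0$.

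From here the finish is a one-line orbit-stabilizer count: the set of non-axis roots of $g(\cdot,\rme^{\rmi t})$ at a fixed $t$ is invariant under this cyclic action and, being disjoint from $0$, has trivial stabilizer at every point; hence it partitions into orbits of size exactly $2^K$. Since this set has cardinality $\tilde{s}$, we conclude $2^K\mid \tilde{s}$, which is precisely the condition in the definition of divisor-symmetry. The only step that requires any care is identifying which element of the symmetry group acts slice-wise — namely $\tau_{2^K}^2$ supplied by Lemma~\ref{lem:circles}, rather than $\tau_{2^K}$ itself, which would additionally reverse the second coordinate — and after that observation the rest is routine and fully supplied by the structural description of the roots of $g$ already recorded in the discussion preceding the lemma.
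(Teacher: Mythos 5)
Your proof is correct, but it takes a genuinely different route from the paper's. The paper argues arithmetically from the Newton boundary: since the compact $1$-face runs between lattice points $(b,n)$ and $(s,a)$ and the support lies at points $(b,n)+j(p_2,-p_1)$, one sees directly that $p_2 \mid \tilde{s}$ (with $\tilde{s} = \deg_u f$ or $\deg_u f - 1$ depending on $v$-convenience), and hence $2^K \mid \tilde{s}$ because $2^K \mid p_2$. You instead work entirely on the braid side: from Lemma~\ref{lem:circles} the $2^K$-symmetric braid $B$ is invariant slice-wise under rotation by $2\pi/2^K$, the non-axis roots avoid $0$ so the $\mathbb{Z}/2^K$-action on them is free, and orbit-counting gives $2^K \mid \tilde{s}$. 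Your argument is self-contained and geometric, and correctly dispatches the $K=0$ case separately since Lemma~\ref{lem:circles} requires $K\geq 1$; the paper's argument is shorter once one has the Newton-boundary description and in fact yields the stronger divisibility $p_2 \mid \tilde{s}$, though only $2^K \mid \tilde{s}$ is needed for divisor-symmetry. Both establish exactly what is required to upgrade the conclusion of Lemma~\ref{lem23}.
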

\begin{proof}
Let $g$ be the $g$-polynomial associated with $f$ as in Eq.~\ref{eq:g}. We know from \cite{AraujoBodeSanchez} that the zeros of $f$ are the cone over the zeros of $g$, which form a closed braid $B$ in $\mathbb{C}\times S^1$.

Note that $p_2$ divides the degree of $f$ with respect to $u$ if $f$ is $v$-convenient and divides $\deg_uf-1$ if $f$ is not $v$-convenient. In both cases, these numbers are equal to $\tilde{s}$, the number of strands of $B$ that are not $0\times S^1$. Therefore, $p_2$ divides $\tilde{s}$. The result then follows from Lemma~\ref{lem23}, since $B$ is $u$-even or $2^K$-symmetric for a divisor $2^K$ of $p_2$, which is therefore also a divisor of $\tilde{s}$.
\end{proof}

\begin{lemma}\label{lem:uconv}
Let $f:\mathbb{C}^2\to\mathbb{C}$ be a $v$-convenient, inner non-degenerate mixed polynomial. Then $uf$ is also inner non-degenerate.
\end{lemma}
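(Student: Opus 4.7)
The plan is to reduce the inner non-degeneracy of $uf$ to that of $f$ by combining the trivial translation of the Newton boundary with a direct Wirtinger-derivative computation. First I would note that $\Gamma_{uf} = \Gamma_f + (1,0)$, so the weight vectors $P_1,\ldots,P_N$ and the $\succ$-ordering coincide for $uf$ and $f$; for each compact face $\Delta$ of $\Gamma_f$ the face function of $uf$ at the translated face $\Delta+(1,0)$ equals $u\cdot f_\Delta$; and extremity of vertices is preserved, so the non-extreme vertices of $\Gamma_{uf}$ are in bijection with those of $\Gamma_f$.

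The central algebraic observation is that on $\{u \neq 0\}$ multiplication by $u$ is a unit and does not affect criticality: at any $(u_0,v_0)$ with $u_0 \neq 0$ in $V_{uf_\Delta}$ one has $f_\Delta(u_0,v_0)=0$, so the four Wirtinger derivatives of $uf_\Delta$ at that point collapse to $u_0$ times the corresponding derivatives of $f_\Delta$. Since $u_0 \in \mathbb{C}^*$, this rescaling preserves the real rank of the Jacobian viewed as a map $\mathbb{R}^4 \to \mathbb{R}^2$. Hence condition (ii) of Definition~\ref{Newtoncond} for $uf$ at every 1-face and non-extreme vertex transports from condition (ii) for $f$; condition (i) for $uf$ at $P_N$ transports from condition (i) for $f$ at $P_N$; and the portion of condition (i) for $uf$ at $P_1$ lying over $\{u \neq 0\}$ reduces to the absence of critical points of $f_{P_1}$ on $V_{f_{P_1}} \cap (\mathbb{C}^*)^2$, which is part of condition (ii) for $f$.

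The only case not handled by this reduction is the locus $\{u = 0,\ v \neq 0\}$ in condition (i) for $uf$ at $P_1$. Writing out the Wirtinger derivatives of $uf_{P_1}$ at $(0,v_0)$ with $v_0 \neq 0$ shows that every partial vanishes except $\partial/\partial u$, which evaluates to $f_{P_1}(0,v_0)$, so $(0,v_0)$ is critical for $uf_{P_1}$ if and only if $f_{P_1}(0,v_0)=0$. The task, which I expect to be the main obstacle, is therefore to combine $v$-convenience with inner non-degeneracy of $f$ to conclude that $f_{P_1}(0,v)\neq 0$ for every $v\in\mathbb{C}^*$. By $v$-convenience the vertex $(0,n)$ lies in $\Gamma_f$ and is the upper endpoint of $\Delta_{P_1}$, so $f_{P_1}(0,v)=f_{(0,n)}(v)$ is a mixed polynomial radially homogeneous of degree $n$ in $v$ alone. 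I would argue by contradiction: a zero $v_0\in\mathbb{C}^*$ of $f_{(0,n)}$ would place $(0,v_0)$ in $V_{f_{P_1}} \cap (\mathbb{C}^2\setminus\{v=0\})$, where condition (i) for $f$ forces it to be non-critical for $f_{P_1}$. Because $f_{(0,n)}$ is radially homogeneous in $v$, such a zero is automatically critical for $f_{(0,n)}$ in the $v$- and $\bar v$-directions, so the non-criticality at $(0,v_0)$ would have to come from the $u$- and $\bar u$-partials of $f_{P_1}$, which in turn come only from lattice points of $\Delta_{P_1}$ with first coordinate $1$, i.e., only in the case $p_2=1$. When $p_2 \geq 2$ the contradiction is immediate; when $p_2 = 1$ one must analyse the $u$- and $\bar u$-partials, which are themselves radially homogeneous in $v$ of a different weight, and squeeze the contradiction from the appropriate $2\times 2$ minor of the real Jacobian. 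This lattice-point bookkeeping is the delicate step at the heart of the argument.
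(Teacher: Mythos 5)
Your high-level reduction agrees with the paper's: translate the Newton boundary by $(1,0)$, observe that multiplication by $u$ is a unit on $\{u\neq 0\}$ so criticality there is unaffected, and reduce everything to ruling out critical points of $(uf)_{P_1}=u\,f_{P_1}$ of the form $(0,v_*)$, $v_*\neq 0$, in $V_{(uf)_{P_1}}$. Your Wirtinger computation is also correct: all four derivatives of $u\,f_{P_1}$ vanish at $(0,v_*)$ except $\partial_u$, which equals $f_{P_1}(0,v_*)$, so such a point is critical precisely when $f_{P_1}(0,v_*)=0$. Your treatment of $p_2\geq 2$ is likewise the paper's: the $u$- and $\bar u$-derivatives of $f_{P_1}$ vanish along $u=0$, and the radial homogeneity of $f_{P_1}(0,\cdot)$ then forces $(0,v_*)$ to be a critical point of $f_{P_1}$, contradicting condition (i) for $f$.

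The case $p_2=1$, which you explicitly defer, is a genuine gap, and the ``$2\times 2$ minor'' bookkeeping you anticipate cannot close it, because the hoped-for contradiction is not there. Take $f(u,v)=v^2-\bar v^2+uv$. It is $v$-convenient ($(0,2)\in\Gamma_f$), has a single compact 1-face with weight $(1,1)$, and is inner non-degenerate: from $\partial_u f=v$, $\partial_{\bar u}f=0$ a critical point must satisfy $v=0$, and then $\partial_v f=2v+u$ reduces to $u$ while $\partial_{\bar v}f=-2\bar v$ vanishes, forcing $u=0$; so the origin is the only critical point. Yet $f(0,1)=0$, so all four Wirtinger derivatives of $(uf)_{P_1}=uf$ vanish at $(0,1)$, which is therefore a critical point of $(uf)_{P_1}$ in $V_{(uf)_{P_1}}\cap(\C^2\setminus\{v=0\})$, and $uf$ fails condition (i). The mechanism is exactly the one you flag: the term $uv$ on the 1-face has $u$-degree $1$, so $\partial_u f_{P_1}(0,1)=1\neq 0$ prevents $(0,1)$ from being critical for $f_{P_1}$, and the contradiction with inner non-degeneracy of $f$ never materialises. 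The paper's own proof elides this very step: it asserts that $A(v_*)=0$ together with $|\partial_v A(v_*)|=|\partial_{\bar v}A(v_*)|$ implies $(0,v_*)$ is a critical point of $f_{P_1}$, ignoring the $u$- and $\bar u$-derivatives, and that implication fails in the example above. The statement does become correct under the extra hypothesis that $f_{P_1}(0,v)\neq 0$ for all $v\in\C^*$ (automatic when $p_2\geq 2$, or when the vertex face function $f_{\Delta_1}$ is nonvanishing on $\{0\}\times\C^*$); this is satisfied in every place Lemma~\ref{lem:uconv} is actually invoked in the paper, since there the polynomials are constructed from braids whose roots are bounded away from $u=0$.
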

\begin{proof}
Let $P$ be a positive weight vector. Then the zeros of $(uf)_P$ in $\mathbb{C}^*\times\mathbb{C}$ are the same as the zeros of $f_P$ in $\mathbb{C}^*\times\mathbb{C}$. Thus the second condition in Definition~\ref{Newtoncond} is satisfied and the first condition is satisfied for $P=P_N$, the weight vector associated with the last 1-face of $uf$. Thus the only way that $uf$ could fail Definition~\ref{Newtoncond} would be if $(0,v_*)$ were a critical point of $(uf)_{P_1}$ in $V_{(uf)_{P_1}}$ for some $v_*\in\mathbb{C}^*$ and $P_1$ the weight vector associated with the first 1-face of $uf$.

We write $A(v)$ for $f(0,v)$, which is not constant 0, since $f$ is $v$-convenient. Then $(0,v_*)$ is a critical point of $(uf)_{P_1}$ if and only if $|\partial_v A(v_*)|=|\partial_{\bar{v}}A(v_*)|$ and $\partial_u(uf)(0,v_*)=A(v_*)=0$ \cite{AraujoBodeSanchez}. But that implies that $(0,v_*)$ is a critical point of $f_{P_1}$. We have $f_{P_1}(0,v_*)=A(v_*)=0$, which contradicts the inner non-degeneracy of $f$.
\end{proof}
By the same arguments as above $vf$ is inner non-degenerate if $f$ is $u$-convenient and inner non-degenerate.

\begin{lemma}\label{lem:symsemi}
Let $B$ be a braid in $\mathbb{C}\times S^1$ that is $u$-even. Then there is a radially weighted homogeneous, inner non-degenerate, semiholomorphic polynomial $f:\mathbb{C}^2\to\mathbb{C}$ with a weakly isolated singularity, whose link is the closure of $B$. There also exists such a polynomial whose link is the union of the closure of $B$ and its braid axis.
\end{lemma}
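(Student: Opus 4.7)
The plan is to invert the correspondence of Eq.~\eqref{eq:g} between $f$ and $g$: given the $u$-even braid $B$, first construct a semiholomorphic loop $g:\mathbb{C}\times S^1\to\mathbb{C}$ of monic polynomials in $u$ whose zeros trace out $B$, and then read off a radially weighted homogeneous semiholomorphic polynomial $f$ from $g$. Parametrise $B$ so that its set of strands is invariant under $u\mapsto -u$: for the fixed-point-free case the non-zero strands come in pairs $u_j(t)=-u_{j'}(t)$, otherwise there is additionally a strand identically $0$. The elementary symmetric polynomials $c_k(t)$ of the strand values are then periodic in $t$ and vanish for all odd $k$ (or for all even $k$, in the presence of the zero strand), so that $g(u,\rme^{\rmi t})=u^{s}+\sum_{k}(-1)^{k} c_k(t)u^{s-k}$ satisfies $g(-u,\rme^{\rmi t})=(-1)^{s} g(u,\rme^{\rmi t})$. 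Approximate the $c_k$ by finite Fourier polynomials in $t$ so small that the discriminant of $g$ in $u$ stays non-vanishing for every $t$; this preserves both the braid isotopy class and the $u$-even symmetry.

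To recover $f$, fix a weight vector $P=(p_1,p_2)$ with $p_1$ even and apply the spinning/rescaling construction of \cite{bode:ralg, bode:sat}: multiply each $u_j$ by a common factor $\rme^{\rmi ct}$ and choose the Fourier frequencies in the approximation so that every non-zero monomial $a_{\alpha,\beta}u^{\alpha}\rme^{\rmi\beta t}$ in $g$ lies on a single line $p_1\alpha+p_2\beta=d$. Then the substitution $u\mapsto U/|V|^{p_1/p_2}$, $\rme^{\rmi t}\mapsto V/|V|$, followed by multiplication by $|V|^{d/p_2}$, cancels every $|V|$ factor and yields a semiholomorphic polynomial $f(U,V)$ whose support is exactly the chosen 1-face (with $\rme^{\rmi\beta t}$ becoming $V^{\beta}$ for $\beta\geq 0$ and $\bar V^{-\beta}$ for $\beta<0$). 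By construction $f$ is radially weighted homogeneous with weight $P$ and $u$-convenient, since $g$ is monic of degree $s$ in $u$.

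Inner non-degeneracy of $f$ is inherited from the disjointness of the strands of $B$: non-vanishing of $\partial_u g$ on $V_g\subset\mathbb{C}\times S^1$ translates via Eq.~\eqref{eq:g} to non-vanishing of $\partial_u f$ on $V_f\cap(\mathbb{C}^*)^2$, giving condition (ii) of Definition~\ref{Newtoncond}; condition (i) is handled by the same argument combined with $u$-convenience and the fact that $V_{f_{P_N}}\cap\{u=0\}$ is empty because $g$ is monic in $u$. By \cite{AraujoBodeSanchez}, inner non-degeneracy now implies a weakly isolated singularity whose link in $S^3$ is the cone over $V_g$, i.e.\ the closure of $B$. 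Lemma~\ref{lem1} gives $u$-evenness of $V_f$ from the evenness of $p_1$. For the second statement, replace $f$ by $vf$; by the argument of Lemma~\ref{lem:uconv}, $vf$ is still inner non-degenerate, its Newton boundary no longer meets the horizontal axis so it is not $u$-convenient, and its link is the closure of $B$ together with the braid axis $\{v=0\}\cap S^3$. The main obstacle is the middle step: arranging the trigonometric approximation so that the entire support of $g$ lies on a single line with $p_1$ even. This is possible precisely because $u$-evenness of $B$ restricts the Fourier exponents of $g$ to a sublattice compatible with an even value of $p_1$.
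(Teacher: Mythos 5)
There is a definitional error at the heart of the proposal: by Definition~2.1, $\tau_u(u,v)=(u,-v)$, so a $u$-even set is invariant under negating $v$, not under negating $u$. For a braid in $\mathbb{C}\times S^1$ this is the shift $(u,\rme^{\rmi t})\mapsto (u,\rme^{\rmi(t+\pi)})$, i.e.\ $\pi$-periodicity in the $t$-direction, and that is exactly what makes $B$ a square braid $B=A^2$. You have instead taken $u$-even to mean invariance under $u\mapsto-u$ (this is closer to the $\tau_v$/$2^K$-symmetric setting handled separately in Lemma~\ref{lem:divsemi}). As a result your parametrisation by strand-pairs $u_j=-u_{j'}$ and the vanishing of the odd elementary symmetric polynomials do not address the hypothesis of the lemma.

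This matters concretely in the middle step you identify as the main obstacle. To pass from a loop of monic polynomials $g(u,\rme^{\rmi t})=\sum_j c_j(\rme^{\rmi t})u^j$ to a mixed polynomial via $f(u,r\rme^{\rmi t})=r^{ks}g(u/r^k,\rme^{\rmi t})$, one needs $k(s-j)-|\ell|$ to be a non-negative even integer for every non-zero Fourier coefficient $a_{j,\ell}$ of $c_j$. The condition $c_j\equiv 0$ for odd $j$ (your conclusion) controls which $j$ appear but says nothing about the frequencies $\ell$, so it does not give this divisibility. The actual $u$-even hypothesis gives $c_j(\rme^{\rmi(t+\pi)})=c_j(\rme^{\rmi t})$ for every $j$, i.e.\ only even frequencies $\ell$, which together with an even choice of $k$ does give the divisibility. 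That is precisely the paper's route: take the polynomial loop for $A$ (as in \cite{bode:algo}), double the frequency to get a loop for $B=A^2$ whose coefficients have only even frequencies, and then apply Eq.~\eqref{eq:fg} with $k$ a large even integer. Your remaining steps — approximating by a trigonometric polynomial while keeping the discriminant non-vanishing, reading off inner non-degeneracy from simplicity of the roots, and multiplying by $u$ or $v$ to handle the zero strand and the braid axis — are all correct and agree with the paper, but the decisive symmetry input is misidentified.
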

\begin{proof}
We may assume that $B$ does not contain a strand that is equal to $0\times S^1$, so that $\tilde{s}=s$, the number of strands of $B$. If $B$ does contain the zero-strand, we can simply multiply the polynomials that we construct in the following steps by $u$ as in Lemma~\ref{lem:uconv}.

Since $B$ is $u$-even, it can be written as $B=A^2$ for some braid $A$. In other words, if $g(u,\rme^{\rmi t})=\prod_{j=1}^s(u-u_j(\rme^{\rmi t}))$ denotes the loop of polynomials whose roots $u_j(\rme^{\rmi t})$, $t\in[0,2\pi]$, form the braid $A$, and whose coefficients are trigonometric polynomials as in \cite{bode:algo}, then we can obtain a corresponding loop of polynomials for $B$, say $\tilde{g}$, by multiplying $t$ by 2, so that each term $\rme^{\rmi t}$ becomes $\rme^{2\rmi t}$ and $\rme^{-\rmi t}$ becomes $\rme^{-2\rmi t}$. This means that all non-zero coefficients of $\tilde{g}$ are trigonometric polynomials that only involve even frequencies. As in \cite{bode:ralg} we obtain a radially weighted homogeneous, inner non-degenerate, semiholomorphic polynomial $f$ with weight vector $(k,1)$ and weakly isolated singularity, whose link is the closure of $B$, via
\begin{equation}\label{eq:fg}
f(u,v,\overline{v})=f(u,r\rme^{\rmi t},r\rme^{-\rmi t})=r^{ks}\tilde{g}\left(\frac{u}{r^k},\rme^{\rmi t}\right),
\end{equation}
where $s$ is the number of strands of $B$ and $k$ is some sufficiently large even integer.

Since the roots of $g(\cdot,\rme^{\rmi t})$ are simple for all $t\in[0,2\pi]$, the face function of the unique 1-face has no critical points in $V_f$ except for the origin. Thus $f$ is inner non-degenerate.

In order to obtain a polynomial whose link is the union of the closure of $B$ and its braid axis, we can multiply the polynomial for $B$ by $v$. By the analogue of Lemma~\ref{lem:uconv} the result is still inner non-degenerate.
\end{proof}

\begin{lemma}\label{lem:divsemi}
Let $B$ be a braid that is divisor-symmetric. Then there is a radially weighted homogeneous, inner non-degenerate, semiholomorphic polynomial $f:\mathbb{C}^2\to\mathbb{C}$ with a weakly isolated singularity, whose link is the closure of $B$. There also exists such a polynomial whose link is the union of the closure of $B$ and its braid axis.
\end{lemma}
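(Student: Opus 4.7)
The plan is to mirror the proof of Lemma~\ref{lem:symsemi}: build a loop $g(u,\rme^{\rmi t})$ of monic polynomials in $u$ whose roots trace out $B$ and whose coefficients are trigonometric polynomials in $\rme^{\rmi t},\rme^{-\rmi t}$ with prescribed Fourier parities, and then extend $g$ to a radially weighted homogeneous polynomial $f$ via the analogue of Eq.~\eqref{eq:fg} for a weight vector of the form $(p_1,2^K)$ with $p_1$ odd. As in Lemma~\ref{lem:symsemi}, I first assume that $B$ does not contain the strand $\{0\}\times S^1$, since otherwise one can multiply the constructed $f$ by $u$ and invoke Lemma~\ref{lem:uconv}.

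Say $B$ is $2^K$-symmetric with $2^K\mid \tilde{s}$. When $K\ge 1$, Lemma~\ref{lem:circles} forces the roots of $B$ at each time $t$ to lie on $\tilde{s}/2^K$ concentric circles, each carrying a regular $2^K$-gon, so I would take
\[
g(u,\rme^{\rmi t})=\prod_{j=1}^{\tilde{s}/2^K}\bigl(u^{2^K}-b_j(\rme^{\rmi t})\bigr),
\]
where the $b_j$ are trigonometric polynomials chosen (by the method of \cite{bode:algo}) so that the roots of $g(\cdot,\rme^{\rmi t})$ realise $B$. When $K=0$ the divisor-symmetry coincides with odd symmetry, and $g(u,\rme^{\rmi t})=\prod_j(u-u_j(\rme^{\rmi t}))$ suffices. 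Taking $2^K$-th powers of the relation $\tau_{2^K}(B)=B$ yields $b_j(\rme^{\rmi(t+\pi)})=-b_{\sigma(j)}(\rme^{\rmi t})$ for some permutation $\sigma$, so expanding $g=\sum_k c_k(\rme^{\rmi t})u^{2^K k}$ and using that (up to sign) the $c_k$ are elementary symmetric polynomials of the $b_j$ yields $c_k(\rme^{\rmi(t+\pi)})=(-1)^{\tilde{s}/2^K-k}c_k(\rme^{\rmi t})$. Hence the Fourier support of $c_k$ consists of integers of a single parity, namely that of $\tilde{s}/2^K-k$.

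This is precisely the parity constraint needed for the $c_k$ to arise as the $u^{2^K k}$-coefficients of a radially weighted homogeneous semiholomorphic polynomial of weight $(p_1,2^K)$: I would choose $p_1$ odd (so that $\gcd(p_1,2^K)=1$) and a total weight $d$ with $d\equiv \tilde{s}\pmod{2^{K+1}}$ and $d/2^K-p_1k$ at least as large as the maximum Fourier frequency of $c_k$ for every $k$. Under these conditions each monomial $\rme^{\rmi m t}$ in $c_k(\rme^{\rmi t})$ can be replaced by a unique $v^{\nu_1}\bar{v}^{\nu_2}$ with $\nu_1+\nu_2=d/2^K-p_1k$ and $\nu_1-\nu_2=m$, producing the desired polynomial $f(u,v,\bar{v})$ supported on the line $p_1\mu+2^K\nu=d$.

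Simplicity of the roots of $g(\cdot,\rme^{\rmi t})$ for every $t$, automatic because $B$ is a braid, then gives the absence of critical points of $f$ in $V_f\cap(\C^*\times\C)$ away from the origin, so $f$ is inner non-degenerate with a weakly isolated singularity whose link is the closure of $B$ by Lemma~\ref{lem:symfg}. For the second assertion, I would multiply the constructed $f$ by $v$ and invoke the analogue of Lemma~\ref{lem:uconv} (stated immediately after its proof) to preserve inner non-degeneracy; this adds $\{u=0\}$, i.e., the braid axis, to the link. I expect the main technical obstacle to be the parity-and-size bookkeeping in the previous paragraph, simultaneously arranging $d\equiv \tilde{s}\pmod{2^{K+1}}$ and $d/2^K-p_1k\ge \max_m|m|$ for each $k$, together with verifying that the loop of polynomials supplied by \cite{bode:algo} can be taken to satisfy the required symmetry $b_j(\rme^{\rmi(t+\pi)})=-b_{\sigma(j)}(\rme^{\rmi t})$ rather than needing a separate symmetrisation step.
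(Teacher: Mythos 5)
Your overall approach is the right one and is essentially the one the paper takes: use Lemma~\ref{lem:circles} to write $g=\prod_j(u^{2^K}-b_j)$, read off the parity of the Fourier support of each coefficient $c_k$ from the symmetry of the $b_j$, and then reassemble these into a radially weighted homogeneous semiholomorphic $f$. The parity computation $c_k(\rme^{\rmi(t+\pi)})=(-1)^{\tilde{s}/2^K-k}c_k(\rme^{\rmi t})$ is correct and coincides with the paper's $c_j(\rme^{\rmi(t+\pi)})=(-1)^{(s+j)/2^K}c_j(\rme^{\rmi t})$ once you set $j=2^Kk$.

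A few remarks on the differences and the wrinkles you flagged. First, your choice of weight vector $(p_1,2^K)$ with $p_1$ odd is actually cleaner than the paper's formulation of the same step, and it makes the parity match transparent: the monomial $u^{2^Kk}v^{\nu_1}\bar{v}^{\nu_2}$ must satisfy $\nu_1+\nu_2=p_1(\tilde{s}/2^K-k)$, whose parity equals that of $\tilde{s}/2^K-k$ because $p_1$ is odd. Second, you treat $d$ as a free parameter, but it is forced: since $g$ is monic and $u$-convenient, the extreme vertex is $(\tilde{s},0)$, so $d=p_1\tilde{s}$; with $p_1$ odd one has $d-\tilde{s}=(p_1-1)2^Ks'$, so $d\equiv\tilde{s}\pmod{2^{K+1}}$ is automatic, and $d/2^K-p_1k=p_1(\tilde{s}/2^K-k)$ is made large enough for $k<\tilde{s}/2^K$ simply by taking $p_1$ large; for $k=\tilde{s}/2^K$ the coefficient is the constant $1$. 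So the bookkeeping you worry about collapses to a single condition on the size of $p_1$. Third, the technical obstacle you flag at the end — whether the $b_j$ supplied by an algorithmic approximation respect the permutation symmetry $b_j(\rme^{\rmi(t+\pi)})=-b_{\sigma(j)}(\rme^{\rmi t})$ — is a real wrinkle, but the paper avoids it by approximating the symmetric functions $c_k$ directly: each $c_k$ is a $2\pi$-periodic function satisfying $c_k(t+\pi)=\pm c_k(t)$, and trigonometric polynomials with frequencies of a single fixed parity are $C^1$-dense in the space of such functions, so one approximates $c_k$ and then takes the roots of $\sum_k c_k u^{2^Kk}$; since the roots of $g(\cdot,\rme^{\rmi t})$ are simple for all $t$, a close approximation still realises the braid $B$. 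Adopting that route removes the permutation issue entirely. Finally, a small slip: multiplying $f$ by $v$ adjoins $\{v=0\}$, not $\{u=0\}$, to the zero set; $\{v=0\}$ is the braid axis here.
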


\begin{proof}
By definition $B$ is $2^K$-symmetric for some divisor $2^K$ of $\tilde{s}$, the number of strands of $B$ that are not $0\times S^1$. As in the proof of Lemma~\ref{lem:symsemi} we may assume that $B$ does not contain a strand that is equal to $0\times S^1$, so that $\tilde{s}=s$, the number of strands of $B$. Otherwise, we can simply apply the following argument to $B\backslash(\{0\}\times S^1)$ and then multiply the resulting polynomial by $u$.

Suppose that $B$ is 1-symmetric, i.e., $B$ is odd and its number of strands $s$ is odd. Let $u_j:[0,2\pi]\to\mathbb{C}$, $j=1,2,\ldots,s$, denote the functions that parametrise the $s$ strands of $B$, so that $B$ is given as the set of parametric curves
\begin{equation}
\bigcup_{t\in[0,2\pi]}\bigcup_{j=1}^s(u_j(t),t)\subset\mathbb{C}\times[0,2\pi].
\end{equation}
Now consider the braid $B'$ that is obtained by multiplying each strand by $\rme^{\rmi t}$, i.e., 
\begin{equation}
\bigcup_{t\in[0,2\pi]}\bigcup_{j=1}^s(u_j(t)\rme^{\rmi t},t)\subset\mathbb{C}\times[0,2\pi].
\end{equation}
Since $B$ is odd, $B'$ is $u$-even and it follows that all coefficients of the loop of polynomials $g$ corresponding to $B'$ are even functions, i.e., $g(u,\rme^{\rmi t})=\sum_{j=0}^sc_j(\rme^{\rmi t})u^j$ with $c_j(\rme^{\rmi (t+\pi)})=c_j(\rme^{\rmi t})$ for all $j=1,2,\ldots,s$ and all $t\in[0,2\pi]$. Since the space of trigonometric polynomials with even frequencies is $C^1$-dense in the space of $\pi$-periodic functions, we can approximate each $c_j$ by such a trigonometric polynomial. Since distinct roots of a complex polynomial depend smoothly on its coefficients, the roots of this approximating polynomial still form the braid $B'$ (up to isotopy), albeit in a different parametrisation, say $u_j'(t)$ instead of $u_j(t)\rme^{\rmi t}$. Now define $U_j(t)=u_j'(t)\rme^{-\rmi t}$. These provide a new parametrisation of the strands of $B$ and the resulting loop of polynomials $\tilde{g}(u,\rme^{\rmi t}):=\prod_{j=1}^s(u-U_j(t))=\sum_{j=0}^sA_j(\rme^{\rmi t})u^j$ is seen to have coefficients $A_j(\rme^{\rmi t})$ that are trigonometric polynomials and that satisfy $A_j(\rme^{\rmi (t+\pi)})=(-1)^{s-j}A_j(\rme^{\rmi t})$. This implies that all frequencies of $A_j$ are odd when $j$ is even and even when $j$ is odd. Then we can define $f$ as in Eq.~\eqref{eq:fg} with the only difference that now $k$ is a sufficiently large odd integer.

Since the roots of $\tilde{g}(\cdot,\rme^{\rmi t})$ are distinct for all $t$, the polynomial has a weakly isolated singularity. The coefficient of $u^j$ in $f$ takes the form $(v\overline{v})^{k(s-j)}A_j\left(\tfrac{v}{\sqrt{v\overline{v}}}\right)$. It follows from the parity condition on the frequencies of $A_j$ that the resulting term is a semiholomorphic polynomial, i.e., all powers of $v\overline{v}$ are natural numbers. Thus $f$ is a semiholomorphic polynomial and by construction it is radially weighted homogeneous with weight vector $(k,1)$ and inner non-degenerate.

Lastly, suppose that $B$ is $2^K$-symmetric with $K>0$ and $2^K$ a divisor of $s$. By Lemma~\ref{lem:circles} its corresponding loop of polynomials $g$ is the product $\prod_{j=1}^{s/2^K}(u^{2^K}-a_j(t))$ for some functions $a_j(t):[0,2\pi]\to\mathbb{C}$, $j=1,2,\ldots,s/2^K$. Expanding the product yields $g(u,\rme^{\rmi t})=\sum_{j=0}^s u^jc_j(\rme^{\rmi t})$, where $c_j(\rme^{\rmi t})\neq 0$ implies that $j$ is divisible by $2^K$.

Since $B$ is $2^K$-symmetric, the roots of $g(\cdot,\rme^{\rmi (t+\pi)})$ are exactly the roots of $g(\cdot,\rme^{\rmi t})$ but rotated by $\rme^{\rmi \pi/2^K}$. Therefore $g(u\rme^{-\rmi \pi/2^K},\rme^{\rmi (t+\pi)})$ is equal to $g(u,\rme^{\rmi t})$ times some non-zero complex constant. Since $g(\cdot,\rme^{\rmi t})$ is monic for all $t$, this constant must be $-1$ if $s/2^K$ is odd and $+1$ if $s/2^K$ is even.

%check this
It then follows that $c_j(\rme^{\rmi(t+\pi)})=(-1)^{(s+j)/2^K}c_j(\rme^{\rmi t})$. We can thus approximate the function $c_j$ by a trigonometric polynomial with only even frequencies if $j/2^K$ has the same parity as $s/2^K$ and by a trigonometric polynomial with only odd frequencies if $j/2^K$ has a different parity from $s/2^K$. The zeros of the resulting loop of polynomials $\tilde{g}$ still form the braid $B$. As in the previous cases, we can now construct the corresponding radially weighted homogeneous semiholomorphic polynomial $f$ via Eq.~\eqref{eq:fg}, where $k$ is a sufficiently large natural number of parity $s/2^K$. The proof that $f$ is inner non-degenerate and thus has a weakly isolated singularity is completely analogous to the previous cases (the roots of $\tilde{g}(\cdot,t)$ are distinct for all $t$).

In all cases, we can obtain a polynomial whose link is the union of the closure of $B$ and its braid axis by multiplying the polynomial for $B$ by $v$. By the analogue of Lemma~\ref{lem:uconv} the result is still inner non-degenerate.
\end{proof}

Together with Lemma~\ref{lem1} this completely characterises the links of weakly isolated singularities of radially weighted homogeneous, inner non-degenerate semiholomorphic polynomials.

The arguments from \cite{AraujoBodeSanchez} imply that all results from this section remain true for inner non-degenerate mixed functions with semiholomorphic principal part and exactly one compact 1-face in their Newton boundary, since terms above the Newton boundary do not affect the link type of the singularity.

Before we turn our attention to mixed functions, we compare our findings for semiholomorphic polynomials with the well-established theory for holomorphic polynomials, which are of course a special case of our results. In particular, we want to emphasize the similarity (and differences) between the description of $2^K$-symmetric braids and torus braids. 

As above let $(u_j(t),t)\subset\mathbb{C}\times[0,2\pi]$, $j=1,2,\ldots,s$, $u_j:[0,2\pi]\to\mathbb{C}$, be a parametrisation of the strands of a $2^K$-symmetric braid. Then for each fixed value of $t$ the complex numbers $u_j(t)$, $j=1,2,\ldots,s$, lie on concentric circles, with $2^K$ of them placed symmetrically (equidistantly) along each circle. As $t$ varies, these points move on the circles, but also the radii of the circles are allowed to vary. In particular, the number of circles can vary with $t$ and circles can interchange their positions, resulting in non-trivial permutations as $t$ goes from $0$ to $2\pi$.

The holomorphic setting is a special case of this scenario, where the radii of the circles are not allowed to vary with $t$. Each circle for a fixed value of $t$ thus corresponds to a torus in $\mathbb{C}\times S^1$ and therefore each component of the braid closure must be a torus link. The tori are nested in precisely the way discussed in \cite{AraujoBodeSanchez}. Note that cable links do not arise in this way. They are the links of singularities of holomorphic polynomials whose Newton principal part is degenerate.

\section{Links of radially weighted homogeneous mixed functions}\label{sec3}

In this section we generalise the results from Section~\ref{sec2} to mixed functions. Many of the proofs will be conceptually very similar to the ones from the previous section and only deviate in some technical details.

\begin{lemma}\label{lem1mix}
Let $f:\mathbb{C}^2\to\mathbb{C}$ be a radially weighted homogeneous, mixed polynomial with weight vector $P=(p_1,p_2)$, $\gcd(p_1,p_2)=1$. Then $V_f$ is $u$-even if $p_1$ is even and $v$-even if $p_2$ is even. Otherwise $V_f$ is odd.
\end{lemma}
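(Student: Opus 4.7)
The plan is to mirror the proof of Lemma~\ref{lem1} but now track how each of the involutions $\tau_u$, $\tau_v$, and $\tau_1$ acts on a general mixed monomial $u^{\mu_1}\bar u^{\mu_2}v^{\nu_1}\bar v^{\nu_2}$. The radial weighted homogeneity pins the support of $f$ to a single line, so it will suffice to show that each monomial associated to a given lattice point $(\mu,\nu)$ on the Newton boundary transforms by the same sign, independent of the internal split between $\mu_1,\mu_2$ (resp.\ $\nu_1,\nu_2$). Since $\tau_u$ multiplies $v$ and $\bar v$ both by $-1$, a monomial gets a factor $(-1)^{\nu_1+\nu_2}=(-1)^\nu$; similarly $\tau_v$ contributes $(-1)^\mu$ and $\tau_1$ contributes $(-1)^{\mu+\nu}$. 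Thus the dependence on the lattice point is entirely through $\mu$ and $\nu$, not on the particular bar/non-bar decomposition, so I may run exactly the same parity argument as in the semiholomorphic case.

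Next I would parametrise the support. Write the endpoints of the unique compact $1$-face as $(b,n)$ and $(s,a)$; then every $(\mu,\nu)\in\mathrm{supp}(f)$ has the form $(b+jp_2,\,n-jp_1)$ for $j=0,1,\ldots,(s-b)/p_2$. The three cases follow by reading off parities.

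\begin{itemize}
\item If $p_1$ is even, then $\nu=n-jp_1$ has the same parity as $n$ for every $j$, so the expansion
\[
f(\tau_u(u,v))=\sum_{j}\sum_{\mu_1+\mu_2=b+jp_2,\;\nu_1+\nu_2=n-jp_1}(-1)^{n-jp_1}c_{\mu_1,\mu_2,\nu_1,\nu_2}u^{\mu_1}\bar u^{\mu_2}v^{\nu_1}\bar v^{\nu_2}
\]
equals $(-1)^n f(u,v)$, whence $V_f$ is $u$-even.

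\item If $p_2$ is even, the symmetric argument with $\tau_v$ and the factor $(-1)^\mu=(-1)^{b+jp_2}=(-1)^b$ shows $f(\tau_v(u,v))=(-1)^b f(u,v)$, so $V_f$ is $v$-even.

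\item If both $p_1$ and $p_2$ are odd (the only remaining case, since $\gcd(p_1,p_2)=1$), then $\mu+\nu=(b+n)+j(p_2-p_1)$, and $p_2-p_1$ is even, so $\mu+\nu\equiv b+n\pmod 2$ for every $j$. Consequently $f(\tau_1(u,v))=(-1)^{b+n} f(u,v)$ and $V_f$ is odd.
\end{itemize}

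No genuine obstacle arises: the point is simply that the coprimality $\gcd(p_1,p_2)=1$ forces at most one of $p_1,p_2$ to be even and makes $p_2-p_1$ even when both are odd, so the three cases are exhaustive and mutually compatible with the stated conclusion. The only bookkeeping care needed is to note that the parity of each coordinate of the supporting lattice point is locked once $j$-dependence drops out, which is precisely the mechanism already used in Lemma~\ref{lem1}.
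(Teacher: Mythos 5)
Your proof is correct and follows essentially the same route as the paper: parametrise the support along the unique compact 1-face as $(b+jp_2,\,n-jp_1)$, observe that each involution contributes a sign depending only on the parity of $\mu$, $\nu$, or $\mu+\nu$, and check that this parity is $j$-independent in each of the three cases. The paper's own proof of Lemma~\ref{lem1mix} uses exactly this parity bookkeeping, including the observation that $p_2-p_1$ is even when both weights are odd.
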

\begin{proof}

The proof follows the same reasoning as that of Lemma~\ref{lem1}. Using the same notation, the intersection points of the line through $(b,n)$ and $(s,a)$ with the integer lattice is again given by the set of points $(b,n)+j(p_2,-p_1)$ with $j=0,1,2,\ldots,\tfrac{s-b}{p_2}$.

Suppose that $p_1$ is even. Then all values of $\nu$ for which there is a $\mu\in\mathbb{N}_0$ with $(\mu,\nu)\in supp(f)$ have the same parity. Analogously, to Eq.~\eqref{eq:tauu} we have
\begin{equation}
f(\tau_u(u,v))=\sum_{j=0}^{(s-b)/p_1}\sum_{\nu_1+\nu_2=n-jp_1}\sum_{\mu_1+\mu_2=b+jp_2}(-1)^{n-jp_1}c_{\mu_1,\mu_2,\nu_1,\nu_2}u^{\mu_1}\bar{u}^{\mu_2}v^{\nu_1}\bar{v}^{\nu_2},
\end{equation}
where 
\begin{equation}
f(u,v)=\sum_{j=0}^{(s-b)/p_1}\sum_{\nu_1+\nu_2=n-jp_1}\sum_{\mu_1+\mu_2=b+jp_2}c_{\mu_1,\mu_2,\nu_1,\nu_2}u^{\mu_1}\bar{u}^{\mu_2}v^{\nu_1}\bar{v}^{\nu_2}.
\end{equation}
Since the parity of $n-jp_1$ is independent of $j$, we have $f(\tau_u(u,v))=(-1)^nf(u,v)$ and so $V_f$ is $u$-even.

By the same argument $V_f$ is $v$-even if $p_2$ is even.

Now suppose that both $p_1$ and $p_2$ are odd. Then the parity of $\mu=b+jp_2$ and of $\nu=n-jp_1$ alternates as $j$ increases from $0$ to $(s-b)/p_2$. We thus get
\begin{equation}
f(\tau_1(u,v))=\sum_{j=0}^{(s-b)/p_1}\sum_{\nu_1+\nu_2=n-jp_1}\sum_{\mu_1+\mu_2=b+jp_2}(-1)^{n+b+j(p_2-p_1)}c_{\mu_1,\mu_2,\nu_1,\nu_2}u^{\mu_1}\bar{u}^{\mu_2}v^{\nu_1}\bar{v}^{\nu_2}.
\end{equation}
Since both $p_1$ and $p_2$ are odd, the factor $(-1)^{n+b+j(p_2-p_1)}$ does not depend on $j$ and so $f(\tau_1(u,v))=(-1)^{n+b}f(u,v)$. In particular, $V_f$ is odd.
\end{proof}

Since a 3-sphere of any radius that is centered at the origin is invariant under the transformations $\tau_u$, $\tau_v$ and $\tau_1$, it follows that all links of singularities of radially weighted homogeneous mixed polynomials must be $u$-even, $v$-even or odd.

In the semiholomorphic case we obtained the stronger $2^K$-symmetry if $p_1$ is odd. In the general mixed case, this is not guaranteed because the complex conjugate $\overline{u}$ is affected by the transformation $(u,\rme^{\rmi t})\mapsto(u\rme^{\rmi \pi/2^K},\rme^{\rmi (t+\pi)})$ differently than $u$ (unless $K=0$). Therefore, the argument from Lemma~\ref{lem1} does not hold anymore.

\begin{lemma}\label{lem:convcases}
Let $f:\mathbb{C}^2\to\mathbb{C}$ be a radially weighted homogeneous, inner non-degenerate mixed polynomial with weight vector $P=(p_1,p_2)$, $\gcd(p_1,p_2)=1$ with $p_1$ even. Then the link of the singularity of $f$ is fixed-point-free $u$-even if $f$ is $u$-convenient. The link is the union of a fixed-point-free $u$-even link and the corresponding set of fixed points if $f$ is not $u$-convenient.
\end{lemma}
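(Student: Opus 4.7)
Lemma~\ref{lem1mix} already gives that $L:=V_f\cap S^3$ is $u$-even, so the remaining content is to determine how $L$ meets the fixed-point circle $C:=\{(u,0):|u|=1\}$ of $\tau_u$. The plan is a Wirtinger-derivative analysis at points of $C$, driven by the following parity observation on the 1-face. Since the lattice points on the unique 1-face are $(b+jp_2,n-jp_1)$ for $j=0,\ldots,N$ and $p_1$ is even, every $\nu$-coordinate in $supp(f)$ has the same parity as the lower-vertex height $a$. In particular $\nu=0$ appears iff $f$ is $u$-convenient, and a direct range check (using $0\leq (n-1)/p_1\leq N$) shows that $\nu=1$ appears in $\Gamma_f$ iff $a=1$. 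Consequently, at any point $(u,0)$, only $\nu=0$ terms contribute to $\partial_u f$ and $\partial_{\bar u} f$, and only $\nu=1$ terms contribute to $\partial_v f$ and $\partial_{\bar v} f$.

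In the $u$-convenient case, $a=0$ and the $v$-derivatives of $f$ vanish identically on $C$. Suppose for contradiction that $(u_0,0)\in L$ for some $u_0\neq 0$, and set $g_0(u):=f(u,0)$. By radial weighted homogeneity, $g_0$ is homogeneous of degree $s$ in $(u,\bar u)$ under real positive scaling, so $g_0(\lambda u_0)=\lambda^s g_0(u_0)=0$ for every $\lambda>0$, and the whole half-ray $\{\lambda u_0:\lambda>0\}$ lies in $V_{g_0}$. In particular $u_0$ is not isolated in $V_{g_0}$, which forces the real Jacobian determinant $|\partial_u g_0|^2-|\partial_{\bar u}g_0|^2$ to vanish at $u_0$. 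Combined with the vanishing $v$-derivatives, this makes $(u_0,0)$ a critical point of $f$ in $V_f$ with $u_0\neq 0$, contradicting condition (i) of inner non-degeneracy for $f_{P_N}=f$. Hence $L\cap C=\emptyset$, and $L$ is fixed-point-free $u$-even.

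In the non-$u$-convenient case every term of $f$ has $\nu\geq a\geq 1$, so $f\equiv 0$ on $\{v=0\}$ and $C\subseteq L$. If $a\geq 2$, then no $\nu=1$ terms occur either, so all four Wirtinger derivatives of $f$ vanish identically on $C$, making $C\setminus\{0\}\subset V_f\cap\Sigma_f$ and contradicting inner non-degeneracy; hence necessarily $a=1$. Writing $A(u):=\partial_v f(u,0)$ and $B(u):=\partial_{\bar v}f(u,0)$ (both homogeneous of degree $s$ in $(u,\bar u)$, coming from the $\nu=1$ terms at the vertex $(s,1)$), a point $(u,0)$ with $u\neq 0$ is critical for $f$ exactly when $|A(u)|=|B(u)|$ (the $u$-derivatives vanish there), so inner non-degeneracy forces $|A(u)|\neq|B(u)|$ for every $u\neq 0$. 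The Taylor expansion $f(u,v)=A(u)v+B(u)\bar v+O(|v|^2)$ near any $(u_0,0)\in C$ then has a non-degenerate real-linear part in $(v,\bar v)$, so solving $f=0$ for $v$ in terms of $u$ by the implicit function theorem shows that $V_f$ locally coincides with $\{v=0\}$. Thus $C$ is a union of connected components of $L$, and $L':=L\setminus C$ is a fixed-point-free $u$-even sublink realising $L=L'\sqcup C$. The main obstacle is this last local step: converting the pointwise non-degeneracy $|A(u)|\neq|B(u)|$ into the structural statement that no additional sheet of $V_f$ meets $C$, via a careful implicit function argument.
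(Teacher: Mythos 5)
Your argument is correct, and for the $u$-convenient case it essentially matches the paper's: both proofs derive $\partial_v f = \partial_{\bar v} f = 0$ on $\{v=0\}$ from the parity of the second coordinate, and both then show that a putative zero $(u_0,0)$ with $u_0\neq 0$ would have to be a critical point of $f$ on $V_f$, contradicting condition (i) of inner non-degeneracy applied to $f_{P_N}=f$. (The paper writes $f_{\Delta_2}(R\rme^{\rmi\varphi},v)=R^s\Phi(\rme^{\rmi\varphi})$ and uses $\partial f/\partial R=0$; you use homogeneity of $g_0(u)=f(u,0)$ along rays and the vanishing of the real Jacobian determinant $|\partial_u g_0|^2-|\partial_{\bar u}g_0|^2$. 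These are two wordings of the same observation.)

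For the non-$u$-convenient case, however, you have done substantially more work than necessary, and the step you flag as ``the main obstacle'' is actually automatic. Since $f$ is inner non-degenerate, the origin is a weakly isolated singularity, so for small $\epsilon$ the intersection $L=V_f\cap S^3_\epsilon$ is a closed $1$-manifold. The circle $C=\{(u,0):|u|=\epsilon\}$ is a connected, closed curve contained in $L$, hence it is automatically a connected component of $L$; the remainder $L':=L\setminus C$ is the union of the other components and is therefore disjoint from $C$, i.e.\ fixed-point-free. Since $\tau_u$ fixes $C$ pointwise and $\tau_u(L)=L$ by Lemma~\ref{lem1mix}, it follows that $\tau_u(L')=L'$. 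That is the whole argument in the paper. Your detour through forcing $a=1$, extracting the coefficients $A(u),B(u)$ of the $\nu=1$ terms, showing $|A(u)|\neq|B(u)|$, and invoking the implicit function theorem to exhibit $V_f=\{v=0\}$ locally is correct and gives a slightly more refined local picture, but none of it is needed: the manifold structure of the link already separates $C$ from $L'$. In addition, you never explicitly state why $L'$ is $u$-even (you only assert it); although it is obvious, that is exactly the one sentence the paper does spell out, while the IFT argument you emphasize is the one step that can be omitted entirely.
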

\begin{proof}
If $f$ is not $u$-convenient, then $f(u,0)=0$ for all $u\in\mathbb{C}$. Since it is inner non-degenerate, it has an isolated singularity and so its link consists of the union of the unknot $O$ formed by the set $v=0$ and remaining components, say $L'$. Since $O\cup L'$ must be an $u$-even link by Lemma~\ref{lem1mix} and $O$ is fixed pointwise by $\tau_u$, it follows that $L'$ must be $u$-even. Since it does not intersect $O$, it is fixed-point-free.

If $f$ is $u$-convenient, then there is a vertex $\Delta_2$ of the Newton boundary of $f$ on the horizontal axis. Since $p_1$ is even, it follows that the second coordinate $\nu$ of $(\mu,\nu)\in\mathbb{N}_0^2$ is even for all $(\mu,\nu)\in supp(f)$. This implies that $f_v(u,0)=f_{\bar{v}}(u,0)=0$ for all $u\in\mathbb{C}$. We already know that the link of the singularity of $f$ is $u$-even. We need to show that it does not intersect the set of fixed points $v=0$. Suppose that there is a $u_*\in\mathbb{C}^*$ such that $f(u_*,0)=f_{\Delta_2}(u_*,0)=0$. Note that $f_{\Delta_2}$ can be written as $f_{\Delta_2}(R\rme^{\rmi\varphi},v)=R^s\Phi(\rme^{\rmi \varphi})$ for some trigonometric polynomial $\Phi(\rme^{\rmi\varphi})$. Writing $u_*=R_*\rme^{\rmi \varphi_*}$, we have that $\Phi(\rme^{\rmi\varphi_*})=0$. But then
\begin{equation}
f_v(u_*,0)=f_{\bar{v}}(u_*,0)=\frac{\partial f}{\partial R}(u_*,0)=0,
\end{equation}
so that $(u_*,0)$ is a critical point of $f=f_P$ with $f(u_*,0)=f_P(u_*,0)=0$, contradicting $f$ being inner non-degenerate.
\end{proof}

Of course, the corresponding result holds if we swap $u$ and $v$ as well as $p_1$ and $p_2$. Thus Lemma~\ref{lem1mix} and Lemma~\ref{lem:convcases} show that every link of a weakly isolated singularity is of the form specified in Theorem~\ref{thm:main1}.

In order to complete the proof of Theorem~\ref{thm:main1} we need to construct radially weighted homogeneous, inner non-degenerate mixed polynomials for any link with the given symmetries. For this we need some results about approximations of manifolds by real algebraic varieties.
\begin{lemma}[Lemma 3 in \cite{calcut}]\label{calcut}
Let $V\subset\mathbb{R}^n$ be a codimension-1 real algebraic set with compact singular set. Let $M\subset\mathbb{R}^n$ be a proper smooth, codimension-1 submanifold such that for some $R>0$, $M\backslash D_R^n=V\backslash D_R^n$, where $D_R^n$ denotes the $n$-ball of radius $R$ centered at the origin in $\mathbb{R}^n$. Then there exists a non-singular real algebraic set $W\subset\mathbb{R}^n$ that is properly isotopic to $M$. In fact, we may suppose there is a smooth isotopy $h_t:\mathbb{R}^n\to\mathbb{R}^n$ and a radius $R'>0$ such that $h_0$ is the identity $h_1(M)=W$ and $|h_t(x)|=|x|$ for all $t\in[0,1]$ and all $x$ with $|x|\geq R'$.
\end{lemma}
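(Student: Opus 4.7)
The plan is to combine polynomial approximation with transversality. Since $M$ agrees with $V$ outside $D_R^n$, the task reduces to producing an algebraic repair of $V$ inside a slightly larger ball, so that the resulting variety is nonsingular and isotopic to $M$ through an isotopy that does not alter radial coordinates at infinity. First I would fix a polynomial $p$ with $V = p^{-1}(0)$. Since the singular set of $V$ is compact, I may enlarge $R$ if necessary so that $\nabla p$ is nowhere zero on the complement of $D_R^n$. Next I would choose a smooth defining function $\Phi: \mathbb{R}^n \to \mathbb{R}$ with $\Phi^{-1}(0) = M$ and $0$ a regular value of $\Phi$; such a $\Phi$ exists locally from a tubular neighborhood construction near $M$, and I can patch it against $p$ using a cutoff supported in the region where $M = V$, arranging that $\Phi = p$ identically outside some ball $D_{R_1}^n$ with $R_1 > R$.

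Next I would apply the Weierstrass approximation theorem on a large ball $D_{R_2}^n$ with $R_2 \gg R_1$ to obtain a polynomial $q = p + r$, where the polynomial $r$ is $C^1$-close to $\Phi - p$ on $D_{R_2}^n$. Crucially, because $\Phi - p$ vanishes identically on the annulus $R_1 < |x| < R_2$, the polynomial $r$ can be taken to be $C^1$-small there. By Sard's theorem applied to $q$, a generic small value $t_0$ is a regular value of $q$, so $W \defeq q^{-1}(t_0)$ is a nonsingular real algebraic hypersurface that is $C^1$-close to $M$ on $D_{R_2}^n$ and contained in a thin tubular neighborhood of $M$ on the outer annulus.

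Finally I would construct the isotopy in two stages. The Tubular Neighborhood Theorem provides an ambient diffeotopy carrying $M$ onto $W$ by sliding along normal fibers of $M$; this isotopy is $C^0$-small wherever $W$ is $C^0$-close to $M$. On the outer region $|x| > R'$, for a chosen $R_1 < R' < R_2$ on which $r$ is very small, the normal-fiber motion can be post-composed with a radial projection returning each point to the sphere of its initial radius, while on the inner region $|x| < R'$ the original normal-fiber isotopy is retained; a smooth interpolation in a collar around $|x| = R'$ glues the two constructions. The main technical obstacle will be checking that this composite map is still a global diffeomorphism realizing $h_1(M) = W$: the key input is that if $r$ is tight enough on the outer annulus then the implicit function theorem guarantees each radial ray from the origin meets $W$ in a unique point near its intersection with $M$, so the radial correction is well-defined, smooth, and compatible with the transport from $M$ to $W$.
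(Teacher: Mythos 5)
The paper does not prove this lemma itself; it simply cites it as Lemma~3 of Calcut--King, so there is no internal proof to compare against. Judging the proposal on its own merits, there is a genuine gap at the Weierstrass approximation step, and it is precisely the gap that makes this a nontrivial result in the noncompact setting.

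You approximate $\Phi - p$ by a polynomial $r$ in the $C^1$-norm on the compact ball $D_{R_2}^n$ and then set $W = q^{-1}(t_0)$ with $q = p + r$. But $W$ is the zero set of $q - t_0$ on all of $\mathbb{R}^n$, not merely on $D_{R_2}^n$. Outside $D_{R_2}^n$ you have no control on $r$ whatsoever. Weierstrass gives no bound on $\deg r$ — as you tighten the $C^1$-error, the degree of the approximating polynomial generically goes up — so typically $\deg r > \deg p$, in which case $r$ dominates $p$ at infinity and $q^{-1}(t_0) \setminus D_{R_2}^n$ can look nothing like $V \setminus D_{R_2}^n$. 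It may acquire entirely new noncompact components, or have a different topology of ends. In that situation $W$ is not properly isotopic to $M$, and the required $|h_t(x)|=|x|$ for $|x|\geq R'$ is certainly unachievable. Your later radial-projection patching assumes $W$ stays in a thin tubular neighborhood of $M$ on ``the outer annulus,'' but you established closeness only on the bounded annulus $R_1<|x|<R_2$ inside the ball where Weierstrass applies, and closeness is lost as soon as $|x|>R_2$.

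This unbounded behavior at infinity is exactly what Calcut--King's proof must contend with, and ordinary polynomial approximation on a large ball does not suffice; one needs a relative algebraization technique with explicit control of the perturbation at infinity (for instance, working with perturbations whose leading-degree behavior is dominated by that of $p$, or passing through a compactification and then tracking what the algebraic set looks like near the point at infinity). As written, your argument proves that there exists a nonsingular algebraic set that looks like $M$ on a large ball, but does not prove that any such set is properly isotopic to $M$, let alone via a norm-preserving isotopy near infinity.
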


The following lemma is a combination of typical approximation results in real algebraic geometry, including Lemma~\ref{calcut}. However, the way in which these classical results are combined to prove the lemma is far from trivial.

\begin{lemma}\label{lem:approx}
Let $L$ be a link in $\mathbb{C}\times S^1$. Then there is a complex-valued map $g:\mathbb{C}\times S^1\to\mathbb{C}$, $g(u,\rme^{\rmi t})=\sum_{j_1,j_2\geq 0}\sum_{\ell\in\mathbb{Z}}c_{j_1,j_2,\ell}u^{j_1}\overline{u}^{j_2}\rme^{\rmi \ell t}$ that is a polynomial in variables $u$, $\overline{u}$, $\rme^{\rmi t}$ and $\rme^{-\rmi t}$ such that the zeros of $g$ are an arbitrarily close $C^1$-approximation of $L$.
\end{lemma}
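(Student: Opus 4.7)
The plan is to reduce this codimension-two approximation to two applications of codimension-one approximation, each handled by Lemma~\ref{calcut}.

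First, I would construct a smooth complex-valued function $\tilde g:\C\times S^1\to\C$ such that $\tilde g^{-1}(0)$ is $C^\infty$-isotopic to $L$, $0$ is a regular value, and $\tilde g(u,\rme^{\rmi t})=u$ outside a compact set $K\supset L$ (so that $\tilde g$ has no zeros outside $K$). The existence of such a $\tilde g$ uses that the normal bundle of the $1$-submanifold $L$ in the orientable $3$-manifold $\C\times S^1$ is trivial, so near $L$ the link is cut out by a smooth $\C$-valued coordinate function coming from a framing of the normal bundle. A partition-of-unity/bump construction then extends this to a global smooth $\tilde g$; the transition to the formula $u$ outside $K$ can be arranged through $\C^*$ without introducing new zeros, invoking that $\C\times S^1\simeq S^1$ has $H^2=0$ so that the relevant extension problem into $\C^*$ is unobstructed (after possibly adjusting the framing or isotoping $L$).

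Next, writing $\tilde g=\tilde g_1+\rmi\tilde g_2$ yields two smooth real hypersurfaces $M_k=\tilde g_k^{-1}(0)\subset\C\times S^1$ meeting transversely along $L$, which outside $K$ coincide with the real algebraic hypersurfaces $\{\mathrm{Re}(u)=0\}$ and $\{\mathrm{Im}(u)=0\}$, respectively. Viewing $\C\times S^1$ as the real algebraic hypersurface $\{v\bar v=1\}\subset\mathbb{R}^4$, I would then apply Lemma~\ref{calcut}, in a version adapted to submanifolds of this ambient algebraic variety, to each $M_k$, producing nonsingular real algebraic sets $W_k=\{P_k=0\}\cap(\C\times S^1)$ that approximate $M_k$ in $C^1$, where $P_k$ is a real polynomial in $u,\bar u,\rme^{\rmi t},\rme^{-\rmi t}$. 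Setting $g=P_1+\rmi P_2$ produces the required complex polynomial, and by the $C^1$-stability of transverse intersections, $g^{-1}(0)=W_1\cap W_2$ is $C^1$-close to $M_1\cap M_2=L$.

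The principal obstacle is in this second step: Lemma~\ref{calcut} is stated for codimension-one submanifolds of $\mathbb{R}^n$, so one must either invoke a relative version valid for codimension-one submanifolds of an algebraic ambient variety, or extend each $\tilde g_k$ from $\C\times S^1$ to $\mathbb{R}^4$ in a manner compatible with the defining polynomial $v\bar v-1$, apply Lemma~\ref{calcut} in $\mathbb{R}^4$, and restrict back, checking that the restricted algebraic set still $C^1$-approximates $M_k$. The role of the stipulation that $\tilde g$ agree with a polynomial outside $K$ is precisely to fit the hypothesis of Lemma~\ref{calcut} that $M$ coincide with an algebraic set outside a compact ball. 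Once this real-algebraic adaptation is settled, the construction of $\tilde g$ and the verification that the transverse intersection $W_1\cap W_2$ approximates $L$ in $C^1$ are standard.
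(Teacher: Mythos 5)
Your overall strategy---write $L$ as the transverse intersection of two real codimension-one hypersurfaces and approximate each by a nonsingular real algebraic hypersurface via Lemma~\ref{calcut}---is exactly the strategy the paper uses, and you have correctly identified the central tool. But the obstacle you flag at the end is not a technicality you can defer; it is the actual content of the proof, and neither of your suggested resolutions is worked out. Lemma~\ref{calcut} requires $M$ to be a \emph{proper codimension-one submanifold of $\mathbb{R}^n$} that coincides with a real algebraic set outside a ball. Your $M_k=\tilde g_k^{-1}(0)$ are surfaces inside the 3-manifold $\mathbb{C}\times S^1$, not hypersurfaces of $\mathbb{R}^4$. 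There is no ``relative'' Calcut--King lemma in the paper's toolkit, and the alternative of extending $\tilde g_k$ from $\mathbb{C}\times S^1$ to $\mathbb{R}^4$ is genuinely delicate: an arbitrary extension may acquire spurious or non-manifold zeros, may fail to be proper, and the resulting algebraic hypersurface need not meet $\{v\bar v=1\}$ transversely, so its intersection with $\mathbb{C}\times S^1$ need not approximate $M_k$. (A smaller gap: for a link $L$ whose strands wind around the $S^1$ factor, you cannot have $\tilde g=u$ at infinity; the winding number forces the model at infinity to have the matching degree.)

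The paper avoids the extension problem altogether by building $M_1,M_2$ in $\mathbb{R}^4$ \emph{directly}, rather than starting with a function on $\mathbb{C}\times S^1$. Concretely: one first constructs a closed orientable surface $F\subset\mathbb{R}^4$ meeting $\mathbb{C}\times S^1$ transversely in $L$ plus split unknots (by gluing two Seifert surfaces of $L\subset S^3$, one pushed into the 4-ball and one out, and isotoping the extra intersections out to large $|u|$). One then compactifies $\mathbb{R}^4$ to $S^4$, realises the framed surface $F$ as a regular level set $\Phi^{-1}(y)$ of a Morse map $\Phi:S^4\to S^2$, and takes $M_1,M_2$ to be $\Phi^{-1}(\gamma_1),\Phi^{-1}(\gamma_2)$ for two loops through $y$ meeting transversely in two points. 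These are automatically compact hypersurfaces of $S^4$, hence proper hypersurfaces of $\mathbb{R}^4$ once $\infty$ is removed, and after a further isotopy fixing $L$ they agree with affine hyperplanes $\{\operatorname{Re}(u)=a_i\}$, $\{\operatorname{Im}(u)=b_i\}$ outside a large ball, so Lemma~\ref{calcut} applies verbatim in $\mathbb{R}^4$. Restricting the resulting polynomial map $(\tilde g_1,\tilde g_2):\mathbb{R}^4\to\mathbb{R}^2$ to $\mathbb{C}\times S^1$ gives $g$. One also has to arrange (by an isotopy) that the preimage of the \emph{second} intersection point of $\gamma_1,\gamma_2$---a spurious component of $M_1\cap M_2$ not present in your construction---stays off $\mathbb{C}\times S^1$. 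In short, your plan and the paper's meet in the middle at Lemma~\ref{calcut}, but the step you called ``the principal obstacle'' is precisely where the paper's Seifert-surface/Morse-map construction does the work, and without a replacement for it your argument does not close.
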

\begin{proof}
First note that there is a closed (compact without boundary), orientable surface $F$ in $\mathbb{R}^4$ that intersects $\mathbb{C}\times S^1$ transversely in $L$ and a number of split unknots. This surface can be obtained by considering $L$ as a link in $S^3\subset\mathbb{R}^4$ and pushing a Seifert surface of $L$ inside the 4-ball. A second Seifert surface can be pushed outside of the 4-ball, so that the intersection with $\mathbb{C}\times S^1$ contains the link $L$. The desired surface $F$ is then the union of these two Seifert surfaces joined along their common boundary $L$. Since $F$ is compact, there are more intersections with $\mathbb{C}\times S^1$, but after an isotopy of $F$ that fixes $L$ we can assume that these intersections occur at points whose $u$-coordinate has modulus bigger than some chosen constant $K$ and that the isotoped surface intersects $\{u:|u|>K\}\times D$ in a finite number of parallel disks, whose boundary circles are the claimed split unknots. Here $D$ denotes the unit disk.

Now identify $\mathbb{R}^4$ and the point at infinity with $S^4$. Then $F$ is a compact, framed submanifold of $S^4$ and therefore equal to the regular level set $\Phi^{-1}(y)$ for some smooth Morse map $\Phi:S^4\to S^2$ with regular value $y$. Now take two smooth, simple loops $\gamma_1$ and $\gamma_2$ on $S^2$ such that they are disjoint from $\Phi(\infty)$ and critical values of $\Phi$, and they intersect each other transversely in exactly two points, one of which is $y$. Such loops exist, since $y$ is a regular value and hence there is a neighbourhood of $y$ consisting of regular values.

Therefore, the preimage sets $\Phi^{-1}(\gamma_1)$ and $\Phi^{-1}(\gamma_2)$ are smooth 3-manifolds $M_1$ and $M_2$ in $S^4$ that intersect in $F$ (the level set of $y$) and some other components (the level set of the second intersection point of $\gamma_1$ and $\gamma_2$). Furthermore, $M_1$ and $M_2$ are compact, since $S^4$ is compact and thus $\Phi$ is proper.

Now we may deform $M_1$ and $M_2$ (and with them their intersections), while leaving $L\subset F\subset M_1\cap M_2$ fixed. First of all, we can deform $M_1$ and $M_2$ such that the additional components of their intersection do not meet $\mathbb{C}\times S^1$. Secondly, we can assume that the intersections of $M_1$ and $M_2$ with $\{u:|u|>K\}\times D$ are 3-balls, intersecting transversely in the disks $F\cap(\{u:|u|>K\}\times D)$. We now fix some sufficiently large numbers $K'$, $K''$ and push these 3-balls in direction of higher and higher $|u|$, so that the intersection of $M_i$, $i=1,2$, with $D^4_{K''}\backslash D^4_{K'}$ consists of a finite number of copies of $S^2\times [0,1]$.

We deform $M_1$ and $M_2$ inside $D^4_{K''}\backslash D^4_{K'}$, so that for some $R_1, R_2$ with $K'<R_1<R_2<K''$ every component $C_i$ of the 3-manifold $M_1$ is given by the equation $\text{Re}(u)=a_i$ for some chosen real number $a_i\neq a_j$ if $i\neq j$ and similarly, the components of $M_2$ in this interval of spheres are given by $\text{Im}(u)=b_i$ for some complex numbers $b_i$. The isotoped surface $F$ in this interval of spheres therefore has constant $u$-coordinate.

\begin{figure}
\centering
\labellist
\Large
\pinlabel $a)$ at 500 4500
\pinlabel $b)$ at 3750 4500
\pinlabel $c)$ at 500 2400
\pinlabel $d)$ at 3750 2400
\pinlabel ${\color{red}F}$ at 2360 3640
\pinlabel $\mathbb{C}\times S^1$ at 2430 3070
\pinlabel $S^3$ at 1850 3400
\pinlabel $D^4_{K'}$ at 5350 3400
\pinlabel $D^4_{K''}$ at 5600 3000
\endlabellist
\includegraphics[height=9cm]{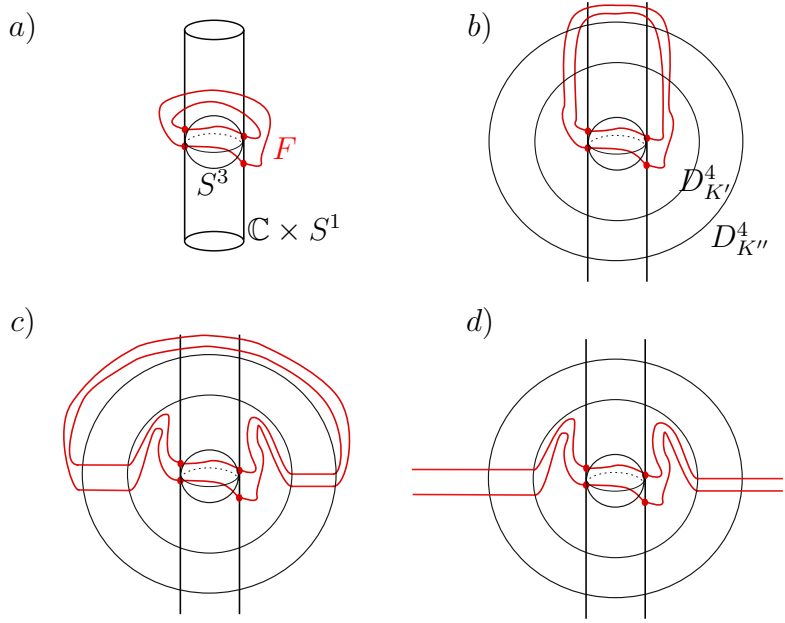}
\caption{Sketch of the deformation of the surface $F$. a) $F$ intersects $\mathbb{C}\times S^1$ in $L$ and a number of unknots. b) The unknots are pushed outside of some 4-ball $D^4_{K''}$. c) Between two 3-spheres $S^3_{K'}$, $S^3_{K''}$ of large radii the deformed $F$ can be taken to have constant $u$-coordinate. d) We obtain a surface that has constant $u$-coordinate outside some 4-ball $D^4_{K'}$ and intersects $\mathbb{C}\times S^1$ in $L$. \label{fig:calcut}}
\end{figure}

Remove $M_1\cap (\mathbb{R}^4\backslash D^4_{R_2})$ and $M_2\cap(\mathbb{R}^4\backslash D^4_{R_2})$ and continue the remainders of $M_1$ and $M_2$ in $M_1\cap (\mathbb{R}^4\backslash D^4_{R_2})$ as the solutions to the equations $\text{Re}(u)-a_i=0$ or $\text{Im}(u)-b_i=0$. Then $M_1$ and $M_2$ satisfy the conditions from Lemma~\ref{calcut} and can thus be isotoped to a real algebraic set.

The isotopy can be taken to be arbitrarily close to the identity on any compact subset of $\mathbb{R}^4$. In general, this is not true globally, specifically if the gradients $\nabla f_i$, $i=1,2$, of the polynomials $f_i$, $i=1,2$, that define $M_1$ and $M_2$ outside of some 4-ball go to the infinity. However, in our case, where the defining equations of $M_1$ and $M_2$ are linear, the gradients are constant outside some 4-ball. It follows from the proof of Lemma~3 in \cite{calcut} that the modulus of the vector field that is used to define the isotopy converges to zero outside some 4-ball. In particular, we can choose the isotopy arbitrarily close to the identity.

This implies that there is a polynomial $\tilde{g}=(\tilde{g}_1,\tilde{g}_2):\mathbb{R}^4\to\mathbb{R}^2$ such that $\tilde{g}_i^{-1}(0)$ approximates $M_i$. In particular, $\tilde{g}^{-1}(0)$ still intersects $\mathbb{C}\times S^1$ transversely in an arbitrarily close approximation of $L$. The desired polynomial $g$ is then simply the restriction of $\tilde{g}$ to $\mathbb{C}\times S^1$. 
\end{proof}

\begin{lemma}\label{lem:nondeg}
The polynomial $g(u,\rme^{\rmi t})=\sum_{j_1,j_2\geq 0}\sum_{\ell\in\mathbb{Z}}c_{j_1,j_2,\ell}u^{j_1}\overline{u}^{j_2}\rme^{\rmi \ell t}$ in Lemma~\ref{lem:approx} can be taken to satisfy the following property. Let $j$ be the maximum value of $j_1+j_2$ among all $(j_1,j_2)$ with $c_{j_1,j_2,\ell}\neq 0$. Then there is a unique pair $(j_1,j_2)$ with $j_1+j_2=j$ and $c_{j_1,j_2,\ell'}\neq 0$ for some $\ell'$. Furthermore, $\ell'$ is unique and equal to 0.
\end{lemma}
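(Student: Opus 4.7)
The strategy is to take the polynomial $g$ furnished by Lemma~\ref{lem:approx} and perturb it by a high-degree monomial in $u$ so that the top total $(u,\overline u)$-degree part becomes a single constant monomial. Writing $j_0$ for the maximum of $j_1+j_2$ over the nonvanishing coefficients of $g$, fix an integer $N>j_0$ and a small $\varepsilon>0$, and set
$$\widetilde g(u,\rme^{\rmi t}):=g(u,\rme^{\rmi t})+\varepsilon u^N.$$
The part of $\widetilde g$ of maximal total $(u,\overline u)$-degree then equals the single monomial $\varepsilon u^N$, giving the unique pair $(j_1,j_2)=(N,0)$ and the unique exponent $\ell'=0$. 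This is precisely the structural condition claimed by the lemma, so the only remaining task is to show that the zeros of $\widetilde g$ in $\mathbb C\times S^1$ are still an arbitrarily close $C^1$-approximation of $L$.

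To verify this I would split $\mathbb C\times S^1$ into three regions. Let $R_0$ be chosen so that $Z(g)\cap(\mathbb C\times S^1)\subset D_{R_0}\times S^1$ and fix some $R_1>R_0$. On $D_{R_1}\times S^1$ the perturbation satisfies $|\varepsilon u^N|\le\varepsilon R_1^N$, which can be made arbitrarily small, so the implicit function theorem applied to the transverse zero locus of $g$ shows that $Z(\widetilde g)\cap(D_{R_1}\times S^1)$ is a $C^1$-small perturbation of $Z(g)$. On the compact annulus $(\overline{D_{R_1}}\setminus D_{R_0})\times S^1$, $|g|$ attains a positive minimum $\mu$, so for $\varepsilon<\mu/R_1^N$ no new zeros appear there. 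For $|u|$ sufficiently large, the estimate $|g|\le C|u|^{j_0}$ combined with $N>j_0$ forces $|\varepsilon u^N|>|g|$, ruling out zeros of $\widetilde g$.

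The main obstacle, and the step I expect to be the hardest, lies in controlling the intermediate transition annulus $\{R_1\le|u|\le(C/\varepsilon)^{1/(N-j_0)}\}\times S^1$, where $|g|$ and $|\varepsilon u^N|$ may be comparable and extra zero components could a priori appear. I plan to resolve this by exploiting the flexibility in Lemma~\ref{lem:approx}: the two real 3-manifolds $M_1,M_2$ used in its construction can be adjusted (by adjoining auxiliary parallel affine-hyperplane components to the one with fewer strands if necessary) so that they have the same number $k$ of components outside a large 4-ball. A direct computation with $u=R\rme^{\rmi\varphi}$ then shows that the top $(u,\overline u)$-degree symbol of $g$ takes the form $R^k(\cos^k\varphi+\rmi\sin^k\varphi)$, which is nowhere vanishing on the 2-torus since $\cos^2+\sin^2=1$. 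This yields a uniform lower bound $|g|\ge c|u|^{j_0}$ for $|u|\ge R_0$, and the inequality $|g|>|\varepsilon u^N|$ then propagates throughout the transition annulus, so no extra components arise and $Z(\widetilde g)$ remains ambient isotopic to $Z(g)$, hence a $C^1$-close approximation of $L$.
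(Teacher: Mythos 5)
There is a genuine gap, and the chosen route cannot work as written. Adding a monomial $\varepsilon u^N$ with $N>j_0$ raises the total $(u,\overline u)$-degree of $g$ and therefore unavoidably creates new zeros of $\widetilde g=g+\varepsilon u^N$ on $\mathbb C\times S^1$, at radius roughly $|u|\sim\varepsilon^{-1/(N-j_0)}$. To see that your argument for the transition annulus cannot close this, take $g(u,\rme^{\rmi t})=u^{j_0}$ (which has a nowhere-vanishing top symbol, the best case you aim for): then $\widetilde g=u^{j_0}\bigl(1+\varepsilon u^{N-j_0}\bigr)$, which has $N-j_0$ zero circles at $|u|=\varepsilon^{-1/(N-j_0)}$. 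The same happens generically, because the condition $\arg\bigl(\text{(top symbol of }g)\bigr)-N\varphi\equiv\pi$ has solutions as soon as $N$ is larger than the winding number of the symbol. So $Z(\widetilde g)$ is $L$ together with extra split components, not an approximation of $L$, and no choice of $\varepsilon$ fixes this. Sending $\varepsilon\to 0$ only pushes the spurious components to large $|u|$; it does not remove them. A secondary weakness is the claim that the top $(u,\overline u)$-symbol of $g$ equals $R^k(\cos^k\varphi+\rmi\sin^k\varphi)$: Lemma~\ref{calcut} only gives a polynomial whose zero set is isotopic to the prescribed manifold, with no control over the leading coefficients of that polynomial, so this normal form is not justified.

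The paper avoids this obstruction entirely by composing $\tilde g:\mathbb R^4\to\mathbb R^2$ with changes of variables rather than adding terms: first $u\mapsto(v\overline v)^ku$ (the identity on $\mathbb C\times S^1$ since there $v\overline v=1$), then $u\mapsto u+\varepsilon_1(v\overline v)^k$ (an affine shift on each slice), and finally $v\mapsto v+\varepsilon_2(u\overline u)^J$ (a small perturbation handled by transversality). These substitutions do not change the isotopy class of $\tilde g^{-1}(0)\cap(\mathbb C\times S^1)$, yet they force the new leading term to be a single monomial $c\,(u\overline u)^{J(4jk+\ell)}$ with nonzero constant coefficient $c$ for a generic small $\varepsilon_2$. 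Note also that the paper's construction produces a \emph{balanced} leading pair $j_1=j_2$, whereas yours gives $(j_1,j_2)=(N,0)$; the balanced form $c(u\overline u)^{s/2}$ is what is actually invoked later in the proof of Lemma~\ref{lem:symmix}, so even a repaired version of your perturbation would not serve the subsequent argument.
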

\begin{proof}
Start with the polynomial $\tilde{g}:\mathbb{R}^4\to\mathbb{R}^2$ constructed in Lemma~\ref{lem:approx}. We may write it as $\tilde{g}(u,\overline{u},v,\overline{v})=\sum_{j_1,j_2,k_1,k_2\geq 0}\tilde{c}_{j_1,j_2,\ell_1,\ell_2}u^{j_1}\overline{u}^{j_2}v^{\ell_1}\overline{v}^{\ell_2}$. Recall that the modulus of the $u$-coordinate of its zeros is bounded, because outside some 4-ball they are approximations of constants $u=a_i+\rmi b_i$ for some real numbers $a_i,b_i\in\mathbb{R}$. Now apply a change of variables that replaces the variable $u$ in the polynomial by $(v\overline{v})^ku$ (and accordingly replaces $\overline{u}$ by $(v\overline{v})^k\overline{u}$), where $k$ is a natural number. Note that this does not affect the zeros in $\mathbb{C}\times S^1$, where $v\overline{v}=1$. However, outside of the chosen 4-ball the zeros are now approximations to $\tfrac{a_i+\rmi b_i}{(v\overline{v})^k}$. The natural number $k$ is chosen such that $2k$ is bigger than the maximum value of $\ell_1+\ell_2$ with $\tilde{c}_{j_1,j_2,\ell_1,\ell_2}\neq 0$ for any pair $(j_1,j_2)$.

Let $j$ be the maximum value of $j_1+j_2$ with $\tilde{c}_{j_1,j_2,\ell_1,\ell_2}\neq 0$. Let $\ell$ denote the maximum value of $\ell_1+\ell_2$ with $\tilde{c}_{j_1,j_2,\ell_1,\ell_2}\neq 0$ and $j_1+j_2=j$. After the change of variables the largest value of $\ell_1+\ell_2$ with a non-zero coefficient is $2jk+\ell$. It is obtained by the terms 
\begin{equation}
\underset{j_1+j_2=j}{\sum_{j_1,j_2}}\underset{\ell_1+\ell_2=\ell}{\sum_{\ell_1,\ell_2}}\tilde{c}_{j_1,j_2,\ell_1,\ell_2}u^{j_1}\overline{u}^{j_2}v^{jk+\ell_1}\overline{v}^{jk+\ell_2}.
\end{equation}

Now perform a second change of variables that replaces the variable $u$ by $u+\varepsilon_1 (v\overline{v})^k$, where $\varepsilon_1>0$ is chosen sufficiently small. Since the zeros of $\tilde{g}$ are regular points intersecting $\mathbb{C}\times S^1$ transversely in $L$, the zeros of this new function still intersect $\mathbb{C}\times S^1$ in an arbitrarily close approximation of $L$ by choosing $\varepsilon_1$ small. The zeros outside the 4-ball $D^4_{R_1}$ now are approximations of $\tfrac{a_i+\rmi b_i-\varepsilon_1 (v\overline{v})^k}{(v\overline{v})^k}$, which converges to $-\varepsilon_1$ as $(v\overline{v})^k$ goes to infinity. In particular, the modulus of the $u$-coordinates of the zeros is still bounded.

The largest value of $\ell_1+\ell_2$ with non-zero coefficients is now $4jk+\ell$ and is obtained by the terms
\begin{equation}
\underset{j_1+j_2=j}{\sum_{j_1,j_2}}\underset{\ell_1+\ell_2=\ell}{\sum_{\ell_1,\ell_2}}\tilde{c}_{j_1,j_2,\ell_1,\ell_2}\varepsilon_1^j(v\overline{v})^{jk}v^{jk+\ell_1}\overline{v}^{jk+\ell_2}.
\end{equation}

We may now perform a last change of variable. We substitute $v$ by $v+\varepsilon_2 (u\overline{u})^J$, where $J$ is a natural number chosen such that $2J>j$ and $\varepsilon_2$ is now some complex number of small absolute value. Again, the transversality of $\tilde{g}^{-1}(0)$ and $\mathbb{C}\times S^1$ guarantees that their intersection is still $L$ after this change of variables.

The maximal value of $j_1+j_2$ with non-vanishing coefficients of the new polynomial is $2J(4jk+\ell)$ and is obtained by
\begin{equation}\label{eq:sumcoeff}
\varepsilon_1^j\underset{j_1+j_2=j}{\sum_{j_1,j_2}}\underset{\ell_1+\ell_2=\ell}{\sum_{\ell_1,\ell_2}}\tilde{c}_{j_1,j_2,\ell_1,\ell_2}\varepsilon_2^{2jk+\ell_1}\overline{\varepsilon_2}^{2jk+\ell_2}(u\overline{u})^{J(4jk+\ell)}.
\end{equation}
What is left to show is that the sum of coefficients $\tilde{c}_{j_1,j_2,\ell_1,\ell_2}\varepsilon_2^{2jk+\ell_1}\overline{\varepsilon_2}^{2jk+\ell_2}$ with $j_1+j_2=j$ and $\ell_1+\ell_2=\ell$ is not equal to 0.

We write $\varepsilon_2=|\varepsilon_2|\rme^{\rmi \chi}$. Then the sum above equals 0 if and only if
\begin{equation}
\underset{\ell_1+\ell_2=\ell}{\sum_{j_1+j_2=j}}\tilde{c}_{j_1,j_2,\ell_1,\ell_2}\rme^{\rmi \chi(\ell_1-\ell_2)}=0.
\end{equation}
The left hand side can be interpreted as the rational function
\begin{equation}
\frac{\underset{\ell_1+\ell_2=\ell}{\sum_{j_1+j_2=j}}\tilde{c}_{j_1,j_2,\ell_1,\ell_2}z^{\ell+\ell_1-\ell_2}}{z^{\ell}}
\end{equation}
restricted to the unit circle. The function is 0 if and only if its numerator, which is a polynomial of degree at most $2\ell$, is 0. Therefore there are only finitely many values of $\chi$ for which Eq.~\eqref{eq:sumcoeff} is 0 and we are free to choose a different value of $\chi$ as long as $|\varepsilon_2|$ is sufficiently small.
\end{proof}

If a radially weighted homogeneous mixed polynomial is semiholomorphic, we typically assume that it is holomorphic in the variable $u$. Then it makes sense to introduce the $g$-polynomial as in Eq.~\eqref{eq:g} or in \cite{AraujoBodeSanchez}, so that $g$ itself is also holomorphic in $u$. However, if $f:\mathbb{C}^2\to\mathbb{C}$ is a general mixed polynomial, there is no reason to choose the $u$-coordinate over $v$. Analogously to Eq.~\eqref{eq:g} we can associate to each radially weighted homogeneous mixed polynomial $f$ a polynomial $h:S^1\times\mathbb{C}\to\mathbb{C}$
\begin{equation}\label{eq:h}
h(\rme^{\rmi\varphi},v):=R^{-(k^{-1}n+b)}f(R\rme^{\rmi \varphi},R^{\tfrac{1}{k}}v,)=R^{-d(P;f)/p_1}f(u,R^{\tfrac{1}{k}}v).
\end{equation}
Just like in the definition of $g$ in Eq.~\eqref{eq:g} the polynomial $h$ is defined such that it does not depend on $R=|u|$.

It follows that $(\rme^{\rmi\varphi_*},v_*)=(\rme^{\rmi\varphi_*},r_*\rme^{\rmi t_*})\in S^1\times\mathbb{C}$ is a zero of $h$ if and only if 
\begin{equation}
f(R\rme^{\rmi \varphi_*},R\rme^{-\rmi\varphi_*},R^{\tfrac{1}{k}}v_*,R^{\tfrac{1}{k}}\overline{v_*})=0
\end{equation} 
for all $R\geq 0$, which is equivalent to $g(r_*^{-k}\rme^{\rmi \varphi_*},\rme^{\rmi t_*})=0$.

%Therefore, if $V_f\cap (\mathbb{C}^*)^2=V_f\backslash\{(0,0)\}$, then $V_f$ is the cone over the zeros of $g$ in $\mathbb{C}\times S^1$ and equivalently the cone over the zeros of $h$ in $S^1\times\mathbb{C}$. 

Naturally, Lemmas~\ref{lem:approx} and \ref{lem:nondeg} hold for links in $S^1\times\mathbb{C}$ and corresponding polynomial maps $h:\mathbb{C}\times S^1\to\mathbb{C}$ if we simply exchange the role of $u$ and $v$. As in Lemma~\ref{lem:symfg} the link of a singularity of a radially weighted homogeneous polynomial has a certain symmetry if and only if the zeros of the corresponding $h$-polynomial has that symmetry.

\begin{lemma}\label{lem:oddHopf}
Let $L$ be an odd link type in $S^3$. Then $L$ has an odd representative that does not intersect the sets $\{(u,0):u\in \mathbb{C}\}$ and $\{(0,v):v\in\mathbb{C}\}$.
\end{lemma}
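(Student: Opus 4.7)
The plan is to work in the quotient $X := S^3/\tau_1$. The crucial observation is that although $\tau_1$ fixes the origin in $\mathbb{C}^2$, it acts \emph{freely} on $S^3$, so $X$ is a smooth closed 3-manifold (in fact $\mathbb{R}P^3$) and the projection $\pi : S^3 \to X$ is a smooth double cover. Writing $C_u := \{(u,0) : u\in\mathbb{C}\}\cap S^3$ and $C_v := \{(0,v) : v\in\mathbb{C}\}\cap S^3$, we have $\tau_1(C_u)=C_u$ and $\tau_1(C_v)=C_v$ because $\tau_1(u,0)=(-u,0)$ and $\tau_1(0,v)=(0,-v)$, so the images $\pi(C_u)$ and $\pi(C_v)$ are smoothly embedded circles in $X$.

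Start with any odd representative $L'$ of the link type $L$. Since $\tau_1(L')=L'$ and the action on $S^3$ is free, $\pi(L')$ is a smoothly embedded closed 1-submanifold of $X$. I would then invoke general position for 1-submanifolds in a 3-manifold: the generic intersection of $\pi(L')$ with $\pi(C_u)\cup\pi(C_v)$ has dimension $1+1-3<0$ and is therefore empty, so a small smooth ambient isotopy of $X$ carries $\pi(L')$ to a submanifold disjoint from $\pi(C_u)\cup\pi(C_v)$.

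Finally, I would lift this isotopy to $S^3$: extend it to a compactly supported ambient isotopy of $X$ via the isotopy extension theorem, lift the generating time-dependent vector field through the local diffeomorphism $\pi$, and integrate upstairs. The lifted vector field is automatically $\tau_1$-invariant (because $\tau_1$ is a deck transformation, so both sheets of the cover yield the same downstairs vector field), hence the resulting ambient isotopy of $S^3$ commutes with $\tau_1$. Applied to $L'$, it produces a new odd representative $L''$ of $L$ that is disjoint from $C_u$ and $C_v$, as required.

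The only delicate point is the $\tau_1$-equivariance of the lift, which is really a formality given that $\pi$ is a covering and $\tau_1$ is its deck involution; there is no genuine intersection-theoretic obstacle because the freeness of the action on $S^3$ and the favorable codimensions in $X$ make the transversality argument in the quotient go through cleanly.
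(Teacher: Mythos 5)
Your argument is correct, but it takes a genuinely different route from the paper's. The paper uses the Hopf fibration $H:S^3\to S^2$, $H(u,v)=u/v$: since $\tau_1$ lies in the Hopf circle action, $H(L)$ is a well-defined 1-dimensional (hence measure-zero) subset of $S^2$, so some antipodal pair $\{p,-p\}$ is disjoint from $H(L)$; one then rotates $S^2$ to send $p$ and $-p$ to the poles (whose preimages are exactly $C_u$ and $C_v$), and lifts the rotation to $SU(2)$, which is automatic and automatically commutes with $\tau_1=-I$ because $-I$ is central. Your proof instead works in the quotient $X=S^3/\tau_1\cong\mathbb{RP}^3$, pushes $\pi(L')$ off $\pi(C_u)\cup\pi(C_v)$ by a general-position argument (codimension count $1+1-3<0$), and lifts the resulting ambient isotopy $\tau_1$-equivariantly through the double cover. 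Both are sound; yours is a bit heavier on machinery (transversality, isotopy extension, lifting a time-dependent vector field through a covering) but does not rely on the special geometry of the Hopf fibration, and would apply verbatim to any free finite group action on $S^3$ preserving a pair of disjoint circles. The paper's argument is shorter because it exploits the coincidence that $\tau_1$ is the antipodal element of the Hopf $S^1$-action, which makes the quotient and the rotation trivially compatible.
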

\begin{proof}
Consider the Hopf fibration $H:S^3\to \mathbb{CP}=S^2$, $H(u,v)=u/v$ and the image $H(L)$ of $L$ under $H$. Since the image is 1-dimensional, there is a pair of antipodal points $p$ and $-p$ on $S^2$ that are not in $H(L)$. We can now apply a rotation to $S^2$ that maps $p$ to the north pole and $-p$ to the south-pole, whose preimages are the sets $\{(u,0):u\in\mathbb{C}\}$ and $\{(0,v):v\in\mathbb{C}\}$, respectively. The rotation of $S^2$ lifts to a rotation of $S^3$ and since the north pole and south pole are not in the image of the rotated $L$ under $H$, the rotated link does not intersect $\{(u,0):u\in \mathbb{C}\}$ and $\{(0,v):v\in\mathbb{C}\}$. Since the rotation of an odd link in $S^3$ is still an odd link, this proves the lemma.
\end{proof}

\begin{lemma}\label{lem:nonzerosym}
Let $L$ be a $u$-even in $\mathbb{C}\times S^1$. Then $L$ is isotopic in $\mathbb{C}\times S^1$ to another $u$-even link in $(\mathbb{C}\backslash\{0\}\times S^1)$.
\end{lemma}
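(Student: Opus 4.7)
The plan is to exploit the fact that $\tau_u$ acts freely on $\mathbb{C}\times S^1$ and run an equivariant general-position argument. The fixed-point set of $\tau_u$ in $\mathbb{C}^2$ is $\{(u,0):u\in\mathbb{C}\}$, which is disjoint from $\mathbb{C}\times S^1$, so the restriction of $\tau_u$ to $\mathbb{C}\times S^1$ is a free involution and the quotient map $\pi\colon \mathbb{C}\times S^1\to Q:=(\mathbb{C}\times S^1)/\tau_u$ is a regular $2$-fold covering of $3$-manifolds. The subset $\{0\}\times S^1$ is $\tau_u$-invariant (the involution restricts to the antipodal map), and $\tau_u$ acts freely on it, so it descends to a knot $\bar K\subset Q$. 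Similarly, since $L$ is $u$-even and $\tau_u$ is free, $L$ is a $2$-to-$1$ cover of a link $\bar L:=\pi(L)\subset Q$, and $L=\pi^{-1}(\bar L)$.

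Next I would apply standard general position to $\bar L$ and $\bar K$ inside the $3$-manifold $Q$. Both are compact $1$-dimensional submanifolds, so by transversality and the dimension count $1+1<3$ there is an arbitrarily small ambient isotopy $\bar h_t\colon Q\to Q$ with $\bar h_0=\mathrm{id}$ such that $\bar h_1(\bar L)$ is transverse to $\bar K$; transversality combined with the dimension count then forces $\bar h_1(\bar L)\cap \bar K=\emptyset$, unless a component of $\bar L$ coincides with $\bar K$. In the latter case, that component is unique (components of a link are disjoint), and upstairs it corresponds to a component $\{0\}\times S^1$ of $L$. I would then replace this component by $\{\varepsilon\}\times S^1$ for a sufficiently small $\varepsilon>0$; this circle is $\tau_u$-invariant, hence still $u$-even, and compactness of $L$ together with the fact that the remaining components of $L$ are closed and disjoint from $\{0\}\times S^1$ guarantees that for small enough $\varepsilon$ the replacement remains a link, with the radial push $\{s\varepsilon\}\times S^1$, $s\in[0,1]$, realising the isotopy.

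Finally, the homotopy lifting property of the covering $\pi$ lifts $\bar h_t$ to a $\tau_u$-equivariant ambient isotopy $h_t\colon \mathbb{C}\times S^1\to\mathbb{C}\times S^1$ with $h_0=\mathrm{id}$; since $h_1$ commutes with $\tau_u$, the link $h_1(L)$ is again $u$-even, and since $\bar h_1(\bar L)\cap\bar K=\emptyset$ we get $h_1(L)\subset \pi^{-1}(Q\setminus \bar K)=(\mathbb{C}\setminus\{0\})\times S^1$. The only non-routine step is the exceptional case where $\{0\}\times S^1$ is itself a component of $L$, which I expect to be the main (though still elementary) obstacle; the generic case is immediate from equivariant transversality.
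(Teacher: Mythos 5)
Your proof is correct, but it takes a genuinely different and noticeably more machinery-heavy route than the paper's. The paper's argument is a one-step construction: translate $L$ by a small generic vector $\varepsilon\rme^{\rmi\chi}$ in the $u$-coordinate. Since $\tau_u$ acts only on the $v$-coordinate, the translation $(u,\rme^{\rmi t})\mapsto(u+\varepsilon\rme^{\rmi\chi},\rme^{\rmi t})$ commutes with $\tau_u$ and hence preserves $u$-evenness for any choice of vector; and because the projection of $L$ to $\mathbb{C}$ is a compact $1$-dimensional set, and in particular has measure zero, a suitable $\varepsilon\rme^{\rmi\chi}$ can be chosen so that the translated link misses $\{0\}\times S^1$. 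You instead pass to the free $\mathbb{Z}/2$-quotient $Q$, run general position there, and lift the resulting ambient isotopy equivariantly back to $\mathbb{C}\times S^1$. Both are sound; your argument is a reusable equivariant general-position template that would carry over to situations where no translation commuting with the symmetry is available, while the paper's translation trick is much shorter, produces a single global isotopy, and handles the case where $\{0\}\times S^1$ is a component of $L$ with no special treatment. On that exceptional case in your proof: the transversality step already forces disjointness even when a component of $\bar L$ equals $\bar K$ (two $1$-submanifolds of a $3$-manifold are transverse precisely when they are disjoint, regardless of the initial configuration), so your explicit replacement by $\{\varepsilon\}\times S^1$ is a valid safety net but not strictly necessary.
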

\begin{proof}
Let $L$ be parametrised as $(u_j(\tau),\rme^{\rmi t_j(\tau)})$, where $j=1,2,\ldots,\ell$, is indexing the components of $L$ and the variable $\tau$ is going from $0$ to $2\pi$.

Now consider the curves $\bigcup_{j=1}^\ell\bigcup_{\tau\in[0,2\pi]}u_j(\tau)$ in $\mathbb{C}$. Since $L$ is smooth, there is an angle $\chi\in[0,2\pi]$ and a positive number $\varepsilon_0>0$ such that $-\epsilon\rme^{\rmi\chi}\neq u_j(\tau)$ for all $j$, all $\tau$ and all $0<\varepsilon<\varepsilon_0$. Thus $(u_j(\tau)+\epsilon\rme^{\rmi\chi},\rme^{\rmi t_j(\tau)})$ parametrises a link isotopic to $L$ that does not intersect $0\times S^1$. Furthermore, it is still $u$-even.
\end{proof}
Naturally, the same statement holds for $v$-even links in $S^1\times(\mathbb{C}\backslash\{0\})$.

Every link $L$ in $\mathbb{C}\times S^1$ corresponds to a link in $S^3$ by embedding $\mathbb{C}\times S^1$ in $S^3$ as the untwisted tubular neighbourhood of the unknot in $S^3$ given by $u=0$. This can be made precise in terms of parametrisations as follows. If $(u_j(\tau),\rme^{\rmi t_j(\tau}))$, $j=1,2,\ldots,n$, $\tau\in[0,2\pi]$, is a parametrisation of an $n$-component link $L$ in $\mathbb{C}\times S^1$, then $(\varepsilon u_j(\tau),\sqrt{1-\varepsilon^2|u_j(\tau)|^2}\rme^{\rmi t_j(\tau}))$, $j=1,2,\ldots,n$, $\tau\in[0,2\pi]$, is a parametrisation of the corresponding link in $S^3$ if $\varepsilon$ is a sufficiently small, positive real number. Note that if $L$ is $u$-even, $v$-even or odd, then the corresponding link in $S^3$ possesses that same symmetry. Likewise, every link in $S^3\backslash\{v=0\}$ corresponds to a link in $\mathbb{C}\times S^1$ that shares its symmetry properties.

\begin{lemma}\label{lem:symmix}
Let $L$ be a $u$-even or odd link in $(\mathbb{C}\backslash\{0\})\times S^1$. Then there is a radially weighted homogeneous, inner non-degenerate, mixed function $f:\mathbb{C}^2\to\mathbb{C}$ such that the zeros of the corresponding $g$-polynomial are isotopic to $L$ in $(\mathbb{C}\backslash\{0\})\times S^1$.

In particular, if we denote the corresponding even or odd link in $S^3$ by $L$ as well, then $L$ is the link of the weakly isolated singularity of $f$.
\end{lemma}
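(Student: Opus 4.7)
The plan is to first construct a mixed polynomial $g:(\mathbb{C}\setminus\{0\})\times S^1\to\mathbb{C}$ whose zero set is $C^1$-close (hence isotopic) to $L$ and whose coefficients satisfy a parity condition reflecting the symmetry of $L$, and then to convert $g$ into a radially weighted homogeneous mixed function $f:\mathbb{C}^2\to\mathbb{C}$ by inverting the correspondence in Eq.~\eqref{eq:g} with a carefully chosen weight vector $P=(k,1)$. The parity conditions on the coefficients of $g$ translate to the condition that the $r$-exponents appearing in the inversion are non-negative even integers, ensuring $f$ is an honest polynomial in $u,\overline u,v,\overline v$.

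For the $u$-even case, the involution $(u,\rme^{\rmi t})\mapsto(u,-\rme^{\rmi t})$ acts freely on $(\mathbb{C}\setminus\{0\})\times S^1$, with quotient realised by the map $(u,\rme^{\rmi t})\mapsto(u,\rme^{2\rmi t})$. Since $L$ is invariant, it descends to a link $\tilde L$ in the quotient. Applying Lemma~\ref{lem:approx} and Lemma~\ref{lem:nondeg} to $\tilde L$ yields a polynomial $g'(u,\rme^{\rmi s})$ whose zeros approximate $\tilde L$ transversely and whose unique monomial of maximal $j_1+j_2$ occurs at $\ell=0$. Setting $g(u,\rme^{\rmi t}):=g'(u,\rme^{2\rmi t})$ produces a polynomial approximating $L$ in which every non-zero coefficient has $\ell$ even. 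Taking $P=(k,1)$ with $k$ a sufficiently large even integer and defining $f(u,v):=r^{kj}g(u/r^k,v/r)$ with $r=|v|$ and $j$ the maximal $u$-degree of $g$, the combined evenness of $k$ and of each $\ell$ ensures that the $r$-exponent $kj-k(j_1+j_2)-|\ell|$ of every term is a non-negative even integer, so $f$ is a polynomial. By construction $f$ is radially weighted homogeneous with a single compact 1-face. The transversality of the zero set of $g$, together with the fact that the unique leading monomial has the form $cu^{j_1}\overline{u}^{j_2}$, will then deliver inner non-degeneracy: the face function $f_P$ has no critical points in $V_{f_P}\cap(\mathbb{C}^*)^2$, and the extremal vertex satisfies the required condition.

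For the odd case, the plan is to reduce to the $u$-even case via the diffeomorphism $\sigma(u,\rme^{\rmi t})=(u\rme^{\rmi t},\rme^{\rmi t})$ of $(\mathbb{C}\setminus\{0\})\times S^1$. A direct verification gives $\sigma\circ\tau_1=\tau_u\circ\sigma$, so that $\sigma(L)$ is a $u$-even link whenever $L$ is odd. Applying the previous construction to $\sigma(L)$ produces a suitable $g'$, and pulling back by $\sigma$, $g(u,\rme^{\rmi t}):=g'(u\rme^{\rmi t},\rme^{\rmi t})$, yields a polynomial in the original coordinates whose zero set approximates $L$. The substitution $\sigma$ shifts the frequency of each monomial by $j_1-j_2$, so the coefficient parity of $g'$ (``$\ell$ even'') translates to the odd-symmetry parity ``$j_1+j_2+\ell\equiv\mathrm{const}\pmod 2$'' for $g$. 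Choosing $P=(k,1)$ with $k$ a large odd integer and $d/p_2$ of matching parity, the same inversion produces a radially weighted homogeneous, inner non-degenerate $f$. Finally, Lemma~\ref{lem:symfg} and the link correspondence from \cite{AraujoBodeSanchez} identify the zeros of $f$ in $S^3$ with the corresponding odd link associated to $L$.

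The main technical obstacle is the parity bookkeeping: one must simultaneously guarantee (i) that the approximation of $L$ respects the prescribed symmetry, (ii) that $g$ retains a unique leading monomial so that the Newton polygon of $f$ has exactly one compact 1-face, and (iii) that the parity of every non-zero coefficient of $g$ matches the parity of the $r$-exponent appearing in the conversion to $f$. The $\sigma$-substitution for the odd case is the conceptual heart of the argument, turning odd symmetry into $u$-even symmetry at the level of $g$-polynomials and thereby reducing everything to a single construction.
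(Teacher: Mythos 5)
Your proposal is correct and takes essentially the same route as the paper: in the $u$-even case you pass to the quotient of $(\mathbb{C}\backslash\{0\})\times S^1$ by the involution $(u,\rme^{\rmi t})\mapsto(u,-\rme^{\rmi t})$, apply Lemmas~\ref{lem:approx} and \ref{lem:nondeg} to the quotient link, lift by doubling the angular frequency, and invert Eq.~\eqref{eq:g} (equivalently Eq.~\eqref{eq:gscale}) with $k$ large and even; in the odd case you conjugate by the shear $\sigma(u,\rme^{\rmi t})=(u\rme^{\rmi t},\rme^{\rmi t})$, which intertwines $\tau_1$ with $\tau_u$, reduce to the $u$-even construction for $\sigma(L)$, pull back, track the frequency shift $\ell\mapsto\ell+j_1-j_2$ to obtain the parity $\ell\equiv j_1+j_2\pmod 2$, and then take $k$ large and odd. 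This is exactly what the paper does, down to the use of Lemma~\ref{lem:nondeg} to guarantee a single lattice point of maximal $j_1+j_2$ with $\ell=0$ and thereby $u$-convenience, Newton non-degeneracy, and inner non-degeneracy. One small remark: the paper writes $g(u\rme^{-\rmi t},\rme^{\rmi t})$ where, given its parametrisation of $L'$ as $\{(u_j(\tau)\rme^{\rmi t_j(\tau)},\rme^{\rmi t_j(\tau)})\}=\sigma(L)$, the correct pullback recovering $L$ is $g(u\rme^{\rmi t},\rme^{\rmi t})=g\circ\sigma$; your sign is the one that actually returns $L$, so your version is the cleaner formulation (the paper's version would return $\sigma^2(L)$, which is still odd and still has the right coefficient parities, so nothing in the paper breaks, but the explicit claim ``the zeros form $L$'' is off by a twist).
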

\begin{proof}
The proof follows a similar argument as that of Lemma~\ref{lem:symsemi}. If $L$ is $u$-even, there is a quotient link $L'\in (\mathbb{C}\backslash\{0\})\times S^1$ such that the image of $L'$ under the map $(u,\rme^{\rmi t})\mapsto(u,\rme^{2\rmi t})$ is $L$. Lemma~\ref{lem:approx} implies that $L'$ can be approximated arbitrarily well by the zero set $\tilde{g}^{-1}(0)$ of some polynomial $\tilde{g}:\mathbb{C}\times S^1\to \mathbb{C}$ in $u$, $\overline{u}$, $\rme^{\rmi t}$ and $\rme^{-\rmi t}$. In particular, the zero set is ambient isotopic to $L'$ in $(\mathbb{C}\backslash\{0\})\times S^1$. Furthermore, 0 is a regular value of $\tilde{g}$. Now consider the map $g:\mathbb{C}\times S^1\to\mathbb{C}$, $g(u,\rme^{\rmi t})=\tilde{g}(u,\rme^{2\rmi t})$. By construction $g$ is a polynomial in $u$ and $\overline{u}$, whose coefficients are polynomials in $\rme^{2\rmi t}$ and $\rme^{-2\rmi t}$, and its zero set is an approximation of (and in particular ambient isotopic to) the given link $L$.

Let $s$ be the total degree of $g$ with respect to $(u,\overline{u})$, i.e., if 
\begin{equation}
g(u,\rme^{\rmi t})=\sum_{j_1,j_2}c_{j_1,j_2}(t)u^{j_1}\bar{u}^{j_2},
\end{equation} then $s$ is the maximum $j_1+j_2$ with $c_{j_1,j_2}(t)\neq 0$. Let $a$ be the maximal degree (with respect to $\rme^{\rmi t}$ and $\rme^{-\rmi t}$) of all $c_{j_1,j_2}(t)$ with $j_1+j_2=s$.

Since all frequencies of the coefficients of $g$ are even, we can define $f$ as
\begin{equation}\label{eq:gscale}
f(u,\overline{u},v,\overline{v})=f(u,\overline{u},r\rme^{\rmi t},r\rme^{-\rmi t})=r^{ks+a}g\left(\frac{u}{r^k},\frac{\overline{u}}{r^k},\rme^{\rmi t}\right),
\end{equation}
where $k$ is a sufficiently large even integer.

The usual argument about the parities of the frequencies and $k$ implies that $f$ is a mixed polynomial and by construction it is radially weighted homogeneous. 
Since 0 is a regular value of $\tilde{g}$, it is also a regular value of $g$ and therefore the origin is the only critical point of $f$ on $f^{-1}(0)$. It is thus a weakly isolated singularity of $f$ and its link is $L$. The argument above already proves that the only compact 1-face of the Newton boundary of $f$ is non-degenerate.

Note that $f(0,0,v,\overline{v})\neq 0$ for all $v\in\mathbb{C}^*$, since $L$ does not intersect $\{0\}\times S^1$. Therefore, $f$ is $v$-convenient and the face function $f_{\Delta_1}$ is non-degenerate, where $\Delta_1$ is the vertex given by the intersection of the Newton boundary of $f$ and the vertical coordinate axis.

If we take the $g$-polynomial as in Lemma~\ref{lem:nondeg}, then $a=0$, so $f$ is $u$-convenient. The face function $f_{\Delta_2}$, where $\Delta_2$ is the vertex at the intersection of the Newton boundary and the horizontal axis, is then precisely Eq.~\eqref{eq:sumcoeff}, whose only zero is at $u=0$. It follows that $f_{\Delta_2}$ is non-degenerate. Therefore, $f$ is convenient and Newton non-degenerate and hence by \cite{AraujoBodeSanchez} also inner non-degenerate. 

%If $L$ is fixed-point-free $v$-even in $(\mathbb{C}\backslash\{0\})\times S^1$, there is a corresponding $v$-even link in $S^1\times(\mathbb{C}\backslash\{0\})$ and we may apply the same arguments as above.

Suppose $L$ is odd and its components are parametrised by $(u_j(\tau),\rme^{\rmi t_j(\tau)})$, where $j$ is labelling the different components, $u_j:[0,2\pi]\to\mathbb{C}\backslash\{0\}$ and $t_j:[0,2\pi]\to \mathbb{R}/(2\pi\mathbb{R})$ are $2\pi$-periodic functions, and $\tau$ is a variable going from 0 to $2\pi$. Since $L$ is odd, there is for every $j$ and every $\tau\in[0,2\pi]$ a $j'$ and a $\tau'\in[0,2\pi]$ with $(u_{j'}(\tau'),\rme^{\rmi t_{j'}(\tau')})=(-u_j(\tau),-\rme^{\rmi t_j(\tau)})$.
Now consider the link $L'$ whose components are parametrised by $(u_j(\tau)\rme^{\rmi t_j(\tau)},\rme^{\rmi t_j(\tau)})$. It follows from the above that for every $j$ and every $\tau$ there is a $j'$ and a $\tau'$ with
\begin{equation}
(u_{j'}(\tau')\rme^{\rmi t_{j'}(\tau')},\rme^{\rmi t_{j'}(\tau')})=(-u_j(\tau)\rme^{\rmi t_{j'}(\tau')},-\rme^{\rmi t_j(\tau)}),
\end{equation}
but since this implies that $\rme^{\rmi t_{j'}(\tau')}=-\rme^{\rmi t_j(\tau)}$, we get
\begin{equation}
(u_{j'}(\tau')\rme^{\rmi t_{j'}(\tau')},\rme^{\rmi t_j(\tau')})=(u_j(\tau)\rme^{\rmi t_j(\tau)},-\rme^{\rmi t_j(\tau)}),
\end{equation}
i.e., $L'$ is $u$-even.

We can now, as in the previous case, construct a $g$-polynomial for $L'$ whose coefficients $c_{j_1,j_2}(t)$ are trigonometric polynomials in $\rme^{2\rmi t}$ and $\rme^{-2\rmi t}$ and therefore satisfy $c_{j_1,j_2}(t+\pi)=c_{j_1,j_2}(t)$ for all $j_1,j_2$ and all $t$. The zeros of $g$ form the link $L'$, since $g(u,\rme^{\rmi t})=0$ if and only if there is a $j$ and a $\tau$ such that $(u,\rme^{\rmi t})=(\tilde{u}_j(\tau)\rme^{\rmi \tilde{t}_j(\tau)},\rme^{\rmi \tilde{t}_j(\tau)})$ for some approximations $\tilde{u}_j(\tau)$ of $u_j(\tau)$ and $\tilde{t}_j(\tau)$ of $t_j(\tau)$.

Now consider the polynomial $(u,\rme^{\rmi t})\mapsto g(u\rme^{-\rmi t},\rme^{\rmi t})$. The zeros of this function form the link $L$, i.e., $(u,\rme^{\rmi t})$ is a root if and only if $(u,\rme^{\rmi t})=(\tilde{u}_j(\tau),\rme^{\rmi \tilde{t}_j(\tau)})$ for some $j$ and $\tau$.
By construction the coefficients only contain even frequencies if $j_1+j_2$ is even and only odd frequencies if $j_1+j_2$ is odd. It follows that $f$ as in Eq.~\eqref{eq:gscale} defined from this $g$-polynomial results in a radially weighted mixed polynomial if $k$ is a sufficiently large odd integer.

We know from Lemma~\ref{lem:nondeg} that the leading term of $f$ is of the form $c(u\overline{u})^{s/2}$ for some $c\in\mathbb{C}^*$ (i.e., $a=0$). In particular, $f$ is $u$-convenient and there are no zeros with $v=0$. Since $0$ is a regular value of the $g$-polynomial for $L$, the mixed polynomial $f$ then has a weakly isolated singularity, whose link is $L$. This also establishes the non-degeneracy of the face function associated with the only 1-face of the Newton boundary of $f$.

Since $L$ is in $(\mathbb{C}\backslash\{0\})\times S^1$, any arbitrarily close approximation also has non-zero $u$-coordinates and therefore there are no roots of $f$ with $u=0$. Therefore, the inner non-degeneracy of $f$ follows from the non-degeneracy of the face function of its only 1-face.
\end{proof}

\begin{proof}[Proof of Theorem~\ref{thm:main1}]
By Lemmas~\ref{lem1mix} and \ref{lem:convcases} all links of radially weighted homogeneous, inner non-degenerate mixed polynomials have the required symmetries. By \cite{AraujoBodeSanchez} the link type of the singularity an inner non-degenerate mixed polynomials only depends on its principal part (the niceness condition from Definition~\ref{def:blabla} is always satisfied for polynomials with exactly one compact 1-face in its Newton boundary). Therefore, all links of inner non-degenerate mixed polynomials with exactly one compact 1-face in its Newton boundary have the desired symmetries.

Let now $L$ be an odd link type in $S^3$. Then by Lemma~\ref{lem:oddHopf} it has an odd representative that is disjoint from $\{(u,0):u\in \mathbb{C}\}$ and $\{(0,v):v\in\mathbb{C}\}$. It is thus the image of an odd link in $(\mathbb{C}\backslash\{0\})\times S^1$ under the usual embedding. (In fact, it is also the image of an odd link in $S^1\times(\mathbb{C}\backslash\{0\})$ under its usual embedding). Then by Lemma~\ref{lem:symmix} it can be realised as the link of a weakly isolated singularity of a radially weighted homogeneous, inner non-degenerate mixed polynomial.

If $L$ is 2-periodic, then there is an ambient isotopy of $S^3$ that takes the set of fixed points of the inversion to the unknot $S^3\cap\{(u,0):u\in\mathbb{C}\}$. Thus $L$ is the image of a $u$-even link in $\mathbb{C}\times S^1$. By Lemma~\ref{lem:nonzerosym} it has a representative that does not intersect $\{0\}\times S^1$ in $\mathbb{C}\times S^1$. Thus by Lemma~\ref{lem:symmix} it is realised as the link of a weakly isolated singularity of a radially weighted homogeneous, inner non-degenerate mixed polynomial.

If $L$ is the union of a 2-periodic link and a corresponding set of fixed points, we can apply the same arguments as in the previous case and then multiply the resulting mixed polynomial by $u$. The result is still radially weighted homogeneous (with the same weight vector) and inner non-degenerate by Lemma~\ref{lem:uconv}.
\end{proof}

%The same proof of course shows that if $L$ is an even or odd link type in $\mathbb{C}\times S^1$ or $S^1\times\mathbb{C}$, then it can be realised as the link of the weakly isolated singularity of a radially weighted homogeneous, inner non-degenerate, mixed function $f:\mathbb{C}^2\to\mathbb{C}$.

%\begin{lemma}\label{lem:nonzerosym}
%Let $L$ be a $u$-even in $\mathbb{C}\times S^1$. Then $L$ is isotopic in $\mathbb{C}\times S^1$ to another $u$-even link in $(\mathbb{C}\backslash\{0\}\times S^1)$.
%\end{lemma}
%\begin{proof}
%Let $L$ be parametrized as $(u_j(\tau),\rme^{\rmi t_j(\tau)})$, where $j=1,2,\ldots,\ell$, is indexing the components of $L$ and the variable $\tau$ is going from $0$ to $2\pi$.

%Now consider the curves $\bigcup_{j=1}^\ell\bigcup_{\tau\in[0,2\pi]}u_j(\tau)$ in $\mathbb{C}$. Since $L$ is smooth, there is an angle $\chi\in[0,2\pi]$ and a positive number $\varepsilon_0>0$ such that $-\epsilon\rme^{\rmi\chi}\neq u_j(\tau)$ for all $j$, all $\tau$ and all $0<\varepsilon<\varepsilon_0$. Thus $(u_j(\tau)+\epsilon\rme^{\rmi\chi},\rme^{\rmi t_j(\tau)})$ parametrizes a link isotopic to $L$ that does not intersect $0\times S^1$. Furthermore, it is still $u$-even.
%\end{proof}

We now briefly turn our attention to the smaller family of convenient and Newton non-degenerate mixed polynomials.

\begin{proposition}\label{prop:okarad}
A link type $L$ in $S^3$ arises as the link of a weakly isolated singularity of a radially weighted homogeneous, convenient, Newton non-degenerate mixed function if and only if $L$ is 2-periodic or odd.
\end{proposition}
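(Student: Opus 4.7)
The plan is to deduce the proposition from Theorem~\ref{thm:main1} together with a closer look at the construction in Lemma~\ref{lem:symmix}.

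For the forward direction, I would start by noting that for a radially weighted homogeneous polynomial the niceness condition is automatic, and that convenient plus Newton non-degenerate implies inner non-degenerate (as invoked at the end of the proof of Lemma~\ref{lem:symmix} and in \cite{AraujoBodeSanchez}). Therefore Theorem~\ref{thm:main1} applies and the link $L$ falls into one of three symmetry classes. The key observation is that the middle class, namely the union of a fixed-point-free even link with the corresponding set of fixed points, is incompatible with convenience: if $f$ is $u$-convenient, then the Newton boundary meets the horizontal axis so that $f(u,0)\not\equiv 0$ and hence the circle $\{v=0\}\cap S^3$ is not contained in the link of the singularity; symmetrically, $v$-convenience rules out $\{u=0\}\cap S^3$ as a component. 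Thus $L$ must be fixed-point-free 2-periodic or odd.

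For the backward direction, I would apply the construction of Lemma~\ref{lem:symmix} to a suitable representative of the given 2-periodic or odd link $L$, producing a radially weighted homogeneous, inner non-degenerate polynomial $f$ with link $L$, and then verify that the construction can be arranged to make $f$ convenient and Newton non-degenerate in Oka's sense. Convenience splits into two parts: $u$-convenience follows from the fact that, by Lemma~\ref{lem:nondeg}, the leading term in $(u,\bar u)$ of the underlying $g$-polynomial can be taken to be of the form $c(u\bar u)^{s/2}$, producing a vertex on the horizontal axis; $v$-convenience follows from choosing the representative of $L$ in $(\mathbb{C}\setminus\{0\})\times S^1$ via Lemma~\ref{lem:nonzerosym} in the even case and Lemma~\ref{lem:oddHopf} in the odd case, so that $g(0,\rme^{\rmi t})$ is nowhere zero and hence contributes, after the rescaling in Eq.~\eqref{eq:gscale}, a vertex on the vertical axis of the Newton boundary of $f$.

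The remaining step is Newton non-degeneracy of each face function. The non-degeneracy of the unique $1$-face is exactly the statement already proven in Lemma~\ref{lem:symmix}. At the horizontal-axis vertex, the face function equals $c(u\bar u)^{s/2}$ by construction, whose zero set is $\{u=0\}$ and therefore disjoint from $(\mathbb{C}^*)^2$. At the vertical-axis vertex, the face function has the form $r^n\Psi(\rme^{\rmi t})$ (with $v=r\rme^{\rmi t}$), where $\Psi$ is determined by $g(0,\rme^{\rmi t})$; since $\Psi$ is nowhere zero on the unit circle, the zero set is $\{v=0\}$, again disjoint from $(\mathbb{C}^*)^2$. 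In both cases the non-degeneracy condition is vacuously satisfied. The main obstacle, if it deserves the name, is the bookkeeping required to confirm that the choice of representative of $L$ in $(\mathbb{C}\setminus\{0\})\times S^1$ propagates through Eq.~\eqref{eq:gscale} to produce exactly the vertex face functions described above; this is a direct calculation using the explicit scaling already laid out in the proof of Lemma~\ref{lem:symmix}.
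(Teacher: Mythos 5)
Your proposal is correct and takes essentially the same approach as the paper: the forward direction is the combination of Lemmas~\ref{lem1mix} and \ref{lem:convcases} (packaged for you inside Theorem~\ref{thm:main1}, with convenience used to exclude the ``link plus fixed-point set'' case), and the backward direction is the construction of Lemma~\ref{lem:symmix} with the observation that Lemmas~\ref{lem:nondeg}, \ref{lem:nonzerosym} and \ref{lem:oddHopf} make the resulting $f$ convenient and force the two vertex face functions to have empty zero set in $(\mathbb{C}^*)^2$.
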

\begin{proof}
We proved in \cite{AraujoBodeSanchez} that convenient, Newton non-degenerate mixed polynomials are inner non-degenerate. Therefore, links of singularities of convenient, Newton non-degenerate are 2-periodic or odd by Lemmas~\ref{lem1mix} and \ref{lem:convcases}.

The construction outlined in the previous lemma results in a convenient mixed polynomial if $L$ is 2-periodic or odd, since in both cases the link of the singularity is disjoint from the sets defined by $u=0$ and $v=0$. This also shows that $f_{\Delta_1}(0,v)\neq 0$ for all $v\in\mathbb{C}^*$ and $f_{\Delta_2}(u,0)\neq 0$ for all $u\in\mathbb{C}^*$, where $\Delta_1$ and $\Delta_2$ are the vertices of the Newton boundary of the constructed mixed polynomial $f$. This implies that $f_{\Delta_1}$ and $f_{\Delta_2}$ are Newton non-degenerate. Likewise $f_P=f$, the face function corresponding to the unique 1-face of $f$, is Newton non-degenerate, since critical points of $f_P$ correspond to critical points of the corresponding $g$-polynomial. But $g$ was constructed such that 0 is a regular value. It follows that the intersection of $\Sigma_f$ and $V_f$ is empty and therefore $f$ is Newton non-degenerate.
\end{proof}

%We can see from the construction that $f$ is convenient and in fact also non-degenerate in the sense of Oka \cite{oka}. In fact, the lemmata from this section and the previous section also prove the analogous statements for convenient and non-degenerate polynomial maps. We thus obtain the following analogue of Theorem~\ref{thm:main1}.

%\begin{theorem}
%A link $L\subset S^3$ is the link of a weakly isolated singularity of some radially weighted homogeneous mixed function $f$ that is non-degenerate and convenient if and only if $L$ is ambient isotopic to an even link or an odd link.

%A link $L\subset S^3$ is the link of a weakly isolated singularity of some radially weighted homogeneous semiholomorphic polynomial $f$ that is non-degenerate and convenient if and only if $L$ is ambient isotopic to an even link or an odd link.
%\end{theorem}

\begin{example}\label{ex:knot}
Consider the knot $8_{16}$. We claim that it is neither 2-periodic nor odd and therefore cannot be realised as the link of a weakly isolated singularity of a radially weighted homogeneous, inner non-degenerate mixed function or of a radially weighted homogeneous, convenient, Newton non-degenerate mixed polynomial.

Knots that are 2-periodic must satisfy Murasugi's condition \cite{murasugi}, i.e., their Alexander polynomials $\Delta(t)$ must satisfy
\begin{equation}
\Delta(t)\equiv f(t)^2(1+t+t^2+\ldots+t^{\lambda-1}) \quad \text{mod }2
\end{equation}
for some Alexander polynomial $f(t)$ and some odd natural number $\lambda$. Here ``$\equiv$'' signifies equality up to multiplication by integer powers of $t$. The Alexander polynomial of $8_{16}$ is (according to knotinfo \cite{knotinfo})
\begin{equation}
\Delta(t)=1-4t+8t^2-9t^3+8t^4-4t^5+t^6=1+t^3+t^6\quad \text{mod }2
\end{equation}
and is irreducible modulo 2 (using the Mathematica software). Therefore, $f(t)$ would have to be constant, but then $\Delta$ is not of the form $1+t+t^2+\ldots+t^{\lambda-1}$ for any $\lambda$. Thus $8_{16}$ is not 2-periodic.

Odd links are freely periodic \cite{hartley}, which places another condition on the Alexander polynomial, namely
\begin{equation}
\Delta(t^2)=f(t)f(-t)
\end{equation}
for some Alexander polynomial $f(t)$. Since $\Delta(t^2)$ is irreducible (again verified with Mathematica), where $\Delta(t)$ is the Alexander polynomial of $8_{16}$, the knot is not freely periodic and hence not odd.
\end{example}

%The knot $8_{16}$ is fibered.  Example~\ref{ex:knot} therefore shows that i

\section{Links of inner non-degenerate semiholomorphic polynomials}\label{sec4}

In this section, we extend Theorem~\ref{thm:main1b}, which discussed the case of semiholomorphic polynomials with exactly one compact 1-face in its Newton boundary, to semiholomorphic polynomials with any number $N$ of compact 1-faces.

We proved in \cite{AraujoBodeSanchez} that the link of the singularity of a $u$-convenient, inner non-degenerate semiholomorphic polynomial $f$ is the closure of a braid $B(B_1,B_2,\ldots,B_N)$ that can be explicitly constructed from a sequence of braids $B_1\subset\mathbb{C}\times S^1, B_i\subset(\mathbb{C}\backslash\{0\}\times S^1)$, $i=2,3,\ldots,N$, each of which is associated to a compact 1-face of the Newton boundary of $f$. If $f$ is not $u$-convenient, the link is the union of such a closed braid and its braid axis and is denoted by $B^o(B_1,B_2,\ldots,B_N)$. Since the link type of the singularity only depends on the principal part of $f$, the same is true for any inner non-degenerate mixed polynomial with semiholomorphic principal part.

The different braids $B_i\subset\mathbb{C}\times S^1$, $i=2,\ldots,N$, are exactly the zeros of the $g$-polynomials in $(\mathbb{C}\backslash\{0\}\times S^1)$ corresponding to the face functions $f_{P_i}$, where $P_i$ are the weight vectors corresponding to the 1-faces of $f$ in the usual ordering \cite{AraujoBodeSanchez}. The braid $B_1$ is given by the zeros of the $g$-polynomial associated with $f_{P_1}$.

\begin{lemma}
Let $f:\mathbb{C}^2\to\mathbb{C}$ be an inner non-degenerate mixed polynomial with semiholomorphic principal part and with $N$ compact 1-faces in its Newton boundary. Then the link of its singularity is the closure of $B(B_1,B_2,\ldots,B_N)$ or $B^o(B_1,B_2,\ldots,B_N)$, depending on whether $f$ is $u$-convenient or not, for a sequence of braids $B_1\subset\mathbb{C}\times S^1, B_i\subset(\mathbb{C}\backslash\{0\}\times S^1)$, $i=2,3,\ldots,N$, each of which is $u$-even or divisor-symmetric.
\end{lemma}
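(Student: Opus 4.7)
The plan is to reduce this multi-face statement to the single-face results already established in Section~\ref{sec2}, applying them face by face. By the construction recalled from \cite{AraujoBodeSanchez}, the link of the singularity of $f$ is the closure of $B(B_1,B_2,\ldots,B_N)$ (if $f$ is $u$-convenient) or $B^o(B_1,B_2,\ldots,B_N)$ (otherwise), where each $B_i$ is the braid formed by the zeros of the $g$-polynomial $g_i$ associated with the face function $f_{P_i}$. Thus it suffices to check that each $B_i$ individually is $u$-even or divisor-symmetric.

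First I would observe that each $f_{P_i}$ is a radially weighted homogeneous mixed polynomial with weight vector $P_i=(p_{i,1},p_{i,2})$, $\gcd(p_{i,1},p_{i,2})=1$, supported on a single compact 1-face, and semiholomorphic (inherited from the semiholomorphic principal part of $f$). The inner non-degeneracy of $f$ guarantees that $g_i$ has no zeros which are also critical points, on $\mathbb{C}\times S^1$ for $i=1$ and on $(\mathbb{C}\setminus\{0\})\times S^1$ for $i\geq 2$. Hence the zeros of $g_i$ form an honest closed braid in the appropriate space.

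Next I would apply the symmetry analysis of Section~\ref{sec2} face by face. The proofs of Lemma~\ref{lem1} and of the divisor-symmetric lemma immediately preceding Lemma~\ref{lem:uconv} depend only on the form of the support of a radially weighted homogeneous semiholomorphic polynomial, that is, on the slope $-p_{i,1}/p_{i,2}$ and the parities of $p_{i,1}$ and $p_{i,2}$. Applied to each $f_{P_i}$, they yield that $B_i$ is $u$-even when $p_{i,1}$ is even and $2^{K_i}$-symmetric (with $2^{K_i}$ the largest power of $2$ dividing $p_{i,2}$) when $p_{i,1}$ is odd. The divisibility $2^{K_i}\mid\tilde{s}_i$, which upgrades $2^{K_i}$-symmetry to divisor-symmetry, follows from the same counting argument as in the single-face case: $\tilde{s}_i$ is determined by the difference between the $u$-coordinates of the endpoints of the $i$-th 1-face, which is a multiple of $p_{i,2}$.

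The main (and quite minor) obstacle is to verify that for interior faces ($2\le i\le N-1$) the relevant hypothesis from Section~\ref{sec2} is available. Here the inner non-degeneracy condition from Definition~\ref{Newtoncond}(ii) only controls $f_{P_i}$ on $(\mathbb{C}^*)^2$ rather than on all of $\mathbb{C}^2$; but this is exactly what is needed to ensure that $B_i\subset(\mathbb{C}\setminus\{0\})\times S^1$ is a well-defined braid, after which the weight-vector-based symmetry arguments carry over unchanged. The $u$-convenient versus non-$u$-convenient dichotomy is handled exactly as in Section~\ref{sec2} by the presence or absence of the braid axis $\{u=0\}$, corresponding to whether the support of $f$ meets the horizontal axis.
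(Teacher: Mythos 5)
Your proposal is correct and follows essentially the same route as the paper: cite the decomposition from \cite{AraujoBodeSanchez} into $B(B_1,\ldots,B_N)$ or $B^o(B_1,\ldots,B_N)$, then apply the single-face symmetry analysis of Section~\ref{sec2} (Lemma~\ref{lem1}, Lemma~\ref{lem23}, and the divisor-symmetry counting) to each face function $f_{P_i}$. The paper's version is more terse — it explicitly invokes Lemma~4.3 of \cite{AraujoBodeSanchez} for the non-vanishing of vertex face functions on $(\C^*)^2$ (which is what makes each $B_i$ a genuine braid with a constant number of strands, a point your proof gestures at via non-degeneracy but slightly misattributes to the absence of critical zeros) and then defers the rest to ``the same arguments as in Lemma~\ref{lem23}''; your proof spells out the per-face parity-of-$p_{i,1}$ dichotomy and the divisibility $2^{K_i}\mid\tilde{s}_i$ explicitly, which is the correct content.
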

\begin{proof}
We already know from \cite{AraujoBodeSanchez} that the link is of the form $B(B_1,B_2,\ldots,B_N)$ or $B^o(B_1,B_2,\ldots,B_N)$ for a sequence of braids $B_1\subset\mathbb{C}\times S^1, B_i\subset(\mathbb{C}\backslash\{0\}\times S^1)$, $i=2,3,\ldots,N$. We need to show that each $B_i$, $i=1,2,\ldots,N$, has the desired symmetry.

Since $f$ is inner non-degenerate and semiholomorphic, it follows that $f_{\Delta}$ has no zeros in $(\mathbb{C}^*)^2$ for all non-extreme vertices $\Delta$ of the Newton boundary of $f$ and the extreme vertex $\Delta=\Delta_{N+1}$, see \cite[Lemma 4.3]{AraujoBodeSanchez}.

The same arguments as in Lemma~\ref{lem23} show that each $B_i$, $i=1,2,\ldots,N$, is $u$-even or divisor-symmetric.
\end{proof}

\begin{lemma}\label{lem:nonzeroodd}
Let $L$ be an odd link in $\mathbb{C}\times S^1$ (or $S^1\times\mathbb{C}$). Then $L$ is isotopic in $\mathbb{C}\times S^1$ (or $S^1\times\mathbb{C}$) to an odd link in $(\mathbb{C}\backslash\{0\})\times S^1$ (or $S^1\times (\mathbb{C}\backslash\{0\})$).
\end{lemma}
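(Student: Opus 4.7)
The plan is to mimic the shift-based argument of Lemma~\ref{lem:nonzerosym}, but to replace the constant translation (which would break the odd symmetry) by a shift that commutes with $\tau_1$. Recall that oddness in $\mathbb{C}\times S^1$ means invariance under $\tau_1(u,\rme^{\rmi t})=(-u,-\rme^{\rmi t})=(-u,\rme^{\rmi(t+\pi)})$. The natural candidate for an equivariant small perturbation is
\[
\phi_\varepsilon\colon\mathbb{C}\times S^1\to\mathbb{C}\times S^1,\qquad \phi_\varepsilon(u,\rme^{\rmi t})=(u+\varepsilon\rme^{\rmi t},\rme^{\rmi t}),
\]
for some constant $\varepsilon\in\mathbb{C}$. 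For every $\varepsilon$ this is a diffeomorphism (the inverse subtracts $\varepsilon\rme^{\rmi t}$), and a direct check gives $\phi_\varepsilon\circ\tau_1=\tau_1\circ\phi_\varepsilon$, because the extra $\rme^{\rmi t}$ factor in the shift picks up the same sign change as $u$ under the involution. Consequently, if $L$ is odd, so is $\phi_\varepsilon(L)$.

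The first step is to note that the family $\{\phi_{s\varepsilon}\}_{s\in[0,1]}$ is an ambient isotopy from the identity to $\phi_\varepsilon$, so $L$ and $\phi_\varepsilon(L)$ are isotopic in $\mathbb{C}\times S^1$. The second step is to choose $\varepsilon$ so that $\phi_\varepsilon(L)$ avoids $\{0\}\times S^1$. This means we need $u+\varepsilon\rme^{\rmi t}\neq 0$ for every $(u,\rme^{\rmi t})\in L$, i.e.\ $\varepsilon$ must lie outside the set
\[
C\defeq\{-u\rme^{-\rmi t}:(u,\rme^{\rmi t})\in L\}\subset\mathbb{C}.
\]
Since $L$ is a compact $1$-manifold and $(u,\rme^{\rmi t})\mapsto -u\rme^{-\rmi t}$ is smooth, $C$ is a compact subset of $\mathbb{C}$ of topological dimension at most $1$, hence of Lebesgue measure zero. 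Therefore $\mathbb{C}\setminus C$ is dense, and we can pick $\varepsilon\in\mathbb{C}\setminus C$ (of arbitrarily small modulus if desired). The resulting $\phi_\varepsilon(L)$ is an odd link in $(\mathbb{C}\setminus\{0\})\times S^1$ isotopic to $L$, as required.

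For the $S^1\times\mathbb{C}$ case the argument is entirely symmetric: use $\psi_\varepsilon(\rme^{\rmi\varphi},v)=(\rme^{\rmi\varphi},v+\varepsilon\rme^{\rmi\varphi})$, which again commutes with $\tau_1$ by the same cancellation of signs, and avoid the analogous measure-zero compact set $\{-v\rme^{-\rmi\varphi}:(\rme^{\rmi\varphi},v)\in L\}$ when choosing $\varepsilon$. The only point that requires a moment of care—and the step I would think of as the crux—is finding the equivariant shift; the constant translations used in Lemma~\ref{lem:nonzerosym} no longer commute with $\tau_1$, and the multiplicative factor $\rme^{\rmi t}$ (respectively $\rme^{\rmi\varphi}$) is exactly what restores equivariance while preserving the ``small perturbation'' nature of the map.
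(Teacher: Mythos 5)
Your proof is correct and takes essentially the same approach as the paper: in both cases the odd link is perturbed by a $\tau_1$-equivariant shift of the form $u\mapsto u+\varepsilon\rme^{\pm\rmi t}$, with $\varepsilon$ chosen to avoid $\{0\}\times S^1$. The paper obtains this shift indirectly by conjugating to the $u$-even case and invoking Lemma~\ref{lem:nonzerosym}, whereas you check the equivariance directly and use a measure-zero argument to select $\varepsilon$, but the underlying idea is identical.
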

\begin{proof}
Consider a parametrisation $\cup_{j=1}^{\ell}\cup_{\tau\in[0,2\pi]}(u_j(\tau),\rme^{\rmi t_j(\tau)})$ of $L$, where $\ell$ denotes the number of components of $L$. Since $L$ is odd, the link parametrised by $(u_j(\tau)\rme^{\rmi t_j(\tau)},\rme^{\rmi t_j(\tau)})$ is $u$-even. By the same arguments as in Lemma~\ref{lem:nonzerosym} there is a $\chi\in[0,2\pi]$ such that $(u_j(\tau)\rme^{\rmi t_j(\tau)}+\varepsilon\rme^{\rmi \chi},\rme^{\rmi t_j(\tau)})$ is $u$-even and does not intersect $\{0\}\times S^1$ for all sufficiently small $\varepsilon>0$. Then $(u_j(\tau)+\rme^{-\rmi t_j(\tau)}\varepsilon\rme^{\rmi \chi},\rme^{\rmi t_j(\tau)})$ is odd and in $(\mathbb{C}\backslash\{0\})\times S^1$. Since $\varepsilon>0$ can be chosen arbitrarily small, the parametric curves describe a link that is isotopic to $L$.
\end{proof}

\begin{lemma}\label{lem:verttimes}
Let $f:\mathbb{C}^2\to\mathbb{C}$ be a Newton non-degenerate mixed function and let $\tilde{f}:\mathbb{C}^2\to\mathbb{C}$ be a mixed function, such that $supp(\tilde{f})$ is a single point, and without any zeros in $(\mathbb{C}^*)^2$. Then $f\tilde{f}$ is Newton non-degenerate. 
\end{lemma}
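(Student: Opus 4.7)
The plan is to exploit the fact that multiplying by $\tilde{f}$ amounts to a single translation on the Newton diagram and a pointwise multiplication by a non-vanishing complex scalar on $(\mathbb{C}^*)^2$. Write $\mathrm{supp}(\tilde{f}) = \{(a,b)\}$. Since this support is a single lattice point, $\tilde{f}_P = \tilde{f}$ for every positive weight vector $P$, so $(f\tilde{f})_P = f_P \cdot \tilde{f}$. Because $\mathbb{C}[u,\bar u,v,\bar v]$ is an integral domain, $f_P\tilde{f}\neq 0$ and one has $d(P;f\tilde{f}) = d(P;f) + \alpha_P(a,b)$. Consequently every compact face $\Delta$ of $\Gamma_{f\tilde{f}}$ is obtained by translating a corresponding compact face $\Delta'$ of $\Gamma_f$ by $(a,b)$, and the identification of face functions reads
\begin{equation}
(f\tilde{f})_\Delta = f_{\Delta'} \cdot \tilde{f}.
\end{equation}

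Next, I would take an arbitrary $(u_*,v_*) \in V_{(f\tilde{f})_\Delta} \cap (\mathbb{C}^*)^2$ and try to show it cannot be a critical point. Since $\tilde{f}(u_*,v_*) \neq 0$ by hypothesis, the equation $f_{\Delta'}(u_*,v_*)\tilde{f}(u_*,v_*)=0$ forces $f_{\Delta'}(u_*,v_*) = 0$; that is, $(u_*,v_*) \in V_{f_{\Delta'}} \cap (\mathbb{C}^*)^2$. Applying the product rule to each of the four Wirtinger derivatives $\partial_u, \partial_{\bar u}, \partial_v, \partial_{\bar v}$ at a point where $f_{\Delta'}$ vanishes collapses one summand and yields
\begin{equation}
\partial_w \bigl[(f\tilde{f})_\Delta\bigr](u_*,v_*) = \tilde{f}(u_*,v_*)\cdot \partial_w f_{\Delta'}(u_*,v_*),\quad w\in\{u,\bar u, v,\bar v\}.
\end{equation}

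Viewed as real maps $\mathbb{R}^4 \to \mathbb{R}^2 \cong \mathbb{C}$, the real differentials therefore satisfy $d\bigl[(f\tilde{f})_\Delta\bigr]_{(u_*,v_*)} = \tilde{f}(u_*,v_*)\cdot d[f_{\Delta'}]_{(u_*,v_*)}$, where the scalar $\tilde{f}(u_*,v_*)\in\mathbb{C}^*$ acts by an $\mathbb{R}$-linear automorphism of $\mathbb{C}$ on the target. In particular the two differentials have the same rank, so $(u_*,v_*)$ is a critical point of $(f\tilde{f})_\Delta$ if and only if it is a critical point of $f_{\Delta'}$. By the Newton non-degeneracy of $f$, the face function $f_{\Delta'}$ admits no critical points in $V_{f_{\Delta'}} \cap (\mathbb{C}^*)^2$, so $(u_*,v_*)$ cannot be critical for $(f\tilde{f})_\Delta$. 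As $\Delta$ was an arbitrary compact face of $\Gamma_{f\tilde{f}}$, this proves $f\tilde{f}$ is Newton non-degenerate.

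There is essentially no hard step here; the only point requiring a little care is the bookkeeping of faces of $\Gamma_{f\tilde{f}}$ versus those of $\Gamma_f$, since a priori cancellations inside the coefficient sums of $f\tilde{f}$ at a translated lattice point could shrink the support. The identity $(f\tilde{f})_P = f_P\tilde{f}\neq 0$ for every weight vector handles this: every face of the product's Newton boundary is recovered from a face of $f$, which is all one needs for the above argument.
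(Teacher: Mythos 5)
Your proof is correct and follows essentially the same route as the paper's: identify $(f\tilde f)_P = f_P\tilde f_P$ with $\tilde f_P=\tilde f$, note that $\tilde f$ is nonvanishing on $(\mathbb{C}^*)^2$ so the zero sets of $f_P$ and $(f\tilde f)_P$ agree there, and observe that each Wirtinger derivative of $(f\tilde f)_P$ at such a zero equals $\tilde f$ times the corresponding derivative of $f_P$, hence the real Jacobian is rescaled by an invertible $\mathbb{R}$-linear map and the rank is preserved. You spell out a couple of points the paper leaves implicit (the translation of the Newton boundary, and the explicit invertibility of scalar multiplication by $\tilde f(u_*,v_*)\in\mathbb{C}^*$), but the argument is the same.
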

\begin{proof}
Note that $(f\tilde{f})_P=f_P\tilde{f}_P$ for all weight vectors $P$. Since $f$ is Newton non-degenerate, there are no critical points of $f_P$ in $V_{f_P}\cap (\mathbb{C}^*)^2$. But since $\tilde{f}=\tilde{f}_P$ is non-vanishing on $(\mathbb{C}^*)^2$, we have $V_{f_P}\cap (\mathbb{C}^*)^2=V_{(f\tilde{f})_P}\cap(\mathbb{C}^*)^2$. For all $(u_*,v_*)\in V_{f_P}\cap (\mathbb{C}^*)^2$, we obtain $\partial_x (f\tilde{f})_P(u_*,v_*)=\tilde{f}_P(u_*,v_*)\partial_x f_P(u_*,v_*)$, where $x\in\{u,\bar{u},v,\bar{v}\}$. Then since, $(u_*,v_*)$ is a regular point of $f_P$, it is also a regular point of $(f\tilde{f})_P$ and so $f\tilde{f}$ is Newton non-degenerate.
\end{proof}

\begin{lemma}\label{lem:semiglue}
Let $B_1,B_2,\ldots,B_N$ be a sequence of braids, each of which is $u$-even or divisor-symmetric with $B_1\subset\mathbb{C}\times S^1$ and $B_i\subset(\mathbb{C}\backslash\{0\})\times S^1$. Then there is an inner non-degenerate mixed polynomial with semiholomorphic principal part and with $N$ compact 1-faces in its Newton boundary whose link of the singularity is the closure of $B(B_1,B_2,\ldots,B_N)$. Likewise, there exists such a polynomial for $B^o(B_1,B_2,\ldots,B_N)$.
\end{lemma}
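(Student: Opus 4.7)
The plan is to construct $f$ face by face, using Lemmas~\ref{lem:symsemi} and \ref{lem:divsemi} for the individual pieces and then gluing them along their Newton boundary. For each $i\in\{1,2,\ldots,N\}$ apply Lemma~\ref{lem:symsemi} or Lemma~\ref{lem:divsemi} (according to whether $B_i$ is $u$-even or divisor-symmetric) to obtain a radially weighted homogeneous, inner non-degenerate semiholomorphic polynomial $h_i:\mathbb{C}^2\to\mathbb{C}$ with weight vector $(k_i,1)$ and whose link is the closure of $B_i$. The exponents $k_i$ appearing in these lemmas may be chosen to be any sufficiently large natural number of appropriate parity, so we may assume that $k_1>k_2>\cdots>k_N$, ensuring that the resulting weight vectors are strictly ordered as required for a Newton boundary with $N$ distinct compact 1-faces.

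Next I would line up the supports. For each $i$ the support of $h_i$ lies on a single line $L_i$ of slope $-k_i$ in the integer lattice. By multiplying $h_i$ by appropriate monomials $u^{\alpha_i}v^{\beta_i}$ (which preserves inner non-degeneracy on $(\mathbb{C}^*)^2$ by Lemma~\ref{lem:verttimes}, and which does not affect the braid associated to the 1-face since we are only rescaling the $g$-polynomial by a factor of $u^{\alpha_i}$), the lines $L_i$ can be arranged so that consecutive lines intersect the integer lattice in a common point, namely the right endpoint of $L_i$ coincides with the left endpoint of $L_{i+1}$. These common lattice points are the non-extreme vertices $\Delta_2,\ldots,\Delta_N$ of the future Newton boundary. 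Set
\begin{equation}
f \defeq \sum_{i=1}^{N} h_i.
\end{equation}
By construction the support of $f$ lies on the union of the $L_i$, and since the $L_i$ share endpoints but have strictly decreasing slopes, the Newton boundary $\Gamma_f$ is exactly this polygonal chain, with $N$ compact 1-faces and weight vectors $P_1\succ P_2\succ\cdots\succ P_N$.

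I would then check that $f$ is inner non-degenerate. For the face function along the $i$th 1-face we have $f_{P_i}=h_i$ up to a non-vanishing monomial factor on $(\mathbb{C}^*)^2$, hence its critical set in $(\mathbb{C}^*)^2$ is empty by the inner non-degeneracy of $h_i$. At each non-extreme vertex $\Delta_j$ the face function is a single mixed monomial (the shared endpoint of two $L_i$) times the trigonometric coefficient arising from the overlap, which by the constructions in Lemmas~\ref{lem:symsemi}, \ref{lem:divsemi} can be ensured non-zero; thus $f_{\Delta_j}$ has no zeros in $(\mathbb{C}^*)^2$. For the extreme vertex $\Delta_{N+1}$ the same argument applies, using that the leading terms of the $h_i$ are non-degenerate by construction, so the first condition of Definition~\ref{Newtoncond} is satisfied as well. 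Hence $f$ is inner non-degenerate with semiholomorphic principal part and Newton boundary having exactly $N$ compact 1-faces. By the construction of the link of the singularity from face functions in \cite{AraujoBodeSanchez}, the link is the closure of $B(B_1,B_2,\ldots,B_N)$ since the $g$-polynomial of $f_{P_i}$ has braided zero set isotopic to $B_i$.

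Finally, to obtain $B^o(B_1,\ldots,B_N)$, note that by construction the polynomial $f$ built above is $u$-convenient (since the support of $h_N$ reaches the horizontal axis via the choice of the monomial factor $u^{\alpha_N}v^{\beta_N}$). Replacing $f$ by $vf$ makes it non-$u$-convenient while preserving inner non-degeneracy by the analogue of Lemma~\ref{lem:uconv}; the resulting link is the union of the closure of $B(B_1,\ldots,B_N)$ and the braid axis $\{u=0\}\cap S^3$, i.e., the closure of $B^o(B_1,\ldots,B_N)$. The main technical obstacle will be the bookkeeping for the vertex face functions: one must carefully control which monomial factors are absorbed into the shared vertices when forming the sum and verify that the coefficients at these vertices remain non-zero so that the vertex face functions automatically avoid $(\mathbb{C}^*)^2$, but since the leading and trailing coefficients of the $h_i$ are at our disposal by Lemma~\ref{lem:nondeg}, this can always be arranged.
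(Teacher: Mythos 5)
Your additive gluing $f=\sum_i h_i$ has a genuine gap at the face functions. Once the supports of the shifted $h_i$ share the vertex $\Delta_i$, the face function of $f$ along the $i$th $1$-face is \emph{not} $h_i$ up to a monomial: it is $h_i$ plus the vertex face functions $(h_{i-1})_{\Delta_i}$ and $(h_{i+1})_{\Delta_{i+1}}$, because those lattice points also lie on the $i$th $1$-face. A monomial rescaling $u^{\alpha_i}v^{\beta_i}$ only translates the support of $h_i$; it does not make the endpoint coefficient functions (which in the mixed setting are genuine functions of $v,\bar v$, not constants) of $h_{i-1}$ and $h_i$ agree at $\Delta_i$. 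Consequently the $g$-polynomial of $f_{P_i}$ differs from that of $h_i$ at its extreme coefficients, and neither the simplicity of the roots (hence inner non-degeneracy) nor the claim that its zero set is braid-isotopic to $B_i$ follows from the corresponding properties of $h_i$. This is precisely the step you assert without proof in ``$f_{P_i}=h_i$ up to a non-vanishing monomial factor.''

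The paper handles exactly this difficulty by avoiding the plain sum: it proceeds by induction on $N$ and glues $\hat f$ (realising $B_1$, with a weight vector chosen $\succ$ all of $\tilde f$'s) to the inductively-built $\tilde f$ (realising $B_2,\dots,B_N$) via
\[
f=\tilde f_{\Delta}\,\hat f+\hat f_{\Delta'}\,\tilde f-\tilde f_{\Delta}\,\hat f_{\Delta'},
\]
where $\Delta$ and $\Delta'$ are the extreme vertices to be identified. Because each summand is a \emph{product} of a nowhere-vanishing vertex function with $\hat f$ or $\tilde f$, the face functions of $f$ factor as $\tilde f_\Delta\hat f_P$ or $\hat f_{\Delta'}\tilde f_P$, and Lemma~\ref{lem:verttimes} immediately gives Newton (hence inner) non-degeneracy while leaving the zero sets, and thus the braids $B_i$, unchanged on $(\mathbb{C}^*)^2$. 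The subtraction term cancels the double-counted vertex contribution, so the shared vertex coefficients match by construction rather than by an additional matching argument. If you want to keep an additive picture, you would have to replace the bare sum by an inclusion--exclusion of this multiplicatively-prepared form and check the vertex matching; as written, the proof does not establish the non-degeneracy or the isotopy type of the $B_i$ in the glued polynomial. A further small inaccuracy: you invoke Lemma~\ref{lem:verttimes} for preservation of inner non-degeneracy under monomial multiplication, but that lemma concerns Newton non-degeneracy; the relevant statement for inner non-degeneracy under multiplication by $u$ or $v$ is Lemma~\ref{lem:uconv} and its analogue.
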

\begin{proof}
We may assume that $B_1$ does not contain a strand that is $\{0\}\times S^1$. If it does, we construct a polynomial for the remaining strands and multiply the result by $u$. If $B_1$ is $u$-even and intersects $\{0\}\times S^1$, we may also pick a different $u$-even representative of $B_1$ that does not have such intersections by the same arguments as in Lemma~\ref{lem:nonzerosym}.

By Lemmas~\ref{lem:symsemi} and \ref{lem:divsemi} we have for every braid $B_i$ a radially weighted homogeneous, inner non-degenerate semiholomorphic polynomial $f_i$, such that the zeros of the corresponding $g$-polynomial form the closed braid $B_i$. In fact, by the construction in Section~\ref{sec2} we may take $f_i$ to be convenient and Newton non-degenerate. We prove by induction on $N$ that we can combine these different radially weighted homogeneous polynomials to obtain an inner non-degenerate polynomial with the desired link of the singularity.

The case of $N=1$ is given by Theorem~\ref{thm:main1b}. Now assume that for every sequence of length $N-1$ of braids with the required symmetries we obtain a link of the singularity of a convenient, Newton non-degenerate semiholomorphic polynomial $\tilde{f}$ with $N-1$ compact 1-faces in its Newton boundary. In particular, for the sequence $B_i\subset(\mathbb{C}\backslash\{0\})\times S^1$, $i=2,3,\ldots,N$, such an $\tilde{f}$ exists. Since $B_2$ does not intersect $0\times S^1$, the polynomial $\tilde{f}$ is convenient and $\tilde{f}_{\Delta}$ has no zeros in $\mathbb{C}\times \mathbb{C}^*$, where $\Delta$ is the vertex of the Newton boundary of $\tilde{f}$ on the vertical axis.

Similarly, there is a convenient, Newton non-degenerate, radially weighted homogeneous semiholomorphic polynomial $\hat{f}$, such that the zeros of its $g$-polynomial form the braid $B_1$. By Lemmas~\ref{lem:symsemi} and \ref{lem:divsemi} we may choose the weight vector corresponding to $\hat{f}$ of the form $(k,1)$, where we may choose $k$ arbitrarily large. In particular, the weight vector associated with $\hat{f}$ may chosen to be greater than all weight vectors associated with 1-faces of $\tilde{f}$ with respect to the usual order of weight vectors. Since $\hat{f}$ is $u$-convenient, $\hat{f}_{\Delta'}$ has no zeros in $\mathbb{C}^*\times\mathbb{C}$, where $\Delta'$ is the vertex of the Newton boundary of $\hat{f}$ on the horizontal axis. In fact, since $\hat{f}$ is semiholomorphic, we have $\hat{f}_{\Delta'}(u,v)=cu^s$, where $s$ is the number of strands of $B_1$ and $c\in\mathbb{C}^*$.

Consider now the semiholomorphic polynomial
\begin{equation}
f(u,v)=\tilde{f}_{\Delta}(u,v)\hat{f}(u,v)+\hat{f}_{\Delta'}(u,v)\tilde{f}(u,v)-\tilde{f}_{\Delta}(u,v)\hat{f}_{\Delta'}(u,v).
\end{equation}

The function $f$ is semiholomorphic and has $N$ compact 1-faces in its Newton boundary. Let $P_i$, $i=1,2,\ldots,N$, denote the weight vector corresponding to these 1-faces and let $k_1=\tfrac{p_1}{p_2}$ with $P_1=(p_1,p_2)$. Note that $P_1$ is the weight vector that corresponds to the unique 1-face of $\hat{f}$ and $P_i$, $i=2,3,\ldots,N$, correspond to the 1-faces of $\tilde{f}$. Then for all weight vectors $P$ with $P_1\succ P$, the corresponding face function $f_P$ is the product of $\hat{f}_{\Delta'}$ which has no zeros in $(\mathbb{C}^*)^2$ and $\tilde{f}_{P}$, which is Newton non-degenerate. It follows from Lemma~\ref{lem:verttimes} that $f_{P}$ is Newton non-degenerate. For weight vectors $P$ with $P\succ P_1$ or $P=P_1$ the face function $f_{P}$ is the product of $\tilde{f}_{\Delta}$ and $\hat{f}_{P}$ and by the same argument as above, it is Newton non-degenerate. Since $f$ is also convenient, it is inner non-degenerate \cite{AraujoBodeSanchez}.

If $B_1$ contains a strand of the form $\{0\}\times S^1$, we have to multiply $f$ by $u$, which does not change inner non-degeneracy. Since the zeros of the face functions $f_{P_i}$ are exactly the zeros of $\tilde{f}_{P_i}$ together with the set $u=0$ and the zeros of $f_{P_1}$ are the zeros of $\hat{f}$ together with the set $v=0$, the link of the singularity of $f$ is $B(B_1,B_2,\ldots,B_N)$.

In order to obtain $B^o(B_1,B_2,\ldots,B_N)$ we multiply the polynomial that we constructed for $B(B_1,B_2,\ldots,B_N)$ by $v$. The result is again inner non-degenerate by Lemma~\ref{lem:uconv}.
\end{proof}

%\begin{definition}[{\cite[Definition 6.1]{AraujoBodeSanchez}}]\label{def-strong}
%We say that $f$ is a {\bf strongly Newton inner non-degenerate (SIND)} if both of the following conditions hold:
%\begin{itemize}
%    \item[(i)] the face functions $f_{P_1}$ and $f_{P_N}$ have no critical points in $ \C^2\setminus \{v=0\}$ and $\C^2\setminus \{u=0\}$, respectively.
%\item[(ii)] for each 1-face and non-extreme vertex $\Delta$, the face function $f_\Delta$ has no critical points in $(\C^*)^{2}$.
%\end{itemize}
%\end{definition} 

\section{Links of inner non-degenerate mixed functions}\label{sec5}

In this section, we prove Theorem~\ref{thm:main2}, which classifies the links of singularities of inner non-degenerate mixed polynomials with nice Newton boundary. This additional condition of being nice is already required in the result from \cite{AraujoBodeSanchez} that describes the links of inner non-degenerate mixed functions in terms of their face functions. For inner non-degenerate functions with semiholomorphic principal part it is automatically satisfied.
\begin{definition}\label{def:blabla}
We say that a mixed polynomial $f:\mathbb{C}^2\to\mathbb{C}$ is \textbf{nice} if for every non-extreme vertex $\Delta$ of $f$, we have $V_{f_{\Delta}}\cap (\C^*)^{2}=\emptyset$.
\end{definition}

\begin{lemma}\label{lem:miximplies}
Let $f:\mathbb{C}^2\to\mathbb{C}$ be an inner non-degenerate, mixed function with nice Newton boundary. Then the link of its weakly isolated singularity is of the form $L([L_1,L_2,\ldots,L_{N-1}],L_N)$ for some $N$ and some sequence of links $L_i$ where each $L_i$ is fixed-point-free even or odd.
\end{lemma}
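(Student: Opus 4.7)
The strategy is to combine the link decomposition result from \cite{AraujoBodeSanchez} with Lemma~\ref{lem1mix} applied to each face function. By \cite{AraujoBodeSanchez}, the hypotheses on $f$ (inner non-degenerate, nice Newton boundary) guarantee that the link of the singularity admits a representative of the form $L([L_1,L_2,\ldots,L_{N-1}],L_N)$, where $N$ is the number of compact $1$-faces of $\Gamma_f$, the link $L_1\subset\mathbb{C}\times S^1$ and each $L_i\subset(\mathbb{C}\setminus\{0\})\times S^1$ for $2\le i\le N-1$ is the zero set of the $g$-polynomial associated with the face function $f_{P_i}$, and $L_N\subset S^1\times\mathbb{C}$ is the zero set of the $h$-polynomial associated with $f_{P_N}$. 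Each face function $f_{P_i}$ is radially weighted homogeneous with weight vector $P_i=(p_1^{(i)},p_2^{(i)})$, $\gcd(p_1^{(i)},p_2^{(i)})=1$, so Lemma~\ref{lem1mix} gives that $V_{f_{P_i}}$ is $u$-even if $p_1^{(i)}$ is even, $v$-even if $p_2^{(i)}$ is even, and odd when both $p_j^{(i)}$ are odd. Because the $g$- (respectively $h$-) polynomial is obtained from $f_{P_i}$ by the rescaling of Eq.~\eqref{eq:g} (respectively Eq.~\eqref{eq:h}), an argument entirely parallel to Lemma~\ref{lem:symfg} shows that each $L_i$ inherits one of the three symmetries.

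The second step is to verify the fixed-point-free property of each $L_i$. For intermediate $i$ with $2\le i\le N-1$, this is automatic: the ambient space $(\mathbb{C}\setminus\{0\})\times S^1$ meets neither the fixed locus $\{v=0\}$ of $\tau_u$ (excluded since $|v|=1$) nor the fixed locus $\{u=0\}$ of $\tau_v$ (excluded by definition of the space), while the fixed locus of $\tau_1$ reduces to the origin. For $L_1\subset\mathbb{C}\times S^1$ the only potentially relevant fixed locus is $\{0\}\times S^1$, coming from $\tau_v$; this strand is contained in $L_1$ exactly when $f_{P_1}(0,v)\equiv 0$, equivalently when $f$ is not $v$-convenient. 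An analogous statement holds for $L_N\subset S^1\times\mathbb{C}$ under the exchange of $u$ and $v$. When such an axis strand appears, it can be peeled off: the remainder keeps the parity symmetry and is now fixed-point-free, while the axis strand itself is trivially fixed-point-free with respect to the other symmetry (for instance, $\{0\}\times S^1$ in $\mathbb{C}\times S^1$ is fixed-point-free $u$-even).

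The main technical obstacle is making the axis-stripping argument above rigorous. One must check, using the explicit construction of $L([\ldots])$ in \cite{AraujoBodeSanchez}, that removing an axis component from $L_1$ or $L_N$ and reinserting it as a separate fixed-point-free factor in the sequence (possibly increasing $N$) yields the same overall link type. Here the niceness condition from Definition~\ref{def:blabla} plays its key role: it controls the gluing at each non-extreme vertex, so that the presence of an additional axis factor in the sequence does not disturb the other $L_i$'s. Once this bookkeeping is settled, the lemma follows by combining Lemma~\ref{lem1mix} with the elementary observations above about the location of the fixed loci of $\tau_u$, $\tau_v$, and $\tau_1$ in the various product spaces.
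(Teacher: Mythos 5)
Your proposal follows essentially the same strategy as the paper: invoke the $L([L_1,\ldots,L_{N-1}],L_N)$ decomposition from \cite{AraujoBodeSanchez}, apply Lemma~\ref{lem1mix} to each radially weighted homogeneous face function to extract a symmetry, observe that intermediate factors $L_i\subset(\mathbb{C}\setminus\{0\})\times S^1$ are automatically fixed-point-free, and then handle the possible appearance of the axis $\{0\}\times S^1$ (resp.\ $S^1\times\{0\}$) inside $L_1$ (resp.\ $L_N$) by splitting it off into an extra factor. That is exactly the skeleton of the paper's argument, so at the level of overall approach there is nothing to criticize.

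However, the step you yourself flag as the ``main technical obstacle'' is a genuine gap, and your diagnosis of what makes it hard points in a slightly wrong direction. You suggest the niceness condition controls the gluing when one ``reinserts'' the stripped axis. In fact niceness is only used to get the $L([\ldots])$ decomposition to exist in the first place; the stripping step needs no further appeal to niceness. What closes the gap is a direct identification coming from the construction of $[\cdot,\cdot]$ itself: when $\widetilde{L_1}$ is the disjoint union of the axis $\{0\}\times S^1$ and a fixed-point-free link $L_2\subset(\mathbb{C}\setminus\{0\})\times S^1$, one has the identity $\widetilde{L_1}=[L_1,L_2]$ where $L_1$ is any representative of the unknot confined to a small tubular neighbourhood of the axis that avoids $L_2$. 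This is essentially tautological from the definition of the combining operation in \cite{AraujoBodeSanchez}, because $[L_1,L_2]$ replaces a tube around $\{0\}\times S^1$ by a tube containing $L_1$. The paper then chooses $L_1$ to be an odd knot (deforming the axis inside the tube); your suggestion to use the literal axis as a fixed-point-free $u$-even unknot is an equally admissible choice. Either way, after this identification one simply renames the sequence, increasing $N'$ to $N'+1$ or $N'+2$, and the lemma follows. Finally, a smaller unacknowledged gap: in the case where $\widetilde{L_1}$ does \emph{not} contain the axis (i.e.\ $f$ is $v$-convenient) you still must show it does not merely \emph{touch} $\{0\}\times S^1$ at isolated points; this is precisely the content of Lemma~\ref{lem:convcases} (after exchanging $u$ and $v$), which uses inner non-degeneracy and should be cited explicitly rather than tacitly assumed.
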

\begin{proof}
The case where $f$ is exactly one compact 1-face in its Newton boundary is covered by Theorem~\ref{thm:main1}. Now assume that the Newton boundary of $f$ has more than $N'>1$ 1-faces.
As usual we write $P_i$, $i=1,2,\ldots,N'$, for the weight vectors associated to the 1-faces of $f$.
We know from \cite{AraujoBodeSanchez} that $f$ has a weakly isolated singularity, whose link is of the form $L([\widetilde{L_1},\widetilde{L_2},\ldots,\widetilde{L_{N'-1}}],\widetilde{L_{N'}})$, where each $\widetilde{L_i}\subset(\mathbb{C}\backslash\{0\}\times S^1)$, $i=2,\ldots,N'-1$, is given by the roots with non-zero $u$-coordinate of the $g$-polynomial associated with the face function $f_{P_i}$, $\widetilde{L_1}\subset\mathbb{C}\times S^1$ is given by the roots of the $g$-polynomial of $f_{P_1}$ and $\widetilde{L_{N'}}\subset S^1\times\mathbb{C}$ is given by the zeros of the $h$-polynomial associated with the face function $f_{P_{N'}}$. As pointed out in \cite{AraujoBodeSanchez}, some of the $\widetilde{L_i}$ might be the empty set, which of course possesses all the symmetries discussed in this article.

Since every face function is radially weighted homogeneous, all links must be even or odd by Lemma~\ref{lem1mix}. Since there are no fixed points of $\tau_u$ or $\tau_v$ in $(\mathbb{C}\backslash\{0\})\times S^1$, all links $\widetilde{L_i}$, $i=2,3,\ldots,N'-1$ are automatically fixed-point-free.

Write $P_i=(p_{i,1},p_{i,2})$ with $\gcd(p_{i,1},p_{i,2})$ and $i\in\{1,2,\ldots,N\}$. If $p_{1,1}$ is even, then $\widetilde{L_1}$ is $u$-even and it must be fixed-point-free, since there are no fixed points of $\tau_u$ in $\mathbb{C}\times S^1$. The analogous statement holds for $\widetilde{L_{N'}}$ if $p_{N',2}$ is even.

If $p_{1,2}$ is even, then $\widetilde{L_1}$ is $v$-even. By the same arguments as in Lemma~\ref{lem:convcases} it is fixed-point-free if $f$ is $v$-convenient. Otherwise, it is the union of a fixed-point-free $v$-even link, say $L_2$ and the set of fixed points of $\tau_v$. Since $L_2$ is fixed-point-free, there is a tubular neighbourhood $U$ of the set of fixed points that does not intersect $L_2$. We may deform the set of fixed points of $\tau_v$ in $U$ into an odd knot $L_1$. It follows from the construction in \cite{AraujoBodeSanchez} that $\widetilde{L_1}=[L_1,L_2]$ in $\mathbb{C}\times S^1$ and thus $L([\widetilde{L_1},\widetilde{L_2},\ldots,\widetilde{L_{N'-1}}],\widetilde{L_{N'}})=L([L_1,L_2,\widetilde{L_2},\ldots,\widetilde{L_{N'-1}}],\widetilde{L_{N'}})$. 

Similarly, if $p_{N',1}$ is even and $f$ is not $u$-convenient, the link $\widetilde{L_{N'}}$ is the union of a fixed-point-free $u$-even link $L_{-2}$ and the set of fixed points of $\tau_u$, which may be deformed into an odd knot, say $L_{-1}$, such that $L_{N'}=L(L_{-2},L_{-1})$ (see again \cite{AraujoBodeSanchez}). We then have $L([\widetilde{L_1},\widetilde{L_2},\ldots,\widetilde{L_{N'-1}}],\widetilde{L_{N'}})=L([\widetilde{L_1},\widetilde{L_2},\ldots,\widetilde{L_{N'-1}},L_{-2}],L_{-1})$ or $L([\widetilde{L_1},\widetilde{L_2},\ldots,\widetilde{L_{N'-1}}],\widetilde{L_{N'}})=L([L_1,L_2,\widetilde{L_2},\ldots,\widetilde{L_{N'-1}},L_{-2}],L_{-1})$. By setting $N=N'$, $N'+1$ or $N'+2$, depending on the parities of $p_{1,2}$ and $p_{N',1}$ and convenience of $f$, and by relabeling the links, we obtain the desired link $L([L_1,L_2,\ldots,L_{N-1}],L_N)$.
\end{proof}

\begin{lemma}\label{lem:vevenconstr}
Let $L$ be a $v$-even link in $(\mathbb{C}\backslash\{0\})\times S^1$. Then there exists a radially weighted homogeneous, convenient, Newton non-degenerate mixed polynomial $f:\mathbb{C}^2\to\mathbb{C}$ such that the zeros of the corresponding $g$-polynomial are isotopic to $L$ in $(\mathbb{C}\backslash\{0\})\times S^1$.
\end{lemma}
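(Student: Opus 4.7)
Plan: The construction parallels Lemma~\ref{lem:symmix} but uses the $h$-polynomial (Eq.~\eqref{eq:h}) with weight vector $(1,2)$, so that $V_f$ is automatically $v$-even by Lemma~\ref{lem1mix}. The reason for using $h$ rather than $g$ is that for this weight the $u$-normalisation underlying $h$ scales $v$ by the integer exponent $R^{2}$, whereas the $v$-normalisation underlying $g$ would scale $u$ by the half-integer exponent $r^{1/2}$; only the former produces a polynomial $f$ cleanly.

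First, transport $L$ to $L^{h}\subset S^1\times(\mathbb{C}\backslash\{0\})$ via the bijection noted right after Eq.~\eqref{eq:h}, sending $(u_0,\rme^{\rmi t_0})\in L$ to $(u_0/|u_0|,|u_0|^{-2}\rme^{\rmi t_0})\in L^h$; the $v$-evenness of $L$ turns into invariance of $L^h$ under $(\rme^{\rmi\varphi},v)\mapsto(-\rme^{\rmi\varphi},v)$. Let $\phi'\colon(\rme^{\rmi\varphi},v)\mapsto(\rme^{2\rmi\varphi},v)$ be the corresponding double cover of $S^1\times(\mathbb{C}\backslash\{0\})$, and set $L^H:=\phi'(L^h)$, so that $(\phi')^{-1}(L^H)=L^h$. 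By the analogues of Lemmas~\ref{lem:approx} and \ref{lem:nondeg} with the roles of $u$ and $v$ interchanged (working in $S^{1}\times\mathbb{C}$), approximate $L^H$ arbitrarily closely by the zero set of a polynomial $H(\rme^{\rmi\psi},v,\bar v)$ whose top-$v$-degree term is a unique monomial $c(v\bar v)^{\ell^{\max}/2}$ with $c\in\mathbb{C}^{*}$ and $\ell^{\max}$ even. Set $h(\rme^{\rmi\varphi},v,\bar v):=H(\rme^{2\rmi\varphi},v,\bar v)$: a polynomial in $\rme^{\rmi\varphi},\rme^{-\rmi\varphi},v,\bar v$ with only even $\varphi$-frequencies, whose zero set $(\phi')^{-1}(H^{-1}(0))$ is a close approximation of $L^h$.

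Now define $f(u,v):=|u|^{d}\,h(u/|u|,\,v/|u|^2,\,\bar v/|u|^2)$ with $d:=2\ell^{\max}$, enlarged if necessary. Each monomial $H_{n,\ell_1,\ell_2}\rme^{2\rmi n\varphi}v^{\ell_1}\bar v^{\ell_2}$ of $H$ contributes to $f$ a summand proportional to $u^{2n}v^{\ell_1}\bar v^{\ell_2}(u\bar u)^{(d-2n-2\ell_1-2\ell_2)/2}$ for $n\geq 0$ (and to $\bar u^{-2n}v^{\ell_1}\bar v^{\ell_2}(u\bar u)^{(d+2n-2\ell_1-2\ell_2)/2}$ for $n<0$); since $d$, $2n$ and $2(\ell_1+\ell_2)$ are all even and $d$ is chosen to exceed $2(|n|+\ell_1+\ell_2)$ throughout the support of $H$, the exponent of $u\bar u$ is a non-negative integer and $f$ is a genuine polynomial in $u,\bar u,v,\bar v$. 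This matching of parities --- between the even $\varphi$-frequencies of $h$ (forced by the $v$-evenness of $L^h$) and the even $d$ --- is the main technical point of the argument. The resulting $f$ is radially weighted homogeneous of weight $(1,2)$, so $V_f$ is $v$-even by Lemma~\ref{lem1mix}; it is $v$-convenient because the leading monomial of $H$ produces the vertex $(0,d/2)$ in its Newton boundary, and $u$-convenience is arranged by adding a small $\epsilon\rme^{\rmi\psi}$-term to $H$ (which does not destroy the approximation of $L^H$), producing the vertex $(d,0)$. The bijection after Eq.~\eqref{eq:h} identifies the zeros of the $g$-polynomial of $f$ with those of $h$ via a smooth map on $\mathbb{C}^{*}\times S^1$, which sends our approximation of $L^h$ back to a close $C^1$-approximation of $L$, so the $g$-zeros are isotopic to $L$ in $(\mathbb{C}\backslash\{0\})\times S^1$. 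Newton non-degeneracy of $f$ follows from the regularity of $H^{-1}(0)$ (which propagates to $h$ and to the face function of the unique $1$-face of $f$) together with the non-vanishing of the extreme leading terms, which prevents the extreme-vertex face functions from having zeros in $(\mathbb{C}^{*})^{2}$.
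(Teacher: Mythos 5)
Your overall route is the same as the paper's: the paper's proof transports $L$ to a $v$-even link $L'$ in $S^1\times(\mathbb{C}\setminus\{0\})$ via the $g$-to-$h$ correspondence and then cites the analogue of Lemma~\ref{lem:symmix} (with $u$ and $v$ swapped) to produce $f$ from the $h$-polynomial, exactly the construction you spell out via the double cover $\phi'$ and Eq.~\eqref{eq:h}. So conceptually you match the paper. However, your fixed choice of weight vector $(1,2)$ introduces a genuine gap. In Lemma~\ref{lem:symmix} (and its analogue used here), the ratio $k$ in the weight vector $(1,k)$ is taken to be a \emph{sufficiently large} even integer; this is what makes the exponent $k(\ell^{\max}-\ell_1-\ell_2)-2|n|$ non-negative for all monomials without further hypotheses. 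You instead freeze the weight at $(1,2)$ and compensate by enlarging $d$, but $d$ is not a free parameter here: for $v$-convenience you need the vertex $(0,d/2)$ to come from the top-$v$-degree monomial of $h$, which forces $d=2\ell^{\max}$ exactly. Once $d=2\ell^{\max}$, the polynomiality condition $d\geq 2(|n|+\ell_1+\ell_2)$ becomes the non-trivial statement that $\ell^{\max}\geq |n|+\ell_1+\ell_2$ for every monomial of $H$. This is not part of the stated conclusion of Lemma~\ref{lem:nondeg}; it is a hidden consequence of the particular three-step change of variables in its proof (one can extract it from the invariant $\nu+2J\mu$ under the final substitution), but you neither invoke nor prove it, and the phrase ``enlarged if necessary'' is incompatible with keeping the vertex at $(0,d/2)$. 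The cleanest repair is to follow the analogue of Lemma~\ref{lem:symmix} literally and use weight $(1,2k)$ for an arbitrary large even $2k$: then the $u$-normalisation scales $v$ by the integer exponent $R^{2k}$, polynomiality is immediate for $k$ large, and both convenience properties follow as in the paper without appealing to fine structure from Lemma~\ref{lem:nondeg}. The rest of your argument (the transport $L\mapsto L^h$, the quotient $\phi'$ turning $v$-evenness into even $\varphi$-frequencies, the propagation of Newton non-degeneracy from the regularity of $H^{-1}(0)$ and the monomial leading term) is sound and parallels the paper.
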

\begin{proof}
Suppose that $L$ is parametrised by $(u_j(\tau),\rme^{\rmi t_j(\tau)})$, where $j=1,2,\ldots,\ell$, is labelling the components of $L$ and $\tau$ goes from $0$ to $2\pi$. Then the link $L'$ given by $(\rme^{\rmi \arg (u_j(\tau))},\tfrac{1}{|u_j(\tau)|}\rme^{\rmi t_j(\tau)})\subset S^1\times(\mathbb{C}\backslash\{0\})$ is still $v$-even. By the analogue of Lemma~\ref{lem:symmix} for $v$-even links in $S^1\times(\mathbb{C}\backslash\{0\})$ there is a radially weighted homogeneous, convenient, Newton non-degenerate mixed polynomial $f$ such that the zeros of the corresponding $h$-polynomial are isotopic to $L'$ in $S^1\times(\mathbb{C}\backslash\{0\})$. It then follows from the definition of the $h$-polynomial and $g$-polynomial associated with $f$ that the zeros of $g$ are isotopic to $L$ in $(\mathbb{C}\backslash\{0\})\times S^1$.
\end{proof}

Naturally, the analogous result holds for $u$-even links in $S^1\times(\mathbb{C}\backslash\{0\})$.

\begin{lemma}\label{lem:Newtonconstr}
Let $L_1,L_2,\ldots,L_N$ be a sequence of links with $L_1\subset\mathbb{C}\times S^1$, $L_i\subset(\mathbb{C}\backslash\{0\})\times S^1$ for all $i\in\{2,3,\ldots,N-1\}$, $L_N\subset S^1\times\mathbb{C}$ and suppose that each of them is fixed-point-free even or odd. Then there exists a convenient, Newton non-degenerate mixed polynomial with nice Newton boundary whose link of the singularity is $L([L_1,L_2,\ldots,L_{N-1}],L_N)$ if $N>1$ and $L_1$ if $N=1$.
\end{lemma}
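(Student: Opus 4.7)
The proof proceeds by induction on $N$, in close parallel with the semiholomorphic case treated in Lemma~\ref{lem:semiglue}. For the base case $N=1$, after an ambient isotopy in $S^3$ (using Lemmas~\ref{lem:nonzerosym} and \ref{lem:nonzeroodd} together with their analogues in $S^1\times(\mathbb{C}\backslash\{0\})$) we may assume that $L_1$ lies in $(\mathbb{C}\backslash\{0\})\times S^1$ or in $S^1\times(\mathbb{C}\backslash\{0\})$. The constructions of Lemma~\ref{lem:symmix} (for the $u$-even and odd cases) and Lemma~\ref{lem:vevenconstr} (for the $v$-even case), together with their mirror versions obtained by swapping $u$ and $v$, then provide a convenient, Newton non-degenerate, radially weighted homogeneous mixed polynomial realising $L_1$ as its link of singularity.

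For the inductive step, assume the statement for sequences of length $N-1$, and apply it to $L_2,\ldots,L_N$ to obtain a convenient, Newton non-degenerate mixed polynomial $\tilde{f}$ with nice Newton boundary, exactly $N-1$ compact 1-faces, and link of singularity $L([L_2,\ldots,L_{N-1}],L_N)$. Separately, the base case applied to $L_1$ produces a radially weighted homogeneous, convenient, Newton non-degenerate polynomial $\hat{f}$ whose weight vector $P=(p_1,p_2)$ has the parity dictated by the symmetry of $L_1$ (via Lemma~\ref{lem1mix}) and can be taken so that $P\succ P_i$ for every weight vector $P_i$ associated with a $1$-face of $\tilde{f}$; concretely, $(k,1)$ with $k$ large for $u$-even or odd $L_1$, and $(k,2)$ with $k$ large and odd for $v$-even $L_1$. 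Writing $\Delta=(0,n)$ for the vertex of $\tilde{f}$ on the vertical axis and $\Delta'=(s_1,0)$ for the vertex of $\hat{f}$ on the horizontal axis, we then set
\begin{equation}
f = \tilde{f}_\Delta\,\hat{f} + \hat{f}_{\Delta'}\,\tilde{f} - \tilde{f}_\Delta\,\hat{f}_{\Delta'},
\end{equation}
exactly mirroring the semiholomorphic gluing of Lemma~\ref{lem:semiglue}.

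A direct support computation identifies the Newton boundary of $f$ with the shifted $1$-face of $\hat{f}$ (translated by $(0,n)$) concatenated with the $N-1$ shifted $1$-faces of $\tilde{f}$ (translated by $(s_1,0)$), meeting at the common vertex $(s_1,n)$; hence $f$ has $N$ compact $1$-faces and is convenient. The face function of $f$ along the first (shifted) $1$-face equals $\tilde{f}_\Delta\hat{f}$, while along each subsequent $1$-face it equals $\hat{f}_{\Delta'}\tilde{f}_P$. Since $\tilde{f}_\Delta$ and $\hat{f}_{\Delta'}$ are single-vertex factors that are nowhere vanishing on the respective punctured axes (a property built into the convenience and Newton non-degeneracy of the constructions of Lemmas~\ref{lem:symmix} and \ref{lem:vevenconstr} via Lemma~\ref{lem:nondeg} and the fact that the links $L_i$ are disjoint from the corresponding coordinate axes), Lemma~\ref{lem:verttimes} gives Newton non-degeneracy of each face function of $f$. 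Niceness at the non-extreme vertices of $f$ is inherited from the niceness of $\tilde{f}$; at the new transition vertex $(s_1,n)$ the face function equals $\tilde{f}_\Delta\hat{f}_{\Delta'}$, which has no zeros on $(\mathbb{C}^*)^2$. The identification of the link of the singularity of $f$ with $L([L_1,\ldots,L_{N-1}],L_N)$ then follows from the general construction of \cite{AraujoBodeSanchez}, since the $g$- and $h$-polynomials attached to the face functions of $f$ have zero sets equal, up to nowhere-vanishing vertex factors, to those of $\hat{f}$ and $\tilde{f}$.

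The principal technical obstacle beyond Lemma~\ref{lem:semiglue} is producing $\hat{f}$ with prescribed symmetry \emph{and} a suitably steep weight vector when $L_1$ is $v$-even: Lemma~\ref{lem:vevenconstr} as stated produces weight vectors of the form $(1,k)$, whereas the induction requires $(k,2)$ with $k$ odd and arbitrarily large. The requisite variant is obtained by the same approximation scheme used in Lemmas~\ref{lem:approx}, \ref{lem:nondeg}, and \ref{lem:symmix}, adapted so that the coefficients of the auxiliary $g$-polynomial satisfy the parity conditions imposed by Lemma~\ref{lem1mix} for a weight vector with $p_2=2$; this guarantees that the rescaled function is genuinely a mixed polynomial in $u,\bar{u},v,\bar{v}$ and that its vertex polynomials on the coordinate axes remain nowhere zero on the respective $\mathbb{C}^*$, which is what feeds into Lemma~\ref{lem:verttimes} during the gluing.
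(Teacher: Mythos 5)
Your proposal follows essentially the same route as the paper's proof: reduce to $N=1$ via the constructions behind Proposition~\ref{prop:okarad}, use Lemmas~\ref{lem:nonzerosym} and \ref{lem:nonzeroodd} to push the boundary links off the coordinate circles, produce radially weighted homogeneous building blocks via Lemmas~\ref{lem:symmix} and \ref{lem:vevenconstr}, and glue inductively with the formula $f=\tilde f_\Delta\hat f+\hat f_{\Delta'}\tilde f-\tilde f_\Delta\hat f_{\Delta'}$ from Lemma~\ref{lem:semiglue}, invoking Lemma~\ref{lem:verttimes} for Newton non-degeneracy and the non-vanishing of the vertex polynomials for niceness. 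This all matches the paper.

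Where you go beyond the paper is in explicitly raising the weight-vector issue, and this is a legitimate observation: the gluing requires $\hat f$ (the block realising $L_1$) to have weight vector $\succ$ every weight vector of $\tilde f$, which Lemma~\ref{lem:symmix} provides for $u$-even/odd $L_1$ via $(k,1)$ with $k$ arbitrarily large, whereas the proof of Lemma~\ref{lem:vevenconstr}, which routes through the $h$-polynomial, produces the shallow form $(1,k)$. The paper's ``follows exactly the same inductive construction'' glosses over this. However, your remedy---``adapted so that the coefficients of the auxiliary $g$-polynomial satisfy the parity conditions imposed by Lemma~\ref{lem1mix} for a weight vector with $p_2=2$''---is asserted rather than established, and it is not as routine as you suggest. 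For $(q,2)$ with $q$ odd, the lattice points on the $1$-face are $(b+2j,n-qj)$, so the parity of $\nu$ (hence of the admissible frequencies of the coefficient of $u^{j_1}\bar u^{j_2}$) must \emph{alternate} as $j_1+j_2$ increases by $2$. This is strictly stronger than the $v$-even symmetry of $L_1$, it is not a geometric symmetry of $V_g$ (unlike the $2^K$-symmetry in the semiholomorphic case, which the paper explicitly notes fails for mixed functions), and the usual quotient/pull-back manipulations behind Lemmas~\ref{lem:approx}, \ref{lem:nondeg} and \ref{lem:symmix} do not automatically deliver it. To make the argument complete one should either construct this $(q,2)$-variant in detail, or reorganise the induction so that the symmetry type of each $L_i$ is matched to the achievable slope at that position in the chain.
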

\begin{proof}
The case of $N=1$ is covered by the proof of Proposition~\ref{prop:okarad}. By Lemmas~\ref{lem:nonzerosym} and \ref{lem:nonzeroodd} we can assume that $L_1$ does not intersect $\{0\}\times S^1$ and $L_N$ does not intersect $S^1\times\{0\}$. Then by Lemma~\ref{lem:symmix} and Lemma~\ref{lem:vevenconstr} there is for every $i\in\{1,2,\ldots,N-1\}$ a radially weighted homogeneous, convenient, Newton non-degenerate mixed polynomial $f$ such that the zeros of the corresponding $g$-polynomial form the link $L_i$. Likewise, for $L_N$ there is such a polynomial such that the zeros of the corresponding $h$-polynomial form the link $L_N$.

The proof now follows exactly the same inductive construction as the proof of Lemma~\ref{lem:semiglue}. We obtain a mixed polynomial $f$ with $N$ compact 1-faces in its Newton boundary, such that the zeros of the $g$-polynomial corresponding to the $i$th face with non-vanishing $u$-coordinate form $L_i$ for all $i\in\{1,2,\ldots,N-1\}$ and the zeros of the $h$-polynomial corresponding to the last 1-face with non-vanishing $v$-coordinate form $L_N$.
Since none of the links intersect the sets defined by $u=0$ or $v=0$, the polynomial is convenient and all of the face functions corresponding to vertices of $f$ have no zeros in $(\mathbb{C}^*)^2$. In particular, $f$ has a nice Newton boundary and all of these face functions are Newton non-degenerate. Since all roots of the $g$-polynomials whose $u$-coordinate is non-zero are regular points, it follows that the face functions corresponding to 1-faces of $f$ are Newton non-degenerate as well. Thus $f$ is Newton non-degenerate. By \cite{AraujoBodeSanchez} the link of its singularity is $L([L_1,L_2,\ldots,L_{N-1}],L_N)$.
\end{proof}

Together with Lemma~\ref{lem:miximplies} this implies Theorem~\ref{thm:main2} and Theorem~\ref{cor:Oka}, since convenient, Newton non-degenerate polynomials are inner non-degenerate \cite{AraujoBodeSanchez}.

The construction in Lemma~\ref{lem:Newtonconstr} illustrates that a knot can only be realised as the link of the singularity of a convenient, inner non-degenerate mixed function with nice Newton boundary if it is fixed-point-free even or odd.
In particular, the knot $8_{16}$ from Example~\ref{ex:knot} cannot be realised in this way, nor as the link of the singularity of a convenient, Newton non-degenerate mixed function with nice Newton boundary. Since $8_{16}$ is fibered, this also shows that in order to prove the Benedetti-Shiota conjecture \cite{benedetti}, degenerate mixed polynomials must be considered.

\section{P-fibered braids with multiplicities and coefficients}\label{sec6}

Now we turn our attention to isolated singularities. Since generic mixed polynomials do not have isolated singularities, constructions are much more challenging than in the case of weakly isolated singularities. By restricting ourselves to strongly inner non-degenerate semiholomorphic polynomials, we can guarantee that the resulting polynomials have isolated singularities \cite{AraujoBodeSanchez}, which allows us to describe the links of the singularities of this family of functions.

\begin{definition}\cite[Definition 6.1]{AraujoBodeSanchez}\label{def:stronginner}
We say that a mixed polynomial $f:\mathbb{C}^2\to\mathbb{C}$ is \textbf{strongly inner non-degenerate} if both of the following conditions hold:
\begin{enumerate}
\item[(i)] the face functions $f_{P_1}$ and $f_{P_N}$ have no critical points in $\mathbb{C}^2\backslash\{v=0\}$ and $\mathbb{C}^2\backslash\{u=0\}$, respectively.
\item[(ii)] for each 1-face and non-extreme vertex $\Delta$, the face function $f_{\Delta}$ has no critical points in $(\mathbb{C}^*)^2$.
\end{enumerate}
\end{definition}

If $f$ is radially weighted homogeneous, $u$-convenient and semiholomorphic, we may interpret the corresponding $g$-polynomial as a loop in the space of polynomials in one complex variable $u$ and with fixed degree. The coefficients of $g$ (as a polynomial in $u$) are polynomials in $\rme^{\rmi t}$ and $\rme^{-\rmi t}$, so that the angular coordinate $t$ of the second complex variable $v$ parametrises the loop of polynomials. The function $f$ is inner non-degenerate if and only if the roots of $g(\cdot,\rme^{\rmi t})$ are simple (i.e., distinct) for all $t\in[0,2\pi]$. In this case, the zeros of $g$ form a (closed) braid in $\mathbb{C}\times S^1$ on $s=\deg_ug=\deg_u f$ strands and parametrised by $t\in[0,2\pi]$, i.e., 
\begin{equation}
g(u,\rme^{\rmi t})=c\prod_{j=1}^s(u-u_j(t))
\end{equation} 
for appropriate functions $u_j:[0,2\pi]\to\mathbb{C}$ and $c\in\mathbb{C}^*$.

Conversely, any braid given as a set of parametric curves 
\begin{equation}\label{eq:braidpara}
\bigcup_{j=1}^s\bigcup{t\in[0,2\pi]}(u_j(t),\rme^{\rmi t})\subset\mathbb{C}\times S^1
\end{equation} 
can be interpreted as a loop in the space of monic polynomials in one complex variable and degree $s$, given by $\prod_{j=1}^s(u-u_j(t))$. If we want the functions $u_j(t)$ to parametrise the strands of a braid they need to satisfy that $u_i(t)\neq u_j(t)$ for all $t\in[0,2\pi]$ if $i\neq j$ and for every $i\in\{1,2,\ldots,s\}$ there exists a $j\in\{1,2,\ldots,s\}$ with $u_i(0)=u_j(2\pi)$.

It is sometimes necessary to distinguish between a braid given as a set of parametric curves and its isotopy class, although both are often referred to simply as a ``braid''. If we want to emphasize that we are dealing with a particular representative of a braid isotopy class we refer to it as a \textit{geometric braid}.

\begin{definition}
A geometric braid $B$ parametrised by $\cup_{j=1}^s\cup{t\in[0,2\pi]}(u_j(t),\rme^{\rmi t})\subset\mathbb{C}\times S^1$ is called P-fibered if $\arg g:(\mathbb{C}\times S^1)\backslash B\to S^1$ is a fibration map, where $g:\mathbb{C}\times S^1\to\mathbb{C}$, $g(u,\rme^{\rmi t})=\prod_{j=1}^s(u-u_j(t))$ is the loop of polynomials corresponding to $B$.

We call a braid isotopy class P-fibered if it contains a P-fibered geometric braid.
\end{definition}

Note that the property of being P-fibered is a geometric property, that is, we may deform P-fibered geometric braids into geometric braids that are not P-fibered. P-fibered braids close to fibered links in $S^3$ \cite{bode:sat} and have been central in various constructions of isolated singularities \cite{bode:ralg, bode:sat, bode:thomo}. 

\begin{proposition}\label{prop:radPfib}
A radially weighted homogeneous, $u$-convenient and semiholomorphic polynomial $f$ is strongly inner non-degenerate if and only if the zeros of the corresponding $g$-polynomial form a P-fibered braid.
\end{proposition}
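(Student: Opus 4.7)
The plan is to reduce the claim to a direct computation relating critical points of $f$ on $\{|v|=1\}\cup\{v=0\}$ to critical points of $\arg g$ on $\mathbb{C}\times S^1$, exploiting the radial weighted homogeneity. Since $f=f_{P_1}=f_{P_N}$ and the unique compact $1$-face has no non-extreme vertices, Definition~\ref{def:stronginner} reduces in our setting to the single requirement that $f$ has no critical points in $\mathbb{C}^2\setminus\{(0,0)\}$. I therefore need to prove that this is equivalent to the zero set $B$ of $g$ being a braid together with $\arg g:(\mathbb{C}\times S^1)\setminus B\to S^1$ being a fibration.

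First I would dispose of the slice $\{v=0\}$. Radial weighted homogeneity combined with $u$-convenience and semiholomorphicity force $f(u,0)=cu^s$ for some $c\in\mathbb{C}^*$. The mixed critical-point condition asks for $\lambda\in S^1$ with $f_{\bar u}=\lambda\overline{f_u}$ and $f_{\bar v}=\lambda\overline{f_v}$; since $f_{\bar u}\equiv 0$, any critical point must satisfy $f_u=0$. But $f_u(u,0)=csu^{s-1}\neq 0$ for $u\neq 0$, so there are no critical points in $\{v=0\}\setminus\{(0,0)\}$, independently of the P-fibered question.

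For the slice $\{v\neq 0\}$, the relation $f(s^{p_1}u,s^{p_2}v)=s^{d(P;f)}f(u,v)$ implies $\mathbb{R}_{>0}$-invariance of the critical set, so it suffices to consider $(U,\zeta)\in\mathbb{C}\times S^1$ with $u=r^kU$, $v=r\zeta$, $r=|v|$. Writing $f=r^dg(U,\zeta,\bar\zeta)$ with $d=d(P;f)/p_2$, a chain-rule computation at $r=1$ produces $f_u=g_U$ together with
\begin{equation*}
2\zeta f_v = dg - kUg_U - \rmi g_t, \qquad 2\bar\zeta f_{\bar v} = dg - kUg_U + \rmi g_t,
\end{equation*}
where $g_t=\rmi(\zeta g_\zeta-\bar\zeta g_{\bar\zeta})$ and I have used $\zeta\bar\zeta=1$. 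For semiholomorphic $f$ the critical-point condition reduces to $f_u=0$ and $|f_v|=|f_{\bar v}|$. Substituting $g_U=0$ into the two identities and using $|A-B|^2-|A+B|^2=-4\mathrm{Re}(A\bar B)$, the equality $|f_v|=|f_{\bar v}|$ becomes $\mathrm{Im}(dg\,\overline{g_t})=0$; as $d$ is a nonzero real number this is equivalent to $\bar g g_t\in\mathbb{R}$. Hence critical points of $f$ with $v\neq 0$ correspond bijectively to pairs $(U,\zeta)\in\mathbb{C}\times S^1$ satisfying $g_U(U,\zeta)=0$ and $\bar g(U,\zeta)\,g_t(U,\zeta)\in\mathbb{R}$.

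To conclude I would identify this latter set with the obstruction to $B$ being a P-fibered braid. Using $g_{\bar U}=0$, a direct calculation of $d\arg g=\mathrm{Im}(dg/g)$ on $\mathbb{C}\times S^1$ shows that, off $B$, the critical points of $\arg g$ are exactly those with $g_U=0$ and $\bar g g_t\in\mathbb{R}$, and these are the obstructions to $\arg g$ being a fibration. Meanwhile, points with $g=0$ and $g_U=0$ are multiple roots of $g(\cdot,\zeta)$, which obstruct $B$ from being a braid at all (equivalently, obstruct inner non-degeneracy of $f$; see \cite{AraujoBodeSanchez}). Consequently the set $\{(U,\zeta)\in\mathbb{C}\times S^1:g_U=0,\ \bar g g_t\in\mathbb{R}\}$ is empty if and only if the zeros of $g$ form a P-fibered braid, and combined with the $\{v=0\}$ analysis this gives the claimed equivalence. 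The main technical obstacle is the chain-rule computation producing the two clean identities for $2\zeta f_v$ and $2\bar\zeta f_{\bar v}$; once these are in hand, both directions of the proposition follow transparently.
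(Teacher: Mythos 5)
Your argument is correct and arrives at the same crux as the paper's, but by a noticeably different computation. The paper cites the equivalence between strong inner non-degeneracy and an isolated singularity for radially weighted homogeneous $f$, then analyses the Jacobian of $(|f|,\arg f)$ in the basis $(\partial_{\operatorname{Re}u},\partial_{\operatorname{Im}u},\partial_{\arg v},w)$ with $w=kr^{k-1}\partial_{|u|}+\partial_{|v|}$, using $(w|f|)\neq0$, $(w\arg f)=0$ to conclude that at a zero of $\partial g/\partial u$ the rank drops exactly when $\partial_t\arg g=0$. You instead make the reduction of Definition~\ref{def:stronginner} to ``no critical points in $\mathbb{C}^2\setminus\{(0,0)\}$'' explicit, specialise Oka's mixed critical-point criterion to the semiholomorphic case ($f_u=0$ and $|f_v|=|f_{\bar v}|$), and unwind the chain rule for the substitution $u=r^kU$, $v=r\zeta$ to obtain the clean identities $2\zeta f_v=dg-kUg_U-\rmi g_t$ and $2\bar\zeta f_{\bar v}=dg-kUg_U+\rmi g_t$, from which the criterion $\operatorname{Im}(\bar g\,g_t)=0$ falls out directly. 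Both routes exploit the same geometric fact---along the weighted radial direction $\arg f$ is constant while $|f|$ is strictly monotone---but your explicit derivative identities make the pointwise criterion on $g$ immediately visible, and your observation that multiple roots of $g(\cdot,\zeta)$ (where $g=g_U=0$) automatically satisfy $\bar g\,g_t=0\in\mathbb{R}$ elegantly packages the obstruction to $B$ being a braid at all together with the obstruction to P-fiberedness in a single equation, whereas the paper treats the weakly isolated case (distinct roots) as a separate preliminary step. One small notational slip: you invoke $|A-B|^2-|A+B|^2=-4\operatorname{Re}(A\bar B)$, but the relevant pair is $A\mp\rmi B$, giving $-4\operatorname{Im}(A\bar B)$; your stated conclusion $\operatorname{Im}(dg\,\overline{g_t})=0$ is nevertheless the correct one, so this is cosmetic.
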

\begin{proof}
Since $f$ is radially weighted homogeneous, it is strongly inner non-degenerate if and only if it has an isolated singularity at the origin \cite{eder2}. The semiholomorphicity of $f$ implies that every critical point of $f$ must be a zero of $\tfrac{\partial f}{\partial u}$. Let $P=(p_1,p_2)$ be the weight vector of $f$ and let $k=\tfrac{p_1}{p_2}$. Then the zeros of $f$ are parametrised by $(r^ku_j(t),r\rme^{\rmi t})\subset\mathbb{C}^2$, where $u_j(t)$, $j=1,2,\ldots,s$, is a parametrisation of the roots of the corresponding $g$-polynomial $g$. Similarly, the roots of $\tfrac{\partial f}{\partial u}$ are parametrised by $(r^kc_j(t),r\rme^{\rmi t})\subset\mathbb{C}^2$, where $c_j(t)$, $j=1,2,\ldots,s-1$, is a parametrisation of the roots of $\tfrac{\partial g}{\partial u}$. This means that $f$ has a weakly isolated singularity if and only if the roots of $g(\cdot,t)$ are distinct for all $t\in[0,2\pi]$.

We know that $f$ is $u$-convenient and so $f(u,0)=cu^s$ for some $c\in\mathbb{C}^*$. In particular, there are no critical points of $f$ of the form $(u,0)\in\mathbb{C}^2$. Consider the matrix of first derivatives of $(|f|,\arg(f))$ at $p\in\mathbb{R}^4$ in the basis $(\partial_{\text{Re}(u)},\partial_{\text{Im}(u)},\partial_{\arg(v)}, w)$ of $T_p\mathbb{R}^4$, where $w=kr^{k-1}\partial_{|u|}+\partial{|v|}$ and $p=(\text{Re}(u),\text{Im}(u),r\cos(\arg(v)),r\sin(\arg(v)))$. Taking $p$ to be of the form $(r^k\text{Re}(c_j(t)),r^k\text{Im}(c_j(t)),r\cos(t),r\sin(t))$ with $r>0$, we find that $(w|f|)(p)\neq 0$, but $(w\arg(f))(p)=0$. Since all derivatives with respect to the real or imaginary part of $u$ vanish at $p$, it follows that $p$ is a critical point if and only if $\tfrac{\partial \arg(f)}{\partial t}(p)=0$, which happens if and only if $\tfrac{\partial \arg(g)}{\partial t}(c_j(t),t)=0$. Thus the origin is the only critical point of $f$ if and only if $\arg(g)$ has no critical points, in other words, the roots of $g$ form a P-fibered braid.
\end{proof}

We may combine this with the proof of Theorem~\ref{thm:main1b} and obtain the following.
\begin{corollary}
A link type $L\subset S^3$ arises as the link of an isolated singularity of some radially weighted homogeneous, $u$-convenient semiholomorphic polynomial $f$ if and only if there is a P-fibered geometric braid $B\subset \mathbb{C}\times S^1$ that is $u$-even or divisor-symmetric and whose closure is $L$.
\end{corollary}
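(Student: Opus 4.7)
The plan is to deduce the corollary by combining Proposition~\ref{prop:radPfib} with the symmetry analysis already carried out in the proof of Theorem~\ref{thm:main1b}, and then to verify that the approximation arguments underlying Lemmas~\ref{lem:symsemi} and~\ref{lem:divsemi} can be made to preserve the P-fibered property.

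For the forward direction, suppose $f$ is a radially weighted homogeneous, $u$-convenient semiholomorphic polynomial with weight vector $P=(p_1,p_2)$ and isolated singularity at the origin, of link $L$. Then $f$ is strongly inner non-degenerate and in particular inner non-degenerate, so by Lemma~\ref{lem23} and the subsequent divisor-symmetric refinement the zero set of the associated $g$-polynomial forms a geometric braid $B\subset\mathbb{C}\times S^1$ that is $u$-even when $p_1$ is even and divisor-symmetric when $p_1$ is odd; since $f$ is $u$-convenient, the closure of $B$ is exactly $L$ (no braid axis component). Proposition~\ref{prop:radPfib} applied to $f$ then gives that this geometric braid $B$ is P-fibered.

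For the reverse direction, start with a P-fibered geometric braid $B\subset\mathbb{C}\times S^1$ that is $u$-even or divisor-symmetric and whose closure is $L$. I would feed $B$ into the construction of Lemma~\ref{lem:symsemi} (in the $u$-even case) or Lemma~\ref{lem:divsemi} (in the divisor-symmetric case). Each of these produces a loop $\tilde g$ of monic complex polynomials whose coefficients are trigonometric polynomials of the parity/frequency type required for $f$ defined by Eq.~\eqref{eq:fg} to be semiholomorphic and radially weighted homogeneous, and whose root set is a geometric braid $\tilde B$ isotopic to $B$ with the same symmetry. With this $\tilde B$ in hand, Proposition~\ref{prop:radPfib} shows that the resulting $f$ has an isolated singularity at the origin if and only if $\tilde B$ is itself P-fibered. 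The closure of $\tilde B$ is still $L$, so this yields the desired realization.

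The main obstacle is therefore to arrange that the truncation to trigonometric polynomials inside Lemmas~\ref{lem:symsemi} and~\ref{lem:divsemi} outputs a geometric braid $\tilde B$ that is not merely isotopic to $B$ but is actually P-fibered. The key point is that P-fiberedness is a $C^1$-open condition on the parametrization: it amounts to the non-vanishing of $\partial_t\arg g(u_j(t),e^{\rmi t})$ along the strands and of $\arg g$ transverse to the strands, both conditions being stable under sufficiently small $C^1$ perturbations. Since the trigonometric polynomials with the prescribed parity restrictions on their frequencies are $C^1$-dense in the corresponding space of smooth loops (as already exploited in the proofs of Lemmas~\ref{lem:symsemi} and~\ref{lem:divsemi}), a sufficiently accurate approximation preserves P-fiberedness while retaining the symmetry and the braid isotopy class. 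Once this is verified, the construction of $f$ proceeds verbatim as in the earlier lemmas, Proposition~\ref{prop:radPfib} upgrades the weakly isolated singularity to an isolated one, and the corollary follows.
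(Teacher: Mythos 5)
Your proof is correct and follows essentially the same route as the paper: the forward direction combines the symmetry analysis of Section 2 with Proposition~\ref{prop:radPfib}, and the reverse direction reuses the constructions of Lemmas~\ref{lem:symsemi} and \ref{lem:divsemi}, with the extra observation that P-fiberedness is a $C^1$-open property of the geometric braid so that the truncation to trigonometric-polynomial coefficients preserves it — which is exactly the single additional ingredient the paper highlights. One small imprecision worth fixing: the fibration condition is not "non-vanishing of $\partial_t\arg g(u_j(t),\rme^{\rmi t})$ along the strands" (the strands are zeros of $g$); it is that the critical values $v_j(t)$ of $u\mapsto g(u,\rme^{\rmi t})$ satisfy $\partial_t\arg v_j(t)\neq 0$, and these critical values depend smoothly on the roots $u_j(t)$, which is what makes the condition $C^1$-open.
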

\begin{proof}
The corollary is a combination of the proof of Proposition~\ref{prop:radPfib} and the proofs in Section~\ref{sec2}, in particular the proof of Lemmas~\ref{lem:symsemi} and \ref{lem:divsemi}. The only important additional ingredient is that if the roots of loop of polynomials $g_t$ form a P-fibered geometric braid, then the same is true for any sufficiently $C^1$-close approximation of $g_t$.
\end{proof}

The following corollary is then a direct consequence of the work in \cite{AraujoBodeSanchez}.
\begin{corollary}
A link type $L\subset S^3$ arises as the link of an isolated singularity of some strongly inner non-degenerate $u$-convenient polynomial $f$ with exactly one compact 1-face in its Newton boundary and semiholomorphic principal part if and only if there is a P-fibered geometric braid $B\subset \mathbb{C}\times S^1$ that is $u$-even or divisor-symmetric and whose closure is $L$.
\end{corollary}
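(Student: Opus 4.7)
The plan is to reduce this corollary to the previous one by leveraging the fact that the link type of the singularity depends only on the Newton principal part when the polynomial is strongly inner non-degenerate. Concretely, I would first observe that for any mixed polynomial $f$ with exactly one compact 1-face in its Newton boundary, the Newton principal part $f_\Gamma$ is automatically radially weighted homogeneous: all of its support lies on the unique 1-face, so $f_\Gamma = f_{P_1}$ for the associated weight vector $P_1$. If $f$ has semiholomorphic principal part, then $f_\Gamma$ is itself a semiholomorphic polynomial, and if $f$ is $u$-convenient then so is $f_\Gamma$.

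Next, I would verify that strong inner non-degeneracy transfers between $f$ and $f_\Gamma$. Both conditions in Definition~\ref{def:stronginner} are stated purely in terms of the face functions at vertices and compact 1-faces of $\Gamma_f$, and these face functions depend only on the principal part. Since $f$ and $f_\Gamma$ share the same face functions at these faces, $f$ is strongly inner non-degenerate precisely when $f_\Gamma$ is. The previously established results from \cite{AraujoBodeSanchez} (used already throughout Sections~\ref{sec2}--\ref{sec4}) then imply that the link of the singularity of $f$ coincides with the link of the singularity of $f_\Gamma$, since terms strictly above the Newton boundary do not affect the link type.

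For the forward direction, starting from a strongly inner non-degenerate, $u$-convenient $f$ with semiholomorphic principal part and a single compact 1-face, I pass to $f_\Gamma$, which falls under the hypothesis of the preceding corollary. This yields a $u$-even or divisor-symmetric P-fibered geometric braid $B \subset \mathbb{C}\times S^1$ whose closure realizes the link of the singularity of $f_\Gamma$, hence of $f$. For the converse, given such a P-fibered geometric braid $B$, the preceding corollary produces a radially weighted homogeneous, $u$-convenient, semiholomorphic polynomial $\tilde f$ with an isolated singularity whose link is the closure of $B$. This $\tilde f$ has exactly one compact 1-face (by definition of radially weighted homogeneous in our convention), coincides with its own principal part, and is strongly inner non-degenerate, so it meets all requirements.

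I do not anticipate any substantive obstacle: the corollary is essentially a restatement of the preceding one combined with the principal-part invariance of the link. The only step worth double-checking is the claim that strong inner non-degeneracy of $f$ is equivalent to that of $f_\Gamma$, which holds because Definition~\ref{def:stronginner} involves only face functions $f_{P_1}$, $f_{P_N}$, and $f_\Delta$ for vertices $\Delta$, and these are identical for $f$ and $f_\Gamma$. Once this is noted, the result follows by direct appeal to the previous corollary and the invariance statement from \cite{AraujoBodeSanchez}.
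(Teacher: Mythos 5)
Your proposal is correct and matches the paper's intent: the paper proves the radially-weighted-homogeneous corollary and then dispenses with this one in a single sentence, appealing to the principal-part invariance of the link from \cite{AraujoBodeSanchez}. You have simply made explicit what that one-line reduction requires — that passing to $f_\Gamma$ preserves $u$-convenience, the single 1-face, semiholomorphicity, and strong inner non-degeneracy (because Definition~\ref{def:stronginner} is a condition on face functions, which agree for $f$ and $f_\Gamma$) — and that the backward direction is supplied outright by the preceding corollary since a radially weighted homogeneous polynomial already has semiholomorphic principal part and one compact 1-face.
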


We would like to generalise these results to polynomials with more than one 1-face in its Newton boundary. In order to do this, we need to generalise the notion of a P-fibered braid.

\begin{definition}
Let $B$ be a geometric braid on $s$ strands, parametrised as in Eq.~\eqref{eq:braidpara}, but with the property that $u_j(t)\neq 0$ for all $t\in[0,2\pi]$. That is, none of the strands intersects the line $\{0\}\times[0,2\pi]\subset\C\times[0,2\pi]$. Let $g(\cdot,\rme^{\rmi t})$ be the corresponding loop of polynomials given by $g(u,\rme^{\rmi t})=\prod_{j=1}^s(u-u_j(t))$. Then we say that $B$ is \textbf{P-fibered with $O$-multiplicity $m$ and coefficient $a(t)$} if $\arg(a(t)u^m g):(\C\times S^1)\backslash(B\cup(\{0\}\times S^1))\to S^1$ is a fibration map, i.e., if it has no critical points, for some nowhere-vanishing finite Fourier series $a:S^1\to\C^*$ written as a polynomial in $\rme^{\rmi t}$ and $\rme^{-\rmi t}$.
\end{definition}

P-fibered braids of $O$-multiplicity 0 and constant coefficient are P-fibered braids. If $B$ is a P-fibered braid of $O$-multiplicity 1 and constant coefficient, then $B\cup(\{0\}\times[0,2\pi])$ is a P-fibered braid. If the $O$-multiplicity $m$ is greater than 1 or if $a(t)$ is not constant, then the closure of $B\cup(\{0\}\times[0,2\pi])$ is not necessarily a fibered link, since in this case $\arg(u^m g)$ does not take the required form on a tubular neighbourhood of $\{0\}\times S^1$ or on the boundary of $\C\times S^1$. The following lemma illustrates this point.
\begin{lemma}\label{lem:a}
For every geometric braid $B$ and every non-negative integer $m$ there exists a non-vanishing finite Fourier series $a:S^1\to\C^*$ such that $B$ is P-fibered with $O$-multiplicity $m$ and coefficient $a(t)$.
\end{lemma}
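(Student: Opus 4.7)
The plan is to reduce the problem to an elementary bound on a logarithmic derivative, after which the choice $a(t) = \rme^{\rmi k t}$ with $k$ a large integer works. Write $F(u,\rme^{\rmi t}) := a(t) u^m g(u,\rme^{\rmi t})$. Since $F$ is holomorphic in $u$ for each fixed $t$ and nowhere zero on the domain, a direct computation of $d\arg F$ in the real coordinates $(\mathrm{Re}\,u, \mathrm{Im}\,u, t)$ shows that $(u,\rme^{\rmi t}) \in (\C\times S^1)\setminus(B\cup(\{0\}\times S^1))$ is a critical point of $\arg F$ if and only if $\partial_u F = 0$ and $\mathrm{Im}(\partial_t F/F) = 0$. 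Factoring $\partial_u F = a(t) u^{m-1}(m g + u \partial_u g)$ and writing $\partial_t F/F = a'(t)/a(t) + \partial_t g/g$, the first equation becomes $m g + u\partial_u g = 0$ while the second reads $\mathrm{Im}(a'(t)/a(t)) = -\mathrm{Im}(\partial_t g(u,\rme^{\rmi t})/g(u,\rme^{\rmi t}))$.

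Next, consider the ``bad set'' $C_m := \{(u,t)\in(\C\setminus\{0\})\times S^1 : m g + u\partial_u g = 0\}$. For each fixed $t$, the defining equation is a polynomial of degree $s = \deg_u g$ in $u$ with leading coefficient $m+s \neq 0$, so its roots depend continuously on $t$ and remain in a bounded region uniformly in $t\in S^1$. Moreover $u = 0$ is never a root because $g(0,\rme^{\rmi t}) = (-1)^s \prod_j u_j(t) \neq 0$ under the hypothesis $u_j(t)\neq 0$. Finally, if $(u_0,t_0)\in C_m$ satisfied $g(u_0,\rme^{\rmi t_0}) = 0$, then $u_0\partial_u g(u_0,\rme^{\rmi t_0})$ would vanish, forcing $u_0$ to be a multiple root of $g(\cdot,\rme^{\rmi t_0})$ and contradicting the simplicity of the braid strands. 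Hence $C_m$ is compact in $(\C\setminus\{0\})\times S^1$ and $g$ is nonvanishing on it, so the continuous function $h(u,t) := \mathrm{Im}(\partial_t g(u,\rme^{\rmi t})/g(u,\rme^{\rmi t}))$ attains a finite bound $M := \sup_{C_m}|h|$.

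Finally, choose any integer $k > M$ and set $a(t) := \rme^{\rmi k t}$, which is a nowhere-vanishing finite Fourier series with $a'(t)/a(t) = \rmi k$, hence $\mathrm{Im}(a'/a) = k$. On $C_m$ we then have $\mathrm{Im}(a'/a + \partial_t g/g) = k + h(u,t) \geq k - M > 0$, so the second critical-point equation fails on $C_m$, while the first fails off $C_m$. Thus $\arg F$ has no critical points on $(\C\times S^1)\setminus(B\cup(\{0\}\times S^1))$, proving that $B$ is P-fibered with $O$-multiplicity $m$ and coefficient $a$. The only nontrivial step is the compactness of $C_m$ and its disjointness from $\{g = 0\}$; boundedness of its fibres over $S^1$ rests on $m+s\neq 0$, separation from $u=0$ uses that strands avoid the trivial one, and separation from the strands themselves uses that the braid strands are simple (so $g$ and $\partial_u g$ have no common zeros).
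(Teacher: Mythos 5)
Your proof is correct and follows essentially the same strategy as the paper: bound $\partial_t\arg(u^m g)$ on the argument-critical locus and then choose $a(t)$ with $\partial_t\arg a$ large enough to outrun that bound. Where the paper phrases the criticality condition in terms of the critical values $v_j(t)$ of $u\mapsto u^m g(u,\rme^{\rmi t})$ and defers the equivalence to a citation of \cite{bode:sat}, you derive the characterisation ($\partial_u F=0$ and $\mathrm{Im}(\partial_t F/F)=0$) directly and make the compactness argument for the bound $M$ explicit; the two formulations agree because along a branch $c_j(t)$ of critical points $\tfrac{d}{dt}\arg v_j(t)=\mathrm{Im}(\partial_t F/F)\big|_{(c_j(t),t)}$. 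One small caveat: for $m=0$ the polynomial $mg+u\partial_u g$ vanishes identically at $u=0$, so your ``$u=0$ is never a root'' step applies only to $m\geq 1$; for $m=0$ the closure of $C_0$ may reach $\{0\}\times S^1$, but since $g(0,\rme^{\rmi t})\neq 0$ the function $h$ extends continuously there and $M$ is still finite, so the conclusion is unaffected.
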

\begin{proof}
Let $v_{j}(t)$, $j=1,2,\ldots,s-1$, denote the non-zero critical values of $u\mapsto u^mg(u,\rme^{\rmi t})$, $t\in[0,2\pi]$, where $g(\cdot,\rme^{\rmi t})$ is the loop of polynomials corresponding to the geometric braid $B$ and $s$ is the number of its strands. Then the non-zero critical values of $a(t)u^mg$ are precisely $a(t)v_j(t)$. A critical point of $\arg(a(t)u^mg)$ is a solution to the equation $\tfrac{\partial\arg(a(t)v_j(t))}{\partial t}=\tfrac{\partial \arg(a(t))}{\partial t}+\tfrac{\partial\arg(v_j)}{\partial t}=0$ \cite{bode:sat}. Hence by choosing $a$ such that $\tfrac{\partial \arg(a(t))}{\partial t}$ has sufficiently large modulus, there are no argument-critical points of $\arg(a(t)u^mg)$.
\end{proof}

If a geometric braid is P-fibered with $O$-multiplicity $m$ and constant leading coefficient, we often simply say that it is P-fibered with $O$-multiplicity $m$.

Note that, similarly to P-fibered braids, a geometric braid $B$ is P-fibered with $O$-multiplicity $m$ and coefficient $a(t)$, if and only if $B^n$ is P-fibered with $O$-multiplicity $m$ and coefficient $a(nt)$ for all $n\in\mathbb{Z}\backslash\{0\}$. 

\begin{lemma}\label{lem:approxcomp}
Let $B$ be a P-fibered geometric braid with $O$-multiplicity $m$ and coefficient $a(t)$. Then for any geometric braid $B'$ that is a sufficiently close $C^1$-approximation of $B$ and any smooth $2\pi$-periodic function $\tilde{a}(t)$ that is a sufficiently close $C^1$-approximation of $a(t)$ the geometric braid $B'$ is P-fibered with $O$-multiplicity $m$ and coefficient $\tilde{a}(t)$.
\end{lemma}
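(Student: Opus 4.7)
The plan is to translate the P-fibered condition into a uniform nonvanishing condition on a compact locus, and then show this condition is stable under $C^1$-perturbations. Since $F(u,\rme^{\rmi t}) := a(t) u^m g(u,\rme^{\rmi t})$ is holomorphic in $u$, a critical point of $\arg F$ requires both (i) $\partial_u F = 0$ and (ii) $\partial_t \arg F = 0$. Using $\arg F = \arg a + m \arg u + \arg g$, condition (ii) reads
$$
\eta(u,\rme^{\rmi t}) := \partial_t \arg a(t) + \partial_t \arg g(u,\rme^{\rmi t}) = 0,
$$
while on the complement of $\{u=0\} \cup B$ condition (i) is equivalent to $h(u,\rme^{\rmi t}) := m g + u\,\partial_u g = 0$.

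First I would analyze the $u$-critical set $\mathcal C := \{h = 0\}$. As a polynomial in $u$, $h(\cdot, \rme^{\rmi t})$ has fixed degree $s$ with nonzero leading coefficient $m+s$, so it has exactly $s$ roots for each $t$, depending continuously on $t \in S^1$. Since $B$ consists of simple roots of $g(\cdot, \rme^{\rmi t})$ and lies in $(\C\setminus\{0\}) \times S^1$, substituting a strand gives $h(u_k(t), \rme^{\rmi t}) = u_k(t)\,\partial_u g(u_k(t), \rme^{\rmi t}) \neq 0$; and for $m \geq 1$, $h(0, \rme^{\rmi t}) = m\prod_k(-u_k(t)) \neq 0$. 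Hence $\mathcal C$ is a compact subset of $(\C\times S^1)\setminus(B \cup \{0\}\times S^1)$, and the hypothesis that $B$ is P-fibered with $O$-multiplicity $m$ and coefficient $a$ is exactly the strict inequality $|\eta|>0$ on $\mathcal C$. By compactness this upgrades to a uniform lower bound $|\eta|\geq\varepsilon_0>0$ on $\mathcal C$.

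The perturbation step is then a standard stability argument. A $C^1$-close approximation $B'$ yields $g'=\prod_j(u-u'_j(t))$ whose coefficients are $C^1$-close to those of $g$ as functions of $t$, and similarly $\tilde a$ is $C^1$-close to $a$. The polynomial $h':=mg'+u\,\partial_u g'$ has the same degree $s$ and leading coefficient as $h$, with coefficients $C^0$-close to those of $h$; by continuous dependence of polynomial roots on coefficients and compactness of $S^1$ the set $\mathcal C':=\{h'=0\}$ lies in any prescribed tubular neighbourhood $\mathcal N$ of $\mathcal C$ once $B'$ is close enough. After shrinking $\mathcal N$ it stays disjoint from $B'$ and from $\{0\}\times S^1$, since $\mathcal C$ is strictly disjoint from $B$ and $\{0\}\times S^1$ and strands of $B'$ are $C^0$-close to strands of $B$. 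On $\mathcal N$ the function $\eta'(u,\rme^{\rmi t}):=\partial_t\arg \tilde a(t)+\partial_t\arg g'(u,\rme^{\rmi t})$ is uniformly close to $\eta$, so $|\eta'|\geq\varepsilon_0/2$ on $\mathcal C'$, yielding the desired P-fibered condition for $(B',\tilde a)$.

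The main obstacle is keeping all estimates uniform over $t\in S^1$ and ensuring no new critical candidates appear outside $\mathcal N$. The latter point is handled by the fact that $h'$ has the same fixed degree $s$ as $h$ with uniformly bounded coefficients, so all of its $s$ roots are accounted for by the local deformation of $\mathcal C$ and none can escape to infinity. The $C^1$-closeness hypothesis (as opposed to $C^0$) is essential because $\eta$ involves $\partial_t\arg a$ and $\partial_t\arg g$, so one genuinely needs the $t$-derivatives of $\tilde a$ and $g'$ to be close to those of $a$ and $g$, not merely their values.
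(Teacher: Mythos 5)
Your argument is the same in spirit as the paper's one-paragraph proof: the P-fibered condition is a strict inequality over a compact set, and it is therefore stable under $C^1$-perturbation. The difference is one of presentation. The paper phrases the condition on the critical \emph{values} $v_j(t)$ of $u\mapsto u^m g(u,\rme^{\rmi t})$ (as already reduced in the proof of Lemma~\ref{lem:a}): these are algebraic, hence smooth, functions of the coefficients, which are themselves symmetric polynomials in the strand functions $u_j(t)$, and the P-fibered condition is that the finitely many functions $t\mapsto\partial_t\arg(a(t)v_j(t))$ are nowhere zero on $S^1$. You instead work directly with the critical \emph{locus} $\mathcal C=\{h=0\}$ in $\mathbb C\times S^1$ and establish the same uniform lower bound there; the two bookkeepings are equivalent and your version usefully makes the uniformity and the degree/leading-coefficient control explicit. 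For $m\geq 1$ your proof is correct and complete.

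There is, however, a genuine gap at $m=0$, and this case does occur: in Definition~\ref{def:compseq} the first braid $B_1$ has $O$-multiplicity $m_1=\sum_{j<1}s_j=0$, and Lemma~\ref{lem:approxcomp} is invoked in the proof of Lemma~6.9 for the whole sequence including $B_1$. When $m=0$ you have $h=u\,\partial_u g$, so $\{0\}\times S^1\subset\mathcal C$ and $\mathcal C$ is \emph{not} contained in $(\mathbb C\times S^1)\setminus(B\cup\{0\}\times S^1)$; you implicitly acknowledge this by writing ``for $m\geq 1$'' when you argue $h(0,\rme^{\rmi t})\neq 0$, but you never close the $m=0$ case. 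Replacing $\mathcal C$ by $\{\partial_u g=0\}$ (the locus you actually need when $m=0$) gives a set that is compact in $\mathbb C\times S^1$ but may meet $\{0\}\times S^1$, and on that intersection the P-fibered hypothesis gives no control over $\eta$, since those points lie outside the domain of the fibration map. Consequently the uniform bound $|\eta|\geq\varepsilon_0$ need not hold on $\{\partial_u g=0\}\setminus(\{0\}\times S^1)$, and a perturbation $B'$ may push an argument-critical point off the zero section into a region where $\eta'$ is arbitrarily small. To repair this you should either (a) restrict to $m\geq 1$ and handle $m=0$ via the critical-value formulation, together with the extra (harmless, and arrangeable in the applications) hypothesis that the critical locus $\{\partial_u g=0\}$ of the unperturbed $g$ avoids $\{0\}\times S^1$, in which case your compactness argument goes through verbatim with $\mathcal C$ replaced by $\{\partial_u g=0\}$; or (b) note that the construction in Lemma~6.9 in fact produces $B_1$ from Lemma~\ref{lem:symsemi}/\ref{lem:divsemi}, where the relevant $g$ can be chosen so that its argument-critical points stay away from the zero section, and say so explicitly.
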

\begin{proof}
The critical values of a complex polynomial are smooth functions of its coefficients, which are themselves smooth functions of its roots. So if $\arg(a(t)u^mg(u,\rme^{\rmi t}))$ has no critical points, then neither does $\arg(\tilde{a}(t)u^m\tilde{g}(u,\rme^{\rmi t}))$ for any sufficiently close $C^1$-approximation of $a(t)$ and $g$, the latter corresponding to an approximation of the geometric braid $B$.
\end{proof}

\begin{definition}\label{def:compseq}
Let $B_1, B_2, \ldots,B_N$ be a finite sequence of geometric braids with $B_i\subset(\mathbb{C}\backslash\{0\})\times[0,2\pi]$ for all $i>1$. Suppose that $B_i$ consists of $s_i$ strands and is P-fibered with $O$-multiplicity $m_i:=\sum_{j<i}s_j$ and coefficient $a_i(t)$ for all $i\in\{1,2,\ldots,N\}$. Let $\widetilde{g_i}$ be the monic polynomial whose roots form $B_i$ and let $b_i(t)$ denote its lowest order coefficient. We call such a sequence of geometric braids a \textbf{compatible sequence} if
\begin{itemize}
\item $a_N(t)=1$, 
\item $a_{i-1}(t)=b_i(t)a_i(t)=\prod_{j=i}^Nb_i$ for all $i\in\{2,3,\ldots,N\}$.
%\item $\tfrac{\partial\arg(b_1(t)\prod_{j=2}^na_j(t))}{\partial t}$ never vanishes.
\end{itemize}
\end{definition}

%Let $(B_1,B_2,\ldots,B_N)$ be a sequence of geometric braids, such that $B_i$ consists of $s_i$ strands and is P-fibered with $O$-multiplicity $m_i:=\sum_{j<i}s_i$ and leading coefficient $a_i(t)$ for all $i\in\{1,2,\ldots,N\}$ and let $\widetilde{g_i}$ be the polynomial that defines $B_i$. Hence $\arg(a_i(t)u^{m_i}\widetilde{g_i})$ is a fibration.

%Now assume that we have a sequence as above with $a_N=1$ and $a_{i-1}(t)$ equal to the lowest order coefficient of $a_i(t)\widetilde{g_i}$ and that $a_0(t)$ satisfies that $\tfrac{\partial\arg(a_0)}{\partial t}$ never vanishes.

\begin{lemma}\label{lem:isobraid}
Let $f$ be a $u$-convenient, strongly inner non-degenerate mixed polynomial with semiholomorphic principal part and exactly $N$ compact 1-faces. Then the link of its isolated singularity is the closure of a braid of the form $B(B_1,B_2,\ldots,B_N)$ for some compatible sequence of geometric braids $B_1, B_2,\ldots,B_N$, all of which are $u$-even or divisor-symmetric.
\end{lemma}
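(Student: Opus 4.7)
The plan is to combine Proposition~\ref{prop:radPfib} with the lemma in Section~\ref{sec4} describing the braid structure for inner non-degenerate polynomials with semiholomorphic principal part. Since strong inner non-degeneracy implies inner non-degeneracy, that lemma already gives that the link of the singularity of $f$ is the closure of $B(B_1,\ldots,B_N)$ ($u$-convenience of $f$ rules out the $B^{\circ}$-case), with each $B_i$ either $u$-even or divisor-symmetric. What remains is to verify that $(B_1,\ldots,B_N)$ is a compatible sequence in the sense of Definition~\ref{def:compseq}.

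Because the principal part of $f$ is semiholomorphic, each vertex face function takes the form $f_{\Delta_i}(u,v) = u^{\beta_i}A_i(v,\bar v)$, where $\beta_i$ denotes the $u$-coordinate of $\Delta_i$. Consequently the $g$-polynomial $g_i$ attached to $f_{P_i}$ factorises as
\[
g_i(u,\rme^{\rmi t}) \;=\; A_{i+1}(\rme^{\rmi t})\, u^{\beta_i}\, \tilde g_i(u,\rme^{\rmi t}),
\]
with $\tilde g_i$ monic in $u$ of degree $s_i=\beta_{i+1}-\beta_i$ and roots parametrising $B_i$. This identifies the candidate $O$-multiplicity $m_i=\beta_i=\sum_{j<i}s_j$ and the candidate coefficient $a_i(t):=A_{i+1}(\rme^{\rmi t})$. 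Adapting the scaling argument from Proposition~\ref{prop:radPfib}, radial weighted homogeneity of $f_{P_i}$ yields a bijection between critical points of $f_{P_i}$ in $(\C^*)^2$ and critical points of $\arg g_i=\arg(a_i(t)u^{\beta_i}\tilde g_i)$ on $((\C\setminus\{0\})\times S^1)\setminus B_i$. For non-extreme faces, part~(ii) of Definition~\ref{def:stronginner} immediately yields the P-fibered property; for $i=1$, part~(i) rules out critical points on the larger set $\C^2\setminus\{v=0\}$ and, as a byproduct, forces $\beta_1\le 1$, so that any zero-strand of $B_1$ is correctly absorbed into the $O$-multiplicity $m_1$; for $i=N$, $u$-convenience places $\Delta_{N+1}$ on the horizontal axis, so $A_{N+1}$ is a nonzero constant which, after an overall rescaling of $f$, becomes $1$, giving $a_N\equiv 1$.

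The compatibility recursion then drops out of matching coefficients at shared vertices: the coefficient of $u^{\beta_i}$ in $g_i$ equals $a_i(t)\,b_i(t)$, where $b_i(t)$ is the constant-in-$u$ term of $\tilde g_i$, and this same coefficient equals $A_i(\rme^{\rmi t})=a_{i-1}(t)$ because $\Delta_i$ is simultaneously the lowest-$u$ vertex of face~$i$ and the highest-$u$ vertex of face~$i-1$. Thus $a_{i-1}(t)=b_i(t)\,a_i(t)$, which together with $a_N\equiv 1$ unrolls to $a_i(t)=\prod_{j>i}b_j(t)$, as required by Definition~\ref{def:compseq}. The main obstacle is the rigorous promotion of the bijection from Proposition~\ref{prop:radPfib} to the present setting, where $g_i$ carries both a nontrivial $u^{\beta_i}$ factor and a nonconstant leading coefficient $a_i(t)$, and in particular handling the extreme face $i=1$ cleanly despite the fact that $a_1(t)$ may already encode data from all later faces.
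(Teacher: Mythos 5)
Your plan matches the paper's proof closely: pass to the principal part and to the braid decomposition $B(B_1,\dots,B_N)$ from Section~\ref{sec4} (which already supplies the $u$-even/divisor-symmetric constraints), apply the critical-point correspondence of Proposition~\ref{prop:radPfib} to each face function $f_{P_i}$ to extract P-fiberedness with the identified $O$-multiplicities and leading coefficients, and derive the recursion $a_{i-1}(t)=b_i(t)a_i(t)$ by matching the coefficient at the shared vertex $\Delta_i$, with $u$-convenience giving $a_N\equiv 1$ after a constant rescaling. The ``main obstacle'' you flag at the end is handled in the paper in a single line---the loss of $u$-convenience of $f_{P_i}$ for $i<N$ does not affect the critical-point correspondence over $(\mathbb{C}^*)^2$; also note that your parenthetical about the zero-strand of $B_1$ being ``absorbed into $m_1$'' is not quite right, since Definition~\ref{def:compseq} forces $m_1=0$ and a P-fibered braid with $O$-multiplicity carries no zero-strand---but this is moot because the $B_i$ are by construction the roots of $g_i$ with non-zero $u$-coordinate, in agreement with the paper.
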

\begin{remark}
Note that, just like P-fiberedness, the defining property of a compatible sequence is one of geometric braids. In particular, in Lemma~\ref{lem:isobraid}  the same geometric braids $B_i$ that form a compatible sequence have to satisfy the symmetry constraint. It is not enough for a $B_i$ to be braid isotopic to a $u$-even or divisor-symmetric geometric braid.
\end{remark}
\begin{proof}
By \cite{AraujoBodeSanchez} the link of the singularity is determined by the principal part of $f$, so that we may assume that $f$ is equal to its principal part and thus semiholomorphic. Let $f_{P_i}$ denote the $i$th compact 1-face of $f$ and let $g_i$ be the $g$-polynomial associated with $f_{P_i}$.

Furthermore, the link of the singularity is the closure of a braid of the form $B(B_1,B_2,\ldots,B_N)$ for a sequence of geometric braids with $B_i\subset(\mathbb{C}\backslash\{0\})\times S^1$, which are determined by the roots of $g_i$ with non-zero $u$-coordinate. By Theorem~\ref{thm:main1b} all of these geometric braids are $u$-even or divisor-symmetric.

Since $f$ is $u$-convenient, its highest order term with respect to $u$ is of the form $cu^s$ for some $c\in\mathbb{C}\backslash\{0\}$. By the same arguments as in Proposition~\ref{prop:radPfib} the fact that $f_{P_N}$ has no critical points in $\mathbb{C}^*\times\mathbb{C}$ is equivalent to $\arg(g_N)$ having no critical points. By definition we have that $g_N=cu^{m_N}\widetilde{g_N}$, where $\widetilde{g_N}(\cdot,t)$ is the monic loop of polynomials whose roots form $B_N$, $m_N=\sum_{j<N}s_j$ and $s_i$ is the number of strands of $B_i$. If $\arg(cu^{m_N}\widetilde{g_N})$ has no critical points, then $B_N$ is a P-fibered geometric braid with $O$-multiplicity $m_N$ and coefficient $a_N(t)=c$. Since $c$ is a constant, it is also P-fibered with the same $O$-multiplicity and leading coefficient $a_N(t)=1$.

Similarly, the arguments in Proposition~\ref{prop:radPfib} imply that $\arg(g_i)$ has no critical points. The only difference to the proof of Proposition~\ref{prop:radPfib} is that $f_{P_i}$ is not $u$-convenient if $i<N$. This however does not affect the argument for points in $(\mathbb{C}^*)^2$. Thus $B_i$ is P-fibered with $O$-multiplicity $m_i=\sum_{j=1}^{i-1}s_j$ and coefficient equal to the coefficient $a_i(t)$ of highest order term of $g_i$ with respect to $u$. This highest order comes from the vertex on the Newton boundary that is shared by the 1-face of $f_{P_i}$ and $f_{P_{i+1}}$. It is therefore equal to the lowest order term of $g_{i+1}$, which is precisely $b_{i+1}(t)a_{i+1}(t)$. It follows that the $B_i$ form a compatible sequence.
\end{proof}

\begin{lemma}
Let $B_1,B_2,\ldots,B_N$ form a compatible sequence of geometric braids, each of which is $u$-even or divisor-symmetric. Then there is a strongly inner non-degenerate mixed polynomial $f:\mathbb{C}^2\to\mathbb{C}$ with semiholomorphic principal part and exactly $N$ compact 1-faces such that the link of its isolated singularity is the closure of $B(B_1,B_2,\ldots,B_N)$.
\end{lemma}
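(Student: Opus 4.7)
The plan is to adapt the inductive gluing construction of Lemma~\ref{lem:semiglue} to the strongly inner non-degenerate setting, with the compatibility condition serving precisely to propagate P-fiberedness from each braid $B_i$ to the assembled polynomial.

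First, for each $i\in\{1,2,\ldots,N\}$ I will construct a radially weighted homogeneous semiholomorphic polynomial $f_i$ whose associated $g$-polynomial takes the form
\[
g_i(u,\rme^{\rmi t})\defeq \tilde a_i(t)\,u^{m_i}\,\widetilde{g_i}(u,\rme^{\rmi t}),
\]
where $\widetilde{g_i}$ is a monic loop of polynomials whose roots trace $B_i$ and $\tilde a_i$ is a trigonometric polynomial $C^1$-approximating the compatible-sequence coefficient $a_i$. Since $B_i$ is $u$-even or divisor-symmetric, the constructions of Lemmas~\ref{lem:symsemi} and~\ref{lem:divsemi} supply $\widetilde{g_i}$ whose coefficients satisfy the parity pattern needed to make $f_i$ semiholomorphic; the same parity constraints can be imposed on $\tilde a_i$ because symmetric trigonometric polynomials are $C^1$-dense in the corresponding space of symmetric smooth functions. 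Lemma~\ref{lem:approxcomp} then guarantees that the augmented braid (namely $B_i$ together with the multiplicity-$m_i$ strand at $u=0$) remains P-fibered with $O$-multiplicity $m_i$ and coefficient $\tilde a_i(t)$, and the scaling of Eq.~\eqref{eq:gscale} with a sufficiently large weight of appropriate parity turns $g_i$ into the desired $f_i$.

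I will then combine the $f_i$ inductively by the formula used in Lemma~\ref{lem:semiglue}: at stage $i$, a polynomial $\tilde f$ encoding the 1-faces of $f_2,\ldots,f_N$ (produced by the induction hypothesis) is glued to $\hat f\defeq f_1$ via
\[
f \defeq \tilde f_{\Delta}\,\hat f \;+\; \hat f_{\Delta'}\,\tilde f \;-\; \tilde f_{\Delta}\,\hat f_{\Delta'},
\]
where $\Delta$ and $\Delta'$ are the shared vertices on the vertical and horizontal axes, and the weight vector of $\hat f$ is chosen strictly above all weight vectors of $\tilde f$ in the usual order. The compatibility identity $a_{i-1}(t)=b_i(t)a_i(t)$ is precisely what forces the highest-$u$-degree coefficient of $g_{i-1}$ and the lowest-$u$-degree coefficient of $g_i$ to coincide at their shared vertex, so that the face functions of the resulting $f$ along its $N$ compact 1-faces agree (up to a common monomial factor in the remaining variables) with the prescribed $g_i$.

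The main obstacle will be verifying the two conditions of Definition~\ref{def:stronginner} for the glued $f$. For each 1-face, the argument in the proof of Proposition~\ref{prop:radPfib} reduces condition~(ii) to $\arg g_i$ having no critical points on $\C^*\times S^1$, which is exactly the P-fiberedness of the augmented braid that is built into the compatible-sequence hypothesis. At each non-extreme vertex, the face function has the monomial shape $c\,u^a v^b$ with $c\ne 0$ by the semiholomorphic structure, hence no critical points in $(\C^*)^2$. The condition at the extreme face $P_N$ extending to $\C^2\setminus\{u=0\}$ follows from the $u$-convenience of $f$, which is built in by taking $\hat f_{\Delta'}=c\,u^s$; the condition at $P_1$ extending to $\C^2\setminus\{v=0\}$ is handled as in the radially weighted homogeneous case, by first arranging that $B_1$ has no strand on $\{0\}\times S^1$ and, if necessary, multiplying $f$ by $u$ at the end, with the new $u$-factor absorbed into the $O$-multiplicity of $f_{P_1}$ without disturbing the remaining analysis. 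The only subtlety is that all trigonometric polynomial approximations must be performed within the subspace of symmetric functions while simultaneously preserving P-fiberedness, which is possible because P-fiberedness is an open condition and symmetric trigonometric polynomials are $C^1$-dense in the relevant symmetric function spaces.
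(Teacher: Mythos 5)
Your high-level strategy is sound (approximate the $\widetilde{g_i}$ by trigonometric-polynomial loops preserving symmetry and, by Lemma~\ref{lem:approxcomp}, P-fiberedness; then verify strong inner non-degeneracy via the critical-point translation in Proposition~\ref{prop:radPfib}), and this matches what the paper does. However, the gluing step has a concrete bookkeeping error. You define each $f_i$ so that its $g$-polynomial is $\tilde a_i(t)\,u^{m_i}\,\widetilde{g_i}$, i.e.\ the $O$-multiplicity $m_i$ is already built into $f_i$, making $f_i$ not $v$-convenient for $i>1$. Then applying the Lemma~\ref{lem:semiglue} formula $f=\tilde f_\Delta\hat f+\hat f_{\Delta'}\tilde f-\tilde f_\Delta\hat f_{\Delta'}$ multiplies $\tilde f$ by $\hat f_{\Delta'}$ (a monomial of $u$-degree $s_1+m_1=s_1$), so the resulting face function along the $i$-th $1$-face for $i\ge2$ has lowest $u$-degree $m_i+s_1$ rather than $m_i$. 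This is exactly wrong: the $O$-multiplicity hypothesis of the compatible sequence only controls the critical set of $\arg(u^{m_i}\tilde a_i(t)g_i)$, not of $\arg(u^{m_i+s_1}\tilde a_i(t)g_i)$, so the strong non-degeneracy check at $f_{P_i}$ does not follow. Relatedly, your induction hypothesis does not apply to $B_2,\dots,B_N$ as a standalone compatible sequence: Definition~\ref{def:compseq} would then require $B_j$ to be P-fibered with $O$-multiplicity $m_j-s_1$, whereas the hypothesis gives you P-fiberedness only with $O$-multiplicity $m_j$, and these are genuinely different conditions.

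The compatibility hypothesis makes the product-based gluing of Lemma~\ref{lem:semiglue} unnecessary, and the paper exploits this: since $a_{i-1}(t)=b_i(t)a_i(t)$ forces the top term of $f_{P_{i-1}}$ (with respect to $u$) to equal the bottom term of $f_{P_i}$, the face functions $f_{P_i}$ (built from $u^{m_i}\tilde a_i(t)g_i$ with weight vectors in decreasing order) already share their adjacent vertices, and one can directly form a single semiholomorphic $f$ whose face functions on the $N$ compact $1$-faces are precisely the $f_{P_i}$. No induction and no multiplication by monomials is needed; the subsequent non-degeneracy check is then exactly the one you sketch, with the correct $O$-multiplicity appearing at each face. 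If you wish to salvage the gluing-formula approach, you would have to strip the $u^{m_i}$ factor from the $f_i$ so they are $v$-convenient, and then separately argue that the powers of $u$ reintroduced by the gluing reproduce exactly $u^{m_i}$, but that is equivalent to the direct assembly and more work.
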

\begin{proof}
Compatible sequences with $N=1$ are simply P-fibered geometric braids. Let $B$ be a P-fibered geometric braid that $u$-even or divisor-symmetric. Then we may construct a radially weighted homogeneous, $u$-convenient semiholomorphic polynomial $f:\mathbb{C}^2\to\mathbb{C}$ with a weakly isolated singularity and the link of the singularity is the closure of $B$ as in Theorem~\ref{thm:main1b}. In this construction in the proof of Theorem~\ref{thm:main1b} the $g$-polynomial associated with $f$ is an arbitrarily close $C^1$-approximation of the loop of polynomials $\widetilde{g}(\cdot,t)$ whose roots form the geometric braid $B$. Since P-fiberedness is an open condition, the roots of $g$ form a P-fibered geometric braid. By Proposition~\ref{prop:radPfib} $f$ is strongly inner non-degenerate. 

Now suppose that $N>1$. We want to proceed as in the proofs of Theorem~\ref{thm:main1b} and Theorem~\ref{thm:main2b}, that is, we need to approximate the loops of monic polynomials $\widetilde{g_i}(\cdot,t)$, whose roots form the geometric braids $B_i$, by loops of monic polynomials $g_i(\cdot,t)$ that still display the same type of symmetries, but whose coefficients are given by polynomials in $\rme^{\rmi t}$ and $\rme^{-\rmi t}$. As in Theorem~\ref{thm:main1b} this can be achieved via arbitrarily close $C^1$-approximations of the polynomials $\widetilde{g_i}$, or equivalently, $C^1$-approximations of the geometric braids $B_i$. The roots of the resulting polynomials $g_i$ are then braid isotopic to the $B_i$ in $\mathbb{C}^*\times S^1$ and by Lemma~\ref{lem:approxcomp} they still form a compatible sequence, where the leading coefficient of the braid $B_{i-1}$ is $\tilde{a}_{i-1}(t)$, the lowest term of $g_i$ multiplies the leading coefficient of $B_i$.

From each of these $\tilde{a}_i(t)g_i(u,\rme^{\rmi t})$ we obtain a radially weighted semiholomorphic polynomial $f_{P_i}$ in the usual way, where there is some choice in the weight vector of $f_{P_i}$. In particular, we can order them as in the proof of Theorem~\ref{thm:main2b}. Definition~\ref{def:compseq} ensures that the highest order term of $f_{P_{i-1}}$ with respect to $u$ is equal to the lowest order term of $f_{P_i}$ with respect to $u$. We thus have one semiholomorphic polynomial $f$, whose face functions corresponding to 1-faces are precisely the functions $f_{P_i}$, $i=1,2,\ldots,N$.

We now need to show that $f$ is strongly inner non-degenerate. Since the leading coefficients $a_i(t)$ are nowhere-vanishing, so are their approximations. This implies that the face functions of vertices of the Newton boundary have no critical points in $(\mathbb{C}^*)^2$. For the face functions $f_{P_i}$ we use a similar argument as in the proof of Proposition~\ref{prop:radPfib}. Any critical point of $f_{P_i}$ in $\mathbb{C}\times\mathbb{C}^*$ corresponds to a critical point of $\arg(u^{m_i}\tilde{a}_i(t)g_i(u,\rme^{\rmi t}))$. Since the roots of $g_i$ form by assumption a P-fibered braid with $O$-multiplicity $m_i$ and leading coefficient $\tilde{a}_i(t)$, there no such critical points. This shows the second property of Definition~\ref{def:stronginner} and the first property for $f_{P_1}$.

For the first condition of Definition~\ref{def:stronginner} for $f_{P_N}$ recall that $\tfrac{\partial f_{P_N}}{\partial u}(u,0)=su^{s-1}$ with $s=m_N+s_N$. So $f_{P_N}$ has no critical points in $\mathbb{C}^*\times\{0\}$ and so $f$ is strongly non-degenerate. 

By \cite{AraujoBodeSanchez} $f$ has an isolated singularity and its link is the closure of $B(B_1,B_2,\ldots,B_N)$.
\end{proof}

This concludes the proof of Theorem~\ref{thm:main3}. Note that in the construction we can also achieve that $B_1$ does not intersect $\{0\}\times S^1$. The constructed semiholomorphic polynomial $f$ can therefore be assumed to be convenient. It is strongly Newton non-degenerate if $\tfrac{\partial \arg(b_1(t)a_1(t))}{\partial t}$ never vanishes, where $b_1(t)$ is the lowest order coefficient of the loop of monic polynomials defining $B_1$ and $a_1(t)$ is the leading coefficient of $B_1$.

\begin{proposition}
A link type $L$ arises as the link of an isolated singularity of a convenient, strongly Newton non-degenerate mixed polynomial $f:\C^2\to\C$ with $N$ compact 1-faces and semiholomorphic principal part if and only if it has a representative that is the closure of $B(B_1,B_2,\ldots,B_N)$, where $B_1,B_2,\ldots,B_N$ is a compatible sequence of P-fibered braids with $O$-multiplicities and coefficients such that every braid $B_i$ is $u$-even or divisor-symmetric and $\tfrac{\partial \arg(b_1(t)a_1(t))}{\partial t}$ never vanishes, where $b_1(t)$ is the lowest order coefficient of the loop of monic polynomials defining $B_1$ and $a_1(t)$ is the leading coefficient of $B_1$.
\end{proposition}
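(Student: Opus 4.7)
The plan is to bolt the two boundary-face conditions in the statement — $v$-convenience (encoded by $B_1\subset(\C\setminus\{0\})\times S^1$) and the nowhere-vanishing derivative of $\arg(b_1(t)a_1(t))$ — onto Theorem~\ref{thm:main3}. The key observation is that under semiholomorphicity and $u$-convenience, the gap between ``strongly inner non-degenerate'' and ``strongly Newton non-degenerate'' lives entirely at the two extreme vertices $\Delta_1$ and $\Delta_{N+1}$: at $\Delta_{N+1}$ it is automatic from semiholomorphicity, while at $\Delta_1$ it is exactly the stated argument condition.

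For the ``only if'' direction I would first show that convenience, semiholomorphic principal part and strong Newton non-degeneracy together imply strong inner non-degeneracy in the sense of Definition~\ref{def:stronginner}. The only non-trivial checks are the extreme 1-faces. On $\{0\}\times \C^*$, $v$-convenience forces all $u$- and $\bar u$-derivatives of $f_{P_1}$ to come from $\Delta_1$ alone, so a critical point of $f_{P_1}$ at $(0,v_*)$ corresponds to a critical point of $f_{\Delta_1}$ at $v_*\in\C^*$; strong Newton non-degeneracy at $\Delta_1$ rules this out. On $\C^*\times\{0\}$, semiholomorphicity and $u$-convenience give $\partial_u f_{P_N}(u_*,0)=scu_*^{s-1}\neq 0$, so $(u_*,0)$ is regular. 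Theorem~\ref{thm:main3} then produces the required compatible sequence of $u$-even or divisor-symmetric geometric braids, and $v$-convenience forces $B_1$ to miss the braid axis (otherwise $u$ would divide $f$). The argument condition follows from a direct Jacobian computation: in polar coordinates $v=r\rme^{\rmi t}$, the face function $f_{\Delta_1}$ takes the form $r^{b_1'}a_1(t)b_1(t)$ up to normalisation, and, using that $a_1(t)b_1(t)$ is nowhere-vanishing, the Jacobian of $f_{\Delta_1}:\mathbb{R}^4\to\mathbb{R}^2$ drops rank precisely where $\partial_t\arg(a_1(t)b_1(t))=0$.

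For the ``if'' direction I would run the construction in the proof of Theorem~\ref{thm:main3} verbatim, making two adjustments. First, since $B_1\subset (\C\setminus\{0\})\times S^1$, Lemma~\ref{lem:approxcomp} together with an openness argument lets us choose the $C^1$-approximating loop of polynomials in a way that keeps the roots away from $0$, so that the resulting $f$ is $v$-convenient, and hence convenient. Second, to promote strong inner non-degeneracy to strong Newton non-degeneracy, the only additional checks are at the extreme vertices: $\Delta_{N+1}$ is handled by $f_{\Delta_{N+1}}=cu^s$ being semiholomorphic, while $\Delta_1$ is handled by the same Jacobian computation as above, now applied to the approximated polynomial. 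The nowhere-vanishing derivative condition is open in the $C^1$-topology, so if the approximations are chosen sufficiently close, it is preserved.

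The main technical obstacle is the bookkeeping that matches the abstract coefficients $a_i(t)$ and $b_i(t)$ of the compatible sequence with the coefficients of the face-function $g$-polynomials produced by the construction, especially after the $C^1$-approximation of the monic loops by trigonometric polynomials. One must check that the approximated sequence retains the compatibility relations $a_{i-1}(t)=b_i(t)a_i(t)$ up to arbitrarily small error, and that the argument condition on $a_1(t)b_1(t)$ translates correctly to the polar description of $f_{\Delta_1}$; both reduce to the openness of the relevant non-vanishing conditions, but the indexing needs to be tracked with care.
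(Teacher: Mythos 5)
Your proposal is correct and follows essentially the same route the paper takes: the Proposition is stated in the paper immediately after the remark that the construction in Theorem~\ref{thm:main3} can be arranged so that $B_1$ misses the axis (giving convenience) and is strongly Newton non-degenerate exactly when $\partial_t\arg(b_1(t)a_1(t))$ never vanishes, and your two-directional argument faithfully unfolds that remark. Your identification of the two extreme vertices $\Delta_1$ and $\Delta_{N+1}$ as the only places where strong Newton non-degeneracy and strong inner non-degeneracy diverge (given convenience and a semiholomorphic principal part) is the key point, and the Jacobian computation $\det J(f_{\Delta_1})=nr^{2n-1}|\Phi|^2\partial_t\arg\Phi$ with $\Phi\doteq a_1(t)b_1(t)$ is exactly what is needed. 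One small caveat: your phrase ``convenience forces all $u$- and $\bar u$-derivatives of $f_{P_1}$ to come from $\Delta_1$ alone'' is slightly off, since the face function at $\Delta_1$ has no $u$-dependence at all, so its contribution to those derivatives is zero; the correct statement is that at $(0,v_*)$ one has $\partial_{\bar u}f_{P_1}=0$ by semiholomorphicity, and either $\partial_u f_{P_1}(0,v_*)=0$ (when $p_2\geq 2$, reducing to the rank of the $v$-Jacobian of $f_{\Delta_1}$) or $\partial_u f_{P_1}(0,v_*)\neq 0$ (possible only when $p_2=1$), in which case the point is automatically regular. Either way the conclusion holds, so this is a cosmetic rather than a substantive issue.
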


The concept of a compatible sequence can be understood as a new way to construct P-fibered braids.
\begin{proposition}\label{prop:seqpfib}
Let $B_1,B_2,\ldots,B_N$ be a compatible sequence of geometric braids with $O$-multiplicities and coefficients. Then $B(B_1,B_2,\ldots,B_N)$ is a P-fibered braid.
\end{proposition}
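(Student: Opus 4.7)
The plan is to deduce the proposition directly from the preceding construction. By the previous lemma in this section, the compatible sequence $B_1,\ldots,B_N$ of P-fibered geometric braids with $O$-multiplicities and coefficients, each $u$-even or divisor-symmetric, gives rise to a strongly inner non-degenerate mixed polynomial $f:\C^2\to\C$ with semiholomorphic principal part and exactly $N$ compact 1-faces whose isolated singularity has link equal to the closure of $B(B_1,\ldots,B_N)$. One should moreover assume $f$ is $u$-convenient, exactly as in the proof of that lemma. Thus proving the proposition reduces to showing that the polynomial structure of $f$ produces a loop of monic polynomials whose argument has no critical points.

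Next I would extract from $f$ an explicit loop of monic polynomials. Since the principal part of $f$ is semiholomorphic and $u$-convenient, one sets $G(u,\rme^{\rmi t})\defeq f(u,\varepsilon\rme^{\rmi t},\varepsilon\rme^{-\rmi t})$ for a sufficiently small $\varepsilon>0$ and rescales to make it monic in $u$. Then $G(\cdot,\rme^{\rmi t})$ is a polynomial of degree $s=\sum_i s_i$, whose roots are simple for every $t$ (a consequence of strong inner non-degeneracy) and therefore trace out a geometric braid in $\C\times S^1$. The construction of $B(B_1,\ldots,B_N)$ from \cite{AraujoBodeSanchez} identifies this braid, up to braid isotopy, with $B(B_1,\ldots,B_N)$: in each annular region of the $u$-plane corresponding to a face of the Newton boundary of $f$, the rescaled $G$ reduces to the $g$-polynomial of the associated face function $f_{P_i}$, whose roots form $B_i$, and the compatibility conditions $a_{i-1}=b_i a_i$ ensure that these pieces line up consistently.

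To finish, I would show that $\arg G$ has no critical points on $(\C\times S^1)\setminus B(B_1,\ldots,B_N)$, generalising the argument of Proposition~\ref{prop:radPfib}. In the annular region dominated by the $i$th face, $\arg G$ agrees up to scaling with $\arg(a_i(t)u^{m_i}\tilde{g}_i(u,\rme^{\rmi t}))$, whose argument-critical points are forbidden by the hypothesis that $B_i$ is P-fibered with $O$-multiplicity $m_i$ and coefficient $a_i$. Translating these local statements back to $f$, an argument-critical point of $\arg G$ at a non-root would yield a non-origin critical point of $f$, contradicting the isolated singularity guaranteed by strong inner non-degeneracy. Hence $B(B_1,\ldots,B_N)$ is P-fibered.

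The main obstacle is the bookkeeping in the local-to-global identification when $N>1$. In the single-face case treated in Proposition~\ref{prop:radPfib}, the radially weighted homogeneous scaling $f(u,v)=r^{d}G(u/r^k,\rme^{\rmi t})$ makes the correspondence between argument-critical points of $\arg G$ and critical points of $\arg f$ immediate. With several faces, the governing scaling $k=p_{i,1}/p_{i,2}$ changes from face to face, so the identification must be carried out in each annular region separately, and the compatibility conditions of Definition~\ref{def:compseq} are precisely what is needed to exclude argument-critical points at the transition radii where two adjacent face functions become comparable.
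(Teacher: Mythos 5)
Your approach imports a hypothesis that the proposition does not contain. Proposition~\ref{prop:seqpfib} is stated for an arbitrary compatible sequence of geometric braids with $O$-multiplicities and coefficients; there is no assumption that the $B_i$ are $u$-even or divisor-symmetric. You begin by invoking the preceding lemma ("each $u$-even or divisor-symmetric, gives rise to a strongly inner non-degenerate mixed polynomial $f$..."), but that lemma genuinely needs the symmetry: the symmetry is precisely what lets the loops $\widetilde{g}_i$ be approximated by loops whose Fourier coefficients have the parity pattern required for the rescaling $u\mapsto u/r^k$ to produce a mixed (let alone semiholomorphic) polynomial. For a general compatible sequence there is no such $f$, so your reduction collapses at step one, and you would at best prove a strictly weaker statement.

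The paper's proof avoids polynomials in $\C^2$ entirely. It works directly on $(\C\times S^1)\setminus B(B_1,\ldots,B_N)$: it defines a circle-valued map $\Phi$ piecewise, equal to $\arg(u^{m_i}a_i(t)\widetilde{g}_i(u,\rme^{\rmi t}))$ on the region $X_i$ coming from $B_i$, and checks via Eqs.~\eqref{eq:limit1}--\eqref{eq:limit2} that the compatibility relations $a_{i-1}=b_i a_i$ are exactly what make the limiting boundary values of consecutive pieces agree, so $\Phi$ is globally defined and has no critical points. It then collapses the outer torus boundary to a braid axis (using $a_1(t)=1$ so that the axis is transverse to the fibers) and cites Theorem 1.1 of \cite{bode:braided} to upgrade this braided open book to P-fiberedness. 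Beyond the hypothesis issue, your route also has a second soft spot you partially flag yourself: for $N>1$ the polynomial $f$ is not radially weighted homogeneous, so restricting to a single small circle $|v|=\varepsilon$ and calling the result $G$ does not reproduce all the face behaviour, and the Proposition~\ref{prop:radPfib}-style correspondence between argument-critical points of the restricted loop and critical points of $f$ does not transfer across the different scalings of the different faces without a genuinely new argument — which is essentially what the paper's gluing of the local $\arg(u^{m_i}a_i(t)\widetilde{g}_i)$ supplies.
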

\begin{proof}
The braid $B(B_1,B_2)$ is obtained from $B_1$ and $B_2$ as follows. We take the copy of $\mathbb{C}\times[0,2\pi]$ that contains $B_2$ and remove a tubular neighbourhood $U_1$ of $\{0\}\times[0,2\pi]$ from it. Since $B_2$ does not intersect $\{0\}\times[0,2\pi]$, we can choose $U_1$ such that it does not intersect $B_2$. Now glue the copy of $\mathbb{C}\times[0,2\pi]$ that contains $B_1$ to $(\mathbb{C}\times[0,2\pi])\backslash U_1$, identifying their boundaries $S^1\times[0,2\pi]$ without any twisting. For higher values of $N$ we use $B(B_1,B_2,\ldots,B_{N-1},B_N)=B(B_1,B(B_2,\ldots,B_{N-1},B_N))$, writing $U_i$, $i=1,2,\ldots,N-1$, for the corresponding tubular neighbourhood of $\{0\}\times[0,2\pi]$ in the copy of $\mathbb{C}\times[0,2\pi]$ that contains $B_{i+1}$.

Let now $B_1,B_2,\ldots,B_N$ be a compatible sequence. Let $\widetilde{g_i}$ be the corresponding loop of monic polynomials whose roots are $B_i$. Then $\widetilde{g_i}(0,\rme^{\rmi t})=b_i(t)$ and so 
\begin{equation}\label{eq:limit1}
\lim_{R\to 0}\arg((R\rme^{\rmi \chi})^{m_i}a_i(t)\widetilde{g_i}(R\rme^{\rmi\chi}),\rme^{\rmi t})=m_i\chi+\arg(b_i(t))+\arg(a_i(t)).
\end{equation}
This determines the behaviour of $\arg((R\rme^{\rmi \chi})^{m_i}\widetilde{g_i}(R\rme^{\rmi\chi}),\rme^{\rmi t})$ on the boundary of $U_{i-1}$.

On the other hand, we have
\begin{align}\label{eq:limit2}
\lim_{R\to\infty}\arg(R\rme^{\rmi \chi})^{m_{i-1}}\widetilde{g_{i-1}}(R\rme^{\rmi \chi},\rme^{\rmi t}))&=m_i\chi+\arg(a_{i-1}(t))\nonumber\\
=m_i\chi+\arg(b_i(t))+\arg(a_i(t)),
\end{align}
where the last equality follows from the second condition in Definition~\ref{def:compseq}.

Using the closed disk as a compactification of $\mathbb{C}$, this describes the behaviour of $\arg(R\rme^{\rmi \chi})^{m_{i-1}}\widetilde{g_{i-1}}(R\rme^{\rmi \chi},\rme^{\rmi t}))$ on the boundary of the solid cylinder that is glued to the boundary of $U_{i-1}$ in $(\mathbb{C}\times[0,2\pi])\backslash U_{i-1}$. Let $X_i=(\mathbb{C}\times S^1)\backslash(B_i\cup U_{i-1})$ for $i\in\{2,3,\ldots,N\}$ and $X_1=(\mathbb{C}\times S^1)\backslash B_1$. Since Eq.~\eqref{eq:limit1} and Eq.~\eqref{eq:limit2} match for all $i$, we obtain a well-defined circle-valued map $\Phi:(\mathbb{C}\times S^1)\backslash B(B_1,B_2,\ldots,B_N)\to S^1$ by setting $\Phi(u,\rme^{\rmi t})=\arg(u^{m_i}a_i(t)\widetilde{g_i}(u,\rme^{\rmi t}))$ on $X_i$.
%\begin{equation}
%\Phi(R\rme^{\rmi \chi})=\arg((R\rme^{\rmi \chi})^{m_i}a_i(t)\widetilde{g_i}(R\rme^{\rmi\chi}),\rme^{\rmi t}) on (CxS^1)\backslash(B_i\cup U_{i-1}).

By definition $\arg((R\rme^{\rmi \chi})^{m_i}a_i(t)\widetilde{g_i}(R\rme^{\rmi\chi}),\rme^{\rmi t})$ has no critical points and thus $\Phi$ is a fibration on $(\mathbb{C}\times S^1)\backslash B(B_1,...,B_N)$ over $S^1$. Since $B_1$ has leading coefficient $a_1(t)=1$, we have
\begin{equation}\label{eq:limit3}
\lim_{R\to\infty}\arg((R\rme^{\rmi \chi})^{m_N}a_N(t)\widetilde{g_N}(R\rme^{\rmi\chi}),\rme^{\rmi t})=(m_N+s_N)\chi.
\end{equation}

As in \cite{bode:braided} we can turn $\Phi$ into a fibration on the complement of the closure of $B(B_1,B_2,\ldots,B_N)$ in $S^3$ by identifying points on the same longitude on the torus boundary of $(\mathbb{C}\times S^1)$. The torus boundary is thus collapsed to an unknot $O$, corresponding to a meridian of the boundary of $\mathbb{C}\times S^1$, and thus a braid axis for $B(B_1,B_2,\ldots,B_N)$. By Eq.~\eqref{eq:limit3}, this unknot is positively transverse to the fibers of $\Phi$ and thus by definition $O$ and the closure of $B(B_1,B_2,\ldots,B_N)$ are generalised exchangeable, see \cite{bode:braided}. It follows from Theorem 1.1 in \cite{bode:braided} that $B(B_1,B_2,\ldots,B_N)$ is a P-fibered braid.
\end{proof}

If every $B_i$ in a compatible sequence is $u$-even, then $B(B_1,B_2,\ldots,B_N)$ is $u$-even itself. Thus the closure of $B(B_1,B_2,\ldots,B_N)$ is real algebraic and the corresponding semiholomorphic polynomial can be taken to be radially weighted homogeneous \cite{bode:sat}. In particular, all examples in Section 6 in \cite{AraujoBodeSanchez} can be obtained in this way. The advantage of working with compatible sequences is that the $B_i$s can satisfy different symmetries. Some of them can be $u$-even and others can be odd, so that $B(B_1,B_2,\ldots,B_N)$ does not satisfy any particular symmetry. However, its closure can nonetheless be constructed as a real algebraic link.

For a given braid it is not easy to determine for which $O$-multiplicities and which leading coefficients it can be represented by a geometric braid as a P-fibered braid with the given data. However, we can still construct several families of compatible sequences.
\begin{lemma}\label{lem:Omult}
Let $B$ be a P-fibered geometric braid of $O$-multiplicity $m$. Then for any leading coefficient $a(t)$ there exists an integer $q$ such that for all $p>q$ the geometric braid $B^p$ is P-fibered with $O$-multiplicity $m$ and leading coefficient $a(t)$. 
\end{lemma}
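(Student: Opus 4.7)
The plan is to reduce everything to the critical-point criterion derived in Lemma~\ref{lem:a} and then exploit the fact that passing from $B$ to $B^p$ rescales the $t$-variable by a factor $p$. This multiplies the derivative of the argument of every non-zero critical value of $u\mapsto u^m g(u,\rme^{\rmi t})$ by $p$, which for $p$ large will dominate any fixed bound on $|\partial_t \arg a(t)|$, preventing the critical-point equation from having solutions.

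Concretely, I would write $g(u,\rme^{\rmi t})$ for the loop of monic polynomials whose roots form $B$; the loop associated with $B^p$ is then $g_p(u,\rme^{\rmi t})=g(u,\rme^{\rmi pt})$, so its non-zero critical values in $u$ are exactly $v_j(pt)$, where $v_1(t),\dots,v_{s+m-1}(t)$ denote the non-zero critical values of $u\mapsto u^m g(u,\rme^{\rmi t})$. (These $v_j$ may be permuted by the monodromy $t\mapsto t+2\pi$, but globally they form a closed system on a finite cyclic cover of $S^1$ on which $\partial_t\arg v_j$ is a well-defined continuous function.) The hypothesis that $B$ is P-fibered with $O$-multiplicity $m$ and constant leading coefficient is, by the computation in Lemma~\ref{lem:a}, the statement that $\partial_t\arg(v_j(t))\neq 0$ everywhere. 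By compactness this upgrades to a uniform bound $|\partial_t\arg(v_j(t))|\geq c>0$. Since $a(t)$ is a nowhere-vanishing finite Fourier series it is smooth and periodic, so $M:=\sup_t|\partial_t\arg(a(t))|<\infty$.

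Setting $q:=\lceil M/c\rceil$, for every integer $p>q$ the chain rule yields
\[
|\partial_t\arg(v_j(pt))| \;=\; p\,\bigl|\partial_s\arg(v_j)\bigr|_{s=pt} \;\geq\; pc \;>\; M \;\geq\; |\partial_t\arg(a(t))|,
\]
so $\partial_t\arg(a(t))+\partial_t\arg(v_j(pt))$ never vanishes. By the criterion in Lemma~\ref{lem:a} this means that $\arg(a(t)u^m g_p)$ has no critical points on $(\C\times S^1)\setminus(B^p\cup(\{0\}\times S^1))$, and hence $B^p$ is P-fibered with $O$-multiplicity $m$ and coefficient $a(t)$. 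I do not anticipate a serious obstacle; the only point that requires some care is the observation that the non-zero critical values of $u\mapsto u^m g(u,\rme^{\rmi t})$, although only locally labellable, collectively form a closed system under $t\mapsto t+2\pi$ and so admit a uniform argument-derivative lower bound on the compact branched cover on which they live.
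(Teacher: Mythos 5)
Your proof is correct and follows essentially the same route as the paper: reduce to the non-vanishing of $\partial_t\arg(a(t))+p\,\partial_t\arg(v_j(pt))$ via the critical-value criterion from Lemma~\ref{lem:a}, then take $p$ large enough to beat the bounded term coming from $a(t)$. You are a bit more explicit than the paper about the uniform lower bound $c$ (and the resulting explicit $q=\lceil M/c\rceil$) and about the monodromy-permutation of the $v_j$; these are useful clarifications, not departures.
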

\begin{proof}
Let $v_{j}(t)$, $j=1,2,\ldots,s-1$, denote the non-zero critical values of $u^mg(\cdot,\rme^{\rmi t})$, where $g$ is the loop of polynomials corresponding to the geometric braid $B$ and $s$ is the number of its strands. Then the non-zero critical values of $a(t)u^mg(u,\rme^{p\rmi t})$ are precisely $a(t)v_j(pt)$. A critical point of $\arg(a(t)u^mg(u,\rme^{\rmi pt}))$ is a solution to the equation $\tfrac{\partial\arg(a(t)v_j(pt))}{\partial t}=\tfrac{\partial \arg(a)}{\partial t}+p\tfrac{\partial\arg(v_j)}{\partial t}=0$. Since $B$ is P-fibered with $O$-multiplicity $m$, we have that $\tfrac{\partial\arg(v_j(t))}{\partial t}$ is nowhere-vanishing for all $j=1,2,\ldots,s-1$. Therefore, by choosing $p$ sufficiently large, we can avoid any argument-critical points of $a(t)u^mg(u,\rme^{\rmi pt})$ and thus $B^p$ is P-fibered with $O$-multiplicity $m$ and leading coefficient $a(t)$ as long as $p$ is larger than some $q$.
\end{proof}

\begin{theorem}\label{thm:Pmult}
Let $B_i$, $i=1,2,\ldots,N$, be a sequence of geometric braids on $s_i$ strands that do not intersect $\{0\}\times[0,2\pi]$. Let $m_i=\sum_{j<i}s_j$ and suppose that $B_i$ is P-fibered with $O$-multiplicity $m_i$ for all $i\in\{1,2,\ldots,N\}$. Then for any sequence of positive integers $r_j, r_{j+1}, \ldots,r_{N}$ there is a positive integer $M_{j-1}$ such that $B_1^{r_1},B_2^{r_2},\ldots,B_{N-1}^{r_{n-1}},B_N^{r_N}$ is a compatible sequence if $r_j>M_j$ for all $j=1,2,\ldots,N-1$. It follows that the closure of $B(B_1^{2r_1},B_2^{2r_2},\ldots,B_{N-1}^{2r_{n-1}},B_N^{2r_N})$ is real algebraic.
\end{theorem}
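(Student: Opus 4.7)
The plan is to run a backwards induction on $i$ from $N$ down to $1$, constructing the leading coefficients $a_i(t)$ explicitly from the given data and applying Lemma~\ref{lem:Omult} at each step to choose the threshold $M_i$. Throughout, write $\widetilde{g_i}$ for the monic loop of polynomials whose roots form $B_i$, and $b_i(t)=\widetilde{g_i}(0,\rme^{\rmi t})$ for its lowest-order coefficient; since $B_i$ does not intersect $\{0\}\times[0,2\pi]$, each $b_i$ is a nowhere-vanishing finite Fourier series. Taking the $r_i$-th power of $B_i$ replaces $\widetilde{g_i}(u,\rme^{\rmi t})$ by $\widetilde{g_i}(u,\rme^{\rmi r_i t})$, hence replaces $b_i(t)$ by $b_i(r_i t)$.

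Set $a_N(t)=1$ and, for $i<N$, define recursively $a_{i-1}(t)\defeq b_i(r_i t)\,a_i(t)$, which once $r_{i+1},\ldots,r_N$ are fixed yields the explicit formula $a_i(t)=\prod_{j=i+1}^N b_j(r_j t)$, a nowhere-vanishing finite Fourier series. These are exactly the relations required by Definition~\ref{def:compseq}. The base case $i=N$ is immediate: the speed-up $t\mapsto r_N t$ does not introduce any argument-critical points when the leading coefficient is constant, so $B_N^{r_N}$ is P-fibered with $O$-multiplicity $m_N$ and constant coefficient $1=a_N$ for every $r_N\ge 1$. For the inductive step, once $r_{i+1},\ldots,r_N$ have been chosen, $a_i(t)$ is a fixed nowhere-vanishing finite Fourier series and Lemma~\ref{lem:Omult} applied to $B_i$ with this coefficient produces an integer $M_i$ such that for every $r_i>M_i$ the braid $B_i^{r_i}$ is P-fibered with $O$-multiplicity $m_i$ and leading coefficient $a_i(t)$. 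Iterating down to $i=1$ gives a sequence $B_1^{r_1},\ldots,B_N^{r_N}$ satisfying every clause of Definition~\ref{def:compseq}, hence a compatible sequence.

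For the real-algebraic statement, replace each $r_i$ by an even value $2r_i$ (applied to the thresholds, this still satisfies $2r_i>M_i$). Since the loop of polynomials defining $B_i^{2r_i}$ is $\widetilde{g_i}(u,\rme^{2\rmi r_i t})$ and this function is invariant under $t\mapsto t+\pi$, the set of parametric curves $B_i^{2r_i}$ is pointwise invariant under $\tau_u$ and is therefore $u$-even. Thus $B_1^{2r_1},\ldots,B_N^{2r_N}$ is a compatible sequence of $u$-even P-fibered geometric braids with $O$-multiplicities and coefficients. Theorem~\ref{thm:main3} applies and produces a $u$-convenient, strongly inner non-degenerate mixed polynomial with semiholomorphic principal part whose link of the singularity is the closure of $B(B_1^{2r_1},\ldots,B_N^{2r_N})$; this link is therefore real algebraic.

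The principal point of care is the interleaving of choices: the coefficient $a_i$ depends on all later powers $r_{i+1},\ldots,r_N$, so the threshold $M_i$ is only meaningful after those later choices have been fixed. This is exactly the asymmetric dependence encoded in the statement ``for any $r_j,r_{j+1},\ldots,r_N$ there is $M_{j-1}$'' and is accommodated automatically by the backwards induction above; no single uniform threshold exists, but each $r_i$ can be chosen as soon as $M_i$ is computed. No other obstacle arises, since the coefficients $a_i$ inherit the required nowhere-vanishing-finite-Fourier-series structure directly from the $b_j$'s.
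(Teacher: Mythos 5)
Your proof is correct and follows essentially the same route as the paper's: backwards induction from $i=N$ to $i=1$, using Lemma~\ref{lem:Omult} at each step to pick the threshold $M_i$ once the later $r_{i+1},\ldots,r_N$ are fixed, identifying the required leading coefficient as $a_i(t)=\prod_{j>i}b_j(r_jt)$, and then doubling the exponents to make every braid $u$-even so that Theorem~\ref{thm:main3} applies. The only cosmetic difference is that you make the recursion $a_{i-1}=b_i(r_it)a_i$ and the base case $a_N\equiv 1$ fully explicit, which is a slightly cleaner presentation of the same idea.
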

\begin{proof}
Let $g_N(\cdot,\rme^{\rmi t})$ denote the loop of monic polynomials that realises $B_N$ as a P-fibered geometric braid with $O$-multiplicity $m_N$. Let $b_N(t)$ be the lowest order coefficient of $g_N(u,\rme^{\rmi t})$ with respect to $u$. Then $b_N(r_Nt)$ is the lowest order coefficient of the loop of monic polynomials that realises $B_N^{r_N}$ as a P-fibered geometric braid with $O$-multiplicity $m_N$. By Lemma~\ref{lem:Omult} there is now a positive integer $M_{N-1}$ such that $B_{N-1}^{r_{N-1}}$ is a P-fibered geometric braid with $O$-multiplicity $m_{N-1}$ and leading coefficient $b_N(r_Nt)$ for all $r_{N-1}>M_{N-1}$. Note however that the value of $M_{N-1}$ depends on the choice of $r_N$. Continuing like this inductively, we show that $B_i^{r_i}$ is a P-fibered geometric braid with $O$-multiplicity $m_i$ and leading coefficient $\prod_{j>i}b_j(r_jt)$, where $b_j(t)$ is the lowest order coefficient of the loop of monic polynomials $g_j(u,\rme^{\rmi t})$ whose roots form the geometric braid $B_j$. In other words, the geometric braids form a compatible sequence. If we take all of the exponents of the braids to be even, they are all $u$-even. Thus by Theorem~\ref{thm:main3} the closure of $B(B_1^{2r_1},B_2^{2r_2},\ldots,B_{N-1}^{2r_{n-1}},B_N^{2r_N})$ is real algebraic if $r_j>M_j$ for all $j<N$.
\end{proof}

Theorem~\ref{thm:Pmult} is reminiscent of the satellite constructions of P-fibered braids (and hence real algebraic links) in \cite{bode:sat} and the construction of real algebraic links from P-fibered braids with $O$-multiplicities in \cite{AraujoBodeSanchez}. In fact, generalizing the concept of P-fibered braids with $O$-multiplicity even further allows us to see all of these constructions as special cases of the same idea.

Without going into too much detail let us first point out some differences and similarities between the construction outlined above and that described in \cite{bode:sat}. In both cases, we start with a braid that is in some sense associated to a polynomial fibration map. In the construction above with $N=2$, this initial braid is the union of $B_2$, which is P-fibered with some non-zero $O$-multiplicity, and the 0-strand $0\times[0,2\pi]$. A new braid is then constructed by replacing a component of its closure, the 0-strand, by another braid $B_1$ that is P-fibered. In order to guarantee that this new braid $B(B_1,B_2)$ leads to an isolated singularity we have to use a certain power of $B_1$, say $B_1^{r_1}$ instead of $B_1$ itself.

In \cite{bode:sat} we start with a P-fibered braid, whose closure consists of $n$ components. Then for every component of the closure we choose a P-fibered geometric braid $B_i$, $i=1,2,\ldots,n$, on $s$ strands. Note that all $B_i$ have the same number of strands. We perform a satellite operation that replaces a tubular neighbourhood of each component $C_i$ with a solid torus containing the closed braid $B_i^{r_i}$ for some exponent $r_i$. Again, if the exponent $r_i$ is chosen sufficiently large, the resulting braid is P-fibered and hence the closure of its square is real algebraic.

While the construction in Theorem~\ref{thm:Pmult} replaces only one component, the construction in \cite{bode:sat} replaces every component by a satellite link of the original component. Since the resulting braid, denoted $\mathcal{B}(B;B_1^{r_1},B_2^{r_2},\ldots,B_n^{r_n})$ is P-fibered, the corresponding semiholomorphic polynomial is radially weighted homogeneous. By Proposition~\ref{prop:seqpfib} the same is true for $B(B_1^{2r_1},B_2^{2r_2},\ldots,B_N^{2r_N})$. However, Theorem~\ref{thm:Pmult} together with Theorem~\ref{thm:main3} also allows us to to realise the latter braid via semiholomorphic polynomials with $N$ compact 1-faces in its Newton boundary, which is presumably not possible for the former.

Another difference lies in the range of values of the different $r_j$ that are sufficient to guarantee the result. In Theorem~\ref{thm:Pmult} the sufficient range of values for the exponent $r_j$ depends on the chosen value of $r_{j+1}$, while in \cite{bode:sat} the numbers can be chosen independently from each other.

We generalise both constructions as follows.
\begin{definition}
Let $B$ be a geometric braid on $s$ strands with a closure that has $n$ components, labeled $C_i$, $i=1,2,\ldots,n$, and parametrised as
\begin{equation}
\bigcup_{i=1}^n\bigcup_{j=1}^{n_i}\bigcup_{t\in[0,2\pi]}(u_{i,j}(t),t)
\end{equation}
where $u_{i,j}:[0,2\pi]\to\C$ parametrises the $j$th strand of the $i$th component. The number $n_i$ denotes the number of strands forming the component $C_i$.
We say that $B$ is \textbf{P-fibered with multiplicities} $(m_1,m_2,\ldots,m_n)$ if the loop of polynomials
\begin{equation}
\tilde{g}(u,t):=\prod_{i=1}^n\prod_{j=1}^{n_i}(u-u_{i,j}(t))^{m_i}
\end{equation}
defines a fibration map $\arg(\tilde{g}):(\C\times S^1)\backslash B\to S^1$.
\end{definition}
Note that the order in the list of multiplicities $(m_1,m_2,\ldots,m_n)$ depends on the ordering of the components $C_i$. The definition above should be understood such that a braid is P-fibered with given multiplicities if there exists some ordering of the components $C_i$ such that the condition in the definition above is satisfied.

Thus a P-fibered braid is a P-fibered braid of multiplicities $(m,m,m,\ldots,m)$ for any $m$ and a P-fibered braid with $O$-multiplicity $m$ is a P-fibered braid of multiplicities $(1,1,\ldots,1,m)$, where one of the strands is the 0-strand, the only component with a multiplicity greater than 1. A braid is P-fibered with multiplicities $(m_1,m_2,\ldots,m_n)$ if and only if it is P-fibered with multiplicities $(km_1,km_2,\ldots,km_n)$ for any positive integer $k$. Links whose components come with assigned multiplicities have already been used in the context of holomorphic polynomials, where they are termed multilinks \cite{eisenbud}.

The satellite operation in \cite{bode:sat} is well-defined even if the braids $B_i$ do not have the same number of strands. It was only required in \cite{bode:sat} to prove the P-fiberedness of the resulting braid. We denote the resulting satellite braid as in \cite{bode:sat} by $\mathcal{B}(B;B_1,B_2,\ldots,B_n)$. The following statement is proved by exactly the same argument as Theorem 1.2 in \cite{bode:sat}.

\begin{theorem}\label{teo:sat}
Let $B$ be a P-fibered braid with multiplicities $(m_1,m_2,\ldots,m_n)$. Let $B_j$ , $j=1,2,\ldots,n$, be P-fibered braids  on $m_j$ strands with multiplicities $(m_{j,1},m_{j,2},\ldots,m_{j,n_j})$, where $n_j$ is the number of components of the closure of $B_j$. Then there are natural numbers $q_i$, $i=1,2,\ldots,n$, such that $\mathcal{B}(B_1^{r_1},B_2^{r_2},\ldots,B_n^{r_n})$ is a P-fibered braid with multiplicities 
\begin{equation}
(m_{1,1},m_{1,2},\ldots,m_{1,m_1},m_{2,1},m_{2,2},\ldots,m_{2,m_2},\ldots,m_{n,1},m_{n,2},\ldots,m_{n,m_n})
\end{equation}
for all natural numbers $r_i\geq q_i$. If $m_i=1$, we may take $q_i=1$.
\end{theorem}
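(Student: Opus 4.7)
The plan is to verify P-fiberedness of the satellite braid by checking that the argument map of its associated weighted product of linear factors has no critical points, once each $r_i$ is taken sufficiently large. Following the strategy of Theorem~1.2 in \cite{bode:sat}, I would split $\mathbb{C}\times S^1$ into narrow tubular neighbourhoods of the strands of $B$ and their complement, and treat the two regions separately.

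First I would produce an explicit parametrisation of $\mathcal{B}(B;B_1^{r_1},\ldots,B_n^{r_n})$: fix $\varepsilon>0$ small, and realise the satellite by inserting a rescaled copy of $B_i^{r_i}$ into a tubular neighbourhood of the $i$th component $C_i$ of the closure of $B$, with phases chosen so that the result is a braid upon closure. Each satellite strand inherits, via the corresponding strand of $B_i$, a multiplicity from its component of $B_i$, and a bookkeeping check reproduces the stated multiplicity list.

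Second, I would analyse the weighted product $\tilde G(u,t)$ associated with the satellite. On the exterior of the tubes, $\tilde G$ is $C^1$-close, for $\varepsilon$ small, to a product whose argument function inherits critical-point-freeness from the P-fiberedness of $B$ with multiplicities $(m_1,\ldots,m_n)$: the key combinatorial observation in the construction of the satellite, as in \cite{bode:sat}, is that the collapsed weight at each strand of $B$ in $C_i$ is a common multiple of $m_i$, so the exterior argument function is a positive scalar multiple of the one defining P-fiberedness of $B$. Inside a tube around $u_{i,j}(t)$, the substitution $u = u_{i,j}(t)+\varepsilon w$ factors $\tilde G$ as $G_{\mathrm{loc}}(w,t)\cdot G_{\mathrm{out}}(w,t)$, where $G_{\mathrm{loc}}$ equals, up to a power of $\varepsilon$, the weighted product for $B_i^{r_i}$ evaluated at $(w, r_i t)$, and $G_{\mathrm{out}}$ is $C^1$-close to a nowhere-vanishing function depending only on $t$. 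A critical point of $\arg \tilde G$ there forces $\nabla \arg G_{\mathrm{loc}} = -\nabla \arg G_{\mathrm{out}}$; P-fiberedness of $B_i$ with its multiplicities ensures that $\nabla_{(w,s)} \arg G_{B_i}$ is nowhere vanishing, and the chain rule through $s = r_i t$ makes the $\partial_t$-component of $\nabla \arg G_{\mathrm{loc}}$ scale linearly in $r_i$, while $\nabla \arg G_{\mathrm{out}}$ stays bounded independently of $r_i$. Comparing both components of the cancellation equation then rules out critical points for $r_i \ge q_i$ and a suitable threshold $q_i$.

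When $m_i = 1$ the $i$th tube contains a single strand and $G_{\mathrm{loc}}$ is one linear factor in $w$ whose argument is critical-point-free automatically, so $q_i = 1$ suffices. The main obstacle will be carrying out the two-sided estimate in the tube: showing that points where $\partial_t \arg G_{\mathrm{loc}}$ happens to be small correspond to points where $\partial_w \arg G_{\mathrm{loc}}$ is bounded below on a fixed compact region of $(w,t)$-space, and that neither direction can be cancelled by the bounded perturbation $\nabla \arg G_{\mathrm{out}}$ (whose $w$-component is itself small because $G_{\mathrm{out}}$ is close to a function of $t$ alone). This is precisely where the full strength of P-fiberedness of $B_i$ \emph{with} the multiplicities $(m_{i,1},\ldots,m_{i,n_i})$, rather than merely P-fiberedness of the underlying geometric braid, is needed.
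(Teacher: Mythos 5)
Your proposal follows exactly the route the paper takes: the paper gives no proof of its own for this statement beyond remarking that it ``is proved by exactly the same argument as Theorem 1.2 in \cite{bode:sat}'', and your outline is a faithful reconstruction of that argument in the multiplicity-weighted setting (tube decomposition, collapse of the weighted product in the exterior, local/outer factorisation inside the tubes, chain-rule scaling in $r_i$, trivial $m_i=1$ case).

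One step deserves closer scrutiny, because it is where the generalisation to arbitrary multiplicities genuinely differs from the base case of \cite{bode:sat}: your ``key combinatorial observation'' that the collapsed weight $W_i$ at a strand of $C_i$ is a \emph{common} multiple of $m_i$, i.e.\ $W_i=c\,m_i$ with the same $c$ for all $i$, so that the exterior argument function is a positive scalar multiple of the one certified by P-fiberedness of $B$ with multiplicities $(m_1,\ldots,m_n)$. In the original setting of \cite{bode:sat} every $B_i$ lives on the same number $s$ of strands with all multiplicities equal to $1$, so $W_i=s$ for each $i$ and the claim is immediate. Here $W_i=\sum_{k}(\text{number of strands of }B_i\text{ in its }k\text{th component})\,m_{i,k}$, and nothing in the hypotheses forces the ratios $W_i/m_i$ to coincide across $i$: for instance $B_1$ on $m_1=1$ strand with multiplicity $(3)$ gives $W_1/m_1=3$, while $B_2$ on $m_2=2$ strands forming a single component with multiplicity $(1)$ gives $W_2/m_2=1$. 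In that situation the exterior argument is governed by the weights $(W_1,\ldots,W_n)$, not $(m_1,\ldots,m_n)$, and P-fiberedness of $B$ with the stated multiplicities no longer directly yields the conclusion. You should either justify why the $W_i$ are nonetheless proportional to the $m_i$ in the relevant cases, or restate the exterior hypothesis as P-fiberedness of $B$ with multiplicities $(W_1,\ldots,W_n)$; as written, this step is asserted but not established.
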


Setting $m_i=s$ for all $i$ and $m_{j,i}=1$ for all $j$, $i$ reproduces Theorem 1.2 in \cite{bode:sat}. Theorem~\ref{teo:sat} above also allows us to perform successive satellite operations as in Theorem~\ref{thm:Pmult} and \cite{bode:sat} until we reach a P-fibered braid where all its multiplicities are 1. In this case, the closure of its square is a real algebraic link by \cite{bode:ralg, bode:sat}. (Of course, there is no need to stop with the satellite operations once all multiplicities are 1, since in this case all multiplicities can be taken to be $m>1$.) Theorem~\ref{thm:Pmult} also follows from repeatedly applying Theorem~\ref{teo:sat}. If $e$ denotes the trivial braid on one strand, then the braid $B(B_1,B_2,\ldots,B_N)$ is equal to 
\begin{equation}
\mathcal{B}(\mathcal{B}(\ldots\mathcal{B}(\mathcal{B}(B_N;e,e,\ldots,e,B_{N-1});e,e,\ldots,e,B_{N-2});\ldots;e,e,\ldots,e,B_2);e,e,\ldots,e,B_1).
\end{equation}

Even though both Theorem~\ref{thm:Pmult} and Theorem~\ref{teo:sat} prove that the closure of $B(B_1^{2r_1},B_2^{2r_2},\ldots,B_N^{2r_N})$ is a real algebraic link, the constructed polynomials are different. In Theorem~\ref{thm:Pmult} there are $N$ compact 1-faces in the Newton boundary, while Theorem~\ref{teo:sat} (in combination with the construction in \cite{bode:ralg}) yields radially weighted homogeneous polynomials, i.e., there is only one compact 1-face in the Newton boundary. 
Let $\mathbb{B}_s$ denote the braid group on $s$ strands and $\sigma_i$, $i=1,2,\ldots,s-1$, denote the Artin generators, representing positive half-twists between neighbouring strands.

We denote by $\imath_s:\mathbb{B}_s\to\mathbb{B}_{s+1}$ the inclusion homomorphism that sends an Artin generator $\sigma_i$, $i=1,2,\ldots,s-1$, of $\mathbb{B}_s$ to $\sigma_i$ in $\mathbb{B}_{s+1}$. Let $T$ be the braid on $s+1$ strands represented by the word $T=\sigma_{s}\sigma_{s-1}\ldots\sigma_2\sigma_{1}\sigma_1\sigma_2\ldots\sigma_{s}$.
\begin{proposition}\label{prop}
Let $B$ be any braid on $s$ strands. Then there is a positive integer $M$ such that the closure of $T^{2m}\imath_s(B^2)\in\mathbb{B}_{s+1}$ is real algebraic for all $m>M$.
\end{proposition}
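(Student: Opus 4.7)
The plan is to realise the closure of $T^{2m}\imath_s(B^2)$ as the closure of a braid of the form $B(B_1,B_2)$ arising from a length-two compatible sequence of P-fibered geometric braids with $O$-multiplicities and coefficients, both of whose terms are $u$-even, and then to invoke Theorem~\ref{thm:main3}. First I would fix a smooth geometric representative of $B\in\mathbb{B}_s$ whose strands are contained in a disk $\{|u|\le R\}\subset\C$, reparametrise it over $t\in[0,\pi]$, and extend it $\pi$-periodically in $t$ to produce a $u$-even geometric braid $B_1\subset\C\times S^1$ whose braid isotopy class is $B^2$. Next I would fix $r_0>R$ and define $B_2=\{(r_0\rme^{2\rmi mt},\rme^{\rmi t}):t\in[0,2\pi]\}\subset(\C\setminus\{0\})\times S^1$, a single closed strand of winding number $2m$ about $\{0\}\times S^1$ that is manifestly $u$-even since $r_0\rme^{2\rmi m(t+\pi)}=r_0\rme^{2\rmi mt}$. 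By construction $B(B_1,B_2)$ places the $s$ strands of $B_1$ inside a tubular neighbourhood of $\{0\}\times S^1$ in the copy of $\C\times S^1$ holding $B_2$, so as $t$ traverses $[0,2\pi]$ the inner $s$ strands execute $B^2$ while the outer strand makes $2m$ positive loops around them; a direct permutation computation shows that $T$ realises precisely one positive winding of strand $s+1$ around strands $1,\ldots,s$, so that the closure of $B(B_1,B_2)$ coincides with the closure of $T^{2m}\imath_s(B^2)$.

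Next I would verify compatibility in the sense of Definition~\ref{def:compseq}. Writing $\widetilde{g_i}$ for the monic loop of polynomials whose roots form $B_i$, we have $\widetilde{g_2}(u,t)=u-r_0\rme^{2\rmi mt}$, with lowest-order coefficient $b_2(t)=-r_0\rme^{2\rmi mt}$, forcing $a_2(t)=1$ and $a_1(t)=b_2(t)a_2(t)=-r_0\rme^{2\rmi mt}$. For $B_2$ the relevant fibration map is $\arg(u^s(u-r_0\rme^{2\rmi mt}))$; the polynomial $u^{s+1}-r_0\rme^{2\rmi mt}u^s$ has a single nonzero critical value equal to a constant multiple of $\rme^{2\rmi m(s+1)t}$, whose argument derivative is $2m(s+1)\neq 0$, so $B_2$ is P-fibered with $O$-multiplicity $s$ and coefficient $1$. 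For $B_1$, a critical point of $\arg(a_1(t)\widetilde{g_1}(u,t))$ outside $B_1$ would require $\partial_u\widetilde{g_1}(u,t)=0$ together with $2m+\partial_t\arg(v_j(t))=0$ for some nonzero critical value $v_j(t)$ of $\widetilde{g_1}(\cdot,t)$. Because the strands of $B_1$ are distinct the critical values $v_j(t)$ are nowhere zero, and on the open dense set where they remain smooth the quantities $|\partial_t\arg(v_j(t))|$ are uniformly bounded by some constant $C$. Setting $M\defeq C/2$ then guarantees that $B_1$ is P-fibered with $O$-multiplicity $0$ and coefficient $a_1(t)$ for every $m>M$, exactly as in the proof of Lemma~\ref{lem:a}.

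With $(B_1,B_2)$ forming a compatible sequence of $u$-even P-fibered geometric braids with $O$-multiplicities and coefficients, Theorem~\ref{thm:main3} yields a $u$-convenient, strongly inner non-degenerate mixed polynomial $f\colon\C^2\to\C$ with semiholomorphic principal part whose isolated singularity at the origin has link equal to the closure of $B(B_1,B_2)$, i.e., the closure of $T^{2m}\imath_s(B^2)$, which is therefore real algebraic. The main technical obstacle is establishing the bound $C$: one must control $\partial_t\arg(v_j(t))$ uniformly for a specific $u$-even geometric representative of $B^2$, and although the bound depends on the representative, it is finite for any fixed choice, which is all the proposition requires. A secondary technicality is that the $\pi$-periodic extension producing $B_1$ may fail to be smooth at the gluing point, but this can be repaired by choosing the representative of $B$ to be stationary near its endpoints or by smoothing in an arbitrarily small neighbourhood of $t=\pi$, neither of which alters the braid isotopy class.
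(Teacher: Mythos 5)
Your proposal is correct and follows essentially the same route as the paper: you realise $T^{2m}\imath_s(B^2)$ as $B(B_1,B_2)$ for a two-term compatible sequence of $u$-even P-fibered geometric braids with $O$-multiplicities and coefficients, verify that the outer single-strand braid is P-fibered with $O$-multiplicity $s$ and coefficient $1$ by an explicit critical-value computation, obtain the required bound $M$ for the inner braid from the argument of Lemma~\ref{lem:a}, and invoke Theorem~\ref{thm:main3}. The only cosmetic difference is that you take $B_1$ to be a $u$-even geometric representative of $B^2$ and $B_2$ a single strand with $2m$ windings directly, whereas the paper sets $B_1=B$, $B_2$ a single strand with $m$ windings, and then works with the squares $B_1^2$, $B_2^2$; these are equivalent.
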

\begin{proof}
Note that $T^{2m}\imath_s(B^2)$ is equal to $B(B_1^2, B_2^2)$ with $B_1=B$ and $B_2$ a braid on one strand, parametrised by $(\rme^{\rmi m t},t)\subset\mathbb{C}\times[0,2\pi]$. 

It follows from the proof of Lemma~\ref{lem:a} that there is a positive integer $M$ such that $B$ is P-fibered with $O$-multiplicity 0 and leading coefficient $a(t)=\rme^{\rmi m t}$ for all positive integers $m>M$. Therefore $B^2$ is P-fibered with $O$-multiplicity 0 and leading coefficient $a(2t)$. The braid $B_2^2$ is P-fibered with $O$-multiplicity $s$ and leading coefficient 1. This can be seen as follows. It is given by the roots of $g_2(u,\rme^{\rmi t})=u-a(2t)$. The only non-zero critical point of $u^sg_2(u,\rme^{\rmi t})$ for a fixed value $t$ is given by $c(t)=\tfrac{s}{s+1}\rme^{\rmi 2mt}$ and the corresponding critical value $v(t)=-\tfrac{s^s}{(s+1)^{s+1}}\rme^{\rmi 2m(s+1)t}$ satisfies $\tfrac{\partial\arg(v(t))}{\partial t}>0$ for all $t$.

Since the leading coefficient $a(2t)$ of $B^2$ is exactly the lowest order coefficient of $g_2$, the geometric braids $B^2$ and $B_2^2$ form a compatible sequence. Since both are $u$-even, we know that the closure of $B(B^2,B_2^2)=T^{2m}\imath_s(B^2)$ is real algebraic by Theorem~\ref{thm:main3}.

%As in the previous proofs we get from the loop of polynomials $g_1(u,\rme^{\rmi t})$ corresponding to $B$ a radially weighted homogeneous polynomial
%\begin{equation}
%f_1(u,v,\overline{v})=f_1(u,r\rme^{\rmi t},r\rme^{-\rmi t})\defeq a(2mt)r^{k_1 s+2m}g_1\left(\frac{u}{r^{k_1}},\rme^{2\rmi t}\right),
%\end{equation}
%with $k_1$ large and even, whose zeros form a cone over $B^{2}$. We may then define $g_2(u,\rme^{\rmi t})\defeq(u-a(2mt))$. Its roots form a braid on strand that is P-fibered with any $O$-multiplicity and leading coefficient 1. Choosing $k_2$ to be smaller than $k_1$, but larger than $2m$ gives the polynomial
%\begin{equation}
%f_2(u,v,\overline{v})=f_1(u,r\rme^{\rmi t},r\rme^{-\rmi t})\defeq r^{2m}u^{s}g_2\left(\frac{u}{r^{2m}},\rme^{\rmi t}\right).
%\end{equation}

%As in the proof of Theorem~\ref{thm:Pmult} we can combine $f_1$ and $f_2$, since the highest order term of $f_1$ and the lowest order term of $f_2$ are both $r^{2m}a(2mt)u^s=v^{2m}u^s$. We obtain one semiholomorphic polynomial $f$ with face functions $f_1$ and $f_2$. By the same arguments as in Theorem~\ref{thm:Pmult} is convenient and strongly inner non-degenerate and therefore $f$ has an isolated singularity.

%By \cite{eder} the link of the singularity is the closure of $B(B_1,B_2)$, where $B_1=B^2$ and $B_2$ is a braid on one strand parametrized by $(\rme^{2m\rmi t},t)\in(\mathbb{C}\backslash\{0\})\times[0,2\pi]$. This proves the proposition, since $B(B_1,B_2)$ is equal to $T^{2m}\imath_s(B^2)\in\mathbb{B}_{s+1}$.   
\end{proof}

\begin{obs}
Naturally, the same arguments imply the corresponding result for $M$ a negative integer and $m$ any integer smaller than $M$.
\end{obs}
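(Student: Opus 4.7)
The plan is to rerun the argument of Proposition~\ref{prop} verbatim with the leading coefficient $a(t)=\rme^{\rmi m t}$ for $m$ a negative integer of sufficiently large absolute value. First I would revisit the proof of Lemma~\ref{lem:a}: the only condition imposed on the coefficient $a$ is that $\left|\tfrac{\partial\arg(a(t))}{\partial t}\right|=|m|$ dominate the bounded quantity $\max_{t,j}\left|\tfrac{\partial\arg(v_j(t))}{\partial t}\right|$ coming from the non-zero critical values of $u\mapsto u^0 g(u,\rme^{\rmi t})=g(u,\rme^{\rmi t})$. Since this threshold is insensitive to the sign of $m$, there is a negative integer $M$ such that for every integer $m<M$ the braid $B$ is P-fibered with $O$-multiplicity $0$ and coefficient $\rme^{\rmi m t}$; consequently $B^2$ is P-fibered with $O$-multiplicity $0$ and coefficient $\rme^{\rmi 2mt}$.

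Next I would check that the single-strand braid $B_2$ parametrised by $(\rme^{\rmi m t},t)$ remains P-fibered with $O$-multiplicity $s$ and trivial leading coefficient after squaring. The computation carried out in the proof of Proposition~\ref{prop} applies without modification: the loop $g_2(u,\rme^{\rmi t})=u-\rme^{\rmi 2 m t}$ has unique non-zero critical point $c(t)=\tfrac{s}{s+1}\rme^{\rmi 2mt}$ of $u\mapsto u^s g_2(u,\rme^{\rmi t})$, whose critical value $v(t)=-\tfrac{s^s}{(s+1)^{s+1}}\rme^{\rmi 2m(s+1)t}$ satisfies $\tfrac{\partial\arg(v(t))}{\partial t}=2m(s+1)$. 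This derivative is only required to be non-vanishing, which holds for every non-zero $m$; the fact that it becomes negative when $m<0$ is irrelevant to P-fiberedness.

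Having established these two sign-insensitive facts, the remainder of the argument proceeds exactly as in Proposition~\ref{prop}. The lowest order coefficient of $g_2$ matches the leading coefficient of $B^2$ up to a non-vanishing constant, so $(B^2,B_2^2)$ still forms a compatible sequence in the sense of Definition~\ref{def:compseq}; both braids are $u$-even; and Theorem~\ref{thm:main3} then produces a $u$-convenient, strongly inner non-degenerate semiholomorphic polynomial whose link of singularity is the closure of $T^{2m}\imath_s(B^2)$ for every integer $m<M$. The only point at which a genuine check is required is the sign-insensitivity of the two fibration criteria highlighted above; no obstacle arises beyond those already dealt with in the positive case, which is why the statement is presented merely as an observation.
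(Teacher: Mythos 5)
Your proof is correct and takes essentially the same approach the paper indicates by ``the same arguments'': you rerun Proposition~\ref{prop}, observing that the threshold in Lemma~\ref{lem:a} depends only on the modulus of $\tfrac{\partial \arg(a)}{\partial t}=m$ and that the critical-value derivative $2m(s+1)$ for $B_2^2$ only needs to be non-vanishing, both of which are insensitive to the sign of $m$. (One small point, present in the paper's own proof of Proposition~\ref{prop} as well: the lowest order coefficient of $g_2$ is $-\rme^{\rmi 2mt}$, so to satisfy Definition~\ref{def:compseq} exactly one should take $a_1(t)=-\rme^{\rmi 2mt}$ rather than $\rme^{\rmi 2mt}$; multiplying $a_1$ by a non-zero constant does not affect P-fiberedness, so this is cosmetic.)
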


\begin{proof}[Proof of Corollary~\ref{cor:sub}]
Every link is a sublink of the closure of some square of a braid, see for example \cite{bode:sat} or Figure~\ref{fig:sub}a). It is thus a sublink of the closure of $T^{2m}\imath_s(B^2)$ (see Figure~\ref{fig:sub}b) for $m=1$), which is a satellite link of the $(2,4m)$-torus link and is real algebraic if $m$ is sufficiently large as in Proposition~\ref{prop}.
\end{proof}

\begin{figure}[H]
\centering
\labellist
\Large
\pinlabel $a)$ at 20 1750
\pinlabel $b)$ at 1750 1750
\pinlabel $A$ at 500 400
\pinlabel $A$ at 500 1180
\pinlabel $B^2$ at 2200 1050
\endlabellist
\includegraphics[height=5cm]{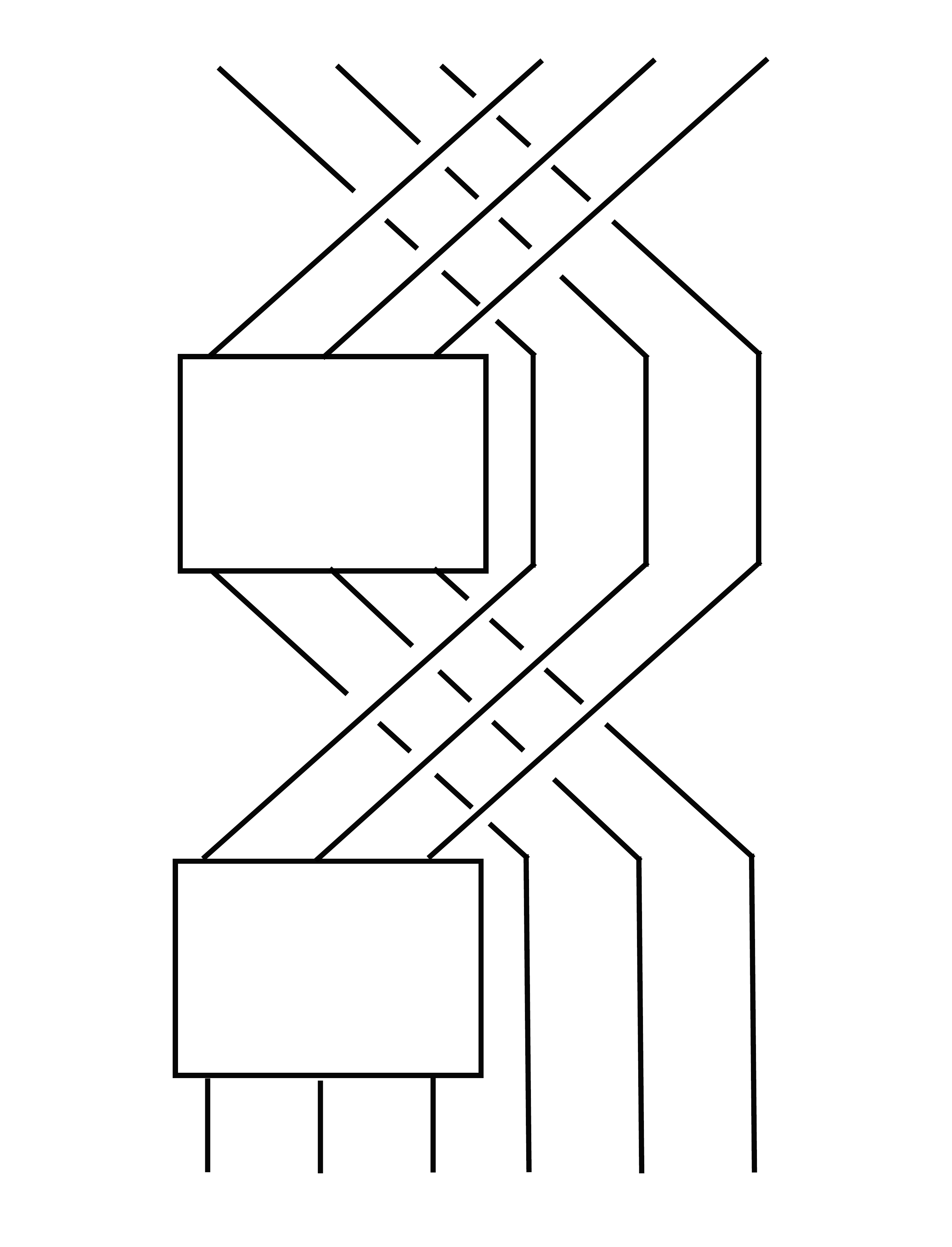}
\includegraphics[height=4cm]{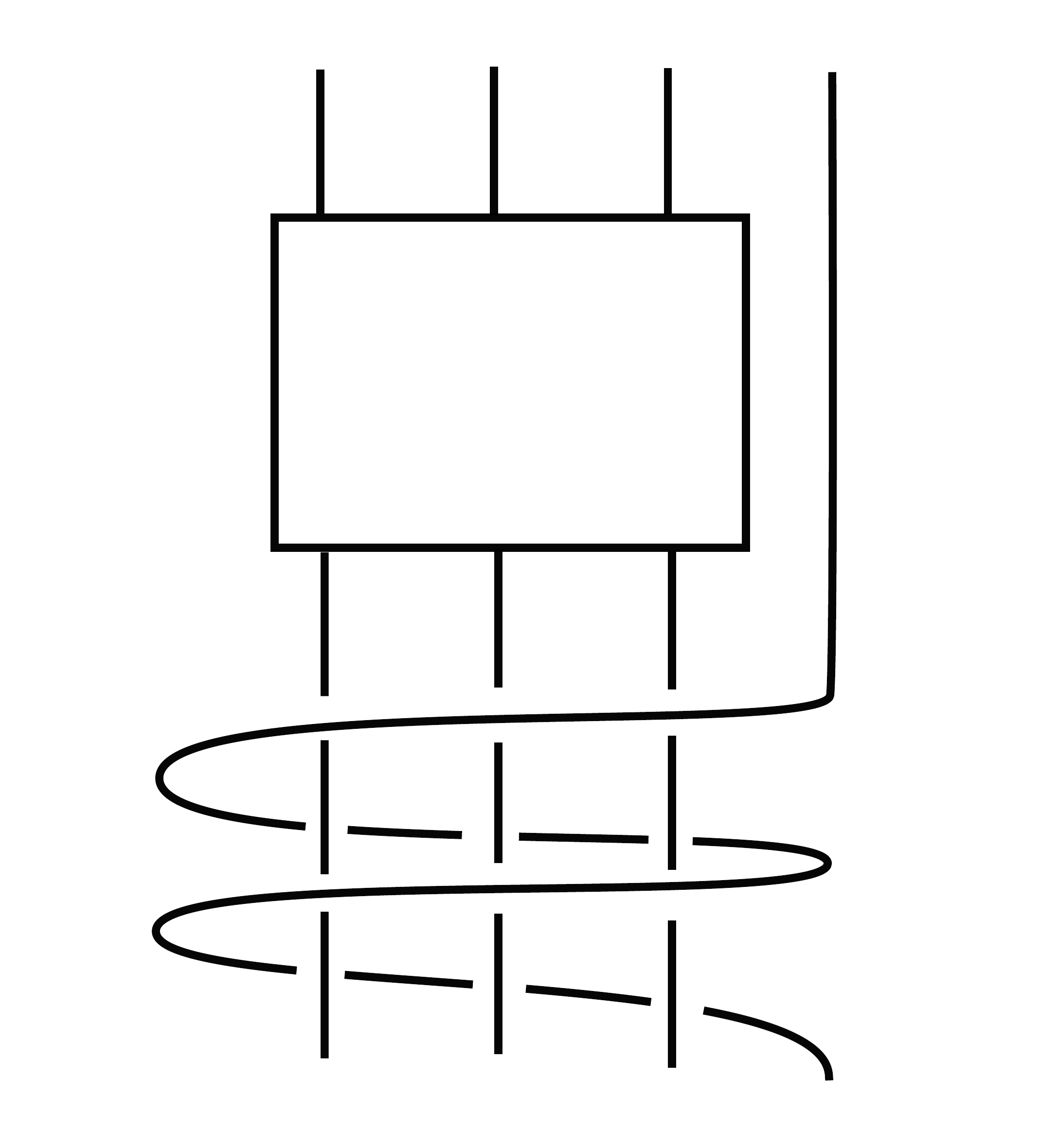}
\caption{a) A square braid $B^2$ whose closure consists of two copies of the closure of a braid $A$. b) $T^{2}\imath_s(B^2)$.\label{fig:sub}}
\end{figure}

The fact that every link is a sublink of a real algebraic link was also proved in \cite{bode:sat} and \cite{bode:thomo}. Corollary~\ref{cor:sub} shows that there is quite a large family of real algebraic links that contain a sublink, even within the family of satellites of torus links.

\section*{Acknowledgements}

The author is supported by the European Union's Horizon 2020 research and innovation programme through the Marie Sklodowska-Curie grant agreement 101023017. The author would like to thank Raimundo Nonato Araújo dos Santos, Osamu Saeki  and Eder Leandro Sanchez Quiceno for interesting and helpful discussions.


\begin{thebibliography}{99}
%\bibitem{controots} G. Harris and C. Martin. \textit{The roots of a polynomial vary continuously as a function of the coefficients}. Proc. Amer. Math. Soc. \textbf{100}, 2 (1987), 390--392.



\bibitem{ak} S. Akbulut and H. C. King.  \textit{All knots are algebraic}. Comm. Math. Helv. \textbf{56} (1981), 339--351.



\bibitem{AraujoBodeSanchez} R. N. Araújo dos Santos, B. Bode and E. L. Sanchez Quiceno. \textit{Links of singularities of inner non-degenerate mixed functions}. arXiv:2208.11655 (2022).

\bibitem{benedetti} R. Benedetti and M. Shiota. \textit{On real algebraic links on} $S^3$. Bolletino dell'Unione Mathematica Italiana, Serie 8 Volume 1B. \textbf{3} (1998), 585--609.

%\bibitem{brauner} K. Brauner. \textit{Zur Geometrie der Funktionen zweier komplexen Ver{\"a}nderlichen II, III, IV.} Abh. Math. Sem. Hamburg \textbf{6} (1928), 8--54.




\bibitem{bode:ralg} B. Bode. \textit{Constructing links of isolated singularities of real polynomials} $\mathbb{R}^4\to\mathbb{R}^2$. Journal of Knot Theory and its Ramifications \textbf{28}, no.1 (2019), 1950009.

\bibitem{bode:sat} B. Bode. \textit{Twisting and satellite operations on P-fibered braids}. Communications in Analysis and Geometry (In press).

\bibitem{bode:thomo} B. Bode. \textit{Closures of T-homogeneous braids are real algebraic}. \textit{arXiv:2211.15394} (2022).

\bibitem{bode:braided} B. Bode. \textit{Braided open book decompositions in $S^3$}. Revista Matemática Iberoamericana (in press). 

\bibitem{bode:algo} B. Bode and M. R. Dennis. \textit{Constructing a polynomial whose nodal set is any prescribed knot or link}. Journal of Knot Theory and its Ramifications \textbf{28}, no. 1 (2019), 1850082.


\bibitem{eder2} B. Bode and E. L. Sanchez Quiceno. \textit{Inner and partial non-degeneracy of mixed functions}. \textit{arXiv:2306.02905} (2023).

\bibitem{brauner} K. Brauner. \textit{Zur Geometrie der Funktionen zweier komplexen Ver{\"a}nderlichen II, III, IV.} Abh. Math. Sem. Hamburg \textbf{6} (1928), 8--54.


%\bibitem{39} 
\bibitem{burau} K. Burau. \textit{Kennzeichnung von Schlauchknoten}. Abh. Math. Sem. Hamburg \textbf{9} (1933), 125--133.

%\bibitem{40} 
\bibitem{burau2} K. Burau. \textit{Kennzeichnung von Schlauchverkettungen}. Abh. Math. Sem. Hamburg \textbf{10} (1934), 285--297.

\bibitem{calcut} J. S. Calcut and H. C. King. \textit{Noncompact codimension-1 real algebraic manifolds}. Michigan Math. J. \textbf{52} (2004), 361--373.

\bibitem{eisenbud} D. Eisenbud and W. Neumann. \textit{Three-dimensional link theory and invariants of plane curve singularities}. Princeton University Press, Princeton, 1985.


\bibitem{hartley} R. Hartley. \textit{Knots of free period}. Can. J. Math. \textbf{33}(1) (1981), 91--102.

\bibitem{kahler} E. Kähler. \textit{Über die Verzweigung einer algebraischen Funktion zweier Veränderlichen in
der Umgebung einer singulären Stelle}. Math. Z. \textbf{30} (1929), 188–204.

\bibitem{knotinfo} C. Livingston and A. H. Moore. \textit{KnotInfo: Table of Knot Invariants}. knotinfo.math.indiana.edu, July 11, 2023.

\bibitem{loja} S. Łojasiewicz. \textit{Ensembles semi-analytiques}. Institut des Hautes Etudes Scientifiques, Bures-sur-Yvette, France, 1965.

\bibitem{looijenga} E. Looijenga. \textit{A note on polynomial isolated singularities}. Ind. Math. (Proc.) \textbf{74} (1971), 418--421.

\bibitem{milnor} J. W. Milnor. \textit{Singular points of complex hypersurfaces}. Princeton University Press, Princeton, 1968.

\bibitem{murasugi} K. Murasugi. \textit{On periodic knots}. Commentarii Mathematici Helvetici \textbf{46} (1971), 162--174.

%\bibitem{nathan} A. Nathan. \textit{Trigonometric interpolation of function and derivative data}. Information and Control \textbf{28} (1975), 192--203.

\bibitem{oka} M. Oka. \textit{Non-degenerate mixed functions}. Kodai Mathematical Journal \textbf{33}, no.1 (2010), 1--62.


\end{thebibliography}
\end{document}